\def\choixcompteur{subsection}
\newtheorem{theo}[\choixcompteur]{Theorem}
\newtheorem{prop}[\choixcompteur]{Proposition}
\newtheorem{lemm}[\choixcompteur]{Lemma}
\newtheorem{coro}[\choixcompteur]{Corollary}
\theoremstyle{definition}
\newtheorem{defi}[\choixcompteur]{Definition}
\newtheorem{exem}[\choixcompteur]{Example}
\newtheorem{rema}[\choixcompteur]{Remark}
\newtheorem{remas}[\choixcompteur]{Remarks}
\newtheorem{conses}[\choixcompteur]{Consequences}
\newtheorem*{exem*}{Example}
\newtheorem*{exems*}{Examples}
\newtheorem*{exam*}{Example}
\newtheorem*{exams*}{Examples}
\newtheorem*{rema*}{Remark}
\newtheorem*{remas*}{Remarks}
\newtheorem*{NB}{N.B}
\theoremstyle{definition}
\newtheorem*{defi*}{Definition}
\newtheorem*{defiprop*}{Definition-Proposition}
\theoremstyle{plain}
\newtheorem*{prop*}{Proposition}
\newtheorem*{lemm*}{Lemma}
\newtheorem*{coro*}{Corollary}
\newtheorem*{theo*}{Theorem}
 \def\cdr@enoncedef{%
 \newenvironment{enonce*}[2][plain]%
 {\let\cdrenonce\relax \theoremstyle{##1}%
 \newtheorem*{cdrenonce}{##2}%
 \begin{cdrenonce}}%
 {\end{cdrenonce}}   }%
\def\cf{{\it cf.\/}\ }
\def\ie{{\it i.e.\/}\ }
\def\eg{{\it e.g.\/}\ }
\def\etc{{\it etc.\/}\ }
\def\lc{{\it l.c.\/}\ }
\def\LC{{\it loc. cit.\/}\ }
\def\rest{\,\rule[-1.5mm]{.1mm}{3mm}_}  
\def\into{\hookrightarrow}
\def\vect{\overrightarrow}
\def\parni{\par\noindent}
\def\ed{ editor}
\def\eds{ editors}
\def\ede{ editor} 
\def\N{{\mathbb N}}    
\def\Z{{\mathbb Z}}
\def\Q{{\mathbb Q}}
\def\R{{\mathbb R}}
\def\C{{\mathbb C}}
\def\A{{\mathbb A}}
\def\M{{\mathbb M}}
\newcommand{\g}[1]{\mathfrak{#1}} 
\def\qa{\alpha}     
\def\qb{\beta}
\def\qd{\delta}
\def\qe{\varepsilon}
\def\qf{\varphi}
\def\qg{\gamma}
 \def\qk{\kappa}
 \def\ql{\lambda}
\def\qm{\mu}
\def\qn{\nu}
\def\qo{\omega}
\def\qp{\pi}
\def\qt{\tau}
\def\qx{\xi}
\def\QD{\Delta}
\def\QF{\Phi}
\def\QG{\Gamma}
\def\QL{\Lambda}
\def\QO{\Omega}
\def\QS{\Sigma}
\def\sha{{\mathcal A}}   
\def\shi{{\mathcal I}}
\def\shm{{\mathcal M}}
\def\sho{{\mathcal O}}
\def\shp{{\mathcal P}}
\def\shq{{\mathcal Q}}
\def\shs{{\mathcal S}}
\def\sht{{\mathcal T}}
\def\shu{{\mathcal U}}
\def\shv{{\mathcal V}}
\def\SHF{{\mathscr F}}
\def\SHI{{\mathscr I}}
\def\SHJ{{\mathscr J}}
\def\SHM{{\mathscr M}}
\begin{document}

\title{Almost split Kac-Moody groups over ultrametric fields}
\author{Guy Rousseau}

\date{July 7, 2015}

\maketitle

%








\begin{abstract}
For a split Kac-Moody group $G$ over an ultrametric field $K$, S. Gaussent and the author defined an ordered affine hovel on which the group acts; it generalizes the Bruhat-Tits building which corresponds to the case when $G$ is reductive.
 This construction was generalized by C. Charignon to the almost split case when $K$ is a local field.
 We explain here these constructions with more details and prove many new properties \eg that the hovel of an almost split Kac-Moody group is an ordered affine hovel, as defined in a previous article.
 \footnote{keywords: Hovel, building, Kac--Moody group, almost split, ultrametric  local field}\footnote{Classification: 20G44, 20E42
(primary), 17B67, 20G25, 22E65, 51E24}
\end{abstract}


\setcounter{tocdepth}{1}    
\tableofcontents


\section*{Introduction}
\label{seIntro}

\par Split Kac-Moody groups over ultrametric local fields were first studied by H. Garland in the case of loop groups \cite{Gd-95}. In \cite{Ru-06} we constructed a "microaffine" building for every split Kac-Moody group over a field $K$ endowed with a non trivial real valuation.
 It is a (non discrete) building with the good usual properties, but it  looks not like a Bruhat-Tits building, rather like the border of this building in its Satake (or polyhedral) compactification.

 \par A more direct generalization of the Bruhat-Tits construction was made by S. Gaussent and the author, in the case where the residue field of $K$ contains $\C$ \cite{GR-08}. This enabled us to deduce interesting consequences in representation theory.
 In \cite{Ru-11} the restriction about the residue field was removed. So, for a split Kac-Moody group $G$ over $K$, one can build an hovel $\SHI$ on which $G$ acts.
  As for the Bruhat-Tits building, $\SHI$ is covered by apartments corresponding to split maximal tori; but it is no longer true that any two points are in a same apartment (this corresponds to the fact that the Cartan decomposition fails in $G$). This is the reason why the word "building" was changed to "hovel".
  Nevertheless this hovel has interesting properties: it is an ordered affine hovel as defined in \cite{Ru-10}. As a consequence the residues in each point of $\SHI$ are twin buildings, there exist on $\SHI$ a preorder invariant by $G$ and, at infinity, we get twin buildings and two microaffine buildings. These are the twin buildings of $G$ introduced by B. R\'emy \cite{Ry-02a} and the microaffine buildings of \cite{Ru-10}.

  \par Cyril Charignon undertook the construction of hovels for almost split Kac-Moody groups \cite{Cn-10}, \cite{Cn-10b}. Actually he considered the disjoint union of the hovel and of some hovels at infinity called fa\c{c}ades. This union is called a bordered hovel, it looks like the Satake compactification of a Bruhat-Tits building; in addition to the main hovel it contains the microaffine buildings.
   He elaborates an abstract theory of bordered hovels associated to a generating root datum, a valuation and a family of parahoric subgroups. He proves an abstract descent theorem and succeeds in using it to build a bordered hovel associated to an almost split Kac-Moody group over a field endowed with a discrete valuation and a perfect residue field. As a corollary the microaffine buildings are also defined in this situation.

   \par In this article we give more details about these constructions and improve many results (see below some details about each section).
    In particular the fixed point theorem in $\R-$buildings proved recently by K. Struyve \cite{Se-11} enables us to prove the existence of bordered hovels in new cases (with a non discrete valuation).
    
    \par The essential new  result we get (from \cite{Cn-10b} and the present article) is the existence of a nice (bordered) hovel for each almost split Kac-Moody group over an ultrametric local field, with a strongly transitive action of the group and a good enclosure map (Theorem \ref{6.9} and Proposition\ref{6.10a}.3). 
    This is the powerful tool which enables S. Gaussent, N. Bardy-Panse and the author to define spherical Hecke algebras or Iwahori-Hecke algebras associated to any almost split Kac-Moody group over an ultrametric local field: \cite{GR-12},  \cite{BGR15}.
    The non split case involves unequal parameters.

    \par In section \ref{s1} we explain the general framework of our study: abstract generating root data, their associated twin buildings and split Kac-Moody groups (as defined by J. Tits \cite{T-87b}).

    \par Section \ref{s1b}  is devoted to B. R\'emy's theory of almost split Kac-Moody groups \cite{Ry-02a}. We improve a few results, \eg on geometric realizations of the associated twin buildings and on imaginary relative root groups.

   \par In section \ref{s2} we define the affine apartments associated to a valuation of an abstract root datum. We explain the interesting subsets or filters of subsets inside them (facets, sectors, chimneys, enclosures,\ldots) and embed them in their bordered apartments. There are several possible choices for these apartments, their imaginary roots or walls and for the fa\c{c}ades at infinity. So this leads to several choices for all these objects and none of them is better in all circumstances.

  \par Section \ref{s3} is devoted to Charignon's abstract construction of the bordered hovel associated to a good family of parahoric subgroups in a valuated root datum \cite{Cn-10}, \cite{Cn-10b}. We select two other conditions he considered for parahoric families and a new third one to define what is a very good family. Then we are able to generalize abstractly the constructions of \cite{GR-08} and prove that the abstract hovel we get is an ordered affine hovel in the sense of \cite{Ru-10} (slightly generalized). This involves an enclosure map ($cl^{\QD^{ti}}_\R$) which gives (too) small enclosures.

   \par In section \ref{s5} we mix these abstract results and the results of \cite{Ru-11} to define the bordered hovel of a split Kac-Moody group over a field endowed with a non trivial real valuation. One of the problems is to extend the results to general apartments, neither essential as in Charignon's or R\'emy's works, nor associated directly to the group as in \cite{Ru-11}. We prove that these bordered hovels are functorial, uniquely defined (in the sense that the very good family of parahorics is unique) and that their residue twin buildings are associated to a generating root datum.

   \par These results are generalized to almost split Kac-Moody groups in section \ref{s6}. We explain the abstract descent theorem of Charignon (generalizing the analogous theorem of F. Bruhat and J. Tits \cite{BtT-72}). To apply it to an almost split Kac-Moody group $G$, we need the same condition as for reductive groups: $G$ is assumed to become quasi split over a finite tamely ramified Galois extension $L/K$ \cite{Ru-77}. There is no need for another condition, even for a non discrete valuation, as is now clear from Struyve's work \cite{Se-11}.
   We explain Charignon's results in this almost split case and generalize them to more general apartments. So we get a bordered hovel and we prove that the hovel inside it (its main fa\c{c}ade) is an ordered affine hovel (in the sense of \cite{Ru-10}) with an enclosure map ($cl^{_K\QD^{r}}_{K}$) which is better than the one in section \ref{s3} (but perhaps still not the best one).
   
   \par The consideration of bordered hovels (of rather general types) leads to many technical complications and many similar definitions (\eg of enclosure maps). This seams unavoidable.
   The final results (for almost split Kac-Moody groups) are simpler, especially when speaking only on hovels (without boundaries).

\section{Root data and split Kac-Moody groups}\label{s1}

\par Most of the following definitions or results (and some other useful ones) may be found in \cite{Ry-02a}, \cite{Ru-06},  \cite{Ru-10} or \cite{Ru-11}.

\subsection{Root generating systems}\label{1.1}

\par
\par {\quad\bf1)} We consider a  Kac-Moody matrix  (or generalized Cartan matrix)
 $\M = (a_{i,j})_{i,j\in I}$ , with rows and columns indexed by a finite set $I$.
Let $Q$ be a free $\Z-$module with basis $(\alpha_i)_{i\in I}$ and $Q^+=\sum_{i\in I}\,\Z_{\geq{}0}.\alpha_i\subset Q$, $Q^-=-Q^+$.
The {\it (vectorial) Weyl group} $W^v$ associated to $\M$ is a Coxeter group with generating system the set $\QS=\{\,s_i\,\mid\,i\in I\}$ of automorphisms of $Q$ defined by $s_i(\alpha_j) = \alpha_j - a_{i,j} \alpha_i$.
The associated system of {\it real roots} is $\QF=\{\,w(\qa_i)\mid w\in W^v,i\in I\,\}$ \cite{K-90};
it is a real root system (with free basis $(\alpha_i)_{i\in I}$) in the sense of \cite{MP-89} or \cite{MP-95}, see also \cite{By-96} and \cite{He-91}.
 If $\alpha \in \Phi$, then $s_{\alpha} =
w.s_i.w^{-1} $ is well defined by $\alpha$  independently of the choice of  $w$ and $i$ such that $\alpha = w(\alpha_i)$.
We say that we are in the {\it classical case} when $W^v$ is finite, then $\M$ is a Cartan matrix and $\QF$ a root system in the sense of \cite{B-Lie}.
 For $J\subset I$,  $\M(J) = (a_{i,j})_{i,j\in J}$ is a Kac-Moody matrix; with obvious notations, $Q(J)$ is a submodule of $Q$ and $\QF^m(J)=\QF\cap Q(J)$ the root system associated to $\M(J)$, its Weyl group is $W^v(J)=\langle s_i\mid i\in J\rangle$.

\medskip
\par {\bf2)} A {\it root generating system } (or RGS) \cite{By-96} will be (for our purpose) a quadruple
${\mathcal S}=(\M,Y,({\overline{\alpha_i}})_{i\in I},({\alpha}{_i^\vee})_{i\in I})$ where $\M$ is a Kac-Moody matrix, $Y$ a free $\mathbb Z-$module of finite rank  $n$,   $({\overline{\alpha_i}})_{i\in I}$ a family (of {\it simple roots}) in its dual $X=Y^*$ and
 $({\alpha}{_i^\vee})_{i\in I}$ a family (of {\it simple coroots}) in $Y$. These data have to satisfy the condition: $\;$
 $a_{i,j} = {\overline{\alpha_j}}({\alpha}{_i^\vee}) $.

 \par The Weyl group $W^v$ acts on $X$ (and dually on $Y$) by $s_i(\chi) = \chi - {\chi}(\alpha{_i^\vee}){\overline{\alpha_i}}$.

 \par We say that  ${\mathcal S}$ is {\it free} (resp. {\it adjoint})  if $({\overline{\alpha_i}})_{i\in I}$ is free in (resp. generates) $X$. For example the minimal adjoint RGS $\shs_{\M m}=\shs^{lad}=(\M,Q^*,({{\alpha_i}})_{i\in I},({\alpha}{_i^\vee})_{i\in I})$ (with an obvious definition of the ${\alpha}{_i^\vee}$) is free and adjoint.

\par There is a group homomorphism $bar: Q\to X$ , $\alpha\mapsto {\overline\alpha}$ such that $bar(\alpha_i)={\overline{\alpha_i}}$; it is $W^v-$equivariant and $\qf^{ad}=bar^*:Y\to Q^*$ is a commutative extension of RGS $\shs\to\shs_{\M m}$ \cite[1.1]{Ru-11}.
 When ${\mathcal S}$ is free, $Q$ is identified with ${\overline Q}=bar(Q)\subset X$.

\par  For $J\subset I$,  ${\mathcal S}(J)=(\M(J),Y,({\overline{\alpha_i}})_{i\in J},({\alpha}{_i^\vee})_{i\in J})$ is also a RGS.

\medskip
\par {\bf3)} The {\it complex Kac-Moody algebra} ${\mathfrak g}_{\mathcal S}$ associated to $\mathcal S$ is generated by the {\it Cartan subalgebra}  ${\mathfrak h}_{\mathcal S}=Y\otimes_\Z\mathbb C$ and elements $(e_i,f_i)_{i\in I}$ with well known relations \cite{K-90}.
This Lie algebra has a gradation by $Q$ : $\g g_{\shs}=\g h_\shs\oplus(\oplus_{\alpha\in\Delta}\;\g g_\alpha)$ where $\Delta\subset Q\setminus\{0\}$ is the {\it root system} of $\g g_\shs$ or of $\M$.

\par We have $\g h_\shs=(\g g_\shs)_0$, $\g g_{\alpha_i}=\C e_i$, $\g g_{-\alpha_i}=\C f_i$ and $\QF\subset\QD$ (as $\QD$ is $W^v-$stable).

\par If $\Delta^+=\Delta\cap Q^+$ (resp. $\Delta^-=-\Delta^+$) is the set of {\it positive} (resp. {\it negative}) roots, then $\Delta=\Delta^-\bigsqcup\Delta^+$.
We set $\QF^\pm=\QF\cap\QD^\pm=-\QF^\mp$. The {\it imaginary} roots are the roots in $\QD\setminus\QF=\QD_{im}$; we set $\QD_{re}=\QF$, $\QD^\pm_{re}=\QF^\pm{}$ and $\QD^\pm_{im}=\QD_{im}\cap\QD^\pm{}$ .

\par  For $J\subset I$,  $\g g_{\shs(J)}$ is a Lie subalgebra of ${\mathfrak g}_{\mathcal S}$ and $\QD^m(J)=\QD\cap Q(J)$ a subroot system of $\QD$.

\par In the classical case, ${\mathfrak g}_{\mathcal S}$ is a reductive finite dimensional Lie algebra and $\QD_{im}=\emptyset$.

\subsection{Vectorial apartments}\label{1.1b}

\par {\quad\bf1)} We consider  a free RGS ${\mathcal S}=(\M,Y,({{\alpha_i}})_{i\in I},({\alpha}{_i^\vee})_{i\in I})$ and the real vector space $V=V^\shs=Y\otimes\R=$ Hom$_\Z(X,\R)$.
 Each element in $X$ or in $Q\subset X$ induces a linear form on $V$. A {\it vectorial wall} in $V$ is the kernel of some $\qa\in\QF$.
The {\it positive} (resp. {\it negative}) {\it fundamental chamber} in $V$ is
$C_+^{v}=\{\,v\in V\mid \qa_i(v)>0,\forall i\in I\,\}$ (resp. $C_-^{v}=-C_+^{v}$).
If $J\subset I$, let $F_+^{v}(J)=\{\,v\in V\mid \qa_i(v)=0,\forall i\in J,\qa_i(v)>0,\forall i\in I\setminus J\,\}$ and $F_-^{v}(J)=-F_+^{v}(J)$, they are cones in $V$.
Then the closed positive fundamental chamber is 
$\overline{C_+^{v}}=\bigsqcup_{J\subset I}\;F_+^{v}(J)$ and symmetrically for $\overline{C_-^{v}}$.

\par The Weyl group $W^v$ acts faithfully on $V$, we identify $W^v$ with its image in $GL(V)$. For $w\in W^v$ and $J\subset I$, $wF_+^{v}(J)$ (resp. $wF_-^{v}(J)$) is called a {\it positive} (resp. {\it negative}) {\it vectorial facet of (vectorial) type $J$}. The fixer or stabilizer of $F_\pm^{v}(J)$ is $W^v(J)$;
 if this group is finite we say that $J$ or $wF_\pm^{v}(J)$ is {\it spherical}.
These positive facets are disjoint and their union $\sht_+$ is a cone: the {\it Tits cone}.
The inclusion in the closure gives an order relation on these facets.
 The {\it star} of a facet $F^v$ is the set $F^{v*}$ of all facets $F^v_1$ such that $F^v\subset\overline{F^v_1}$.

\par The properties of the action of $W^v$ on the set of positive facets allows one to identify this poset (or to be short $\sht_+$) with the Coxeter complex of $(W^v,\QS)$. The interior $\sht_+^\circ$ of $\sht_+$ is the union of its spherical facets, it is also a non empty convex cone. 
 The symmetric results for $\sht_-=-\sht_+$ are also true.

\par We call $\A^{v}=\sht_+\cup\sht_-$ the {\it  vectorial fundamental twin apartment} associated to $\shs$ and set $\vect{\A^{v}}=V$ (vector space generated by $\A^{v}$).
A {\it generic subspace} of $\A^{v}$ is an intersection of $\A^{v}$ with a vector subspace of $\vect{\A^{v}}$ which meets the interior of $\A^{v}$; for example a wall is a generic subspace.
An {\it half-apartment} in $\A^{v}$ is the intersection with $\A^{v}$ of one of the two closed half-spaces of $\vect{\A^{v}}$ limited by a wall.

\par In $V$ the subspace $V_0=F_\pm^{v}(I)=\bigcap_{i\in I}\,$Ker$(\qa_i)$ (trivial facet) is the intersection of all vectorial walls. Acting by translations it stabilizes all facets and the two Tits cones. So  the essentialization of $V$ or $\A^v$ is $V^e=V/V_0$ or $\A^{ve}=\A^v/V_0$.

\par One may generalize these definitions to the case when the chamber $C^v_+$ defined by  $({\overline{\alpha_i}})_{i\in I}$ (for a non free RGS) is non empty in $V=Y\otimes\R$ \cite[p. 113, 114]{By-96} but we shall avoid this.

\medskip
\par {\bf2)} The smallest example for $V$ associated to $\M$ and $\QF\subset Q$ corresponds to $\shs=\shs_{\M m}$.
 Then $V=V^q=V^\M=Q^*\otimes\R=$ Hom$_\Z(Q,\R)$. In the above notations we add an exponent $^q$ to all names.
  We get thus the {\it essential vectorial fundamental twin apartment} $\A^{vq}$. Actually $V^q$ and  $\A^{vq}$ are canonically the essentializations of any $V$ or $\A^v$ in 1) : $V^e=V^q$ and $\A^{ve}=\A^{vq}$

  \medskip
\par {\bf3)} If $\shs$ is a given free RGS, we shall write $V^x=V^\shs$ and add an exponent $^x$ to all names in 1) \eg $V^q=V^{xe}=V^x/V^x_0$ and $\A^q=\A^{xe}=\A^x/V^x_0$.
 We get thus  the {\it normal vectorial fundamental twin apartment} $\A^{vx}$.

 \par If $\shs$ is a given (non necessarily free) RGS, we may consider the free RGS $\shs^l$ of \cite[1.3d]{Ru-11}: ${\mathcal S}^l=(\M,Y^{xl},({{\alpha_i^{xl}}})_{i\in I},({\alpha}{_i^{xl\vee}})_{i\in I})$ with $Y^{xl}=Y\oplus Q^*$, $\qa_i^{xl}=\overline\qa_i+\qa_i\in X^{xl}=X\oplus Q$ and ${\alpha}{_i^{xl\vee}}={\alpha}{_i^{\vee}}\in Y\subset Y^{xl}=Y\oplus Q^*=(X\oplus Q)^*$.
 Then $V^{xl}=$ Hom$_\Z(X\oplus Q,\R)$ and we add an exponent $^{xl}$ to all names in 1).
 We get thus  the {\it extended vectorial fundamental twin apartment} $\A^{vxl}$.



  \medskip
\par {\bf4)} More generally we may consider a quadruple as in \cite[1.1]{Ru-10}:
$(V,W^v,({{\alpha_i}})_{i\in I},({\alpha}{_i^\vee})_{i\in I})$ with $\qa_i$ free in $V^*$ and $a_{i,j} = {{\alpha_j}}({\alpha}{_i^\vee}) $ hence $\QF\subset Q\subset V^*$.
The same things as in 1) (\eg $F_\pm^{v}(J)$, $\A^{v}_\pm{}=\sht_\pm{}$, ..) may be defined in $V$ and we have $V^q=V/V_0$ for $V_0=\bigcap_{i\in I}$ Ker$\qa_i$.
 For example we may take for $V$ a quotient of a $V^\shs$ as in 1) by any subspace $V_{00}$ of $V_0$.
\medskip
\par We get thus many geometric realizations of the Coxeter complex of $(W^v,\QS)$.

 \subsection{The split Kac-Moody group $\g G_\shs$}\label{1.2}

 \par As defined by J. Tits \cite{T-87b}, this group $\g G_\shs$ is a functor from the category of (commutative) rings to the category of groups.

 \par One considers first the torus $\g T_\shs=\g T_Y=\g{Spec}(\Z[X])$ with character group $X(\g T_\shs)=X$ and cocharacter group $Y(\g T_\shs)=Y$. For any ring $R$, $\g T_Y(R)=Y\otimes_\Z R^*=$ Hom$_\Z(X,R^*)$. 
 Then the group $\g G_\shs(R)$ is generated by $\g T_\shs(R)$ and elements $\g x_\qa(r)$ for $\qa\in\QF$ and $r\in R$; for the precise relations see \cite{T-87b}, \cite{Ry-02a} or \cite{Ru-11}.

 \par Actually $\g T_\shs$ is a sub-group-functor of $\g G_\shs$, the {\it standard maximal split subtorus}.
 For $\qa\in\QF$, there is an injective homomorphism $\g x_\qa:\g{Add}\to\g G_\shs$, $r\in R\mapsto \g x_\qa(r)$; the sub-group-functor of
$\g G_\shs$ image of $\g x_\qa$ is written $\g U_\qa$.
  The {\it standard positive} (resp. {\it negative}) {\it maximal unipotent subgroup} is the sub-group-functor $\g U^\pm_\shs$ such that, for any ring $R$, $\g U^+_\shs(R)$ (resp. $\g U^-_\shs(R)$) is generated by  all $\g U_\qa(R)$ for $\qa\in\QF^+$ (resp. $\qa\in\QF^-$); it depends actually only on $\M$, not on $\shs$.
  Then the {\it standard positive} (resp. {\it negative}) {\it Borel subgroup} is the semi-direct product $\g B^+_\shs=\g T_\shs\ltimes \g U^+_\shs$ (resp. $\g B^-_\shs=\g T_\shs\ltimes \g U^-_\shs$).

  \par The construction of $\g G_\shs$ uses a $Q-$graded $\Z-$form $\shu_{\shs\Z}$ of the universal enveloping algebra of $\g g_\shs$, we call it the {\it Tits enveloping algebra} of $\g G_\shs$ over $\Z$.
   It is a filtered $\Z-$bialgebra; the first term of its filtration is $\Z\oplus\g g_{\shs\Z}$, where $\g g_{\shs\Z}$ is a $\Z-$form of the Lie algebra $\g g_\shs$.
   There is a functorial adjoint representation $Ad: \g G_\shs\to\g{Aut}(\shu_{\shs\Z})$, see \cite{Ry-02a} and/or \cite{Ru-11} for details.
    In the classical case $\g G_\shs$ is a reductive group and $\shu_{\shs\Z}$ is often called the Kostant's $\Z-$form. By analogy with this case we define the {\it reductive rank} (resp. {\it semi-simple rank}) of $\g G_\shs$ or $\shs$ as $rrk(\shs)=n=dim(X)$ (resp. $ssrk(\shs)=\,\vert I\vert$\,); there is no a priori inequality between these two ranks.

 \par In the following we shall almost always consider a field $K$ and  restrict the above functors to the category $\shs ep(K)$ of algebraic separable field extensions of $K$ contained in a given separable closure $K_s$.
 The groups associated to $K$ by these functors are then written with roman letters: $G_\shs=\g G_\shs(K)$, $T_\shs=\g T_\shs(K)$, $U_\qa=\g U_\qa(K)$, $x_\qa: K\to U_\qa\subset G_\shs$, etc.
 We set also $\shu_{\shs K}=\shu_{\shs\Z}\otimes_\Z\,K,\cdots$. We shall sometimes forget the subscript $_\shs$.



   \begin{defi}\label{1.3}  \cf \cite{Ru-06} A {\it root datum of type} a (real) root system $\QF$ is a triple $(G,(U_\qa)_{\qa\in\QF},Z)$ where $G$ is a group and $Z$, $U_\qa$ (for $\qa\in\QF$) are subgroups of $G$, satisfying:

   \par (RD1) For all $\qa\in\QF$, $U_\qa$ is non trivial and normalized by $Z$.

   \par (RD2) For each prenilpotent pair of roots $\{\qa,\qb\}$, the commutator group $[U_\qa,U_\qb]$ is included in the group generated by the groups  $U_\qg$ for $\qg=p\qa+q\qb\in\QF$ and $p,q\in\Z_{>0}$.
   \par\qquad (there is a finite number of such roots $\qg$, as $\{\qa,\qb\}$ is supposed prenilpotent).

   \par (RD3) If $\qa\in\QF$ and $2\qa\in\QF$, then $U_{2\qa}\subsetneqq U_\qa$.

   \par (RD4) For all $\qa\in\QF$ and all $u\in U_\qa\setminus\{1\}$, there exist $u',u''\in U_{-\qa}$ such that $m(u):=u'uu''$ conjugates $U_\qb$ into $U_{s_\qa(\qb)}$ for all $\qb\in\QF$. Moreover, for all $u,v\in U_\qa\setminus\{1\}$, $m(u)Z=m(v)Z$.

 \par (RD5) If $U^+$ (resp. $U^-$) is the group generated by the groups $U_\qa$ for $\qa\in\QF^+$ (resp. $\qa\in\QF^-$), then $ZU^+\cap U^-=\{1\}$.

\par The root datum is called {\it generating} if moreover:

\par (GRD) The group $G$ is generated by $Z$ and the groups $U_\qa$ for $\qa\in\QF$.

   \end{defi}

\begin{remas}\label{1.3b} a) This definition is given for a general (real) root system $\QF$. For the system $\QF$ of \ref{1.1} the axiom (RD3) is useless as $\QF$ is reduced. In the classical case (\ie for a finite root system) this is equivalent to the definition of "donn\'ee radicielle de type $\QF$" in \cite {BtT-72}. In general a generating root datum is the same thing as a  "donn\'ee radicielle jumel\'ee enti\`ere" as defined in \cite[6.2.5]{Ry-02a}.

\par b) Actually $Z$ has to be the intersection of the normalizers of the groups $U_\qa$: \cite[1.5.3]{Ry-02a}, see also \cite[7.84]{AB-08}. So one may forget $Z$ in the datum, as in \cite{T-92a} or \cite[10.1.1]{Cn-10b}.

\par c) Even in the classical case, the notion of root datum (of type a root system) is more precise than the notion of RGD-system (of type a Coxeter system) defined in \cite[8.6.1]{AB-08} (which is the same thing as "donn\'ee radicielle jumel\'ee" defined in \cite[1.5.1]{Ry-02a}, see also \cite{T-92a}).
 The "roots" of $(W^v,\QS)$ are in one to one correspondence with the non-divisible roots in $\QF$.
 So, if $(G,(U_\qa)_{\qa\in\QF},Z)$ is a generating root datum, then $(G,(U_\qa)_{\qa\in\QF_{nd}},Z)$ is a RGD-system; the difference is that axiom (RGD1) is less precise than (RD2): it allows $p$ and $q$ to be in $\R_{>0}$.

 \par Root data describe more precisely the algebraic structure of reductive groups or Kac-Moody groups; with RGD systems one can describe more general actions of groups on (twin) buildings.

\end{remas}

\begin{conses}\label{1.3c} Let $(G,(U_\qa)_{\qa\in\QF},Z)$ be a generating root datum, then, by \cite[chap. 1, 2]{Ry-02a} or \cite{AB-08}, one has:

\par 1) The group $B^\pm=ZU^\pm$ is called the {\it standard positive} (resp. {\it negative}) {\it Borel subgroup} or more generally {\it minimal parabolic subgroup}.

\par Let $N$ be the group generated by $Z$ and the $m(u)$ for $\qa\in\QF$ and $u\in U_\qa\setminus\{1\}$. There is a surjective homomorphism $\qn^v:N\to W^v$ (where $W^v$ is the Weyl group of the root system $\QF$) such that $\qn^v(m(u))=s_\qa$ and Ker$(\qn^v)=Z$.

\par Then $B^\pm\cap N=Z$ and $(B^\pm,N)$ is a BN-pair in $G$. In particular we have two Bruhat decompositions: \qquad $G=\bigsqcup_{w\in W^v}\;B^\qe wB^\qe$\qquad (for $\qe=+$ or $-$).

\par Moreover $G=\bigsqcup_{w\in W^v}\;(\prod_{\qb\in\QF^+\cap w\QF^-w^{-1}}\;U_\qb).wZ.U^+$, with uniqueness of the decomposition (refined Bruhat decomposition).
 The same is also true when exchanging $+$ and $-$.

\par 2) More precisely $(B^+,B^-,N)$ is a twin BN-pair; in particular we have a Birkhoff decomposition: \qquad $G=\bigsqcup_{w\in W^v}\;B^+ wB^-$ . Moreover for $u,u'\in U^+$, $v,v'\in U^-$ and $z,z'\in Z$, if $uzv=u'z'v'$ then $u=u'$, $v=v'$ and $z=z'$.

\par 3) Associated to the BN-pair $(B^\qe,N)$, there is a combinatorial building $\SHI_\qe^{vc}$ (viewed as an abstract simplicial complex) on which $G$ acts strongly transitively (with preservation of the types of the facets). The group $B^\qe$ is the stabilizer and fixer of the fundamental chamber $C^{vc}_\qe\subset \SHI_\qe^{vc}$.
The group $N$ stabilizes the fundamental apartment $\A^{vc}_\qe$ (which contains $C^{vc}_\qe$); it is equal to the stabilizer in $G$ of $\A^{vc}_\qe$, as the BN-pair is saturated \ie $Z=\bigcap_{w\in W^v}\;wB^\qe w^{-1}$.

\par The Birkhoff decomposition gives a twinning between the buildings $\SHI_+^{vc}$ and $\SHI_-^{vc}$; we have $Z=B^+\cap B^-$.

\par 4) As the facets of $\sht_\qe=\A^{v}_\qe$ are in one to one, increasing and $N-$equivariant  correspondence with those of the Coxeter complex $\A^{vc}_\qe$, we can glue different apartments together to get a geometric realization $\SHI_\qe^{v}=\SHI^v_\qe(G,\A^v)$ of $\SHI_\qe^{vc}$ (called vectorial or conical) in which the apartments and facets are cones.
 The different peculiar choices of $\A^v$ explained in \ref{1.1b} 2), 3) give vectorial buildings $\SHI^{vq}_\qe$, $\SHI^{vx}_\qe$, $\SHI^{vxl}_\qe$;

 \par For any of these vectorial buildings, the vector space $V_0\subset\vect{\A^v_\qe}$ acts $G-$equivariantly and stabilizes all facets or apartments. The essentialization of this building \ie its quotient $\SHI^{ve}_\qe(G,\A^v)=\SHI^v_\qe(G,\A^v)/V_0$ by $V_0$ is canonically equal to $\SHI_\qe^{vq}=\SHI^v_\qe(G,\A^{vq})$.

\par 5) There is a one to one decreasing correspondence between facets and parabolic subgroups:
 the stabilizer and fixer in $G$ of a facet $F^{v}\subset\sht_\qe$ or of $F^{v}/V_0\subset\sht_\qe^q$ is a parabolic subgroup $P(F^v)$ of $G$ (which is its own normalizer).
 We have a Levi decomposition $P(F^v)=M(F^v)\ltimes U(F^v)$.
 The group $P(F^v)$ (resp. $M(F^v)$) is generated by $Z$ and the groups $U_\qa$ for $\qa\in\QF(F^v)$ (resp. $\qa\in\QF^m(F^v)$) \ie $\qa\in\QF$ and $\qa(F^v)\geq{}0$ (resp. $\qa(F^v)=0$).
 The subgroup $U(F^v)$ is normal in $P(F^v)$ and contains the groups $U_\qa$ for $\qa\in\QF^u(F^v)$ \ie $\qa\in\QF$ and $\qa(F^v)>0$ \cite[6.2]{Ry-02a}. We define $G(F^v)=P(F^v)/U(F^v)\simeq M(F^v)$ and $N(F^v)=N\cap P(F^v)\subset M(F^v)$.

\par If $F^v=F_\qe^{v}(J)$ for $J\subset I$, then $P(F^v)=P^\qe(J)=B^\qe W^v(J)B^\qe$, $\QF^m(F^v)=\QF^m(J)$ and $N(F^v)/Z=W^v(J)$.
The group $G(J)=M(F^v)$ is endowed with the generating root datum $(G(J),(U_\qa)_{\qa\in\QF^m(J)},Z)$.

\end{conses}

\begin{theo}\label{1.4} With the notations of \ref{1.2}, $(G_\shs,(U_\qa)_{\qa\in\QF},T_\shs)$ is a root datum of type $\QF$. Moreover if $\vert K\vert\geq{}4$, $N$ is the normalizer of $T_\shs$ in $G_\shs$.

\begin{proof} This is essentially in \cite{T-87b} and \cite{T-92a}. See \cite[8.4.1]{Ry-02a}
\end{proof}

\end{theo}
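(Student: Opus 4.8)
The plan is to take $Z = T_\shs$ and verify the five axioms (RD1)--(RD5) of Definition \ref{1.3} directly from Tits's presentation of $\g G_\shs$ \cite{T-87b} (recalled in \cite{Ry-02a} and \cite{Ru-11}); the generating property (GRD) holds by construction, since $\g G_\shs(R)$ is generated by $\g T_\shs(R)$ and the $\g U_\qa(R)$. For the second assertion I would argue separately, using the Bruhat decomposition of \ref{1.3c} together with the hypothesis $|K| \ge 4$.

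The first axioms are quick. (RD1): the morphism $\g x_\qa : \g{Add} \to \g G_\shs$ is injective by construction, so $U_\qa = x_\qa(K)$ is non trivial; and the relation $t\, x_\qa(r)\, t^{-1} = x_\qa(\overline\qa(t)\, r)$ for $t \in T_\shs$ shows that $T_\shs$ normalizes $U_\qa$. (RD3) is empty since $\QF$ is reduced (\cf Remark \ref{1.3b}a)). (RD2) is exactly the Chevalley-type commutation relation $[x_\qa(r), x_\qb(s)] = \prod x_{p\qa+q\qb}(C_{p,q}\, r^p s^q)$ imposed on $\g G_\shs$ for each prenilpotent pair $\{\qa,\qb\}$, the product over $p,q>0$ with $p\qa+q\qb\in\QF$ being finite by prenilpotence.

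The substantive rank-one point is (RD4). For $\qa\in\QF$ and $u = x_\qa(r)$ with $r\neq 0$ I would set $m(u) = x_{-\qa}(-r^{-1})\, x_\qa(r)\, x_{-\qa}(-r^{-1})$, the image under the canonical morphism attached to $\{\qa,-\qa\}$ (from $SL_2$ or $PGL_2$) of an antidiagonal matrix; one checks that $m(u)\in N$ with $\qn^v(m(u)) = s_\qa$. Using the functorial adjoint action $Ad:\g G_\shs\to\g{Aut}(\shu_{\shs\Z})$ and the fact that, for $n\in N$, $Ad(n)$ carries the $Q$-homogeneous component of weight $\qb$ onto that of weight $\qn^v(n)(\qb)$, one deduces that $m(u)$ conjugates $U_\qb$ onto $U_{s_\qa(\qb)}$ for every $\qb\in\QF$. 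Finally, for $v = x_\qa(s)$ one computes in the rank-one subgroup that $m(u)\, m(v)^{-1} = \qa^\vee(r/s) \in T_\shs$, whence $m(u)Z = m(v)Z$. Most of this is carried out in \cite{T-87b} and \cite{Ry-02a}.

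The genuinely delicate axiom is (RD5), the positivity $T_\shs U^+ \cap U^- = \{1\}$: this is not a formal consequence of the defining relations and requires the structure theory of $\g G_\shs$, typically a faithful (pro-)representation such as the adjoint action on a completion of $\shu_{\shs\Z}$, whose weight grading separates $U^+$ from $B^- = T_\shs U^-$. Rather than redo this I would invoke \cite[8.4.1]{Ry-02a} (\cf also \cite{T-92a}), where $(G_\shs,(U_\qa)_{\qa\in\QF},T_\shs)$ is shown to be a "donn\'ee radicielle jumel\'ee enti\`ere", which by Remark \ref{1.3b}a) is precisely a generating root datum of type $\QF$; in particular $(B^+,B^-,N)$ is a twin BN-pair and $B^+\cap B^- = T_\shs$. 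For the last assertion, $N\subseteq N_{G_\shs}(T_\shs)$ is immediate. Conversely, a commutator computation shows $Z_{G_\shs}(T_\shs)\cap U^\pm = \{1\}$ once $|K|\ge 4$ (looking at a root group of least height occurring in a putative nontrivial central element, where some $\overline\qb(t)\neq 1$), hence via the Bruhat decomposition $Z_{G_\shs}(T_\shs)\subseteq B^+\cap B^- = T_\shs$; then conjugation defines an injection $N_{G_\shs}(T_\shs)/T_\shs\hookrightarrow Aut(Y)$ whose image, preserving $\QF$ and the coroots and being realized inside $G_\shs$, lies in $W^v$, so $N_{G_\shs}(T_\shs) = N$. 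I expect (RD5), and the exact bookkeeping behind the hypothesis $|K|\ge 4$, to be the main obstacles, which is why the cleanest route is to lean on \cite[8.4.1]{Ry-02a} for the bulk of the verification.
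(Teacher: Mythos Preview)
Your proposal is correct and aligns with the paper's own proof, which is purely by citation to \cite{T-87b}, \cite{T-92a}, and \cite[8.4.1]{Ry-02a}; you simply unpack what those references contain before deferring to them for (RD5) and the normalizer statement. The only soft spot is the last step of your normalizer argument---that the image of $N_{G_\shs}(T_\shs)/T_\shs$ in $Aut(Y)$ lands in $W^v$ does not follow just from preserving $\QF$ and the coroots (diagram automorphisms do that too); the clean argument goes through the refined Bruhat decomposition directly, but since you explicitly fall back on \cite[8.4.1]{Ry-02a} for this point, the proposal stands.
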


\begin{remas}\label{1.4b}
1) $B^\pm_\shs$ (resp. $U^\pm_\shs$) as defined in \ref{1.2} coincide with $B^\pm$  defined in \ref{1.3c}.1 (resp. $U^\pm$ defined in \ref{1.3}).
The group $N$ is $N_\shs=\g N_\shs(K)$, where $\g N_\shs$ is a sub-group-functor of $\g G_\shs$ normalizing $\g T_\shs$.
 Moreover $N$ is the normalizer in $G_\shs$ of $\g T_\shs$, but not always the normalizer of $T_\shs$ (\eg when $\vert K\vert=2$, $T_\shs=\{1\}$).
 The maximal split tori of $\g G_\shs$ are conjugated by $G_\shs$ to $\g T_\shs$ \cite[12.5.3]{Ry-02a}.

 \par The Levi factor of $P^\qe(J)$ is $G(J)=\g G_{\shs(J)}(K)$ where $\g G_{\shs(J)}$ is the split Kac-Moody group associated to the RGS $\shs(J)$ of \ref{1.1}.2 \cite[5.15.2]{Ru-11}.

 \par 2) The combinatorial buildings associated to this root datum are written $\SHI_\qe^{vc}(\g G_\shs,K)$ or $\SHI_\qe^{vc\M}(K)$, as they depend only on the field $K$ and the Kac-Moody matrix $\M$ (not of the SGR $\shs$: \cite[1.10]{Ru-11}).

 \par As $N$ is the stabilizer of the fundamental apartment $\A^{vc}_\qe$ in $\SHI_\qe^{vc\M}(K)$ and the normalizer of $\g T_\shs$, we get a one to one correspondence $\g T\mapsto A^{vc}_\qe(\g T)$ between the maximal split tori in $\g G_\shs$ (or their points over $K$, if $\vert K\vert\geq{}4$) and the apartments of $\SHI_\qe^{vc\M}(K)$.

 \par 3) The geometric realization of $\SHI_\qe^{vc\M}(K)$ introduced in \ref{1.3c}.4 is named $\SHI_\qe^{v}(\g G_\shs,K,\A^v)$.
  If we use $\sht_\qe^q=\A_\qe^{vq}$, we call it  the {\it essential vectorial building}  $\SHI_\qe^{v}(\g G_\shs,K,\A^{vq})=\SHI_\qe^{vq}(\g G_\shs,K)=\SHI_\qe^{v\M}(K)$ of $\g G_\shs$ over $K$ of sign $\qe=\pm$.
 We have also {\it extended vectorial buildings} $\SHI_\qe^{v}(\g G_\shs,K,\A^{vxl})$ $=\SHI_\qe^{vxl}(\g G_\shs,K)=\SHI_\qe^{v\shs^l}(K)$ defined using $\sht_\qe^{xl}=\A^{vxl}_\qe$ instead of $\sht_\qe^{q}$ in  \ref{1.3c}.4.

 \par When $\shs$ is free, we can also use $\sht^x_\qe=\A^{vx}_\qe$ and define the {\it (normal) vectorial buildings} $\SHI_\qe^{v}(\g G_\shs,K,\A^{vx})=\SHI_\qe^{vx}(\g G_\shs,K)=\SHI_\qe^{v\shs}(K)$.


 \par To be short we omit often $K$ and/or $\M$, $\shs$, $\g G_\shs$, $\A^v$ in the above notations.

 \par 4)  $\SHI_\qe^{vc}(\g G_\shs,K)$ is clearly functorial in $K$.  $\SHI_\qe^{v}(\g G_\shs,K,\A^{v})$ is functorial in $K$ and $\shs$ (for commutative extensions).

\end{remas}

\subsection{Completions of $\g G_\shs$}\label{1.4c}

\par There is a {\it positive} (resp. {\it negative}) {\it completion} $\g G^{pma}_\shs$ (resp. $\g G^{nma}_\shs$) of $\g G_\shs$ (defined in \cite{M-88a}, \cite{M-89}) which is used in \cite{Ru-11} to get better commutation relations.
This is an ind-group-scheme which contains $\g G_\shs$ but differs from it by its positive (resp. negative) maximal  pro-unipotent subgroup: $\g U_{\shs}^{+}$ (resp. $\g U_{\shs}^{-}$) is replaced by a greater group scheme $\g U_{\shs}^{ma+}$ (resp. $\g U_{\shs}^{ma-}$) involving the full root system $\QD$ of \ref{1.1}.3.

\par For a ring $R$, an element of $\g U_{\shs}^{ma\pm{}}(R)$ can be written uniquely as an infinite product:

\par\noindent$u= \prod_{\qa\in\QD^\pm{}}\,u_\qa$ with $u_\qa\in\g U_\qa(R)$, for a given order on the roots $\qa=\sum_{i\in I}\,n_i^\qa\qa_i\in\QD^\pm{}$ (\eg an order such that $\vert ht(\qa)\vert=\sum_{i\in I}\,\vert n_i^\qa\vert$ is increasing).
 For $\qa\in\QF$, $u_\qa$ is written $u_\qa=\g x_\qa(r)=[exp]re_\qa$ for a unique $r\in R$ and $e_\qa$ a fixed basis of $\g g_\qa$.
  For $\qa\in\QD_{im}$, $u_\qa$ is written $u_\qa=\prod_{j=1}^{j=n_\qa}\;[exp]r_{\qa,j}.e_{\qa,j}$ for  unique $r_{\qa,j}\in R$ and for $(e_{\qa,j})_{j=1,n_\qa}$ a fixed basis of $\g g_\qa$; but $\g U_\qa(R)$ is not a group: this is only true for $\g U_{(\qa)}(R)=\prod_{\qb\in\Z_{>0}\qa}\, \g U_\qb(R)$.
Moreover the conjugate of  an element $u\in\g U_{\shs}^{ma\pm}(R)$ by $t\in\g T_\shs(R)$ is given by the same formula: for $u'_\qa=tu_\qa t^{-1}\in\g U_\qa(R)$, we just replace $r$ by $\overline\qa(t)r$  or each $r_{\qa,j}$ by $\overline\qa(t)r_{\qa,j}$ \cite[3.2, 3.5]{Ru-11}.
 (Actually we often write $\qa(t)$ for $\overline\qa(t)$.)

 \par The commutation relations between the $u_\qa$ are deduced from the corresponding relations in the Lie algebra (or better in the Tits enveloping algebra $\shu_{\shs\Z}$).
 So we know well the structure of the Borel groups $\g B_{\shs}^{ma\pm{}}=\g T_\shs\ltimes\g U_{\shs}^{ma\pm{}}$.
  For $R$ a field there are Bruhat and Birkhoff decompositions of $\g G^{pma}_\shs(R)$ and $\g G^{nma}_\shs(R)$:
  $\g G^{pma}_\shs(R)=\g U_{\shs}^{ma+}(R).\g N_\shs(R).\g U_{\shs}^{ma+}(R)=\g U_{\shs}^{-}(R).\g N_\shs(R).\g U_{\shs}^{ma+}(R)$ and
   $\g G^{nma}_\shs(R)=\g U_{\shs}^{ma-}(R).\g N_\shs(R).\g U_{\shs}^{ma-}(R)=\g U_{\shs}^{+}(R).\g N_\shs(R).\g U_{\shs}^{ma-}(R)$.

\subsection{Centralizers of tori}\label{1.4d}

Let $\g T'$ be a subtorus of $\g T_\shs$ (over $K_s$). There is a linear map $Y(\g T')\otimes\R\into Y(\g T_\shs)\otimes\R\to V^q$ which sends $\ql\otimes x$ to the map $\qa\mapsto\overline\qa(\ql)x$.  We write $V^q(\g T')$ its image.
 We say that $\g T'$ is {\it generic} (resp.  {\it almost generic}) in $\g T_\shs$ if $V^q(\g T')$ meets the interior of the Tits cone $\sht^q_+$ (resp. if  $V^q(\g T')\cap \sht^q_+$ generates the vector space  $V^q(\g T')$) \cf \ref{1.1b}.1.
 Note that, if $\shs$ is free, $\g T_\shs$ is generic in $\g T_\shs$, as the above map is then onto.

 \begin{prop*} If $\g T'$ is generic,  then, up to conjugacy, $V^q(\g T')$ is generated by $V^q(\g T')\cap \overline{C^{vq}_+}$ (which is convex) or more precisely by $V^q(\g T')\cap F^{vq}_+(J)$ where $F^{vq}_+(J)$ (with $J$ spherical in $I$) is the greatest facet in $ \overline{C^{vq}_+}$ meeting $V^q(\g T')$.
  Then the centralizer $\g Z(\g T')$ of $\g T'$ in $\g G_\shs$ is $\g G_{\shs(J)}$ (a reductive group).

  \par To be short, when $\g T'$ is generic, its  centralizer $\g Z(\g T')$ is the group scheme $\g Z_g(\g T')$ generated by $\g T_\shs$ and the $\g U_\qa$ for $\qa\in\QF$ and   $  \overline\qa\rest{\,\g T'}=1$   (called the generic centralizer of $\g T'$ in $\g G_\shs$).
 \end{prop*}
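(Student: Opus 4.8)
The plan is to reduce everything to the structure theory of $\g G_\shs$ as a split group over the separably closed field $K_s$, working with the adjoint action $Ad:\g G_\shs\to\g{Aut}(\shu_{\shs\Z})$ and the $Q$-gradation of the Tits enveloping algebra, together with the poset of facets in $\sht^q$ and the Levi decompositions recalled in \ref{1.3c}.5. First I would treat the combinatorial claim about $V^q(\g T')$. Since $\g T'$ is generic, $V^q(\g T')$ meets the open Tits cone, hence meets some spherical facet $F^{vq}_+(J')$; after conjugating $\g T'$ by an element of $\g G_\shs$ realizing a suitable element of $W^v$ (using that $W^v$ permutes the facets of $\sht^q_+$ transitively on those of a given type, and that $N$ maps onto $W^v$) we may assume $V^q(\g T')$ meets $\overline{C^{vq}_+}$. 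Among the facets $F^{vq}_+(J)$ that $V^q(\g T')$ meets, there is a greatest one for the closure order (because the set of such $J$ is stable under the operations dictated by the face poset: if $V^q(\g T')$ meets $F^{vq}_+(J_1)$ and $F^{vq}_+(J_2)$ with both spherical, a convexity argument on the linear subspace $V^q(\g T')$ shows it meets a facet in the common star, and $J$ stays spherical). Convexity of $V^q(\g T')\cap\overline{C^{vq}_+}$ is immediate since both sets are convex, and it is contained in a single face $\overline{F^{vq}_+(J)}$ because no wall strictly separates points of a convex set all lying on the closed side; so this intersection spans $V^q(\g T')$.

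Next I would identify the centralizer. The key computation is: for $\qa\in\QF$, the root group $\g U_\qa$ is centralized by $\g T'$ if and only if $\overline\qa\rest{\g T'}=1$, which one checks on the explicit conjugation formula $t\,\g x_\qa(r)\,t^{-1}=\g x_\qa(\overline\qa(t)r)$ from \ref{1.4c} (and the analogous formula for imaginary root groups is not needed, since $\g G_\shs$ itself involves only real root groups). Writing $\QF(\g T')=\{\qa\in\QF\mid\overline\qa\rest{\g T'}=1\}$, one sees from the above that $\QF(\g T')$ is exactly the set of $\qa\in\QF$ vanishing on $V^q(\g T')$, hence — because $V^q(\g T')$ meets the interior of $\overline{F^{vq}_+(J)}$ relative to that face's span and is not separated by any other wall — $\QF(\g T')=\QF^m(J)=\QF\cap Q(J)$. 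Therefore the group $\g Z_g(\g T')$ generated by $\g T_\shs$ and the $\g U_\qa$, $\qa\in\QF(\g T')$, is exactly the Levi factor $\g G_{\shs(J)}$ attached to $F^{vq}_+(J)$ in \ref{1.3c}.5 and \ref{1.4b}.1; since $J$ is spherical, $\g G_{\shs(J)}$ is reductive.

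The remaining point, which I expect to be the genuine obstacle, is the inclusion $\g Z(\g T')\subseteq\g Z_g(\g T')$, i.e.\ that no element outside the generic centralizer centralizes $\g T'$. Here I would argue with the refined Bruhat (or Birkhoff) decomposition of \ref{1.3c}.1–2: writing $g=u_-\,n\,u_+$ with $u_\pm$ in the appropriate unipotent parts and $n\in\g N_\shs$, the condition $tgt^{-1}=g$ for all $t\in\g T'$, combined with uniqueness in the decomposition and the $\g T'$-equivariance of the $\g U_\qa$-coordinates (each coordinate $r_\qa$ being scaled by $\overline\qa(t)$), forces $\qn^v(n)$ to fix $V^q(\g T')$ pointwise — hence $\qn^v(n)\in W^v(J)$ — and forces every root $\qa$ actually occurring in $u_\pm$ to satisfy $\overline\qa\rest{\g T'}=1$, i.e.\ to lie in $\QF^m(J)$. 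One must be careful that $\g T'$ is only defined over $K_s$, so this argument is run over $K_s$, where $\g G_\shs(K_s)$ is a generating root datum and these decompositions apply verbatim; the statement about $\g Z(\g T')$ as a group scheme then follows since a smooth subgroup scheme is determined by its $K_s$-points. Finally I would note that the first displayed assertion (that $\g Z(\g T')=\g G_{\shs(J)}$) and the second (that $\g Z(\g T')=\g Z_g(\g T')$) are now the same statement read through \ref{1.4b}.1, so the proof is complete.
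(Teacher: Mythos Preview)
Your overall strategy---decompose $g$, conjugate by $s\in\g T'(K_s)$, and use uniqueness to pin down each piece---is the right one, and matches the paper's approach. The combinatorial reduction on $V^q(\g T')$ and the analysis of the $N$-component (forcing $w\in W^v(J)$) are fine. The finite product $\prod_{\qb\in\QF^+\cap w\QF^-}U_\qb$ is also under control, since each factor is preserved under conjugation by $s$ and the product is unique.

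The genuine gap is your treatment of the factor $u_+\in U^+$. You write that uniqueness ``forces every root $\qa$ actually occurring in $u_+$'' to lie in $\QF^m(J)$, and you explicitly dismiss imaginary root groups as ``not needed, since $\g G_\shs$ itself involves only real root groups.'' But in the minimal Kac-Moody group, $U^+$ has no unique ordered product decomposition over the real root groups $U_\qa$, $\qa\in\QF^+$---there is no well-defined notion of ``the roots occurring in $u_+$'' to which your argument can appeal. All that the refined Bruhat decomposition of \ref{1.3c}.1 gives you is $su_+s^{-1}=u_+$ as an element of $U^+$, and from this alone you cannot conclude $u_+\in\g U^+_{\shs(J)}$. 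This is exactly why the paper embeds $\g G_\shs$ into the completion $\g G^{pma}_\shs$: there $\g U^{ma+}_\shs$ \emph{does} admit a unique product decomposition $\prod_{\qa\in\QD^+}u_\qa$, but over \emph{all} positive roots, imaginary ones included, with the conjugation formula from \ref{1.4c} acting coordinatewise. One then concludes $g\in\g G^{pma}_{\shs(J)}$, and the sphericality of $J$ is used at the very end to collapse $\g G^{pma}_{\shs(J)}$ back to the reductive group $\g G_{\shs(J)}$. Remark \ref{1.4e}(b) makes explicit that without $J$ spherical the identification $\g U^{ma+}_{\shs(J)}\cap\g U^+_\shs=\g U^+_{\shs(J)}$ is unclear, which underscores that the passage through the completion is not a cosmetic convenience but the actual mechanism of the proof.
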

 \begin{proof} The reduction to $V^q(\g T')$ generated by $V^q(\g T')\cap F^{vq}_+(J)$ is clear as  $V^q(\g T')\cap \sht^q_+$ is convex and  generates $V^q(\g T')$.
  We embed $\g G_\shs$ in the ind-group-scheme $\g G^{pma}_\shs$. Any element $g\in\g G^{pma}_\shs(K_s)$ may be written uniquely as $g=(\prod_{\qa\in\QF^+\cap w\QF^-}\,u_\qa).t.\tilde w.(\prod_{\qa\in\QD^+}\,u_\qa)$, where $t\in\g T_\shs(K_s)$, $w\in W^v$, $\tilde w$ is its representant in a chosen system of representants $\widetilde W^v\subset\g N(\g T_\shs)(K_s)$ and each $u_\qa$ is in $\g U_\qa(K_s)$ \cf \cite[1.2.3]{Ry-02a} and \cite[3.2]{Ru-11}.
   If we conjugate by $s\in\g T'(K_s)$, $t$ is fixed, each $u_\qa$ is sent to $u'_\qa\in\g U_\qa(K_s)$.
   So $g$ commutes with $s$ if and only if $u'_\qa=u_\qa, \forall\qa$ and $s\tilde ws^{-1}=\tilde w$.
   This last condition is $s=\tilde ws\tilde w^{-1}=w(s)$; as it must be true $\forall s \in\g T'(K_s)$, this means that $w\in W^v(J)$.
   Now for $\qa\in\QF$ and $u_\qa=\g x_\qa(r)$, $u'_\qa=\g x_\qa(\qa(s).r)$; hence $u_\qa=u'_\qa,\forall s\in\g T'(K_s)\Rightarrow
   \overline\qa\,\rule[-1.5mm]{.1mm}{3mm}_{\,\g T'}=1 \Rightarrow\qa\,\rule[-1.5mm]{.1mm}{3mm}_{\, V^q(\g T')}=0\Rightarrow\qa\in Q(J)$.
   The same thing is true for $\qa\in\QD^+$ by the formulae in \ref{1.4c}.
   Finally $g\in\g Z(\g T')(K_s)\iff g\in \g G^{pma}_{\shs(J)}(K_s)$.
   But, as $F^{vq}_+(J)$ is in the interior of the Tits cone, $J$ is spherical, $\QD(J)$ is finite and $\g G^{pma}_{\shs(J)}=\g G_{\shs(J)}$ is a reductive group.
 \end{proof}

\begin{remas}\label{1.4e} a) $\g Z(\g T')$ is the schematic centralizer of $\g T'$  or the centralizer of  $\g T'(K_s)$. The centralizer of $\g T'(K)$ may be greater, \eg if $\vert K\vert=2$, $\g T_\shs(K)=\{1\}$.

\par b) If $\g T'$ is almost generic, the above proof tells that $\g Z(\g T')=\g G^{pma}_{\shs(J)}\cap\g G_\shs$.
 But it is not clear that it is the Kac-Moody group $\g G_{\shs(J)}$ \ie that $\g U^{ma+}_{\shs(J)}\cap\g G_\shs=\g U^{ma+}_{\shs(J)}\cap\g U^+_\shs$ is $\g U^+_{\shs(J)}$; \cf \cite[3.17 and {\S{}} 6]{Ru-11}.

 \par c) In the affine case with $\shs$ free, let $\qd$ be the smallest positive imaginary root.
 The torus $\g T'=$Ker$\qd$ is not almost generic, there is no real root $\qa\in\QF$ with $\qa\,\rule[-1.5mm]{.1mm}{3mm}_{\,\g T'}=1$ but $\g Z(\g T')$ is greater than $\g T_\shs$:
  if $\g G_\shs(K_s)=\g G^\circ(K_s[t,t^{-1}])\rtimes K_s^*$ for $\g G^\circ$ a semi-simple group with maximal torus $\g T^\circ$, then $\g T_\shs(K_s)=\g T^\circ(K_s)\times K_s^*$, $\g T'(K_s)=\g T^\circ(K_s)$ and $\g Z(\g T')(K_s)=\g T^\circ(K_s[t,t^{-1}])\rtimes K_s^*$ is the subset of $\g N_\shs(K_s)$ consisting of elements whose image in the affine Weyl group $W^v$ are in the "translation group".

  \par Otherwise said, when $\shs$ is affine non free, $\overline\qd=0$, $\g T'=\g T_\shs$ and $\g Z(\g T_\shs)$ may be greater than $\g T_\shs$: $\g T_\shs$ is not almost generic in $\g T_\shs$.
\end{remas}


\section{Almost split Kac-Moody groups}\label{s1b}

\par The reference for this section is B. R\'emy's monograph \cite{Ry-02a}.

\subsection{Kac-Moody groups}\label{1.5}

\par {\quad\bf1)} A {\it Kac-Moody group} over the field $K$ is a functor $\g G=\g G_K$ from the category $\shs ep(K)$ to the category of groups such that there exist a RGS $\shs$, a field $E\in \shs ep(K)$ and a functorial isomorphism between the restrictions $\g G_E$ and $\g G_{\shs E}$ of $\g G$ and $\g G_\shs$ to $\shs ep(E)=\{F\in\shs ep(K)\mid E\subset F\}$.
We say that $\g G$ is {\it split over} $E$, that  $\g G$ is a $K-${\it form of} $\g G_\shs$ and we fix such a functorial isomorphism to identify $\g G_E$ and $\g G_{\shs E}$.

\par The above condition is the most important but, to compensate the lack of a good notion of algebraicity, we need also a $K-$form $\shu=\shu_K$ of the Tits enveloping algebra $\shu_{\shs K_s}$
and some other technical conditions (PREALG1,2, SGR, ALG1,2) given in \cite[chap. 11]{Ry-02a} and omitted here. We write only the following condition [\lc 12.1.1] which makes more precise the functoriality:
\smallskip
\par (DCS2) For each extension $L$ of $K$ in $\shs ep(K)$ the group $\g G(L)$ maps isomorphically to its canonical image in $\g G(K_s)$ which is the fixed-point-set
$\g G(K_s)^{Gal(K_s/L)}$ of the Galois group.
\medskip
\par We identify all these groups with their images in $\g G(K_s)$. We forget often the subscript $_\shs$ for subgroups of $\g G_{\shs K_s}$ when we think of them as subgroups of $\g G_{K_s}$, \eg $\g B^{\pm}_{\shs K_s}=\g B^{\pm}_{K_s}$. Now the natural action of $\QG=Gal(K_s/K)$ on $\g G_{K_s}$  gives us a twisted action of $\QG$ on $\g G_{\shs K_s}$ such that $\g G_\shs(K_s)^{Gal(K_s/L)}=\g G(L)$ for each $L\in\shs ep(K)$ and $\g G(L)=\g G_\shs(L)$ if $E\subset L$.

\par A subgroup $H$ of $\g G(K_s)$ invariant under this twisted action of $Gal(K_s/L)$ defines a sub-group-functor $\g H_L$ on $\shs ep(L)$; we say that $H$ is $L-defined$ in $\g G(K_s)$ and that $\g H_L$ is a $L-$sub-group-functor of $\g G_L$.

\par {\bf2)} We say that $\g G$ is {\it almost split} if  the twisted action of each $\qg\in \QG$ transforms $\g B^\qe_{\shs K_s}$ into a Borel subgroup in the same conjugacy class under $\g G_\shs(K_s)$.

\par Let $L$ be an infinite field in $\shs ep(E)$,  Galois over $K$, then there is a (twisted) action of $Gal(L/K)$ on the twin buildings $\SHI^{vc}_\qe(L)$ such that the action of $\g G(L)$ on $\SHI^{vc}_\qe(L)$ is $Gal(L/K)-$equivariant; this action permutes the types of the facets [\lc 11.3.2].
One can extend affinely this action on the  geometric realization $\SHI^{vq}_\qe(L)$ [\lc 12.1.2] or on the so-called "metric" realization, where the action is through a bounded group of isomorphisms;
 more precisely any point in this last realization has a finite orbit [\lc 11.3.3, 11.3.4].

 \par As a consequence any Borel subgroup of $\g G(K_s)$  is defined over a finite Galois extension of $K$; taking a greater extension $K'$  such that $Gal(K_s/K')$ preserves the types, we see that the same thing is true for parabolic subgroups.
  A maximal torus of $\g G(K_s)$ is intersection of two opposite Borel subgroups, so it is also defined over a finite Galois extension of $K$.

 \par {\bf3)} If $\g G$ is  almost split, the twisted action of $\QG$ on $\g G_\shs(K_s)$ and $\shu_{\shs K_s}$ is described through a "star action" [\lc 11.2.2, 11.3.2].
 More precisely there is a map $\qg\mapsto g_\qg$ from $\QG$ to $\g G_\shs(K_s)$ and for each $\qg\in\QG$ an automorphism $\qg^*$ of $\g G_\shs(K_s)$ and a $\qg-$linear bijection $\qg^*$ of $\shu_{\shs K_s}$, such that the twisted actions are given by $\widetilde\qg=$Int$(g_\qg)\circ\qg^*$ on $\g G_\shs(K_s)$ and $\widetilde\qg=$Ad$(g_\qg)\circ\qg^*$ on $\shu_{\shs K_s}$; moreover $g_\qg$ and $\qg^*$ are trivial for $\qg\in Gal(K_s/E)$.
  On $\g G_\shs(K_s)$, $\qg^*$ stabilizes $\g T_\shs$ and $\g B^{\pm}_\shs$; on $\shu_{\shs K_s}$, $\qg^*$ stabilizes $\shu_{\shs K_s}^0$ [\lc 11.2.5(i)]. Actually $g_\qg$ is defined up to $\g T_\shs(K_s)$ (but the map $\qg\mapsto g_\qg$ may be chosen with a finite image, by \ref{1.5}.2 above).
   So $\qg^*$ is defined up to $\g T_\shs(K_s)$. The "star action" is perhaps not an action on $\g G_\shs(K_s)$ or $\shu_{\shs K_s}$, but it defines an action on $\shu^0_{\shs K_s}$, $X$, $\QD$, $\QF$, $\QF^+$, $I$ or $W^v$.
   
 \par {\bf4)}  By \cite[3.19.4]{Ru-11} we may actually choose the element $g_\qg$ in $\g G_{\shs^{lad}}(K_s)=:\g G^{xlad}(K_s)$.
  Then we may add the condition that $\qg^*$ stabilizes the \'epinglage $(\g T_\shs,\QF^+,(e_i)_{i\in I})$ \ie $\qg^*(e_i)=e_{\qg^*(i)}$ and $\qg^*(x_{\qa_i}(r))=x_{\qg^*(\qa_i)}(\qg r)$ for $i\in I$ and $r\in K_s$ \cite[11.2.5 iii)]{Ry-02a}.
  This condition determines uniquely $g_\qg$ and $\qg^*$.
  But $\qg^*$ may be extended as an automorphism of $\g G^{xlad}(K_s)$ (\cf \cite[1.8.2]{Ru-11}), so $\qg^*\circ$Int$(g_{\qg'})=$Int$(\qg^*(g_{\qg'}))\circ\qg^*$.
  We deduce from this that $g_{\qg\qg'}=g_{\qg}.\qg^*(g_{\qg'})$ and $(\qg\qg')^*=\qg^*\qg'^*$:
  thus defined, the "star action" is a true action.

\begin{lemm}\label{1.6} Let $\g G$ be an almost split $K-$form of $\g G_\shs$ as above.

\par a) There is  an almost split $K-$form $\g G^{xl}$ of $\g G_{\shs^l}$ which is split over the same field $E$ as $\g G$ and an homomorphism $\g G\to\g G^{xl}$ whose restriction to $\shs ep(E)$ is the known homomorphism $\g G_\shs\to\g G_{\shs^l}$ \cite[1.3d, 1.11]{Ru-11}.

\par b) Let $L$ be an infinite field in $\shs ep(E)$ Galois over $K$, then the action of $Gal(L/K)$ on the building $\SHI^{vc}_\qe(L)$ may be extended linearly to $\SHI^{vxl}_\qe(L)$.
 This action makes $\QG-$equivariant the action of $\g G^{xl}(K_s)$ ($=\g G(K_s)$) over this building and the essentialization map $\eta^v:\SHI^{vxl}_\qe(L)\to \SHI^{vq}_\qe(L)$.

 \par c) When $\shs$ is free, the same thing is true for $\SHI^{vx}_\qe(L)$ and $\g G(K_s)$.

\end{lemm}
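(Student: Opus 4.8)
The strategy is to reduce everything to Lemma \ref{1.6}.b by using part a) and the functoriality of the vectorial buildings in the RGS (Remark \ref{1.4b}.4), then to prove b) by the same kind of gluing argument as in Consequences \ref{1.3c}.4, now carried out equivariantly under the Galois group.

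\emph{Part a).} The free RGS $\shs^l$ and the homomorphism $\g G_\shs\to\g G_{\shs^l}$ are constructed in \cite[1.3d, 1.11]{Ru-11}. The point is to transport the twisted $\QG$-action. Since the star action of \ref{1.5}.3 acts on $X$, $\QD$, $\QF$, $I$, $W^v$ and $\shu^0_{\shs K_s}$, and since the data defining $\shs^l$ (namely $Y^{xl}=Y\oplus Q^*$, $\qa_i^{xl}=\overline\qa_i+\qa_i\in X^{xl}=X\oplus Q$, and the unchanged coroots) are built functorially from these, the star action extends canonically to an action on $\g G_{\shs^l}(K_s)$ fixing $\g T_{\shs^l}$ and $\g B^\pm_{\shs^l}$. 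Composing with $\mathrm{Int}(g_\qg)$ (with the \emph{same} elements $g_\qg\in\g G_\shs(K_s)\subset\g G_{\shs^l}(K_s)$) gives a twisted action on $\g G_{\shs^l}(K_s)$, hence an almost split $K$-form $\g G^{xl}$ of $\g G_{\shs^l}$; it is split over $E$ because $g_\qg$ and $\qg^*$ are trivial on $Gal(K_s/E)$. The homomorphism $\g G_\shs\to\g G_{\shs^l}$ is equivariant for the star actions by construction, hence for the twisted actions since it commutes with the same $\mathrm{Int}(g_\qg)$; so it descends to $\g G\to\g G^{xl}$ over $K$, restricting to the known map over $E$.

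\emph{Part b).} By \ref{1.3c}.4 applied to $\A^{vxl}$, the vectorial building $\SHI^{vxl}_\qe(L)$ is a geometric realization of the combinatorial building $\SHI^{vc}_\qe(L)$, obtained by gluing copies of the cone $\A^{vxl}_\qe$ along the $N$-equivariant, inclusion-preserving bijection between the facets of $\A^{vxl}_\qe$ and those of the Coxeter complex $\A^{vc}_\qe$; and the essentialization map $\eta^v$ is the quotient by $V^{xl}_0$, realizing $\SHI^{vxl}_\qe(L)\to\SHI^{vq}_\qe(L)$. The twisted action of $Gal(L/K)$ on $\SHI^{vc}_\qe(L)$ permutes the types via the star action on $I$ (and $W^v$). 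To lift it linearly to $\SHI^{vxl}_\qe(L)$ it suffices to check that the star action on $W^v$ is induced by a linear automorphism of $\vect{\A^{vxl}_\qe}=V^{xl}=\mathrm{Hom}_\Z(X\oplus Q,\R)$ permuting the simple roots $\qa_i^{xl}$ compatibly with the permutation of $I$: this is exactly what the construction of $\shs^l$ in a) provides, since $\qg^*$ acts on $X$ and on $Q$ permuting the $\overline\qa_i$ and the $\qa_i$ the same way, hence permuting the $\qa_i^{xl}=\overline\qa_i+\qa_i$ and acting on $V^{xl}$ by the transpose. One then uses this linear map on each apartment $\A^{vxl}_\qe(\g T)$ and checks that, because the $N$-equivariant facet correspondence is itself compatible with the star action on types, the maps glue to a well-defined linear action on $\SHI^{vxl}_\qe(L)$; strong transitivity of $\g G(L)=\g G^{xl}(K_s)^{Gal(K_s/L)}$ together with the equivariance of the combinatorial action makes the action of $\g G^{xl}(K_s)$ on $\SHI^{vxl}_\qe(L)$ $\QG$-equivariant. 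Since $V^{xl}_0$ is the intersection of the kernels of the $\qa_i^{xl}$ and is therefore stable under the linear star action, the map $\eta^v$ is $\QG$-equivariant as well.

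\emph{Part c).} When $\shs$ is itself free, $V^x=V^\shs=\mathrm{Hom}_\Z(X,\R)$ already carries the transpose of the star action on $X$, which permutes the $\overline\qa_i$ compatibly with $I$; so the identical gluing argument produces a linear $Gal(L/K)$-action on $\SHI^{vx}_\qe(L)$ making the $\g G(K_s)$-action $\QG$-equivariant (no passage through $\shs^l$ is needed).

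\textbf{Main obstacle.} The delicate point is not the existence of a linear map realizing the star action on a single apartment — that is essentially immediate from the construction of $\shs^l$ — but verifying that these apartment-wise linear maps are \emph{consistent on overlaps}, i.e. that the twisted $Gal(L/K)$-action on the combinatorial building $\SHI^{vc}_\qe(L)$, which is only known a priori to permute types, actually lifts to a single globally well-defined linear automorphism of the glued geometric realization. This requires knowing that the combinatorial Galois action of \cite[11.3.2]{Ry-02a} is compatible, facet by facet and apartment by apartment, with the star action on $I$ and $W^v$ in a way that matches the $N$-equivariant identification of \ref{1.3c}.4; granting \cite[11.3.2, 12.1.2]{Ry-02a} this becomes a bookkeeping check, but it is where the real content lies.
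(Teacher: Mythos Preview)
Your overall approach matches the paper's: for (b) and (c) the paper, like you, simply observes that the star action is well defined on $X^{xl}=X\oplus Q$ (resp.\ on $X$) and then says one ``mimics the proof in the case $\SHI^{vq}_\qe(L)$'' from \cite[12.1.2]{Ry-02a}; your more detailed gluing discussion is fine and the ``main obstacle'' you flag is exactly what that reference handles.

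There is, however, a genuine gap in your treatment of (a). You assert that ``the star action extends canonically to an action on $\g G_{\shs^l}(K_s)$'' and that composing with $\mathrm{Int}(g_\qg)$ ``gives a twisted action''. But recall from \ref{1.5}.3 that the star action is \emph{not} a genuine $\QG$-action on $\g G_\shs(K_s)$: the $\qg^*$ are only defined up to $\g T_\shs(K_s)$, and correspondingly the $g_\qg$ do not satisfy a strict cocycle identity. The paper's proof uses that $\g G_{\shs^l K_s}$ is the semi-direct product of $\g G_{\shs K_s}$ by the torus $\g T_{QK_s}=\mathrm{Spec}(K_s[Q])$; one extends $\qg^*$ to $\g T_Q$ via the genuine $\QG$-action on $Q$, sets $\widetilde\qg=\mathrm{Int}(g_\qg)\circ\qg^*$, and must then \emph{verify} that $\widetilde{\qg\qg'}=\widetilde\qg\circ\widetilde{\qg'}$ on $\g T_Q(K_s)$. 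Comparing the two expressions one finds $g_\qg\cdot\qg^*(g_{\qg'})=g_{\qg\qg'}\cdot t_{\qg,\qg'}$ with $t_{\qg,\qg'}\in\g T_\shs(K_s)$, and the verification succeeds precisely because $t_{\qg,\qg'}$ centralizes $\g T_Q(K_s)$ (both lie in the maximal torus $\g T_{\shs^l}=\g T_\shs\times\g T_Q$). This cocycle check is the actual content of (a) and is missing from your sketch; without it you have not shown that $\g G^{xl}$ is a well-defined $K$-form.
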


\begin{enonce*}[definition]{Remarks}  1) For $\A^v$ as in \ref{1.1b}.4, let us  suppose that the star action of $\QG$ on $I$ (hence on the $\qa_i$, $\qa_i^\vee$) may be extended linearly to $V$.
 Then b) and c) above are also true for $\SHI^v(\g G_\shs,K,\A^v)$.
 
 \par 2) If we define on $\g G^{xlad}(K_s)$ $\qg^*$ as above in \ref{1.5}.4 and $\widetilde\qg=$Int$(g_\qg)\circ\qg^*$, we can prove as below that we get an extension of the twisted action to $\g G^{xlad}(K_s)$.
\end{enonce*}

\begin{proof} a) We have to describe the form $\g G^{xl}$ and a $K-$form $\shu^{xl}$ of the Tits enveloping algebra $\shu_{\shs^lK_s}$ through twisted actions of $\QG$ on $\g G_{\shs^l}(K_s)$ and $\shu_{\shs^lK_s}$ extending those constructed in \ref{1.5}.3 above for $\shs$.
For the RGS $\shs^l$, $Y^l=Y^{xl}=Y\oplus Q^*$, so, by \cite[1.11]{Ru-11}, $\g G_{\shs^lK_s}$ is the semi-direct product of $\g G_{\shs K_s}$ by the torus $\g T_{QK_s}=\g{Spec}(K_s[Q])$.
Clearly $\shu_{\shs^lK_s}$ is also a semi-direct product of $\shu_{K_s}=\shu_{\shs K_s}$ and the "integral enveloping algebra" $\shu_{Q K_s}$ of the torus $\g T_{QK_s}$ (\ie its algebra of distributions at the origin).
 Now the star action of $\QG$ on $Q$ gives a $\QG-$algebraic action on $\g T_{QK_s}$ and a $\QG-$linear action on $\shu_{Q K_s}$.
 \label{N1} This is compatible with the formulae defining semi-direct products and so we construct an automorphism $\qg^*$ of $\g G_{\shs^l}(K_s)$ and a $\qg-$linear bijection $\qg^*$  of $\shu_{\shs^lK_s}$. Now let $\widetilde\qg=$Int$(g_\qg)\circ\qg^*$ or $\widetilde\qg=$Ad$(g_\qg)\circ\qg^*$.

  \par We have to prove that this defines actions of $\QG$. By definition  Int$(g_{\qg\qg'})\circ(\qg\qg')^*=\widetilde{\qg\qg'}$ and $\widetilde\qg\circ\widetilde{\qg'}=$Int$(g_{\qg})\circ{\qg}^*\circ$Int$(g_{\qg'})\circ{\qg'}^*=$Int$(g_{\qg}.\qg^*(g_{\qg'}))\circ{\qg}^*\circ{\qg'}^*$.
  There is  equality of these two expressions on $\g G_\shs(K_s)$, moreover $(\qg\qg')^*=\qg^*\circ\qg'^*$ on $\g T_{\shs K_s}$,
  hence $g_{\qg}.\qg^*(g_{\qg'})=g_{\qg\qg'}.t_{\qg,\qg'}$ with $t_{\qg,\qg'}\in\g T_{\shs}(K_s)$.
   We have to verify that $\widetilde{\qg\qg'}(t)=\widetilde\qg\circ\widetilde{\qg'}(t)$ for $t\in \g T_Q(K_s)$.
   But $(\qg\qg')^*(t)=\qg^*(\qg'^*(t))$ is in $\g T_Q(K_s)$ hence centralized by $t_{\qg,\qg'}$; so the result follows.
    The same proof works also for $\shu_{\shs^lK_s}$.

    \par \label{N19} We define $\shu^{xl}$ as $(\shu_{\shs^lK_s})^{Gal(K_s/K)}$ and, for $L\in\shs ep(K)$, $\g G^{xl}(L)=\g G_{\shs^l}(K_s)^{Gal(K_s/K)}$ (fixed points for the twisted actions). We have now the two ingredients of the Kac-Moody group as defined above. We leave to the reader the verification of the technical conditions of \cite{Ry-02a} (PREALG, SGR, $\cdots$).

    \par b,c) As the star action of $\QG$ is well defined on $X^{xl}=X\oplus Q$ and on $X$, we just have to mimic the proof in the case $\SHI^{vq}_\qe(L)$ (corresponding to $X^q=Q$) [\lc 12.1.2].

\end{proof}

\subsection{Continuity of the actions of the Galois group}\label{1.7}

\par From now on in this section \ref{s1b}, we choose an almost split Kac-Moody group $\g G$ over $K$ with Tits enveloping algebra $\shu$ and keep the above notations. We forget now the (old) actions of $\QG=Gal(K_s/K)$ on $\g G_{\shs K_s}$ or $\shu_{\shs K_s}$ and consider only the star action or the (twisted) action (which is the natural action on $\g G_{K_s}$ or $\shu_{K_s}$).

\par {\bf1)} By \ref{1.5}.2 above the orbits of $\QG$ on the Borel subgroups of $\g G_{K_s}$ are finite. So the $g_\qg\in\g G(K_s)$ (such that $\qg^*=$Int$(g_\qg)^{-1}\circ\qg$ stabilizes  $\g T_{K_s}$ and $\g B^{\pm}_{K_s}$) may be chosen in a finite set.
In particular, if $\qg^*=$Ad$(g_\qg)^{-1}\circ\qg$ on $\shu_{K_s}$, then $\{\qg^*u\mid\qg\in\QG\}$ is finite $\forall u\in \shu_{K_s}$.
So the star action of $\QG$ has finite orbits on $\QF$ \cite[11.2.5(iii)]{Ry-02a} and on $Q$. We know that this star action stabilizes the basis $\{\qa_i\mid i\in I\}$ and acts on $I$ by automorphisms of the Dynkin diagram.

\par {\bf2)} The following extension of condition (ALG2) is implicit in \lc starting \eg from 11.3.2:
\vskip0.2cm\par
\noindent (ALG3) The star action of $\QG$ on $X$ or $Y$ is continuous \ie its orbits are finite.
\medskip

\par As for (ALG2) this is useless if $X=Q$. Without it, only the description of the action of $\QG$ on the center of $\g G(K_s)$ is less precise; in particular there is no problem for the buildings
 $\SHI^{vq}_\qe(L)$. But the proof of [\lc 12.5.1(i)] uses this property.

 \par It would also be reasonable to ask:
 \smallskip
 \par\noindent (ALG3') The evident map $\qf:Y\to Y\otimes K_s\subset\shu^0_{K_s}$ is $\QG^*-$equivariant.
 \medskip
 \par\label{N2} By [\lc 11.2.5] the map $\qf$ restricted to $Q^\vee=\sum_{i\in I}\,\Z\qa_i^\vee$ is $\QG^*-$equivariant. In characteristic $0$ (ALG3) is a consequence of (ALG3').

 \par In the following we add to the conditions of \lc the condition (ALG3) but not (ALG3'). With these assumptions we get the good structure for $\g G$.
  But anybody interested in considering $\shu$ as the good Tits enveloping algebra for $\g G$ should add (ALG3') and, in positive characteristic, perhaps some stronger conditions, see \ref{1.14}.

  \par {\bf3)} By \ref{1.5}.2 $\g T_{K_s}$ and $\g B^\pm_{K_s}$ are defined over a finite Galois extension $L$ of $K$ in $\shs ep(K)$.
  Enlarging a little $L$ we may suppose $\g T_{K_s}$ split over $L$ (\ie $X$ or $Y$ fixed pointwise under $Gal(K_s/L)$) and $Q$ also fixed (\ref{1.7}.1).
   Now we may modify each $e_i$ in $K_se_i=\shu^+_{\qa_iK_s}$, so that $e_i$ (and $f_i$) is fixed under $\QG$.
    By [\lc11.2.5(iii)] this proves that $\qg(\g x_{\pm{}\qa_i}(r))=\g x_{\pm{}\qa_i}(\qg r)$ for $r\in K_s$ and $\qg\in Gal(K_s/L)$.
    So the original action and the new twisted action of $\QG$ on $\g G_\shs(K_s)$ coincide on $\g T(K_s)$ and the groups $\g U_{\pm{}\qa_i}(K_s)$. As these groups generate $\g G_\shs(K_s)$ (see \cite[1.6 KMT7]{Ru-11}), the two actions coincide and $\g G$ is actually split over the finite Galois extension $L$ of $K$.

    \par Now each of the above generators of $\g G(K_s)$ has a finite orbit under $\QG$, so this is also true for every element of $\g G(K_s)$: $\g G(K_s)$ is the union of the subgroups $\g G(L)$ for $L\in \shs ep(K)$ with $L/K$ finite.

    \par {\bf4)} We saw in \ref{1.5}.2 that the orbits of $\QG$ on $\SHI^{vc}_\qe(K_s)$ are finite.
    The stabilizer in $\QG$ of a facet of $\SHI^{vc}_\qe(K_s)$ acts on the corresponding facet of $\SHI^{vq}_\qe(K_s)$, $\SHI^{vxl}_\qe(K_s)$ or $\SHI^{vx}_\qe(K_s)$ through a finite group (see 2) above and the definition of these actions).
    So the actions of $\QG$ other these buildings have finite orbits.

    \par If a Galois group $Gal(K_s/M)$ stabilizes a facet of one of these geometric buildings, then it has a fixed point in this facet (as this facet is a convex cone and the action is affine).

\subsection{$K-$objects in the buildings} \label{1.8}

\par {\quad\bf1)} Let $E\in\shs ep(K)$ be infinite, Galois over $K$ and such that $\g G$ is split over $E$.
 By \cite[10.1.4 and 13.2.4]{Ry-02a} the buildings over $E$ are the fixed point sets in the buildings over $K_s$ of the Galois group $Gal(K_s/E)$. So we set $\QG=Gal(E/K)$ and we shall work over $E$ (\cf \lc 12.1.1(1)).

 \par Let $\SHI^{v}=\SHI^{v}_+\cup\SHI^{v}_-$ be the union $\SHI^{v}(\g G_\shs,E,\A^v)=\SHI^{v}_+(\g G_\shs,E,\A^v)\cup\SHI^{v}_-(\g G_\shs,E,\A^v)$ as in remark \ref{1.6} (\eg $\A^v=\A^{vq}$, $\A^{vxl}$ or if $\shs$ is free $\A^{vx}$).
 The essentialization $\SHI^{ve}$ of $\SHI^{v}$ is always $\SHI^{vq}(E)=\SHI^{v}(\g G_\shs,E,\A^{vq})$ which is the building investigated in \lc so one may use this reference.

 \par {\bf2)} \textbf{Definitions.} A {\it $K-$facet} (resp. {\it spherical $K-$facet}) in $\SHI^v$ is the fixed point set under $\QG$ of a facet (resp. spherical facet) of $\SHI^v$ stable under $\QG$ (by \ref{1.7}.4 the $K-$facet is non empty).

 \par A {\it $K-$chamber} in $\SHI^v$ is a spherical $K-$facet with maximal closure.

 \par A {\it $K-$apartment} in $\SHI^v$ is a generic subspace (of an apartment) of $\SHI^v$ which is (pointwise) fixed under $\QG$ and maximal for these properties.

 \par A (real) {\it $K-$wall} in a $K-$apartment $_KA^v$ is the intersection with $_KA^v$ of a wall in an apartment of $\SHI^v$ containing $_KA^v$, provided that this intersection contains a spherical $K-$facet. This $K-$wall divides $_KA^v$ into two (closed) {\it $K-$half-apartments}.

 \par {\bf3)} \textbf{Properties.} By definition $K-$facets (resp. spherical $K-$facets, $K-$chambers) correspond bijectively to $K-$defined parabolics (resp. $K-$defined spherical parabolics, minimal $K-$defined parabolics). The union of the $K-$facets is $(\SHI^v)^\QG$; their set is written $_K\SHI^{vc}$.

 \par By [\lc 12.2.4 and 12.3.1] two $K-$facets are always in a same $K-$apartment and there exists an integer $d=d(\SHI^{v\QG})\geq{}1$ such that each $K-$chamber or each $K-$apartment is of dimension $d$.
  One should notice that the different choices for $\SHI^v$ may give  different integers $d$.
   The group $G=\g G(K)$ acts transitively on the pairs $(_KC,{_KA^v})$ of a $K-$chamber $_KC$ of given sign in a $K-$apartment $_KA^v$ (see also [\lc 12.4.1]).

   \par {\bf4)}  \textbf{Standardizations.} Any $K-$apartment $_KA^v$ in $\SHI^v$ is contained in a Galois stable apartment $A^v$ of $\SHI^v$ (perhaps after enlarging a little $E$) [\lc 12.3.2(1)].
  \label{N20}  We may choose moreover opposite chambers $_KC^v_+$, $_KC^v_-$ in $_KA^v$ and (non necessarily $\QG-$stable) opposite chambers $C^v_+$, $C^v_-$ in $A^v$ with $_KC^v_\pm\subset\overline{C^v_\pm}$.
    We say that $(_KA^v,{_KC^v_+},{_KC^v_-})$ and $(A^v,C^v_+,C^v_-)$ are {\it compatible standardizations} of $\SHI^{v\QG}$ and $\SHI^v$.

    \par The apartment $A^v$ determines a $K-$defined maximal torus $\g T_{K_s}$ (such that $A^v=A^v(\g T_{K_s})$). After enlarging a little $E$ we may suppose $\g T_{K_s}$ split over $E$, and conjugated under $\g G(E)$ to the fundamental torus $\g T_{\shs E}$ [\lc 10.4.2].
     So we may (and will) suppose $\g T_{K_s}=\g T_{\shs K_s}$ and $C^v_\pm$ associated to the Borel subgroups $\g B^\pm_{\shs E}$.
     Then the star action of $\QG$ is defined by $\qg^*=$Int$(g_\qg)^{-1}\circ\qg$ with $g_\qg\in\g G(E)$ normalizing $\g T_\shs$ and fixing pointwise $_KC^v_+$, $_KC^v_-$ and $_KA^v$.

     \par Let $I_0=\{i\in I\mid\qa_i(_KA^v)=\{0\}\;\}$ and $A^{vI_0}=\{x\in A^v\mid \qa_i(x)=0,\forall i\in I_0\}$.
     Then $I_0$ is spherical (as $_KA^v$ meets spherical facets) and stable under $\QG^*$, the (normal twisted) action and the star action of $\QG$ coincide on $A^{vI_0}$ and $_KA^v=(A^{vI_0})^{\QG^*}$.
     The vector space generated by $_KA^v$ in $\vect{A^v}$ is $\vect{_KA^v}=\{v\in \vect{A^v}\mid \qa_i(x)=0,\forall i\in I_0\}^{\QG^*}$ [\lc 12.6.1].

 \subsection{Maximal split tori and relative roots}\label{1.9}

 \par We choose standardizations and identifications as in \ref{1.8}.4 above.

 \par {\bf1)} The maximal split subtorus $\g S$ of $\g T$ depends only on  the $K-$apartment $_KA^v$  and is actually a maximal split torus in $\g G$.  The maximal split tori are conjugated under $G=\g G(K)$ \cite[12.5.2, 12.5.3]{Ry-02a}. The dimension of a maximal split torus is the {\it reductive relative rank over} $K$ of $\g G$, written $rrk_K(\g G)$.

 \par As a consequence of [\lc cor. 12.5.3] and lemma \ref{1.10}(ii) below, there is a bijection $\g S\mapsto {_KA^v}(\g S)$ between maximal $K-$split tori in $\g G$ (or their points over $K$, if $\vert K\vert\geq{}4$) and the $K-$apartments in $\SHI^v$.

 \par {\bf2)} Let $_KX$ (resp. $_KY$) be the group of characters (resp. cocharacters) of $\g S$.
 For each $\qa\in Q$, let $_K\overline\qa\in{_KX}$ be the restriction to $\g S$ of $\overline\qa\in X$ and $_K\qa$ be the restriction of $\qa$ to $\vect{_KA^v}$.
  We define $_KQ$ as the image of $Q$ by this restriction map $\qa\mapsto{_K\qa}$; the set of {\it relative $K-$roots} is $_K\QD=\{_K\qa\mid\qa\in\QD,\,_K\qa\not=0\}$;
  the set of {\it real relative $K-$roots} is $_K\QF=\,_K\QD^{re}=\{_K\qa\in\,_K\QD\mid\,_KA^v\cap$Ker$\qa$ is a (real) $K$-wall$\}$.
  Let $_KQ_{re}$ be the submodule of $_KQ$ generated by the real relative roots.

  \par With the notations in \ref{1.8}.4, $_K\qa_i$ is a root if and only if $i\notin I_0$; for $i,j\notin I_0$, $_K\qa_i=\,_K\qa_j$ if and only if $i$ and $j$ are in the same $\QG^*-$orbit;  $_K\qa_i$ is a real root if $I_0\cup \QG^*i$ is spherical and different from $I_0$.
   Hence a basis of $_K\QD$ (or $_KQ$) is given by $\{_K\qa_i\mid i\in \,_KI\}$ where $_KI=(I\setminus I_0)/\QG^*$ and a basis of $_K\QF$ (or $_KQ_{re}$) is given by $\{_K\qa_i\mid i\in \,_KI_{re}\}$ where $_KI_{re}=\{i\in I\setminus I_0\mid I_0\cup\QG^*i\, spherical\}/\QG^*$:
    the basis of $_K\QD$ may contain some imaginary relative roots.
    The set $\QF_0=\{\qa\in\QF\mid\,_K\qa=0\}$ is actually $\QF\cap(\oplus_{i\in I_0}\,\Z\qa_i)$.
    We say that $\vert_KI_{re}\vert=ssrk_K(\g G)$ is the {\it semi-simple relative rank over} $K$ of $\g G$.

    \par For $i\in I_0$, $\qa_i$ is trivial on $\g S$ (see 3) below); so the two actions of $\QG$ (star or not) on $\g T$ coincide on $\g S$ and $_KY=\{y\in Y\mid\qa_i(y)=0,\forall i\in I_0\}^{\QG^*}$.
  \label{N21}   Hence $\forall\qa\in Q$, $_K\overline \qa$ is the canonical image $\overline{_K\qa}$ of $_K\qa$ in $_KX$.
      It is now clear that $\{\overline{_K\qa}\mid\,_K\qa\in\,_K\QD\}$ is the set of roots of $\g S$ for the adjoint representation on the Lie algebra $\g g_K\subset\shu_K$.

      \par When $\shs$ is free and $\SHI^v=\SHI^{vx}(E)$ is the normal geometric realization, then
      $dim(_KA^v)=dim(\g S)$ is the reductive relative rank. \label{N3} Hence, when $\shs$ is free, the reductive relative rank is at least $1$: an almost split Kac-Moody group (with $\shs$ free) cannot be anisotropic.

   \par   When $\SHI^v$ is the essential building $\SHI^{vq}(E)$, then dim$(_KA^v)=\vert_KI\vert$ may be greater than $\vert_KI_{re}\vert=ssrk_K(\g G)$, so $_KA^v$ may be inessential.

      \par {\bf3)} {\bf Relative Weyl group.} [\lc12.4.1, 12.4.2]

      \par Let $_KN$ (resp. $_KZ$) be the stabilizer (resp. fixer) of $_KA^v$ in $G$; by \ref{1.9}.1 $_KN$ is the normalizer of $\g S$ in $G$ and $_KZ$ centralizes $\g S$ (by definition of $\g S$ [\lc 12.5.2]).
      Actually $_KZ$ is generated by $T$ and the $U_\qa$ for $\qa\in\QF_0$ [\lc 6.4.1], hence $\qa(\g S)=1$ $\forall\qa\in\QF_0$.
      The quotient group $_KW^v=\,_KN/_KZ$ is the {\it relative Weyl group of} $\g G$ (associated to $\g S$ or $_KA^v$).
      It acts simply transitively on the $K-$chambers of fixed sign  in $_KA^v$ and (as $_KN\subset N.{_KZ}$) is induced by the action of the subgroup of $W^v=W^v(A^v)$ stabilizing $_KA^v$.

      \par To each real relative root $_K\qa\in\,_K\QF$ is associated an element $s_{_K\qa}\in\,_KW^v$ of order 2 which fixes the wall Ker$(_K\qa)$. The pair $(_KW^v,\{s_{_K\qa_i}\mid i\in\,_KI_{re}\})$ is a Coxeter system.

      \par When $\shs$ is free, the map $Y(\g T_\shs)\otimes\R\to\vect{A^{vq}}=(Q\otimes\R)^*$ is onto.
  But $_K\!A^{vq}$ is generic in $A^{vq}$ (\ref{1.8}.2) and, by \ref{1.8}.4 and \ref{1.9}.2, the same equations define $\vect{_K\!A^{vq}}$ in $\vect{A^{vq}}$ or $Y(\g S)\otimes\R$ in $Y(\g T_\shs)\otimes\R$.
   So $\g S$ is generic in $\g T_\shs$, $\g Z(\g S)=\g Z_g(\g S)$ and $_KZ=\g Z(\g S)(K)$ (\ref{1.4d}).

   \par For $\shs$ general $_KZ=\g Z_g(\g S)(K)$ may be smaller than $\g Z(\g S)(K)$, \cf \ref{1.4e}c.
   The reductive group $\g Z_g(\g S)$ is the {\it anisotropic kernel} \cite[12.3.2]{Ry-02a} associated to $_KA^v$ \ie to $\g S$ (by 1) above).

 \par {\bf4)}  The set $_K\QD$ (resp. $_K\QF=\,_K\QD^{re}$) is a system of roots (resp. of real roots) in the sense of \cite{By-96}  \cf \cite[12.6.2]{Ry-02a},  \cite{By-96} or  \cite{B3R-95}.
      If $_K\QD^\pm{}=\pm{}(_K\QD\cap(\oplus_{i\in_KI}\,\Z_{\geq{}0}._K\qa_i))$ (resp. $_K\QF^\pm{}=\pm{}(_K\QF\cap(\oplus_{i\in_KI_{re}}\,\Z_{\geq{}0}._K\qa_i))$ then $_K\QD=\,_K\QD^+\sqcup\,_K\QD^-$ and $_K\QF=\,_K\QF^+\sqcup\,_K\QF^-$.
      The system $_K\QD$ or $_K\QF$ is stable under $_KW^v$ and any real relative root is of the form $w._K\qa_i$ or $2w._K\qa_i$ with $w\in{_KW^v}$ and $i\in{_KI_{re}}$ (as the system may be unreduced).

      \par It is not too hard to find a RGS $(_K\M,\,_KY,(_K\overline{\qa_i})_{i\in_KI_{re}},(_K\qa_i^\vee)_{i\in_KI_{re}})$ (in the sense of \ref{1.1}) with Weyl group $_KW^v$.
      But it is not sufficient to describe $_K\QD$ (or even $_K\QF$); one has to use a more complicated notion of RGS, see  \cite{By-96}, \cite{B3R-95} or \cite[12.6.2]{Ry-02a}.
     On the contrary the reduced system $_K\QF_{red}$ is a system of real roots in the sense of \cite{MP-89},  \cite{MP-95} and even of \cite{K-90} as its basis is free.

      \subsection{Relative root groups}\label{1.10}

      \par For $_K\qa\in{_K\QF}$, we consider the finite set $(_K\qa)=\{\qb\in\QF\mid{_K\qb}\in\N.{_K\qa}\}$ and the unipotent group $\g U_{(_K\qa)K_s}$ generated by the $\g U_{\qb K_s}$ for $\qb\in(_K\qa)$, it is defined over $K$. We set $V_{_K\qa}=\g U_{(_K\qa)}(K)$.

      \par The positive integral multiples of $_K\qa$ in $_K\QD$ are $_K\qa$ and (eventually) $2{_K\qa}$ ($\in{_K\QF}$).
       If $2{_K\qa}\notin{_K\QD}$ we set $\g U_{(2{_K\qa})K_s}=\{1\}$ and $V_{2{_K\qa}}=\{1\}$.
       So $\g U_{(2{_K\qa})K_s}$ (resp.  $V_{2{_K\qa}}$) is always a normal subgroup of $\g U_{(_K\qa)K_s}$ (resp. $V_{_K\qa}$) \cite[12.5.4]{Ry-02a}.


       \begin{lemm*} Let $_K\qa\in{_K\QF}$ be a real relative root.

       \par (i) $V_{_K\qa}/V_{2{_K\qa}}$ is isomorphic to a vector space over $K$ on which $s\in\g S(K)$ acts by multiplication by $_K\overline\qa(s)\in K^*$. Its dimension is $\vert(_K\qa)\vert-\vert(2{_K\qa})\vert>0$.

       \par (ii) The centralizer $Z_{V_{_K\qa}}(\g S(K))$ of $\g S(K)$ in $V_{_K\qa}$ is trivial if $\vert K\vert\geq{}4$.
       \end{lemm*}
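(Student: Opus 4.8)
The plan is to derive (i) from the commutation relations of the Kac-Moody group (which make $\g U_{(_K\qa)}$ commutative modulo $\g U_{(2{_K\qa})}$) together with the explicit conjugation of root groups by the torus, and then to deduce (ii) from (i) by examining the action of $\g S(K)$ on the two successive quotients through the characters $_K\overline\qa$ and $2\,{_K\overline\qa}$; the bound $\vert K\vert\geq 4$ is used only in (ii).

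For (i), first I would note that $(_K\qa)$ is a finite prenilpotent set of real roots, all of whose elements restrict to $_K\qa$ or to $2{_K\qa}$ (by \ref{1.10} there is no higher positive multiple of $_K\qa$ in $_K\QD$); hence $\g U_{(_K\qa)}$ and its $K$-defined normal subgroup $\g U_{(2{_K\qa})}$ are split unipotent groups and each element of $V_{_K\qa}=\g U_{(_K\qa)}(K)$ has a unique expression as an ordered product $\prod_{\qb\in(_K\qa)}\g x_\qb(r_\qb)$, $r_\qb\in K$ (\cf \ref{1.4c} and \cite[12.5.4]{Ry-02a}). Next, by the commutation relations (axiom (RD2), coming from the Tits enveloping algebra of \ref{1.4c}), for $\qb,\qb'\in(_K\qa)$ any real root $\qg=p\qb+q\qb'$ with $p,q\in\Z_{>0}$ satisfies ${_K\qg}=(p\qe+q\qe')\,{_K\qa}$ with $\qe,\qe'\in\{1,2\}$, hence ${_K\qg}=2{_K\qa}$ and $\qg\in(2{_K\qa})$; therefore $[\g U_{(_K\qa)},\g U_{(_K\qa)}]\subseteq\g U_{(2{_K\qa})}$ and, modulo $V_{2{_K\qa}}$, multiplication of ordered products is componentwise addition. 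This identifies $V_{_K\qa}/V_{2{_K\qa}}$ with the $K$-vector space $\{(r_\qb)_{\qb:\,{_K\qb}={_K\qa}}\}\cong K^d$, $d=\vert(_K\qa)\vert-\vert(2{_K\qa})\vert$. For the action of $s\in\g S(K)$, the formula $t\,\g x_\qb(r)\,t^{-1}=\g x_\qb(\overline\qb(t)\,r)$ of \ref{1.4c}, together with $\overline\qb\rest{\,\g S}={_K\overline\qb}={_K\overline\qa}$ for ${_K\qb}={_K\qa}$ (\ref{1.9}.2), gives $s\,\g x_\qb(r)\,s^{-1}=\g x_\qb({_K\overline\qa}(s)\,r)$, so $s$ acts on $V_{_K\qa}/V_{2{_K\qa}}$ by multiplication by $_K\overline\qa(s)\in K^*$. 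Finally $d>0$ because a real relative root always has at least one real root of $\QF$ restricting exactly onto it: this is clear from the description $_K\qa=w\,{_K\qa_i}$ or $2w\,{_K\qa_i}$ of \ref{1.9}.4 (take $w(\qa_i)$, resp. $w(\qa_i+\qa_j)$ with $j$ the partner of $i$ in its $\QG^*$-orbit).

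For (ii), let $v\in V_{_K\qa}$ commute with every element of $\g S(K)$ and assume $v\neq 1$. By (i), $\g S(K)$ acts on the image of $v$ in $V_{_K\qa}/V_{2{_K\qa}}$ through the character $_K\overline\qa$; since $\vert K\vert\geq 4$ this character is non-trivial on $\g S(K)$ (pairing $_K\overline\qa$ with the coroot $_K\qa^\vee\in{_KY}$ of \ref{1.9}.4 gives $_K\overline\qa(\g S(K))\supseteq(K^*)^{2}\neq\{1\}$, the last inequality being equivalent to $\vert K\vert\geq 4$; the divisible case, where $V_{2{_K\qa}}=\{1\}$, is handled directly). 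Hence the image of $v$ is $0$, i.e. $v\in V_{2{_K\qa}}$. Applying the argument of (i) to $2{_K\qa}$ — note $3{_K\qa}\notin{_K\QD}$, so $V_{4{_K\qa}}=\{1\}$ and $V_{2{_K\qa}}$ is itself a $K$-vector space on which $s\in\g S(K)$ acts by $_K\overline{2\qa}(s)={_K\overline\qa}(s)^2$ — the hypothesis $\vert K\vert\geq 4$ again forces the action to be non-trivial ($\{{_K\overline\qa}(s)^2\mid s\in\g S(K)\}\supseteq(K^*)^2\neq\{1\}$), whence $v\in V_{4{_K\qa}}=\{1\}$, a contradiction. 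So $Z_{V_{_K\qa}}(\g S(K))=\{1\}$.

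I expect the genuine work to be the non-triviality of the $\g S(K)$-actions in (ii): one must rule out the possibility that $_K\overline\qa$ is so divisible a character of $\g S$ (or that its double is) that $\vert K\vert=4$ or $5$ could make an action trivial. Since a root is at most $2$-divisible in the relative weight lattice and the short root of a non-reduced relative system is a primitive character of $\g S$, this reduces to a computation in the rank-one relative subgroups (relative types $A_1$, $BC_1$, and the Galois forms of $A_1\times A_1$), parallel to the one in Bruhat--Tits theory \cite{BtT-72}; everything else is formal, using only the commutation relations and the material of \ref{1.4c} and \ref{1.9}.
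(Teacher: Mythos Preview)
Your argument for (i) is correct and takes a more hands-on route than the paper, which simply reduces to a $K$-defined reductive subgroup $\g H\supset\g S,\g U_{(_K\qa)}$ of relative semi-simple rank $1$ (its existence is \cite[12.5.4]{Ry-02a}) and then quotes the classical result \cite[th.~3.17]{BlT-65}. Your direct use of the commutation relations is a legitimate alternative that stays inside the Kac-Moody group.

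For (ii), however, there is a genuine gap in the non-reduced case. The RGS of \ref{1.9}.4 gives coroots $_K\qa_i^\vee\in{_KY}$ with $\langle{_K\overline\qa_i},{_K\qa_i^\vee}\rangle=2$, and hence (after Weyl conjugation) a cocharacter pairing to $2$ with any non-divisible $_K\qa$. But when $_K\qa$ is multipliable your second step needs the action on $V_{2{_K\qa}}$ to be non-trivial; via that cocharacter the action is multiplication by $t^{4}$, and for $\vert K\vert=5$ one has $(K^*)^4=\{1\}$, so the argument does not conclude. (Your claimed containment $\{{_K\overline\qa}(s)^2\mid s\in\g S(K)\}\supseteq(K^*)^2$ is wrong with that coroot; it gives only $(K^*)^4$.) The same obstruction appears in one step when $_K\qa$ itself is divisible. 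The paper closes this gap precisely by passing to the reductive group $\g H$: the $K$-split reductive subgroup of $\g H$ constructed in \cite[7.2]{BlT-65} furnishes a cocharacter $_K\qa^\vee\in\mathrm{Hom}(\g{Mult},\g S)$ with $\langle 2{_K\qa},{_K\qa^\vee}\rangle=2$ in the multipliable case (equivalently $\langle{_K\qa},{_K\qa^\vee}\rangle=1$) and $\langle{_K\qa},{_K\qa^\vee}\rangle=2$ otherwise. With this cocharacter both layers $V_{_K\qa}/V_{2{_K\qa}}$ and $V_{2{_K\qa}}$ carry an action by $t$ or $t^2$, and $\vert K\vert\geq 4$ then suffices. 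So the difficulty you flag in your final paragraph is real, and its resolution is exactly the reduction to $\g H$ that the paper performs, not an independent combinatorial check.
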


       \begin{proof} (suggested in [\lc 12.5.3]) By [\lc 12.5.4] there exists a reductive $K-$group $\g H$ of relative semi-simple rank $1$ containing $\g S$ and $\g U_{(_K\qa)}$. Then (i) is classical \cf \eg  \cite[th. 3.17]{BlT-65}.
       Now (for $\g H$) there exists a coroot $_K\qa^\vee\in$ Hom$(\g{Mult},\g S)$ such that $2{_K\qa}({_K\qa^\vee})=2$ or ${_K\qa}({_K\qa^\vee})=2$ (if  $2{_K\qa}$ is not a root) (one may use the $K-$split reductive subgroup of $\g H$ constructed in \cite[7.2]{BlT-65}). So (ii) follows.
       \end{proof}

       \begin{theo}\label{1.11} (\cite[12.6.3 and 12.4.4]{Ry-02a}) Let $\g G$ be an almost split Kac-Moody group over $K$, then,

       \par a) The triple $(\g G(K),(V_{_K\qa})_{_K\qa\in{_K\QF}},{_KZ})$ is a generating root datum of type $_K\QF$.

       \par b) The fixed point set $_K\!\SHI^v=(\SHI^v)^\QG$ is a good geometrical representation of the combinatorial twin building $^K\!\!\SHI^{vc}=\SHI^{vc}(\g G,K)$ associated to this root datum:
       there are $\g G(K)-$equivariant bijections, between the $K-$apartments and the apartments of  $^K\!\!\SHI^{vc}$, and between the $K-$chambers and the chambers of  $^K\!\!\SHI^{vc}$; this last bijection is compatible with adjacency and opposition.
       \end{theo}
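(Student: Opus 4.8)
The plan is to derive part a) by Galois descent from the split datum over $K_s$ given by Theorem \ref{1.4}, and then to feed the resulting relative root datum into the general machinery recalled in \ref{1.3c} in order to produce $^K\!\SHI^{vc}$ and identify it, as a combinatorial object, with the fixed-point set $(\SHI^v)^\QG$. So a) is a verification of the axioms of Definition \ref{1.3}, and b) is a matching of apartments, chambers, adjacency and opposition on the two sides; both are, in essence, \cite[12.6.3 and 12.4.4]{Ry-02a}, so what remains is to organize the argument and to account for the fact that we allow geometric realizations $\SHI^v$ other than the essential one $\SHI^{vq}$ used in \LC.

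For a), I would check (RD1)--(RD5) and (GRD) for $(\g G(K),(V_{_K\qa})_{_K\qa\in{_K\QF}},{_KZ})$ in turn. Axiom (RD1): non-triviality of $V_{_K\qa}$ is the strict inequality $\vert(_K\qa)\vert-\vert(2{_K\qa})\vert>0$ of Lemma \ref{1.10}(i), and normalization by $_KZ$ follows from \ref{1.9}.3 (where $_KZ$ is generated by $T$ and the $U_\qa$, $\qa\in\QF_0$) together with the fact that conjugating $\g U_{(_K\qa)K_s}$ by $T$ or by $\g U_{\qa K_s}$, $\qa\in\QF_0$, stays inside $\g U_{(_K\qa)K_s}$: $\QF_0$ relative-projects to $0$, and by the commutator relations in $\g G_\shs$ over $K_s$ the relative root of any new generator that occurs is again a non-negative multiple of $_K\qa$. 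Axiom (RD3) is again Lemma \ref{1.10}(i): $V_{2{_K\qa}}$ is a proper normal subgroup of $V_{_K\qa}$ (cf. \cite[12.5.4]{Ry-02a}). Axiom (RD2): for a prenilpotent pair $\{_K\qa,{_K\qb}\}$ the finitely many absolute roots lying above the cone $\N{_K\qa}+\N{_K\qb}$ form a nilpotent set; one lifts $u\in V_{_K\qa}$, $v\in V_{_K\qb}$ to products of absolute root-group elements over $K_s$, applies the absolute commutator relations valid in $\g G_\shs$, descends to $K$-points, and checks that every absolute root occurring is carried into one of the allowed intervals $p{_K\qa}+q{_K\qb}$ --- a combinatorial statement about $_K\QD$. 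Axiom (RD4) is the rank-one reduction: for $u\in V_{_K\qa}\setminus\{1\}$ one works inside the reductive $K$-group $\g H$ of relative semisimple rank $1$ containing $\g S$ and $\g U_{(_K\qa)}$ furnished by \cite[12.5.4]{Ry-02a} (see the proof of Lemma \ref{1.10}); there the existence of $u',u''\in V_{-{_K\qa}}$ with $m(u)=u'uu''$ representing $s_{_K\qa}$, and the equality $m(u){_KZ}=m(v){_KZ}$, are classical Borel--Tits facts, while $m(u)$ then conjugates $V_{_K\qb}$ into $V_{s_{_K\qa}({_K\qb})}$ for all relative roots because $m(u)$ induces $s_{_K\qa}$ on $_KA^v$ and therefore permutes the relative root groups accordingly. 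Finally (RD5) and (GRD) --- that $_KZ\,{_KU^+}\cap{_KU^-}=\{1\}$ and that $\g G(K)$ is generated by $_KZ$ and the $V_{_K\qa}$ --- are the substantive descent statements of \cite[12.6.3]{Ry-02a}, obtained from the refined Bruhat and Birkhoff decompositions of \ref{1.3c} over $K_s$ restricted to $\QG$-fixed points, the fact (\ref{1.9}.3) that the anisotropic kernel $\g Z_g(\g S)$ accounts for $_KZ$, and the reduction via \ref{1.7}.3 to finite Galois extensions where one already knows the group is split.

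For b), the relative root datum of a) produces, by \ref{1.3c}.3, a twin combinatorial building $^K\!\SHI^{vc}=\SHI^{vc}(\g G,K)$ with $\g G(K)$ acting strongly transitively; the stabilizer of a chamber is a minimal $K$-parabolic (the product of $_KZ$ with the relative unipotent of one sign), the stabilizer of the fundamental apartment is $_KN$, and the twinning is furnished by the Birkhoff decomposition of \ref{1.3c}.2. On the geometric side, by \ref{1.8}.3 the $K$-facets of $(\SHI^v)^\QG$ are in bijection with the $K$-defined parabolics of $\g G$, the $K$-chambers with the minimal $K$-defined parabolics, and $\g G(K)$ is transitive on the pairs $(_KC,{_KA^v})$; by \ref{1.9}.1 the $K$-apartments correspond to the maximal $K$-split tori, which are $\g G(K)$-conjugate, $_KN$ being the stabilizer of $_KA^v$. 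These correspondences are $\g G(K)$-equivariant and identify the relevant stabilizers on the two sides, so comparing strong transitivity yields the asserted $\g G(K)$-equivariant bijections, chambers of $^K\!\SHI^{vc}$ to $K$-chambers and apartments to $K$-apartments; adjacency matches because codimension-one $K$-facets are exactly the $K$-walls, and opposition matches via the twinning on $\SHI^v$ coming from its two signs $\pm$. Moreover each $K$-apartment is a generic subspace of an ordinary apartment carrying the induced action of the relative Weyl group $_KW^v$ (\ref{1.9}.3), hence a cone realizing the Coxeter complex of $(_KW^v,\{s_{_K\qa_i}\mid i\in{_KI_{re}}\})$; this is exactly what makes $(\SHI^v)^\QG$ a \emph{good} geometric representation of $^K\!\SHI^{vc}$.

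The main obstacle is the descent itself --- axioms (RD2), (RD5) and (GRD) --- which is genuinely delicate and is the content of \cite[12.6.3]{Ry-02a}; everything else is bookkeeping around Lemma \ref{1.10} and \ref{1.3c}. The one point specific to the present setting is that \LC works with the essential realization $\SHI^{vq}$ while here $\SHI^v$ may be $\SHI^{vxl}$ or, when $\shs$ is free, $\SHI^{vx}$: one recovers the general case by observing that the essentialization map $\eta^v:\SHI^v\to\SHI^{vq}$ is $\QG$-equivariant (Lemma \ref{1.6}) and restricts to a $\g G(K)$-equivariant map of fixed-point sets, so that apartments, chambers, adjacency and opposition pull back unchanged, the only datum affected by the choice of $\SHI^v$ being the dimension $d$ of \ref{1.8}.3.
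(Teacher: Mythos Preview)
The paper does not give its own proof of this theorem: the statement carries the citation \cite[12.6.3 and 12.4.4]{Ry-02a} and is followed only by remarks (the \emph{N.B.} block), with no proof environment. So there is nothing in the paper to compare your argument against line by line.

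That said, your sketch is a faithful unpacking of what R\'emy's proof does, and you say so yourself. The axiom-by-axiom plan for a) is correct: (RD1), (RD3) via Lemma \ref{1.10}(i) and \cite[12.5.4]{Ry-02a}; (RD4) by the rank-one reduction inside the reductive subgroup $\g H$ supplied by \cite[12.5.4]{Ry-02a}; and (RD2), (RD5), (GRD) as the substantive descent content of \cite[12.6.3]{Ry-02a}. For b), the stabilizer-matching argument using \ref{1.8}.3 (bijection between $K$-chambers and minimal $K$-parabolics, transitivity on pairs $({_KC},{_KA^v})$) and \ref{1.9} ($K$-apartments $\leftrightarrow$ maximal $K$-split tori, $_KN$ as stabilizer, $_KW^v$ acting) is exactly the intended route.

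Your last paragraph is the one genuine addition beyond a bare citation: R\'emy works with the essential realization $\SHI^{vq}$, while here $\SHI^v$ is allowed to be $\SHI^{vxl}$ or $\SHI^{vx}$. The paper handles this exactly as you propose, via the $\QG$-equivariant essentialization $\eta^v:\SHI^v\to\SHI^{vq}$ of Lemma \ref{1.6}; this is made explicit at the start of \ref{1.8}.1 (``The essentialization $\SHI^{ve}$ of $\SHI^v$ is always $\SHI^{vq}(E)$ \ldots\ which is the building investigated in \lc\ so one may use this reference''). So on this point you and the paper agree as well.
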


       \begin{enonce*}[definition]{N.B}  1) When $\g G$ is already split over $K$, we see easily, using galleries, that ${^K\!\!\SHI}^{vc}=(\SHI^{vc})^\QG$ and $_K\SHI^{v}=\SHI^{v}(\g G,K,\A^v)$.

       \par 2) The group $_KB^+={_KZ}U^+$ defined in \ref{1.3c}.1 for this root datum is a minimal $K-$parabolic of $\g G$.
       It is a Borel subgroup if and only if there exist Borel subgroups defined over $K$ (\ie $\g G$ is {\it quasi split} over $K$); this is equivalent to $I_0=\emptyset$ \ie to $\g Z_g(\g S)$ being a torus.

       \par 3) The objects defined in \ref{1.3} to \ref{1.3c} for the above root datum will bear a left or right index $_K$, sometimes a left exponent $^K$.
\end{enonce*}

       \subsection{Comparison with a Weyl geometric realization}\label{1.12}

       \par {\quad\bf1)} With the notations in \ref{1.8}.4, \ref{1.9}, we may describe the positive $K-$facets:

       $C^v_+=\{x\in\vect{A^v}\mid\qa_i(x)>0,\forall i\in I\}$

       $_KC^v_+=\{x\in\vect{_KA^v}\mid{_K\qa_i}(x)>0,\forall i\in {_KI}\}$ \qquad(relative interior of $\overline{C^v_+}\cap\vect{_KA^v}$)

       $_KA^v_+=\cup_{w\in{_KW^v}}\,w.\overline{_KC^v_+}$

       \par\noindent The $K-$facets in $\overline{_KC^v_+}$ correspond bijectively to subsets $_KJ$ of $_KI$ by setting:

       $_KF^v_+({_KJ})=\{x\in\vect{_KA^v}\mid{_K\qa_i}(x)>0,\forall i\in {_KI}\setminus{_KJ}\,and\,{_K\qa_i}(x)=0,\forall i\in {_KJ}\}\subset{_KA^v_+}\cap\overline{C^v_+}$.

 \par\noindent so the definition of the $K-$facets uses the whole $_K\QD$ (not only $_K\QF$).

       \par \label{N22} Moreover $_KF^v_+({_KJ})$ is spherical if and only if $_K\underline J=I_0\cup\{i\in I \mid \QG^*i\in{_KJ}\}$ is spherical, which is equivalent to $_KJ\subset{_KI_{re}}$ and $_KJ$ spherical in ${_KI_{re}}$ (as defined by the root system $_K\QF$) \cf \cite[p 163, p 175]{By-96}.

       \par {\bf2)} A Weyl geometric realization $_K\!\SHI^v_+=\SHI^v_+(\g G,K,{^K\!A^v})$ of the combinatorial building $^K\!\SHI_+^{vc}$ can be constructed using, for fundamental apartment and facets, subcones of the vector space $\vect{_KA^v}$ defined using $_K\QF$ (\ie $_KW^v$).
       The corresponding Weyl facets in the closure of the positive fundamental chamber are defined as:

\par\noindent       $^KF^v_+({_KJ})=\{x\in\vect{_KA^v}\mid{_K\qa_i}(x)>0,\forall i\in {_KI_{re}}\setminus{_KJ}\,and\,{_K\qa_i}(x)=0,\forall i\in {_KJ}\}$ \; for  $_KJ\subset{_KI_{re}}$

\par\noindent and the positive fundamental Weyl$-K-$apartment is $^K\!A^v_+=\bigcup_{w\in{_KW^v},{_KJ}\subset{_KI_{re}}}\;w.{^KF^v_+({_KJ})}$.

\par The building $^K\!\SHI^v$ is the disjoint union of the Weyl$-K-$facets associated to the parabolics of $(\g G(K),(V_{_K\qa})_{_K\qa\in{_K\QF}},{_KZ})$; it contains $_K\SHI^v$ by the following lemma. Its minimal facet is $^KF^v_+({_KI_{re}})=(\vect{_KA^v})_0$.

\begin{enonce*}[plain]{\quad3) Lemma} Let $_KJ\subset{_KI_{re}}$.

\par a) The intersection $^KF^v_+({_KJ})\cap{_KA^v_+}$ is the disjoint union of the $K-$facets $_KF^v_+({_KJ'})$ for $_KJ'\supset{_KJ}$ and $_KJ'\cap{_KI_{re}}={_KJ}$.

\par Among these $K-$facets the maximal one (for the inclusion of the closures) is $_KF^v_+({_KJ})$, which is open in $^KF^v_+({_KJ})$; moreover $^KF^v_+({_KJ})={_KF^v_+({_KJ})}+(\vect{_KA^v})_0$. The minimal one corresponds to $_KJ'={_KJ}\cup({_KI}\setminus{_KI_{re}})$.

\par b) The Weyl$-K-$facet $^KF^v_+({_KJ})$ is spherical if and only if $_KF^v_+({_KJ})$ is spherical and then this $K-$facet is the only spherical $K-$facet in $^KF^v_+({_KJ})\cap{_KA^v_+}$.

\par c) The Weyl$-K-$facet $^KF^v_+({_KJ})$ and all $K-$facets $_KF^v$ in $^KF^v_+({_KJ})\cap{_KA^v_+}$ have the same fixer $P_K^+({_KJ})={P_K}({_KF^v})$ in $\g G(K)$.
 Hence each $K-$facet of $_K\SHI^v_+$ is associated to a unique Weyl$-K-$facet in $^K\!\SHI^v_+$;
\end{enonce*}
\begin{proof} Let $w.\overline{_KC^v_+}$ (with $w\in{_KW^v}$) be a closed $K-$chamber meeting $^KF^v_+({_KJ})$, then $w.\overline{^KC^v_+}$ meets $^KF^v_+({_KJ})$; so $w\in{_KW^v}({_KJ})$ which fixes (pointwise) $^KF^v_+({_KJ})$.
Hence $w.\overline{_KC^v_+}\cap{^KF^v_+({_KJ})}\subset\overline{_KC^v_+}$ and ${_KA^v_+}\cap{^KF^v_+({_KJ})}\subset\overline{_KC^v_+}$. Now a) and b) \label{N22} are clear.

\par The fixer in $\g G(K)$ of $_KF^v_+({_KJ'})$ contains the fixer $P$ of $_KC^v_+$, hence it is a parabolic subgroup of the positive $BN-$pair associated to the root datum in $\g G(K)$ \ie of the form $P.{_KW^v({_KJ''})}.P$ for some $_KJ''\subset{_KI_{re}}$.
 It is easy to check that $_KJ''$ has to be $_KJ$ and c) follows.
\end{proof}

       \par {\bf4)} So the Weyl$-K-$facets of $^K\!\SHI_+^{vc}$ correspond to some $K-$facets of $(\SHI^v_+)^\QG$ and there is a good correspondence between spherical Weyl$-K-$facets and spherical $K-$facets.
       But, if $ {_KI_{re}}\not= {_KI}$, some non-spherical $K-$facets correspond to nothing in $^K\!\SHI_+^{vc}$.
       So $(\SHI^v)^\QG$ is only a geometric representation of $^K\!\SHI_+^{vc}$ in the sense of the theorem, it is not really a geometric realization of it.
       Note also that, if $ {_KI_{re}}\not= {_KI}$, the Weyl geometric realization $^K\!\SHI_+^{v}$ of $^K\!\SHI_+^{vc}$, constructed in 2) above, is not essential, even if $\SHI^v=\SHI^{vq}$ is.

       \par The above results (and those in \ref{1.13}) are well illustrated by example 13.4 in \cite{Ry-02a}.

 \par   {\bf5)} {\bf Remarks.} a) In this example we see also that $_K\QF$ may be a classical (finite) root system, even if $\QF$ is infinite.
       It may also happen that $_K\QF$ is empty (\ie $ssrk_K(\g G)=0$); this is always the case when $\QF$ is infinite and $\vert{_KI}\vert=1$ (see examples for $K=\R$ in the tables of \cite{B3R-95}).
       Then $\g G(K)={_K Z}$ and $(\SHI^v)^\QG$ is reduced to one $K-$apartment and two $K-$chambers (one of each sign).

       \par b) On the contrary, if $\QF$ is infinite, $_K\QD$ is always infinite and $\vert{_KI}\vert\geq{}1$.

       \par c) Actually some vectorial facets (\eg the minimal one $V_0$) are positive and negative.
 So to associate a maximal $K-$facet to a Weyl$-K-$facet, we may have to make a choice of a sign, at least if $_KI_{re}\not={_KI}$.

\begin{enonce*}[plain]{\quad6) Lemma} let $\g G$ be an almost split Kac-Moody group defined over $K'$, with $K'\subset K\subset E$, $E/K'$, $K/K'$ Galois and $(\g G_K,E)$ as above (\cf \ref{1.8}.4).

\par a) Let ${_{K'}A^v}\subset{_{K}A^v}\subset A^v$ be respectively a $K'-$apartment in $_{K'}\SHI^v$, a $K-$apartment in $_{K}\SHI^v$ and an apartment in $\SHI^v$ (stable under $Gal(E/K')$ or not).
 Then the $K-$facets or $K'-$facets are described in $A^v$ as in 1) above with help of $_KI$ or $_{K'}I$.

 \par b) The action of $Gal(E/K')$ on $\SHI^v$ induces an action of $Gal(K/K')$ on $_K\SHI^v$ which may be extended (linearly and uniquely) to $^K\!\SHI^v$.
\end{enonce*}
\begin{proof} There is a star-action of $Gal(E/K)$ (resp. $Gal(E/K')$) on $A^v$ (and its vector space $\vect{A^v}$) and a subset $I_0^K$ (resp. $I_0^{K'}$) of $I$ which describes entirely $_KA^v$ (resp. $_{K'}A^v$); this is independent of the choice of $A^v$, as different choices are conjugated \cite[prop. 6.2.3 (i)]{Ry-02a}.
 They describe also the $K-$facets (resp. $K'-$facets), so a) follows.
  The action of $Gal(K/K')$ on $_K\SHI^v=\g G(K).{_KA^v}$ is described through its action on $_K\SHI^{vc}$ and its star action on $_KA^v$ which may be extended (linearly and uniquely) to $^KA^v$. So b) is a consequence of 3)c above.
\end{proof}

\subsection{Imaginary relative root groups}\label{1.13}

\par{\quad\bf1)} Let's consider $_K\qa\in{_K\QD}^{im}$. The sets $({\Z_{>0}}.{_K\qa})\cap({_K\QD})$ and $(_K\qa)=\{\qb\in\QD\mid{_K\qb}\in {\Z_{>0}}.{_K\qa}\}$ are infinite \cite[3.3.2]{B3R-95}.

\par We saw in \ref{1.4c} that  $\g G_E$ is embedded in some ind-group-scheme.
If $_K\qa$ is positive (resp. negative) we can define in the pro-unipotent group-scheme $\g U_E^{ma+}$ (resp. $\g U_E^{ma-}$) a pro-unipotent subgroup-scheme $\g U_{(_K\qa)E}^{ma}$ such that the elements of $U_{(_K\qa)}^{ma}=\g U_{(_K\qa)E}^{ma}(E)$ are written uniquely as infinite products:
  $u= \prod_{\qb\in(_K\qa)}\,\prod_{j=1}^{j=n_\qb}\;[exp]\ql_{\qb,j}.e_{\qb,j}$ where $(e_{\qb,j})_{j=1,n_\qb}$ is a basis of $\g g_\qb$ ($n_\qb=1$ for $\qb$ real) and $\ql_{\qb,j}\in E$.
Moreover the conjugate of such an element $u\in U_{(_K\qa)}^{ma}$ by $s\in\g S(E)$ is $\prod_{\qb\in(_K\qa)}\,\prod_{j=1}^{j=n_\qb}\;[exp]\qb(s).\ql_{\qb,j}.e_{\qb,j}$.

\par We define the root group corresponding to $_K\qa$ as $V_{_K\qa}=U_{(_K\qa)}^{ma}\cap\g G(K)$.

\begin{enonce*}[plain]{\quad2) Lemma} The group $G=\g G(K)$ has an extra large (abstract) center: it contains $S_Z=\{s\in\g S(K)\mid{_K{\qa_i}}(s)=1\,,\,\forall i\in{_KI_{re}}\}$.
\end{enonce*}
\begin{proof} As $\g S(K)$ is in the center of $_KZ$  and $G$ is generated by $_KZ$ and the groups $V_{_K\qa}$ for $_K\qa\in{_K\QF}$, this result is a consequence of lemma \ref{1.10} (i).
\end{proof}

\par\noindent{\quad\bf3) Remarks and definition} a) The schematic center of $G$, \ie the centralizer in $G$ of $\g G(K_s)$, is $\{s\in\g T(K)\mid{{\qa_i}}(s)=1\,,\,\forall i\in{I}\}$ \cite[9.6.2]{Ry-02a}. Hence its intersection with $\g S(K)$ is smaller than $S_Z$ in general.

\par b) If $_K\qa\in{_K\QD}$, we can write uniquely $_K\qa=\pm{}(\sum_{i\in{_KI}}\,n_i.{_K\qa_i})$ with $n_i\in\Z_{\geq{}0}$.
 We shall say that $_K\qa$ is {\it almost real} and write $_K\qa\in{_K\QD^r}$ if and only if $n_i=0\; \forall i\in {_KI}\setminus{_KI_{re}}$. Hence ${_K\QF}={_K\QD}_{re}\subset{_K\QD^r}\subset{_K\QD}$.
 This set ${_K\QD^r}$ is a system of roots in the sense of \cite[2.4.1]{By-96}.

\par c) By the following lemma the non trivial root groups $V_{_K\qa}$ correspond to roots $_K\qa\in{_K\QD^r}$.
 So it is natural to abandon the $K-$facets (defined using $_K\QD$) and to use the Weyl$-K-$facets of \ref{1.12}.2 (defined using $_K\QF$ or $_K\QD^r$).

 \par We may define $^K\QD=\{{_K\qa}\in{_K\QD}\mid V_{_K\qa}\not=\{1\}\,\}$, so $_K\QF\subset{^K\QD}\subset{_K\QD^r}$ (by the following).

\begin{enonce*}[plain]{\quad4) Lemma} If $_K\qa\in{_K\QD}\setminus{_K\QD^r}$ (hence $_K\qa\in{_K\QD_{im}}$), then $V_{_K\qa}=\{1\}$.
\end{enonce*}

\begin{proof} Suppose $\shs$ free, $K$ infinite and $_K\qa\in{_K\QD}\setminus{_K\QD^r}$, then $\forall n\in\Z_{>0}$ we have $(n.{_K\qa})(S_Z)\not=\{1\}$.
But the conjugation by $s\in S_Z$ of an element of $G$ (resp. $U_{(_K\qa)}^{ma}$) is trivial (resp. given by the formulae in 1) above). Hence $V_{_K\qa}=\{1\}$.

\par When $\shs$ is not free we obtain the same result by using $\g G^{xl}$ \cf \ref{1.6}a.
When $K$ is finite, the (schematic) centralizer $\g Z$ of $\g S$ is a $K-$quasi-split reductive group with $\g S$ as maximal $K-$split torus; so $\g Z$ is a torus.
     \label{N17}   Now $\g Z$ splits over a Galois extension of degree $D$. If $L$ is an infinite union of extensions of degree prime to $D$, $\g S$ is still maximal $K-$split over $L$ and the wanted result is true over $L$. The result over $K$ is then clear.
\end{proof}

\subsection{Associated almost split maximal Kac-Moody groups}\label{1.14}

\par In \ref{1.4c} or \cite{Ru-11}, we associated to $\g G_\shs$ a positive (resp. negative) completion $\g G_\shs^{pma}$ (resp. $\g G_\shs^{nma}$).
 We want to associate to $\g G$ a positive (resp. negative) completion $\g G^{pma}$ (resp. $\g G^{nma}$) considered as a functor from $\shs ep(K)$ to the category of groups.
 For this we have to describe a (twisted) action of $\QG=Gal(K_s/K)$ on $\g G_\shs^{pma}(K_s)$ (resp. $\g G_\shs^{nma}(K_s)$) extending the known one on $\g G_\shs(K_s)$ and to define 
 $\g G^{pma}(L)=\g G_\shs^{pma}(K_s)^{Gal(K_s/L)}$ (resp. $\g G^{nma}(L)=\g G_\shs^{nma}(K_s)^{Gal(K_s/L)})$ for every $L\in\shs ep(K)$.
 
 \par We consider a maximal $K-$split torus $\g S$ in $\g G$, contained in a $K-$defined maximal torus $\g T$. We choose a Weyl$-K-$chamber $^KC^v$ in $^KA^v={^KA^v}(\g S)$ and define $F_1^v$ as the vectorial (spherical) facet in $A^v=A^v(\g T)$ such that $_KF_1^v=F^v_1\cap{_KA^v}$ is open in $^KC^v$ (\cf \ref{1.12}).
 We choose a vectorial chamber $C^v$ in $A^v$ whose closure contains $F^v_1$.
 We identify $\g G_{K_s}$ and $\g G_{\shs K_s}$ in such a way that $\g T_{K_s}=\g T_{\shs K_s}$ and $C^v$ is the fundamental positive chamber $C^v_+$; this defines a (twisted) action of $\QG$ on  $\g G_{K_s}=\g G_{\shs K_s}$.
 As in \ref{1.5}.4 we choose, for any $\qg\in\QG$, an element $g_\qg\in\g G^{xlad}(K_s)$ such that $Int(g_\qg^{-1})\circ\qg$ stabilizes $A^v$ and $C^v$, more precisely $\g T_{K_s}$ and $\g B^+_{\shs K_s}$.
 It is clear, in the above situation, that $g_\qg\in P^{xlad}(F_1^v)\cap\g Z_g^{xlad}(\g S)(K_s)=M^{xlad}(F_1^v)$; moreover $g_\qg=1$ for $\qg$ in some finite index subgroup of $\QG$.
 Now the star action of $\QG$ is defined by $\qg^*=Int(g_\qg^{-1})\circ\qg$ on $\g G(K_s)$ and $\qg^*=Ad(g_\qg^{-1})\circ\qg$ on $\shu_{\shs K_s}$.
 This star action is a true action.
 
 \par By hypothesis $\qg^*(\g T_{K_s})=\g T_{K_s}$, $\qg^*(\g B^+_{K_s})=\g B^+_{K_s}$ and $\qg^*(e_i)=e_{\qg^*i}$ for every $\qg\in\QG$ and $i\in I$.
  By \cite[11.2.5]{Ry-02a}, we have $\qg^*(\qa^\vee_i)=\qa^\vee_{\qg^*i}$, $\qg^*(f_i)=f_{\qg^*i}$ and $\qg^*(x_{±\qa_{i}}(k))=x_{±\qa_{\qg^*i}}(\qg k)$ for $k\in K_s$.
  We deduce that $\qg^*(\widetilde s_{\qa_{i}})=\widetilde s_{\qg^*\qa_{i}}$ and $\qg^*(x_\qa(k))=x_{\qg^*\qa}(\qg k)$, for all $\qa\in\QF$ and $k\in K_s$, \cf \cite[sec. 1]{Ru-11} in particular (KMT7).
  Together with the clear star action on $\g T(K_s)$ this gives a complete description of the star action on $\g G(K_s)$.
  
  \par For $\shu_{\shs K_s}$ the description of the star action is less clear.
  We have $\qg^*(e_\qa)=e_{\qg^*\qa}$ and $\qg^*(\qa^\vee)=(\qg^*\qa)^\vee$ for all $\qa\in\QF$.
  The condition (ALG3') of \ref{1.7} (if it is assumed) tells us that $\qg^*$ on $Y\otimes K_s\subset\shu^0_{K_s}$ is given by $\qg^*$ on $Y$ \ie on $\g T_\shs$.
  If $\qa(Y)$ is non zero in $K$ (\eg if $char(K)≠0$) the formula $\qg^*(x_\qa(k))=x_{\qg^*\qa}(\qg k)$ and the $\QG^*-$equivariance of $Ad$ tells us that $\qg^*(e_{\qa}^{(n)})=e_{\qg^*\qa}^{(n)}$ for all $\qa\in\QF$ and $n\in\N$.
  More precisely in characteristic $0$ the condition (ALG3') tells us that $\qg^*$ on $\shu_{\shs K_s}$ is as we want: the $\qg^{**}$ of \cite[13.2.3]{Ry-02a} entirely defined by the star action on $I$ and $X$ or $Y$.
  In positive characteristic, particularly in characteristic $2$, the equality of $\qg^{*}$ and $\qg^{**}$ is far from obvious, \eg for imaginary roots.
  
  \par We have two solutions to this problem. First add an axiom (ALG3'') (involving (ALG3')) telling that $\qg^{*}=\qg^{**}$.
  We choose the second solution: we change the star action on $\shu_{\shs K_s}$, we take $\qg^{**}$ instead of  $\qg^{*}$; the description we gave of $\qg^{*}$ on $\g G(K_s)$ tells us that $Ad$ is still $\QG^*-$equivariant.
  Then we define $\widetilde\qg=Ad(g_\qg)\circ\qg^{**}$; this gives an action as $g_{\qg\qg'}=g_\qg.\qg^*(g_{\qg'})$ (\ref{1.5}.4) and $Ad$ is $\widetilde\QG-$equivariant.
  Moreover the orbits of $\widetilde\QG$ on $\shu_{\shs K_s}$ are finite, hence we have defined a new $K-$form $\shu'_K$ of $\shu_{\shs K_s}$.
  With the precise definition of Kac-Moody groups given in \cite{Ry-02a}, we have changed the Kac-Moody group, but the functor $\g G$ is still the same.
  
  \par The definition of an almost split maximal Kac-Moody group is now clear: the $\QG^{**}-$action on $\shu_{\shs K_s}$ induces clearly an action on $\g G^{pma}(K_s)=\g G^{}(K_s).\g U^{ma+}(K_s)$ or on $\g G^{nma}(K_s)=\g G^{}(K_s).\g U^{ma-}(K_s)$ which coincides on $\g G(K_s)$ with the known $\QG^*-$action (see the definition of these groups in \cite{Ru-11}).
  Then we define the action of $\QG$ by $\widetilde\qg=Int(g_\qg)\circ\qg^{**}$.

\section{Valuations and affine (bordered) apartments}\label{s2}

\begin{defi}\label{2.1} A {\it valuation} of a root datum $(G,(U_\qa)_{\qa\in\QF},Z)$ of type a real root system $\QF$ is a family $(\qf_\qa)_{\qa\in\QF}$ of maps $\qf_\qa:U_\qa\to\R\cup\{+\infty\}$ satisfying the following axioms:

\par(V0) $\forall\qa\in\QF$, $\vert\qf_\qa(U_\qa)\vert\geq{}3$

\par(V1) $\forall\qa\in\QF$, $\forall\ql\in\R\cup\{+\infty\}$, $U_{\qa,\ql}=\qf_\qa^{-1}([\ql,+\infty])$ is a subgroup of $U_\qa$ and $U_{\qa,\infty}=\{1\}$

\par(V2.1) $\forall\qa,\qb\in\QF$, $\forall u\in U_\qa\setminus\{1\}$, $\forall v\in U_\qb\setminus\{1\}$,
\par\qquad\qquad\qquad $\qf_{r_\qa(\qb)}(m(u)vm(u)^{-1})=\qf_\qb(v)-\qb(\qa^\vee)\qf_\qa(u)$

\par(V2.2)  $\forall\qa\in\QF$, $\forall t\in Z$, the map $v\mapsto\qf_\qa(v)-\qf_\qa(tvt^{-1})$ is constant on $U_\qa\setminus\{1\}$

\par(V3) For each prenilpotent pair of roots $\{\qa,\qb\}$ and all $\ql,\qm\in\R$, the commutator group $[U_{\qa,\ql},U_{\qb,\qm}]$ is contained in the group generated by the groups $U_{p\qa+q\qb,p\ql+q\qm}$ for $p,q\in\Z_{>0}$ and $p\qa+q\qb\in\QF$

\par(V4) If $\qa\in\QF$ and $2\qa\in\QF$, then $\qf_{2\qa}$ is the restriction of $2\qf_\qa$ to $U_{2\qa}$.
\end{defi}

\begin{remas*} 1) This definition appears in \cite[10.2.1]{Cn-10b}. A weaker definition is given in \cite[2.2]{Ru-06}; there, axiom (V2.1) is replaced by axioms named (V2a) and (V5).
In the classical case, both definitions are equivalent to the original one of \cite[6.2.1]{BtT-72}, \cf \cite[10.2.3.2]{Cn-10b}. Actually (V2.1) is then the proposition 6.2.7 of \cite{BtT-72}.
 This definition may be extended to RGD-systems for a family $(\qf_\qa)_{\qa\in\QF_{nd}}$: in (V3) just allow $p$ and $q$  to be in $\R_{>0}$; in (V2.1) if $r_\qa(\qb)=\ql\qg$ with $\ql>0$ and $\qg\in\QF_{nd}$, replace $\qf_{r_\qa(\qb)}$ by $\ql\qf_\qg$.

\par 2)We define $\QL_\qa=\qf_\qa(U_\qa\setminus\{1\})\subset\R$. From (V2.1) with $\qa=\qb$, $u=v$ we get $\QL_\qa=-\QL_{-\qa}$.
 For $u,u',u''$ as in \ref{1.3} (RD4), we have $\qf_{-\qa}(u')=\qf_{-\qa}(u'')=-\qf_{\qa}(u)$, \cite[11.1.11]{Cn-10b}.
 For $\ql\in\R$, we set $U_{\qa,\ql+}=\qf_\qa^{-1}(]\ql,+\infty])$

\par 3) Let $Q=\Z\QF$ be the $\Z-$module generated by $\QF$ and $V^q=(Q\otimes\R)^*$. Then using this (strong) definition one can build an action of the group $N$ (defined in \ref{1.3c}.1) over $V^q$ (this seems impossible with the weaker definition of \cite{Ru-06}):
\end{remas*}

\begin{prop}\label{2.1b}  \cf \cite[prop. 11.1.9, 11.1.10]{Cn-10b} There exists a unique action $\qn^q$ of $N$ over $V^q$ by affine transformations such that:

- $\forall t\in Z$, $\qn^q(t)$ is the translation by the vector $\vect{v_t}$ such that $\qa(\vect{v_t})=\qf_\qa(u)-\qf_\qa(tut^{-1})$, $\forall\qa\in\QF$ and $\forall u\in U_\qa\setminus\{1\}$,

- $\forall n\in N$, $\qn^q(n)$ is an affine automorphism with associated linear map $\vect{\qn^q(n)}=\qn^v(n)$.

\end{prop}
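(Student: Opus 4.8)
The statement asks for existence and uniqueness of an affine action $\nu^q$ of $N$ on $V^q$ compatible with two constraints: the prescribed translation behavior on $Z$, and the prescribed linear part $\nu^v$ on all of $N$. The natural strategy is: first construct a candidate translation vector $\vec{v_t}$ for each $t\in Z$, then extend to all of $N$ using the group structure $N/Z\cong W^v$ and a choice of lifts $m(u)$, and finally verify that the formula is well-defined (independent of choices) and is a homomorphism.

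First I would verify that the defining condition for $\vec{v_t}$ makes sense. For $t\in Z$, the map $\alpha\mapsto\varphi_\alpha(u)-\varphi_\alpha(tut^{-1})$ is well-defined (independent of $u\in U_\alpha\setminus\{1\}$) precisely by axiom (V2.2), and it is additive in $\alpha$ in the sense needed to come from an element of $V^q=(Q\otimes\R)^*$: one checks compatibility with the additive structure on $Q=\Z\QF$ using (V4) for divisible roots and the fact that conjugation by $t$ is a group automorphism of each $U_\alpha$. (One must be slightly careful: $\QF$ is only a spanning set of $Q$, not a basis, so the additivity of $\alpha\mapsto\varphi_\alpha(u)-\varphi_\alpha(tut^{-1})$ across relations in $Q$ needs the commutation axioms; this is essentially the content of the cited \cite[11.1.9]{Cn-10b}.) Then $t\mapsto\vec{v_t}$ is a homomorphism $Z\to V^q$: $\vec{v_{tt'}}=\vec{v_t}+\vec{v_{t'}}$ follows by composing the two conjugations. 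So $\nu^q$ is already defined and affine on $Z$, acting by translations, with trivial linear part $\nu^v|_Z=\mathrm{id}$.

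Next I would extend to $N$. Pick, for each $s_\alpha\in\QS$ (equivalently each simple root), a representative $m(u)\in N$ with $u\in U_\alpha\setminus\{1\}$; since $N/Z\cong W^v$, every $n\in N$ writes as a product of such $m(u)$'s times an element of $Z$. Define $\nu^q(m(u))$ to be the affine map with linear part $s_\alpha=\nu^v(m(u))$ and with a translation part determined by the requirement that the cocycle relation forced by (V2.1) holds. Concretely, (V2.1) says $\varphi_{r_\alpha(\beta)}(m(u)vm(u)^{-1})=\varphi_\beta(v)-\beta(\alpha^\vee)\varphi_\alpha(u)$; comparing this with the desired identity $\nu^q(m(u))$ conjugates the ``filtration datum'' of $U_\beta$ into that of $U_{r_\alpha(\beta)}$ translated appropriately pins down $\nu^q(m(u))$ uniquely once its linear part is fixed. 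I would then set $\nu^q(n):=\nu^q(m_1)\cdots\nu^q(m_k)\nu^q(z)$ for any such decomposition $n=m_1\cdots m_k z$, and check well-definedness: different decompositions differ by the relations in the presentation of $N$, and each relation must be checked to be respected — the braid relations among the $m(u)$'s (which in $W^v$ are the Coxeter relations, lifted to $N$ up to elements of $Z$, and those $Z$-discrepancies are controlled by the already-established behavior of $\nu^q$ on $Z$ together with (V2.2)), and the conjugation relations $m(u)\,z\,m(u)^{-1}$ for $z\in Z$.

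**Main obstacle.** The delicate point — and where I would spend the most care — is exactly this well-definedness across the relations defining $N$: one must show that the ambiguity in writing $n$ as a word in the $m(u)$'s and elements of $Z$ is absorbed correctly, i.e. that the ``cocycle'' built from (V2.1) and (V2.2) is consistent. Equivalently, one checks that the map $n\mapsto(\text{translation part of }\nu^q(n))$ is a crossed homomorphism $N\to V^q$ relative to the linear action $\nu^v$, and that it is independent of the chosen representatives $m(u)$ — here (V0) (which guarantees $\QL_\alpha$ has at least three elements, hence that $m(u)$ for different $u$ differ by elements of $Z$ acting by translations we already understand) and the last clause of (RD4) ($m(u)Z=m(v)Z$) are what make the construction representative-independent. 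Once this consistency is established, uniqueness is immediate: the linear part is forced to be $\nu^v$, the translation part on $Z$ is forced by the first bullet, and since $N$ is generated by $Z$ and the $m(u)$'s with the $m(u)$-translation parts forced by (V2.1), the whole action is determined. I would refer to \cite[11.1.9, 11.1.10]{Cn-10b} for the routine verifications and concentrate the write-up on the conceptual skeleton above.
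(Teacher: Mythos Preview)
The paper does not give its own proof of this proposition; it is stated with a direct citation to \cite[prop.\ 11.1.9, 11.1.10]{Cn-10b} and no argument is supplied. Your sketch is a sound reconstruction of the standard argument and correctly isolates the two genuine technical points: that the prescription $\alpha\mapsto\varphi_\alpha(u)-\varphi_\alpha(tut^{-1})$ really defines an element $\vec{v_t}\in V^q$, and that the extension from $Z$ and the $m(u)$'s to all of $N$ is consistent with the relations in $N$.

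One organizational remark, visible in how the paper \emph{uses} the result in \S\ref{2.3}.1: it is slightly cleaner to \emph{start} by declaring $\nu^q(m(u)):=s_{\alpha,\varphi_\alpha(u)}$ (the affine reflection fixing $M(\alpha,\varphi_\alpha(u))$) for each $u\in U_\alpha\setminus\{1\}$, and then \emph{derive} the translation formula on $Z$ from the identity $m(tut^{-1})=t\,m(u)\,t^{-1}$. Indeed, conjugating $s_{\alpha,\varphi_\alpha(u)}$ by a translation of vector $\vec{v_t}$ gives $s_{\alpha,\varphi_\alpha(u)-\alpha(\vec{v_t})}$, so the relation forces $\alpha(\vec{v_t})=\varphi_\alpha(u)-\varphi_\alpha(tut^{-1})$ automatically; the consistency of these values for varying $\alpha\in\Phi$ (your ``additivity'' step) then falls out of (V2.1) rather than needing a separate verification via the commutation axiom (V3), which is not really what is used here. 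With either ordering, the crux is exactly what you flag as the main obstacle: the cocycle condition across the relations of $N$, governed by (V2.1) and (V2.2).

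Two small corrections. First, your invocation of (V0) for representative-independence is off target: what makes the choice of $u$ in $m(u)$ harmless is the last clause of (RD4) ($m(u)Z=m(v)Z$ for $u,v\in U_\alpha\setminus\{1\}$) together with (V2.2); axiom (V0) only guarantees that the filtration is nontrivial. Second, the ``additivity'' you need is not literally additivity under (V3)-type commutators but compatibility of the values $c_\alpha$ under the $W^v$-action on roots, and that is precisely (V2.1).
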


\subsection{Valuation for a split Kac-Moody group}\label{2.2}

\par Let $\g G=\g G_\shs$ be a split Kac-Moody group over $K$, as in \ref{1.2}.
We suppose the base field $K$ endowed with a non trivial real valuation $\qo=\qo_K:K\to\R\cup\{+\infty\}$. Its {\it ring of integers} (resp. {\it maximal ideal}, {\it residue field}) is $\mathcal{O}=\mathcal{O}_K=\qo^{-1}([0,+\infty])$ (resp. $\g m=\g m_K=\qo^{-1}(]0,+\infty])$, $\qk=\sho/\g m$) and $\QL=\QL_K=\qo(K^*)$ is its {\it value group}. An important particular case (the {\it discrete case}) is when $\QL$ is discrete in $\R$.

\par Let $u=x_\qa(r)\in U_\qa$ with $\qa\in\QF$ and $r\in K$, we set $\qf_\qa(u)=\qo(r)\in\R\cup\{+\infty\}$.

\begin{prop*} The family $(\qf_\qa)_{\qa\in\QF}$ is a valuation of the root datum $(G,(U_\qa)_{\qa\in\QF},T)$.
\end{prop*}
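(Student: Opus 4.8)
The proof is a verification that the explicit family $\qf_\qa(x_\qa(r))=\qo(r)$ satisfies the six axioms (V0)--(V4) of Definition \ref{2.1}. Most of these follow directly from properties of the valuation $\qo$ on $K$ together with the defining relations of the split Kac-Moody group $\g G_\shs$ recalled in \ref{1.2} and \ref{1.4c}; the one genuinely substantial point is (V2.1). First, (V0) holds because $\QL=\qo(K^*)$ is a non-trivial subgroup of $\R$, hence infinite, so $\qf_\qa(U_\qa\setminus\{1\})=\QL$ has more than three elements (indeed $\qf_\qa(U_\qa)=\QL\cup\{+\infty\}$). Axiom (V1): since $x_\qa:K\to U_\qa$ is an isomorphism of groups and $\qo(r+r')\geq\min(\qo(r),\qo(r'))$ with $\qo(r)=+\infty\iff r=0$, the set $U_{\qa,\ql}=x_\qa(\qo^{-1}([\ql,+\infty]))$ is a subgroup and $U_{\qa,\infty}=x_\qa(0)=\{1\}$. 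Axiom (V4) is empty here since $\QF$ is reduced (Remark \ref{1.3b}a), or trivially true. Axiom (V2.2): for $t\in T=\g T_\shs(K)$ one has $tx_\qa(r)t^{-1}=x_\qa(\overline\qa(t)r)$ by the torus action recalled in \ref{1.4c}, so $\qf_\qa(v)-\qf_\qa(tvt^{-1})=\qo(r)-\qo(\overline\qa(t)r)=-\qo(\overline\qa(t))$, a constant independent of $v=x_\qa(r)\neq1$.

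For (V3), take a prenilpotent pair $\{\qa,\qb\}$. The commutation relations in $\g G_\shs$ (coming from the relations in the Tits enveloping algebra $\shu_{\shs\Z}$, see \ref{1.4c}) express the commutator $[x_\qa(r),x_\qb(s)]$ as a product of elements $x_{p\qa+q\qb}(C_{pq}\,r^p s^q)$ with integer coefficients $C_{pq}\in\Z$, the product running over the finitely many $p,q\in\Z_{>0}$ with $p\qa+q\qb\in\QF$. If $r\in\qo^{-1}([\ql,+\infty])$ and $s\in\qo^{-1}([\qm,+\infty])$, then $\qo(C_{pq}r^ps^q)\geq p\qo(r)+q\qo(s)\geq p\ql+q\qm$ (using $\qo(C_{pq})\geq0$ since $C_{pq}\in\Z\subset\sho$), so each factor lies in $U_{p\qa+q\qb,\,p\ql+q\qm}$, giving (V3).

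The heart of the matter is (V2.1). Here I would reduce, as is classical, to the rank-one situation: the pair $\{\qa,-\qa\}$ generates a subgroup of $\g G_\shs$ which is (a quotient of) $SL_2$ or $PGL_2$, and inside it one computes $m(u)$ explicitly. For $u=x_\qa(r)$ with $r\in K^*$, a direct matrix computation in $SL_2$ shows $m(x_\qa(r))=x_{-\qa}(-r^{-1})x_\qa(r)x_{-\qa}(-r^{-1})$ (up to the torus, and with the sign conventions of \cite{T-87b}), and conjugation of $x_\qb(s)$ by this element, using the $W^v$-equivariance of the construction and the torus formula $t x_\qb(s)t^{-1}=x_\qb(\overline\qb(t)s)$, yields $m(u)x_\qb(s)m(u)^{-1}=x_{s_\qa(\qb)}(\pm r^{-\qb(\qa^\vee)}s\cdot(\text{unit}))$; more precisely the scalar multiplying $s$ has valuation $-\qb(\qa^\vee)\qo(r)$. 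Applying $\qo$ gives exactly $\qf_{s_\qa(\qb)}(m(u)x_\qb(s)m(u)^{-1})=\qo(s)-\qb(\qa^\vee)\qo(r)=\qf_\qb(x_\qb(s))-\qb(\qa^\vee)\qf_\qa(u)$. The main obstacle is bookkeeping: one must track the exact scalar (a monomial in $r$ with a coefficient that is a product of structure constants, hence a unit in $\sho$ after possibly absorbing signs) rather than just its valuation, and one must check that the reduction to rank one is legitimate for a general root $\qb$ — but this is precisely the content of the relevant $SL_2$-computations and the $W^v$-equivariance already built into Tits' construction and recorded in \cite{Ru-11}, so I would invoke those rather than redo them. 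Once (V2.1) is in hand, all axioms are verified and the proposition follows; alternatively one may cite that in the classical (reductive) case this is the standard Chevalley-group valuation of \cite{BtT-72}, and the Kac-Moody case is formally identical since only rank-one and prenilpotent-rank-two relations intervene.
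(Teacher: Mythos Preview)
Your proposal is correct and follows essentially the same approach as the paper: verify the axioms (V0)--(V4) directly from the valuation $\qo$ and the defining relations of $\g G_\shs$, singling out (V2.1) as the only substantive point. The paper's own proof is a single line---``Clear except for (V2.1) proved in \cite[10.2.3.1]{Cn-10b}''---so your write-up is considerably more detailed; in particular you sketch the (V2.1) argument via the factorization $m(x_\qa(r))=\qa^\vee(r)\,m(x_\qa(1))$ in the rank-one subgroup together with the fact that $m(x_\qa(1))$ conjugates $x_\qb(s)$ to $x_{s_\qa(\qb)}(\varepsilon s)$ with $\varepsilon=\pm1$, whereas the paper simply defers to Charignon. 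Both routes amount to the same computation; yours is more self-contained, the paper's more economical.
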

\begin{proof} Clear except for (V2.1) proved in  \cite[10.2.3.1]{Cn-10b}.
\end{proof}
\begin{rema*} We have $\QL_\qa=\QL$, $\forall\qa\in\QF$.
\end{rema*}

\subsection{Affine apartments}\label{2.3}

\par We consider an abstract valuated root datum as in \ref{2.1}.

\par {\bf1)} Let $V=\vect\A$ be a real vector space with $\QF\subset Q\subset V^*$ and $(\qa_i^\vee)_{i\in I}\subset V$ as in \ref{1.1b}.4; we consider in $V$ all objects defined in \ref{1.1b}.

\par For $\ql\in\R$ and $\qa\in Q\setminus\{0\}$, we define the affine hyperplane $M(\qa,\ql)=\{x\in V\mid \qa(x)+\ql=0\}$ of direction Ker$\qa$, the closed half-space $D(\qa,\ql)=\{x\in V\mid \qa(x)+\ql\geq{}0\}$ and its interior $D^\circ(\qa,\ql)=\{x\in V\mid \qa(x)+\ql>0\}$.
For $\qa\in\QF$ the reflection $s_M=s_{\qa,\ql}$ with respect to $M=M(\qa,\ql)$ is the affine reflection with associated linear map $\vect{s_M}=s_\qa$ and with fixed point set $M$.

\par We suppose $V$ endowed with an action $\qn$ of $N$ such that, $\forall n\in N$, $\qn(n)$ is an affine automorphism with associated linear map $\vect{\qn(n)}=\qn^v(n)$.
 We ask moreover that, for $\qa\in\QF$ and $u\in U_\qa\setminus\{1\}$, $\qn(m(u))$ is the reflection $s_{\qa,\qf_\qa(u)}$. We write $Z_0=$Ker$\qn\subset Z$.

\par Then $t\in Z=$ Ker$(\qn^v)$ acts on $V$ by a  translation of vector $\vect{v_t}$.
The action $\qn$ commutes with the translations by $V_0$ and the induced action on the essential quotient $V^q=V/V_0$ is $\qn^q$ as defined in proposition \ref{2.1b}:
as $m(tut^{-1})=tm(u)t^{-1}$, we have clearly $\qa(\vect{v_t})=\qf_\qa(u)-\qf_\qa(tut^{-1})$. \label{N5}

\par As a consequence, $\forall n\in N$, $\forall\qa\in\QF$ and $\forall u\in U_\qa\setminus\{1\}$ we have $\qn(n).D(\qa,\qf_\qa(u))=D(\qn^v(n).\qa,\qf_{\qn^v(n).\qa}(nun^{-1}))$ and the same thing for the walls \cite[11.1.10]{Cn-10b}.

\par For $v\in V$, we may define a new valuation $\qf'$ ({\it equipollent} to $\qf$) by $\qf'_\qa(u)=\qf_\qa(u)+\qa(v)$ for $\qa\in\QF$. This corresponds to choosing for $V$ a new origin $0_{\qf'}=v$.

\begin{enonce*}[definition]{\quad2) Definitions} a) A {\it wall} (resp. an {\it half-apartment}) in $V$ is an hyperplane (resp. a closed half-space) of the form $M(\qa,\qf_\qa(u))=V^{m(u)}$ (fixed point set) (resp. $D(\qa,\qf_\qa(u))$ with $\qa\in\QF$ and $u\in U_\qa\setminus\{1\}$.  The action $\qn$ of $N$ permutes the walls and half-apartments.

 \par More generally, for $(V,W^v,({{\alpha_i}})_{i\in I},({\alpha}{_i^\vee})_{i\in I})$ as in \ref{1.1b}.4 and a family $\QL=(\QL_\qa)_{\qa\in\QF}$ of infinite subsets of $\R$, we say that $M(\qa,\ql)$ (resp. $D(\qa,\ql)$) is a {\it wall} (resp. {\it half-apartment}) if, and only if, $\ql\in\QL_\qa$. 
 We use this definition only when, for any $\qa\in\QF$, $\ql\in\QL_\qa$, the reflection $s_{\qa,\ql}$ stabilizes this set $\SHM$ of walls.

\par b) The {\it affine apartment} $\A$ 
is $V$ considered as an affine space and endowed with its family $\SHM$ of walls and the corresponding reflections.
 It is called {\it semi-discrete} if, $\forall\qa\in\QF$, the set of walls of direction Ker$\qa$ is locally finite, \ie if $\QL_\qa$ is discrete in $\R$.
  Its essentialization is $\A^e=\A/V_0$ endowed with the image of the family $\shm$.

\par A preorder is defined on  $\A$ (or $\A^e$) by \quad $x\leq{}y\iff y-x\in\sht_+$.

\par There is also a more restrictive preorder: \quad$x\stackrel{o}{<} y\;\Leftrightarrow\; y-x\in\mathcal T^o_+$.

\par c) An {\it automorphism} of $\A$ is an affine bijection $\qf:\A\to\A$ stabilizing the family $\SHM$ of walls and conjugating the corresponding reflections.
We ask also that its associated linear map $\vect\qf$ stabilizes $\QF$ (this is automatic in the semi-discrete case with $\QF$ reduced and $\QL_\qa$ independent of $\qa$) and the union $\sht_+\cup\sht_-$ of the Tits cones (this is automatic in the classical case).
 Then $\vect\qf$ normalizes the vectorial Weyl group $W^v$ and transforms vectorial facets into vectorial facets.

 \par d) We say that an automorphism $\qf$ is {\it positive} (or of first kind) (resp. {\it vectorial-type-preserving, vectorially Weyl}) if $\vect{\qf}(\sht_\pm{})=\sht_\pm{}$ (resp. $\vect\qf$ preserves the types of the vectorial facets, $\vect\qf\in W^v$).

 \par e) The {\it (affine) Weyl group} $W^a=W^a(\A)$ of $\A$ is the subgroup of Aut$(\A)$ generated by the reflections $s_M$ for $M\in\SHM$. Its elements are called  {\it Weyl-automorphisms} of $\A$.

\par f) An {\it apartment of type} $\A$ is a set $A$ endowed with a set $Isom_{W^a}(\A,A)$ of bijections $f:\A\to A$ (called {\it Weyl isomorphisms}) such that if $f_0\in Isom_{W^a}(\A,A)$, then $f\in Isom_{W^a}(\A,A)$ if, and only if, there exists $w\in W^a$ such that $f=f_0\circ w$.

\par g) An {\it isomorphism} between two apartments $A$ and $A'$ is a bijection $\qf:A\to A'$ such that for some $f_0\in Isom_{W^a}(\A,A)$ and $f'_0\in Isom_{W^a}(\A,A')$ (the choices have no importance) the map $(f'_0)^{-1}\circ\qf\circ f_0$ is an automorphism of $\A$.
 We say that this $\qf$ is {\it positive}, {\it vectorial-type-preserving}, {\it vectorially Weyl} or a {\it Weyl isomorphism} if $(f'_0)^{-1}\circ\qf\circ f_0$ is positive, vectorial-type-preserving, vectorially Weyl or a Weyl automorphism (compare with \cite[1.13]{Ru-10}); actually it is sufficient to verify this property by restriction to a non empty open convex subset of $A$.

  \end{enonce*}

    \begin{enonce*}[definition]{\quad3) Remarks}
a) By definition $N$ and $W^a$ act on $\A$ by vectorially Weyl automorphisms. The Weyl group $W^a$ is a normal subgroup of $\qn(N)$. They are not always equal.

    \par b) If $\qf$ is an automorphism of $\A$, $\vect\qf$ stabilizes $\QF$ and $\sht_+\cup\sht_-$; so, up to $W^v$, $\pm{}\vect\qf$ stabilizes the basis $(\qa_i)_{i\in I}$ and $\QS=\{s_i\mid i\in I\}$ or $(\qa_i^\vee)_{i\in I}$.
    The induced action on $I$ preserves the Kac-Moody matrix $\M$ and its Dynkin diagram (it is a diagram automorphism); it is trivial if $\qf$ is vectorial-type-preserving.
   A vectorially Weyl automorphism is positive and vectorial-type preserving; the converse is true when $\A$ is essential.

    \par c) We define $G^\emptyset$ (resp. $N^\emptyset$) as the subgroup of $G$ (resp. $N$) generated by $Z_0$ and the groups $U_\qa$ (resp. by $Z_0$ and the $m(u)$, $u\in U_\qa\setminus\{1\}$) for $\qa\in\QF$.
    It is normal in $G$ (resp. $N$) and $G=G^\emptyset.Z$ (resp. $N=N^\emptyset.Z$).
    By definition $\qn(N^\emptyset)=W^a$ and even $N^\emptyset=\qn^{-1}(W^a)$ is the group of Weyl automorphisms in $N$. We set $Z^\emptyset=N^\emptyset\cap Z$ which is normal in $Z$.

    \par By \cite[6.1.2 (12)]{BtT-72} $(G^\emptyset,(U_\qa)_{\qa\in\QF},Z^\emptyset)$ is a generating root datum of type $\QF$.
    The associated group "N" (as in \ref{1.3c}.1) is $N^\emptyset$.
    Comparing the refined Bruhat decompositions (\ref{1.3c}.1) of $G^\emptyset$ and $G$, we obtain $G^\emptyset\cap N=N^\emptyset$. Compare with \cite[6.2]{Ru-10}.

    \end{enonce*}

  \par {\bf 4) Imaginary roots}
  We consider moreover a set $\QD_{im}$  in $V^*$ of {\it imaginary roots} with $\QD_{im}\cap(\cup_{\qa\in\QF}\,\R\qa)=\emptyset$ and $\QD_{im}$ $W^v-$stable; we write $\QD_{re}=\QF$ and $\QD=\QF\cup\QD_{im}$.
 The best example for $\QD$ is  a root system as in \cite{By-96} with $\QF$ as system of real roots   (it can be \eg the root system generated by $\QF$ as in \ref{1.1}.3 or, if $\QF={_K\QF}$, the system $_K\QD$ as in \ref{1.9}).
  The {\it totally imaginary} choice $\QD^{ti}$ for $\QD$ corresponds to $\QD^{ti}_{im}=V^*\setminus(\cup_{\qa\in\QF}\,\R\qa)$.

\par  We say that $\QD$ is {\it tamely imaginary} \cite[1.1]{Ru-10} (resp.  {\it relatively imaginary}) if $\QD_{im}=\QD^+_{im}\cup \QD^-_{im}$ with $W^v-$stable sets $\QD^{\pm}_{im}=\pm{}(\QD\cap(\oplus_{i\in I}\,\R^+\qa_i))$  (resp. $\QD^{\pm}_{im}=\pm{}(\QD\cap(\oplus_{i\in I_\QD}\,\R^+\qa_i))$, where $I_\QD\supset I$ is finite and $(\qa_i)_{i\in I_\QD}$ is free).
   Remark that $_K\QD$ (as defined in \ref{1.9}.2) is always relatively imaginary and is tamely imaginary if and only if it is equal to $_K\QD^r$: \ref{1.13}.3b.

  \par For all $\qa\in\QD_{im}$, we consider an infinite subset $\QL_\qa=-\QL_{-\qa}$ of $\R$.
  We define the system $\mathcal M^i$ of {\it imaginary walls} as the set of affine hyperplanes $M(\qa,\ql)$ for $\qa\in\QD_{im}$ and $\ql\in\QL_\qa$ (actually the real walls are given by the same formula for $\qa\in\QF$).
  We ask that these walls are permuted by $\qn(N)\supset W^a$, in particular  $\QL_{w\qa}=\QL_\qa$, $\forall w\in W^v$.

 \par For $\qa\in\QD$ and $k\in\R$, we sometimes say that $M(\qa,k)$ (resp. $D(\qa,k)$) is a {\it true} or {\it ghost} wall (resp. half-apartment), according to the fact that $k\in\QL_\qa$ or $k\not\in\QL_\qa$.

  \begin{enonce*}[definition]{\quad5) Remarks}
a) Actually these imaginary roots or walls will be used only to define enclosures, hence facets and chimneys (\ref{2.4}). 
 So making a difference between true or ghost imaginary walls is often useless, \eg in the case of \ref{2.3b}, see \ref{2.4}.1.
It would be possible to modify the vectorial facets (hence the sectors, facets, chimneys,...) with $\QD_{im}$ (as in \ref{1.12}) in the relatively imaginary case (this changes nothing in the tamely imaginary case). But it seems useless for us: see \ref{1.13}.4 and section \ref{s6}.

   \par b) Let $\qf$ be an automorphism of $\A$. Then $\vect\qf$  stabilizes $\QD_{im}^+$ and $\QD_{im}^-$ or exchanges them if $\qf$ is a Weyl automorphism (by definition) or if
$\QD$ is generated by $\QF$ \cite[4.2.15, 4.2.20 and 2.4.1]{By-96}.
 We say in general that $\qf$ is {\it imaginary-compatible} if $\vect\qf(\QD^+_{im})=\QD^\pm_{im}$ and $\qf$ permutes the imaginary walls (automatic \eg if $\QL_\qa=\R$, $\forall\qa\in\QD_{im}$).


    \end{enonce*}

    \subsection{Affine apartments for a split Kac-Moody group}\label{2.3b}

    \par We consider the group and valuation as in \ref{2.2}.

\par {\bf1)} We can build easily examples of pairs $(V,\qn)$ as in \ref{2.3}.1.
We choose a commutative extension of RGS $\qf:\shs\to\shs'=(\M,Y',(\qa'_i)_{i\in I},(\qa_i^{'\vee})_{i\in I}$ with $\shs'$ free and we set $V=Y'\otimes\R$.

\par There is an action $\qn_T$ of $T$ over $V$  by translations: for $t\in T$, $\qn_T(t)$ is the translation of vector $\qn_T(t)$ such that $\chi(\qn_T(t))=-\qo(\overline{\chi}(t))$ for $\chi\in X'$
 and $\overline\chi=\qf^*(\chi)\in X$.
 In other words $\qn_T$ is the map $-(\qf\otimes\qo)$ from $T=Y\otimes_\Z K^*$ to $V=Y'\otimes_\Z\R$.
  This action is $W^v-$equivariant.

  \par By  \cite[2.9]{Ru-06} there exists an affine action $\qn$ of $N$ over $V$ whose restriction to $T$ is $\qn_T$ and satisfying the properties asked in \ref{2.3}.1.
   Actually $N/$Ker$\qn_T$ is a semi-direct product by $T/$Ker$\qn_T$ of a group isomorphic to $W^v$ and generated by the images of $m(x_{\qa_i}(\pm{}1))$ for $i\in I$. This last group will fix the origin of $V$.


\par For $\shs'$ we may choose $\shs_{\M m}$, $\shs^l$ or (if $\shs$ is free) $\shs$ itself.
 We get thus $V=V^q,V^{xl}$ or $V^x$ and corresponding affine apartments $\A=\A^q,\A^{xl}$ or $\A^x$

  \begin{enonce*}[definition]{\quad2) Remarks}  Suppose $(V,\qn)$ as in 1) above.

  \par a) The kernel $Z_0=$Ker$\qn_T=$ Ker$\qn$ of $\qn$ contains the group $\g T(\sho)=Y\otimes\sho^*\simeq(\sho^*)^n$ of points of $\g T$ over $\sho$.
   It is actually equal to it except when  the image of the map $\qf^*:X'\to X$, $\chi\mapsto\overline\chi$ has infinite index \ie when $\qf$ is not injective.

  \par b)  We have $\qn(N)=W^v\ltimes(\overline{Y}\otimes_\Z\QL)$ and $W^a=W^v\ltimes(\overline{Q^\vee}\otimes_\Z\QL)$, where $\overline{Y}$ (resp. $\overline{Q^\vee}$) is the image by $\qf$ of $Y$ (resp. $Q^\vee=\sum_{i\in I}\,\Z\qa_i^\vee\subset Y$) in $V$.
   So there is equality in the simply connected case (in a strong sense: $Y=Q^\vee$) and only in this case when $\qf$ is injective (\eg $V=V^{xl}$ or $V=V^x$) and $\qo$ discrete.

   \par{\bf 3) General affine apartments}  a)  We consider now any pair $(V,\qn)$ as in \ref{2.3}.1.
   But we add the condition (useful in section \ref{s5}) that the kernel $Z_0=$ Ker$\qn$ contains $\g T(\sho)$.
   We speak then of a {\it suitable apartment} for $(\g G_\shs,\g T_\shs)$; apartments defined in 1) are suitable.

   \par Then $\qn\,\rule[-1.5mm]{.1mm}{3mm}_{\, T}$ induces a $\Z-$linear map $\overline\qn:Y\otimes\QL\to V$ and this map sends $\qa_i^\vee\otimes\ql$ to $-\ql\qa_i^\vee$: $\qa_i^\vee\otimes\ql$ is the class modulo $\g T(\sho)$ of $\qa_i^\vee(r)\in\g T(K)$ with $\qo(r)=\ql$.
   But $\qa_i^\vee(r)=m(x_{-\qa_i}(1))^{-1}.m(x_{-\qa_i}(r))$ by \cite[1.5, 1.6]{Ru-11}, \label{N13} so by the hypothesis in \ref{2.3}.1, $\qn(\qa_i^\vee(r))=s_{-\qa_i,0}\circ s_{-\qa_i,\qo(r)}$ which is the translation of vector $-\ql\qa_i^\vee$.
  In particular the $\Z-$linear relations  between the $\qa_i^\vee$  in $Y$ are also satisfied in $V$.

   \par By \ref{2.3}.1 and \ref{2.2}, we have also $\qa(\overline\qn(y\otimes\ql))=-\qa(y).\ql$.

   \par b) We choose $\QD_{im}$ as in \ref{1.1}.3 \ie generated by $\QF$ \cite[5.4]{K-90}, \cite[2.4.1]{By-96}. We have $\QL_\qa=\QL$, $\forall\qa\in\QF$ and we set
$\QL_\qa=\QL$, $\forall\qa\in\QD_{im}$. The system $\mathcal M$ of walls is discrete (resp. semi-discrete) if and only if we are in the classical discrete case (resp. if the valuation is discrete).

\par   If $\qf$ is an automorphism of $\A$ and $\qa\in\QD$, $\ql\in\QL$, then $x\in\qf(M(\qa,\ql))\iff0=\qa(\qf^{-1}(x))+\ql=\qa(\qf^{-1}(0))+\vect\qf(\qa)(x)+\ql$.
So $\qf(M(\qa,\ql))$ is a (real or imaginary) wall (of direction Ker$\vect\qf(\qa)$) if and only if $\qa(\qf^{-1}(0))\in\QL$.
By hypothesis this is true for $\qa\in\QF$, so this is also true for $\qa\in\QD_{im}\subset Q$; hence $\qf$ permutes the imaginary walls. Therefore any automorphism of $\A$ is imaginary-compatible.


    \end{enonce*}

    \subsection{Enclosures, facets, sectors and chimneys}\label{2.4}

    \par We come back to the general abstract case of \ref{2.3}; the following notions depend only on $\A$ (with $\cal M$) and $\mathcal M^i$.

    \par We consider filters in $\A$ as in \cite{GR-08} or \cite{Ru-08}, \cite{Ru-10}, \cite{Ru-11}. The reference for the following is  \cite{Ru-11} or \cite{Ru-10}. The {\it support} of a filter in $\A$ is the smallest affine subspace in $\A$ containing it.
    We identify a subset in $\A$ to the filter whose elements are the subsets of $\A$ containing this subset. We use definitions for filters (inclusion, union, closure, (pointwise) fixation or stabilization by a group) which coincide with the usual ones for sets when these filters are associated to subsets.

    \par {\bf1)} If $F$ is a filter in $\A$, we define several types of {\it enclosures} for $F$ (corresponding to different choices for a greater family of real or imaginary walls) \cf \cite[4.2.5]{Ru-11}:
    if $\QF\subset\shp\subset \QD$ and, $\forall \qa\in\shp$, $\QL_\qa\subset\QL'_\qa\subset\R$, then  $cl^\shp_{\QL'}(F)$ is the filter made of the subsets of $\A$ containing an element of $F$ of the form $\cap_{\qa\in\shp}\,D(\qa,\ql_\qa)$ with, for each $\qa\in\shp$, $\ql_\qa\in\QL'_\qa\cup\{+\infty\}$;
    in particular each $D(\qa,\ql_\qa)$ contains  the filter $F$ \ie is an element of this filter.
    When $\QL'_\qa=\QL_\qa$ (resp. $\QL'_\qa=\R$) $\forall\qa$, we write $cl^\shp:=cl^\shp_\QL$ (resp. $cl^\shp_\R:=cl^\shp_{\QL'}$); when $\QL'_\qa=\QL_\qa$, $\forall\qa\in\QF$ and $\QL'_\qa=\R$,  $\forall\qa\in\QD^{im}$ we write $cl^\shp_{ma}:=cl^\shp_{\QL'}$.
    We define $\QL_\qa''=\{k\in\R\mid cl^\shp_{\QL'}(D(\qa,k))=D(\qa,k)\}$ for $\qa\in\QD$, then $\QL_\qa''=\QL_\qa'\cup\frac{1}{2}\QL'_{2\qa}$ for $\qa\in\QF$  and $cl^\shp_{\QL'}=cl^\shp_{\QL''}$.
    In the case of \ref{2.3b}.3b with $\QL'=\QL$, $cl^\shp_{ma}=cl^\shp_{\QL''}=cl^\shp_{\QL'}$ \cite[1.6.2]{Ru-10}.

  \par   We define $cl^\#(F)$ (resp. $cl^\#_\R(F)$) as the filter made of the subsets of $\A$ containing an element of $F$ of the form $\cap_{j=1}^k\;D(\qb_j,\ql_j)$ for $\qb_j\in\QF$ and $\ql_j\in\QL_{\qb_j}\cup\{+\infty\}$ (resp. $\ql_j\in\R\cup\{+\infty\}$);
    $cl^\#$ is the enclosure map used by Charignon \cite[sec. 11.1.3]{Cn-10b}.

    \par In \cite{GR-08} (resp. \cite{Ru-10} or \cite{Ru-11}) one uses $cl^\QD_{}$ (resp. $cl^\QD_{ma}$, $cl^\QD_{\R}$, $cl^\QF_{}$, $cl^\QF_{\R}$ or $cl^\QD_{}$, $cl^\QF_{}$, $cl^\#_{}$) under the names $cl$ (resp. $cl$, $cl_\R$, $cl^{si}_{}$, $cl^{si}_{\R}$ or $cl$, $cl^{si}_{}$, $cl^{\#}_{}$).

  \par   One has: $cl^\#(F)\supset cl^{\QF}(F)\supset cl^{\QD}(F)\supset cl^{\QD}_{ma}(F)\supset cl^{\QD}_\R(F)\supset cl^{\QD^{ti}}_\R(F)=\overline{conv}(F)$ (closed convex hull), $cl^{\QF}(F)\supset cl^{\QF}_\R(F)\supset cl^\QD_\R(F)$ and some other clear inclusions.

    \par The maps $cl^{\QD^{ti}}_\R=\overline{conv}$, $cl^{\QF}$,$cl^{\QF}_\R$,  $cl^\#$, $cl^\#_\R$ (resp. $cl^{\QD}$, $cl^{\QD^{ti}}$, $cl^{\QD}_{ma}$, $cl^{\QD}_\R$) are equivariant with respect to automorphisms (resp. imaginary-compatible automorphisms) of $\A$.

        \par In the following, we choose  one of these enclosure maps ($cl^\shp_{\QL'}$, $cl^\#$ or $cl^\#_\R$) which we call $cl$. We say that $F$ is {\it enclosed} or  {\it $cl-$enclosed} if $F=cl(F)$.
        
        \par Actually $\SHM$ is entirely determined by $cl$ and $\QF\subset\vect\A^*$ (provided that $\QL_\qa'=\QL_\qa$ for $\qa\in\QF$): a (true) wall in $\SHM$ is an affine hyperplane which is $cl-$enclosed and with direction Ker$\qa$ for $\qa\in\QF$.
   The enclosure map does not always determine $\SHM^i$, but this set of imaginary walls is only used to determine $cl$.

    \par {\bf2)} A {\it local-facet} is associated to a point $x$ in $\A$ and a vectorial facet $F^v$ in $\vect\A$; it is the filter $F^l(x,F^v)=germ_x(x+F^v)$ intersection of $x+F^v$ with the filter of neighbourhoods of $x$ in $\A$.

    \par The {\it facet} or {\it $cl-$facet} associated to $F^l(x,F^v)$ and the enclosure map $cl=cl^\shp_{\QL'}$ (resp. $cl=cl^\#$ or $cl=cl^\#_\R$) is the filter $F(x,F^v)=F^\shp_{\QL'}(x,F^v)$ (resp. $F^\#(x,F^v)$ or $F^\#_\R(x,F^v)$) made of the subsets containing an intersection (resp. a finite intersection) of half spaces $D(\qa,\ql_\qa)$ or $D^\circ(\qa,\ql_\qa)$
     (at most one $\ql_\qa\in\QL''_\qa$ for each $\qa\in\shp$) (resp. with $\qa\in\QF$ and $\ql_\qa\in\QL_\qa$ or $\ql\in\R$)
     such that this intersection contains $F^l(x,F^v)$ \ie a neighbourhood of $x$ in $x+F^v$.

        \par The {\it closed-facet}  $\overline{F}(x,F^v)$ is the closure of $F(x,F^v)$, also $\overline{F}(x,F^v)=cl(F(x,F^v))=cl(F^l(x,F^v))$.
         Note that $F^l=F^{\QD^{ti}}_\R\subset F^{\QD}_\R\subset F^{\QF}_\R=F^{\#}_\R=F^l+V_0$ and 
        $\overline F^l=\overline F^{\QD^{ti}}_\R\subset \overline F^{\QD}_\R\subset \overline F^{\QF}_\R=\overline F^{\#}_\R=\overline F^l+V_0$,
        where $V_0$ is as defined in \ref{1.1b}.4.

        \par These  facets are called {\it spherical} (resp. {\it positive}, {\it negative}) if $F^v$ is. When $F^v$ is a vectorial chamber, these facets are {\it chambers} hence spherical.

        \par {\bf3)} A {\it sector} (resp. {\it sector-face}) is a $V-$translate $\g q=x+C^v$ (resp. $\g f=x+F^v$) of a vectorial chamber $C^v$ (resp. vectorial facet $F^v$). A {\it shortening} of a sector or sector-face $\g f=x+F^v$ is a sector or sector-face $\g f'=x'+F^v$ included in $\g f$.
        The {\it germ} of a sector $\g q=x+C^v$ (resp. sector-face $\g f=x+F^v$) is the filter $\g Q=germ_\infty(\g q)$ (resp. $\g F=germ_\infty(\g f)$) made of the subsets containing shortenings of $\g q$ (resp. $\g f$). The {\it direction} of $\g f=x+F^v$ or of its germ is $F^v$, its {\it sign} is the sign of $F^v$. When $F^v$ is spherical, we say that $\g f$ and $\g F$ are spherical or {\it splayed} ("\'evas\'e" in \cite{Ru-10} or \cite{Ru-11}).
     The {\it vertex} $x$ of $\g f=x+F^v$ is well defined by $\g f$ when $\A$ is essential.

        \par {\bf4)} A {\it chimney} or  {\it $cl-$chimney} is associated to a facet $F=F(x,F^v_0)$ (its {\it base}) and a vectorial facet $F^v$; it is the filter $\g r(F,F^v):=cl(F+F^v)=cl(F^l(x,F_0^v)+F^v)=:\g r(F^l,F^v)$ (containing $cl(F)+\overline{F^v}=\overline{F}+\overline{F^v}$).
        If $cl=cl^\shp_{\QL'}$, we write $\g r^\shp_{\QL'}(F,F^v)=\g r(F,F^v)$.

        \par A {\it shortening} of $\g r(F,F^v)$ (with $F=F(x,F_0^v)$)  is defined by $\xi\in\overline{F^v}$, it is the chimney $\g r(F(x+\xi,F_0^v),F^v)$.
        The germ of this chimney is the filter $\g R(F,F^v)$ made of the subsets containing a shortening of $\g r(F,F^v)$.
        The {\it direction} of $\g r(F,F^v)$ or $\g R(F,F^v)$ is $F^v$, its {\it sign} is the sign of $F^v$, it is said {\it splayed} if $F^v$ is spherical and {\it solid} (resp. {\it full}) if the direction of its support has a finite fixer in $W^v$ (resp. if its support is $\A$).

        \par\label{N4} For example the enclosure $cl(\g f)$ of a sector-face $\g f=x+F^v$ is a chimney of direction $F^v$; its germ is splayed 
         if and only if $\g f$ is spherical, it is full if (but not only if) it is a sector. A facet is a chimney and a chimney germ with direction the minimal vectorial facet $V_0=F^v_\pm{}(I)$; it is splayed or solid if and only if it is spherical, it is full if and only if it is a chamber.

        \begin{NB} In \cite{Ru-77} a chimney is a specific set among the sets of the chimney as defined above, the chimney germs are the same. In \cite{CL-11} P.E. Caprace and J. L\'ecureux introduce (generalized) sectors in any combinatorial building; in the classical discrete case for $\A$, these sectors are the enclosures of a facet and a chimney germ.
        \end{NB}

        \subsection{Bordered apartments}\label {2.5}

        \par Following \cite{Cn-10b}, we shall add to $\A$ some other apartments at infinity, see also \cite{Ru-10}.

        \par {\bf1)} {\bf Fa\c{c}ades} : For $F^v$ a vectorial facet in $V$, we consider the sets $\QD^m(F^v)=\{\qa\in\QD\mid\qa(F^v)=0\,\}$ and $\QF^m(F^v)=\QF\cap\QD^m(F^v)$ of roots. They are clearly systems of roots: if $F^v=F^v(J)$, then $\QD^m(F^v)=\QD^m(J)$ and $\QF^m(F^v)=\QF^m(J)$.

        \par We define $\A^{ne}_{F^v}$ as the affine space $V$ endowed with the set $\SHM(F^v)=\{M(\qa,\ql)\mid\qa\in\QF(F^v),\ql\in\QL\}$ of walls, the corresponding reflections and $\SHM^i(F^v)$ defined similarly using $\QD(F^v)$. Its points are written $(x,F^v)$ with $x\in V$.
        The essentialization $\A^{e}_{F^v}$ of $\A^{ne}_{F^v}$ is the quotient of $V$ by the vector space $\vect{F^v}$ generated by $F^v$ (with the corresponding walls and reflections); the class of $x\in V$ in $\A^{e}_{F^v}$ is written $[x+F^v]$.

        \par When $F_1^v\in F^{v*}$ \ie $F^v\subset\overline{F_1^v}$, we have $\vect{F^v}\subset\vect{F_1^v}$; so there is a projection $pr_{F_1^v}$ of $\A^{e}_{F^v}$ onto $\A^{e}_{F_1^v}$ :
        $pr_{F_1^v}([x+F^v])=[x+F_1^v]$. We also write $pr_{F_1^v}$ the evident map from $\A^{ne}_{F^v}$ onto $\A^{ne}_{F_1^v}$ or $\A^{e}_{F_1^v}$.

        \par Following \cite{Cn-10b}, we say that $\A^{e}_{F^v}$ (resp. $\A^{ne}_{F^v}$) is the (essential) {\it fa\c{c}ade} (resp. {\it non essential fa\c{c}ade}) of $\A$ in the {\it direction} $F^v$. A fa\c{c}ade is called {\it spherical} (resp. {\it positive} or {\it negative}) if its direction is spherical (resp. positive, or negative). The same things as in \ref{2.4} may be defined in each fa\c{c}ade.

        \par {\bf2)} {\bf Bordered apartments} : Let $\overline{\overline{\A}}$ (resp. $\overline{\A}^e$) be the disjoint union of all $\A^{ne}_{F^v}$ (resp. $\A^{e}_{F^v}$) for $F^v$ a vectorial facet in $V$ and let $\overline{\A}^i$ be the disjoint union of  $\A$ and all $\A^{e}_{F^v}$ for $F^v$ a non trivial vectorial facet in $V$.
        Then $\overline{\overline{\A}}$ (resp. $\overline{\A}^e$, $\overline{\A}^i$) is the {\it strong} (resp. {\it essential}, {\it injective}) {\it bordered apartment} associated to $\A$; its {\it main fa\c{c}ade} is $\A_0=\A$ (resp. $\A^e$, $\A$) of direction the trivial vectorial facet $V_0=F^v_\pm{}(I)$.

 \par       In the following we set $\overline{\A}=\overline{\overline{\A}}$  (resp. $\overline{\A}^e$, $\overline{\A}^i$), $\A^{}_{F^v}=\A^{ne}_{F^v}$ (resp. $\A^{e}_{F^v}$, $\A^{ne}_{F^v}$), \etc

        \par For $x\in\overline\A$, we write $F^v(x)$ the direction of the fa\c{c}ade containing $x$.
        For $\qe=\pm{}$, $\overline\A^\qe$ (resp. $\overline\A_{sph}$) is the union of the fa\c{c}ades of sign $\pm{}$ (resp. the spherical fa\c{c}ades) in $\overline\A$ and $\overline{\A}^{\qe}_{sph}=\overline{\A}^{\qe}_{}\cap\overline{\A}^{}_{sph}$.

        \par To each wall $M(\qa,\ql)$ or half-apartment $D(\qa,\ql)$ is associated a wall $\overline{M}(\qa,\ql)$ or half-apartment $\overline{D}(\qa,\ql)$ of $\overline{\A}$:  $\forall F^v$, $\overline{M}(\qa,\ql)\cap\A^{}_{F^v}$ (resp. $\overline{D}(\qa,\ql)\cap\A^{}_{F^v}$) is the projection of $M(\qa,\ql)$ (resp. $D(\qa,\ql)$) on $\A^{}_{F^v}$ if $\qa(F^v)=0$, the empty set if $\qa(F^v)<0$ and the empty set (resp. $\A^{}_{F^v}$) if $\qa(F^v)>0$.
       With these definitions we may define enclosures $cl(\QO)$  in $\overline{\A}$.

    \par    The essentialization of  $\overline{\A}$ is $\overline{\A}^e$, which is the bordered apartment defined in \cite{Cn-10b}. We shall focus on $\overline{\A}^i$, as $\overline{\A}^e$ is $\overline{\A}^i$ if we choose $V=V^q$.

        \par The set $\overline{\A}^{i\qe}_{sph}$  is the microaffine apartment of sign $\qe$ as in \cite{Ru-06} (in its Satake realization). The corresponding object $\overline{\overline{\A}}^\qe_{sph}$ is closer to the apartments of \cite[2.3]{Ru-06}.

        \par {\bf3)} {\bf Links with sector-face germs and chimney germs} :
        There is a one to one correspondence between the points of $\overline{\A}^e$ and the sector-face germs in $\A$.
        To $\g F=germ_\infty(x+F^v)$ corresponds the class $[x+F^v]$ of $x$ modulo $\vect{F^v}$ in $\A_{F^v}^e$, also written $[\g F]$.
        When $\A=\A^q$ is essential, the points in $\overline{\overline\A}$ correspond bijectively to the sector-faces in $\A$.

        \par By definition $\A_{F^v}$ itself is an affine apartment with walls defined using $\QF^m(F^v)$. The closed-facets in $\A_{F^v}$ correspond bijectively with the chimney germs of $\A$ of direction $F^v$. To $\g R=\g R(F,F^v)$ corresponds the closed facet $[\g R]$ which is the filter made of the subsets in $\A_{F^v}$ containing $\{\,[\g F]\mid\g F\subset\QS\}$ for some subset $\QS$ of $\A$ containing $\g R$.

        \par Actually $\g R$ is splayed (resp. solid, full) if and only if $[\g R]$ is in a spherical fa\c{c}ade (resp. is spherical in its fa\c{c}ade, is a chamber in its fa\c{c}ade).

         \par {\bf4)} {\bf Topology} : On $\overline{\A}^i$ (or $\overline{\A}^e$) one can define a topology inducing the affine topology on each fa\c{c}ade and such that $\overline{\A}$  (or $\overline{\A}^e$) is the closure of $\A_0=\A$ (or $\A^e$) which is open in $\overline{\A}$  (or $\overline{\A}^e$) \cite[11.1.1]{Cn-10b}:

         \par For a non trivial vectorial facet $F^v$, $x\in\A$ and $U$ an open subset of $\A$ containing $x$, we set $\shv(U,F^v)=(U+F^v)\cup\{\,[\g F]\mid\g F\subset U+F^v\,\}$.
         When $x,U$ vary but $F^v$ and $germ_\infty(x+F^v)$ are fixed, we get a fundamental system of neighborhoods of $[x+F^v]$ in $\overline{\A}^i$ (or $\A^e$).

         \par For this topology the closure $\overline\A_{F^v}$ of a fa\c{c}ade $\A_{F^v}$ (with $F^v$ non trivial) is the union of the fa\c{c}ades $\A_{F_1^v}$ for $F_1^v\in F^{v*}$ \ie $F^v\subset\overline{F_1^v}$; we take this for definition of $\overline\A_{F^v}$ when $\overline\A=\overline{\overline\A}$.
          In the classical case, $\overline{\A}^i$ is a compactification of $\A$ called the Satake or polyhedral compactification, see \eg \cite{Cn-08}.

          \par {\bf5)} {\bf Automorphisms} : Any automorphism $\qf$ of $\A$ may be extended to an automorphism $\overline\qf$ of $\overline{\A}$.
          For $\overline{\A}^i$ or $\overline{\A}^e$ the image of $[x+F^v]\in\A_{F^v}$ is $[\qf(x)+\vect{\qf}(F^v)]\in\A_{\vect{\qf}(F^v)}$ and $\overline{\qf}$ is continuous. For $\overline{\overline{\A}}$, ${\overline{\qf}}(x,F^v)=(\qf(x),\vect{\qf}(F^v))$. Automorphisms permute the fa\c{c}ades.

          \par In particular, the action $\qn$ of $N$ on $\A$ may be extended as an action on  $\overline{\A}$ which is also written $\qn$.

\section{Hovels and bordered hovels}\label{s3}

\par The main reference for this section is \cite{Cn-10b}, but we recall all needed definitions.

\subsection{Wanted}\label{3.1}

Let $G$ be a group, $N$ a subgroup and $\qn$ an action of $N$ over some space $A$. We want a space $\SHI$ containing $A$ as a subset and an action of $G$ on $\SHI$ such that $G.A=\SHI$,  $A$ is stable under $N$ and the induced action is $\qn$.

\par Following F. Bruhat and J. Tits \cite[7.4.1]{BtT-72}, a good way to get it, is to define $\SHI$ as a quotient:

\par\noindent$\SHI=G\times A/\sim_\shq$ with $(g,x)\sim_\shq(h,y)\iff\exists n\in N$ such that $y=\qn(n).x$ and $g^{-1}hn\in Q(x)$

  \par\qquad where $\shq=(Q(x))_{x\in A}$ is a family of subgroups of $G$.

  \par\noindent The action of $G$ on $\SHI$ is induced by the left multiplication on $G$, we have a map $i:A\to\SHI$ : for $x\in A$, $i(x)$ is the class of $(1,x)$.

  \par We are interested in the groups $G$ and $N$ as in \ref{2.1} (in particular associated to a valued root datum) and an action $\qn$ as in \ref{2.3}.1 or \ref{2.5}.5. 
  As in \cite{Cn-10}, \cite{Cn-10b} we skip a possible generalization to RGD-systems.
  We shall now precise the conditions on the family $(Q(x))_{x\in A}$, following \cite[11.2.1 and 11.3]{Cn-10b}.

  \subsection{Families of parahoric subgroups}\label{3.2}

  \par {\quad\bf1)} Let $A=\overline\A$ be $\overline{\overline{\A}}$, $\overline{\A}^e$ or $\overline{\A}^i$ and $\qn$ the corresponding action of $N$.
  For a family $\shq=(Q(x))_{x\in A}$, we then write $\overline\SHI=G\times\overline\A/\!\sim_\shq$, it is the {\it bordered hovel} associated to the situation.
  The {\it (bordered) apartments} of $\overline\SHI$ are the sets $g.i(\overline\A)$ for                         $g\in G$.

  \par For a subset or a filter $\QO$ in $\overline\A$ (resp. $\A$), $\qa\in\QF$ and $\Psi\subset\QF$, we define   $\overline D(\qa,\QO)=\overline D(\qa,\sup(-\qa(\QO)))$ (resp. $D(\qa,\QO)=\overline D(\qa,\QO)\cap\A$),  $U_\qa(\QO)=\{u\in U_\qa\mid\QO\subset\overline D(\qa,\qf_\qa(u))\}$ (hence $U_\qa(y)=U_{\qa,-\qa(y)}$), $N(\QO)=\{n\in N\mid n\; \mathrm{fixes}\; \QO\}$, $G(\Psi,\QO)=\langle U_\qa(\QO)\mid\qa\in\Psi\rangle$ and $G(\QO)=G(\QF,\QO)$, written $U_\QO$ in \cite[{\S{}} 3.2]{GR-08}  or \cite[4.6a]{Ru-11}.
  As in these references we write $U_\QO^{++}:=G(\QF^+,\QO)\subset U_\QO^+:=U^+\cap G(\QO)$ and the same things with $-$.
  It often happens that $U_\QO^{++}\not=U_\QO^+$ \cite[4.12.3a]{Ru-11}, see also \ref{5.11}.4 below.

\par  It is clear that $N(\QO)\subset N(F^v)$ normalizes $G(\QO)$ and $U(F^v)$ and that $G(\QO)\subset G(\QF(F^v),\QO)\subset P(F^v)$ normalizes  $U(F^v)$, if $\QO\cap\A_{F^v}\not=\emptyset$. We always have $G(\Psi,\QO)=G(\Psi,cl^\#(\QO))$.
  When $\QO=\emptyset$, we have $G^\emptyset=Z_0.G(\emptyset)$.
  For $\QO\not=\emptyset$, the group $N_\QO^{min}=N\cap G(\QO)$ is normal in $N(\QO)$.
  Its image $W_\QO^{min}$ by $\qn$ is in $W^a$ and generated by the reflections with respect to the (true) walls of $\A$ containing $\QO$.
  This group $W_\QO^{min}$ is isomorphic to its image $W^v_\QO$ in $W^v$ \cite[{\S{}} 3.2]{GR-08}.

  \begin{enonce*}[definition]{\quad2) Definition} A family $\mathcal{Q}=(Q(x))_{x\in\overline{\A}}$ of subgroups of $G$ is a {\it family of parahoric subgroups} if it satisfies the following axioms:

  \par\noindent (For the convenience of the reader, we give to axioms the names in \cite{Cn-10b} and shorter names.)

  \par\label{N16} (P1) (para 0.1) For all $x\in\overline{\A}$, $U(F^v(x))\subset Q(x)\subset P(F^v(x))$

    \par (P2) (para 0.2) For all $x\in\overline{\A}$, $N(x)\subset Q(x)$

    \par (P3) (para 0.3) For all $x\in\overline{\A}$, for all $\qa\in\QF$, for all $\ql\in\R$, if $x\in\overline{D}(\qa,\ql)$, then $U_{\qa,\ql}\subset Q(x)$

    \par (P4) (para 0.4) For all $x\in\overline{\A}$, for all $n\in N$, $nQ(x)n^{-1}=Q(\qn(n).x)$.
  \end{enonce*}

  \par If $\QO$ is a subset of $\overline{\A}$, we define $Q(\QO)=\bigcap_{x\in\QO}\,Q(x)$. If $\QO$ is a filter in $\overline{\A}$, we define $Q(\QO)=\bigcup_{\QO'\in\QO}\,Q(\QO')$. In both cases $Q(\QO)$ is a subgroup of $G$.

    \begin{enonce*}[definition]{\quad3) Easy consequences} a) Axiom (P4) tells that $\sim_\shq$ is an equivalence relation [\lc, 11.3.2].
    By axiom  (P3), $U_{\qa,\ql}$ fixes (pointwise) $\overline{D}(\qa,\ql)$.
    Axiom (P2) tells that the map $i:\overline\A\to\overline\SHI$ is $N-$equivariant, but it is not clearly one to one, \cf \ref{3.3}.2 below.

    \par b) [\lc, 11.3.8] The fixer of $i(x)\in i(\overline\A)$ in $G$  is $G_x=Q(x)$.
    More generally for a subset or filter $\QO$ in $g.i(\overline\A)\subset\overline\SHI$, we define $G_\QO=Q(\QO)$ as the fixer $g.Q(g^{-1}.\QO).g^{-1}$ of $\QO$.

    \par For $x\in\overline\A$ and $g\in G$, if $g.i(x)\in i(\overline\A)$, then there exists $n\in N$ with $g.i(x)=n.i(x)$.
     For a subset or filter $\QO$ in $\overline\A$, the set $G(\QO\subset\overline\A)=\{g\in G\mid G.i(\QO)\subset i(\overline\A)\}$ is equal to $\bigcap_{x\in\QO}\,NQ(x)$ (if $\QO$ is a set) or $\bigcup_{\QO'\in\QO}\,G(\QO'\subset\overline\A)$ (if $\QO$ is a filter).

     \par For all $x\in\overline\SHI$, $Q(x)$ is transitive on the apartments containing $x$.

    \par c) If $F^v$ is a vectorial facet of $\A^v$, axiom (P1) tells that the map $P(F^v)\times\A_{F^v}\to\overline\SHI$ induces a map $G(F^v)\times\A_{F^v}/\sim_{F^v}\to\overline\SHI$, where $\sim_{F^v}$ is the equivalence relation defined using $\shq\,\rule[-1.5mm]{.1mm}{3mm}_{\,\A_{F^v}}$ and $N(F^v)$.
    This map is one to one, as $y=\qn(n).x$ with $x,y\in\A_{F^v}$ and $n\in N$ implies $n\in N(F^v)=N\cap P(F^v)$.

   \par The image of this map is the {\it fa\c{c}ade} $\SHI_{F^v}$ of $\overline\SHI$ in the direction $F^v$.
    In particular the {\it main fa\c{c}ade} of $\overline\SHI$ is the {\it hovel} $\SHI=G\times\A/\sim$ where $\sim$ is  defined using $\shq\,\rule[-1.5mm]{.1mm}{3mm}_{\,\A}$ and $N$.
     Actually each fa\c{c}ade $\SHI_{F^v}$ is an hovel, the main fa\c{c}ade of $\overline\SHI_{F^v}=\bigcup_{F_1^v\in F^{v*}}\,\SHI_{F^v_1}$ associated to $\overline\A_{F^v}$, $\shq\,\rule[-1.5mm]{.1mm}{3mm}_{\,\overline\A_{F^v}}$ and the valuated root datum $(G(F^v),(U_\qa)_{\qa\in\QF^m(F^v)},Z,(\qf_\qa)_{\qa\in\QF^m(F^v)})$.

    \par d) By (P1) and (P3), if $\QO\subset\A_{F^v}$ is non empty, then $G(\QF^m(F^v),\QO)\subset G(\QO)\subset U(F^v)\rtimes G(\QF^m(F^v),\QO)\subset Q(\QO)=U(F^v)\rtimes(M(F^v)\cap Q(\QO))$.

    \par\label{N16} e) If $\shq$ is a family of parahorics and $x\in\overline{\A}$, then $Q(x)\supset P(x):=\langle N(x), G(x),U(F^v(x))\rangle=N(x).G(x).U(F^v(x))$.
    So it is clear that $\mathcal{P}=(P(x))_{x\in\overline{\A}}$ is the {\it minimal family of parahorics}. In the classical (= spherical) case it is the right family; this is the reason for axiom (P6) below. But it is not clear in general that $\shp$ satisfies axiom (P5) below.
    Note that, even for $x\in\A$, $P(x)$ is seldom equal to $P_x$, as defined in \cite[5.14]{Ru-11} or \cite[3.12]{GR-08}.

  \end{enonce*}

    \begin{enonce*}[definition]{\quad4) Definition} A {\it good family of parahorics} is a family $\shq$ of parahorics satisfying moreover:

    \par(P5) (para inj) For all $x\in\overline{\A}$, $N(x)=Q(x)\cap N$

     \par(P6) (para sph) For all $x\in\overline{\A}$,  if $F^v(x)$ is spherical, $Q(x)=P(x)$
     \ie $\shq\,\rule[-1.5mm]{.1mm}{3mm}_{\,\overline\A_{sph}}=\shp\,\rule[-1.5mm]{.1mm}{3mm}_{\,\overline\A_{sph}}$.

\par (P7) (para 2.2)(sph)  If $F^v$ is a spherical facet, $F_1^v\subset\overline{F^v}$ and $x\in\A_{F_1^v}$
\par \qquad\qquad then \qquad $NQ(x)\cap NP(F^v)=NQ(\{x,pr_{F^v}(x)\})$.

  \end{enonce*}

    \begin{enonce*}[definition]{\quad5) Definition} A {\it very good family of parahorics} is a good family $\shq$ of parahorics satisfying moreover:

     \par(P8) (para dec) For all $x\in\overline{\A}$, for all chamber $C^v\in F^v(x)^*$,
 \par\qquad\qquad\qquad\label{N23}    $Q(x)=(Q(x)\cap U(C^v)).(Q(x)\cap U(-C^v)).N(x)$

 \par (P9) (para 2.1+)(sph)  If $F^v$ is a spherical facet, $F_1^v\subset\overline{F^v}$ and $x\in\A_{F_1^v}$
\par \qquad\qquad then \qquad $Q(x)\cap P(F^v)=Q(\overline{x+F^v})$

\par\noindent where $x+F^v=pr_{F_1^v}(x_1+F^v)$ for any $x_1\in\A$ with $pr_{F_1^v}(x_1)=x$ and $\overline{x+F^v}$ is the union of the sets $pr_{F_2^v}(x+\overline{F^v}+\vect{F_2^v})$ for $F_1^v\subset\overline{F_2^v}\subset\overline{F^v}$ (it is the closure of $x+F^v$ when $\overline\A=\overline\A^e$ or $\overline\A^i$).

   \par (P10) If $x<y$ or $y<x$ in $\A_{F^v}$, then $Q(]x,y])\subset Q(x)$ \ie $Q(]x,y])=Q([x,y])$
    \par\noindent where $]x,y]=[x,y]\setminus\{x\}$ is an half-open-segment.

  \end{enonce*}

      \begin{enonce*}[definition]{\quad6) Remarks} a) (P8) is an important tool for calculations. (P7)  and (P9) give links between $\shq$ and $\shq\,\rule[-1.5mm]{.1mm}{3mm}_{\,\overline\A_{sph}}$ which is well known by (P6).

      \par b) By [\lc, 11.9.2] a consequence of (P9) is the following condition:

       \par (P9-) (para 2.1+-)(sph)  If $F^v$ is a vectorial facet and $g\in U(F^v)$, there exists $x\in\A$ such that $g\in Q(\overline{x+C^v})$ for all chamber $C^v\in F^{v*}$.

       \par c) For $x$, $F^v=F^v(x)$ and $C^v$ as in (P8), suppose $x\notin\A$ \ie $F^v$ non trivial.
       Then we have $Q(x)\cap U(C^v)=(Q(x)\cap M(F^v)\cap U(C^v))\ltimes U(F^v)$.
       Now, by the uniqueness in Birkhoff decomposition (\ref{1.3c}.2), $P(F^v)\cap U(-C^v)=M(F^v)\cap U(-C^v)=M(-F^v)\cap U(-C^v)$ which is a "maximal unipotent" subgroup (opposite $M(F^v)\cap U(C^v)$) in $M(F^v)=M(-F^v)$; hence $Q(x)\cap U(-C^v)=Q(x)\cap M(-F^v)\cap U(-C^v)$.
       Now, if $F^v$ is spherical, we can give another explanation: $M(F^v)\cap U(-C^v)=M(F^v)\cap U(C_-^v)$ where $C_-^v$ is the chamber opposite to $C^v$ in $F^{v*}$; so
       $Q(x)\cap U(-C^v)=Q(x)\cap M(F^v)\cap U(C^v_-)$.

       \par d) Except (P9) all axioms impose relations between a single fa\c{c}ade $\A_{F^v}$ and the spherical fa\c{c}ades $\A_{F^v_1}$ for $F_1^v\in F^{v*}$.
       We may fix $F_0^v$  and take $x\in \A_{F^v_0}\cup\overline\A_{sph}$ in the axioms, then we get the same results.
       So, starting with \ref{3.4}, we shall use actually a family $\shq$ of groups $Q(x)$ for $x\in \A_{}\cup\overline\A_{sph}$ with the corresponding axioms.

    \par e) A priori a good family of parahorics has no property of continuity. This is the reason of the (weak) axiom (P10). But without it everything in this section is still true (except when the contrary is explicitly told). This axiom (P10) is satisfied by the minimal family $\shp$.

    \par f) If $\shq$ is a very good family for $(G,(U_\qa)_{\qa\in\QF},Z,(\qf_\qa)_{\qa\in\QF})$ then we define $Q^\emptyset(x)=Q(x)\cap G^\emptyset$.
    We have $Q(x)=Q^\emptyset(x).N(x)$ (by (P8)) and $\shq^\emptyset$ is a very good family of parahorics for $(G^\emptyset,(U_\qa)_{\qa\in\QF},Z^\emptyset,(\qf_\qa)_{\qa\in\QF})$ (defined in \ref{2.3}.3c).
  \label{N24}  The two bordered hovels associated to $\overline\A$ and $(G,\shq)$ or $(G^\emptyset,\shq^\emptyset)$ are canonically isomorphic.

      \end{enonce*}

      \subsection{Bordered hovels associated to good families}\label{3.3}

      \par We explain now some of the abstract results of \cite{Cn-10b} (or \cite{Cn-10}).
     So let $\shq$ be a good family of parahorics (if it exists) and $\overline\SHI$ be the associated bordered hovel.

    \par {\bf1)} By Bruhat-Tits theory and (P6) $\shq$ is well known on the spherical fa\c{c}ades [\lc, 11.2.3]:
    \eg the results of (P8) and (P9) are true when $F^v(x)$ is spherical, for $\QO$ in a spherical fa\c{c}ade $\A_{F^v}$, $Q(\QO)=U(F^v)\rtimes(N(\QO).G(\QF^m(F^v),\QO))=N(\QO).Q(cl^\#(\QO))$ and
 $G(\QO\subset\overline\A)=N.Q(\QO)$.
  \par  Actually for $F^v$ spherical $\SHI_{F^v}$ is the Bruhat-Tits building of the classical valuated root datum $(G(F^v),(U_\qa)_{\qa\in\QF^m(F^v)},Z,(\qf_\qa)_{\qa\in\QF^m(F^v)})$ (with the facets associated to $cl$).

    \par {\bf2)} The minimal family $\shp$ satisfies also (P5) and (P6). Axiom (P5) tells us that $i:\overline\A\to\overline\SHI$ is one to one [\lc, 11.3.4]; we identify $\overline\A$ and $i(\overline\A)$. The stabilizer of $\overline\A$ in $G$ is $N$ [\lc, 11.3.5].

   \par {\bf3)  Iwasawa decomposition} [\lc, 11.4.2]: For a chamber $C^v$ or a facet $F^v$ in $\A^v$ and a facet $F\subset\overline\A$, we have $G=U(C^v).N.G(F)=U(F^v).N.Q(cl^\#(F))=U(F^v).N.Q(F)$.
   
 \par Actually we may replace here $F$ by any narrow filter (see \cite[3.2, 3.6]{GR-08}), \eg any segment germ $[x,y)\subset\A$, preordered or not (\cf \ref{3.6}.1).

   \par {\bf4) Bruhat-Birkhoff-Iwasawa decomposition} [\lc, 11.5] : Let $F_1\subset\A_{F_1^v}$ and $F_2\subset\A_{F_2^v}$ be two facets with $F_1^v$ or $F_2^v$ spherical.  \label{N6} Then
   $G=U(F_1^v).G(\QF^m(F_1^v),F_1).N.G(\QF^m(F_2^v),F_2).U(F_2^v)=Q(cl^\#(F_1)).N.Q(cl^\#(F_2))=Q(F_1).N.Q(F_2)$.
    If $F_1$ and $F_2$ are facets in $\overline\SHI$ and $F_1$ or $F_2$ is in a spherical fa\c{c}ade, then there is an apartment of $\overline\SHI$ containing $F_1$ and $F_2$ (even $cl^\#(F_1)$ and $cl^\#(F_2)$).

    \par {\bf5) Projection} : Let $F^v$ be a spherical facet and $F_1^v\subset\overline{F^v}$.
    Then, by (P7), the projection $pr_{F^v}$ of $\A_{F_1^v}$ onto $\A_{F^v}$ extends to a well defined map $pr_{F^v} : \SHI_{F_1^v}\to\SHI_{F^v}$ between the corresponding fa\c{c}ades.
    For each $g\in G$, $g.pr_{F^v}(x)=pr_{gF^v}(gx)$ [\lc, prop. 11.7.3].

    \par {\bf6)} If $\overline\A=\overline\A^e$, then (P8) is satisfied by any good family of parahorics [\lc, 11.7.5].

    \par {\bf7)} Let $\shq$ be a good family of parahorics for $\overline\A=\overline\A^e$, satisfying moreover (P9) or (P9-). Suppose $\QO\subset\overline\A$ is in $\overline\A^\qe$ and intersects non trivially $\overline\A^\qe_{sph}$ or intersects non trivially $\overline\A^+_{sph}$ and $\overline\A^-_{sph}$.
    Then $G(\QO\subset\overline\A)=N.Q(\QO)$, hence $Q(\QO)$ is transitive on the apartments containing $\QO$.
     If $\QO$  intersects non trivially $\overline\A^+_{sph}$ and $\overline\A^-_{sph}$, then $Q(\QO)=N(\QO).Q(cl^\#(\QO))$ : $cl^\#(\QO)$(and also $cl(\QO)$,...) is well defined in $\overline\SHI$ independently of the apartment containing $\QO$. [\lc, section 11.9.2]

     \par {\bf8)} One can find in \LC many other implications between the various axioms. Actually Charignon introduces also useful notions of functoriality \ie the possibility of embedding the valuated root datum in greater ones, with arbitrarily large subsets $\QL_\qa$ of $\R$ and various good compatibilities. We shall not explain this, as it is more natural in the framework of split Kac-Moody groups over valuated fields on which we shall  concentrate in the next section.

   \begin{prop}\label{3.4} Let $\shq$ be a  good family of parahorics satisfying (P9) and $\QO$ be a non empty subset or filter in $\A$.

  \par a)  Let $F^v\subset\overline{C^v}\subset\A^v$ be a spherical vectorial facet in the closure of a chamber and $\qa_1,\cdots,\qa_n\in\QF$ be the non divisible roots such that $\qa(C^v)>0$ and $\qa(F^v)=0$.
 Then:
   \par\qquad $Q(\QO)\cap P(C^v)=(Q(\QO)\cap U(C^v)) \rtimes Z_0$
  \par\qquad $Q(\QO)\cap U(C^v)=(Q(\QO)\cap U(F^v)) \rtimes (Q(\QO)\cap U(C^v)\cap M(F^v))$
   \par\noindent and $Q(\QO)\cap U(C^v)\cap M(F^v)=U_{\qa_1}(\QO).\cdots.U_{\qa_n}(\QO)$ with uniqueness of the decomposition.

   \par b) Let $C^v\subset\A^v$ be a chamber. Then the set $Q^{dec}(\QO,C^v)=(Q(\QO)\cap U(C^v)).(Q(\QO)\cap U(-C^v)).N(\QO)$ depends only of the sign of the chamber $C^v$.
   \end{prop}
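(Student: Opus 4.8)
The plan is to reduce all of a) to classical Bruhat--Tits facts inside the reductive Levi factor $M(F^v)$ --- whose fa\c{c}ade $\SHI_{F^v}$ is a Bruhat--Tits building (\ref{3.3}.1) --- using (P9) as the bridge that turns intersections like $Q(\cdot)\cap M(F^v)$ into fixators in that building; the only genuinely geometric input, needed just for the torus part of the first equation, I treat separately. I would prove everything first for a single point $x\in\QO$ and then pass to general $\QO$: the ambient factorisations $P(C^v)=Z\ltimes U(C^v)$, $U(C^v)=U(F^v)\rtimes(U(C^v)\cap M(F^v))$ and $U(C^v)\cap M(F^v)=\prod_kU_{\qa_k}$ are unique and independent of $x$, so $\bigcap_{x\in\QO}$ (and the filtered union $\bigcup_{\QO'\in\QO}$) preserves each stated decomposition. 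Two consequences of $F^v\subset\overline{C^v}$ are used freely: $\{\qa\in\QF\mid\qa(F^v)>0\}\subseteq\{\qa\in\QF\mid\qa(C^v)>0\}$, whence $U(F^v)\subseteq U(C^v)$ and $U(C^v)=U(F^v)\rtimes(U(C^v)\cap M(F^v))$ with $U(C^v)\cap M(F^v)=\prod_kU_{\qa_k}$ the unipotent radical (unique ordered factorisation, $M(F^v)$ reductive) of the Borel of $M(F^v)$ cut out by $C^v$; and $M(C^v)=Z$, $\QF^m(C^v)=\emptyset$.

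For the third equation, $\prod_kU_{\qa_k}(\QO)\subseteq Q(\QO)\cap U(C^v)\cap M(F^v)$ with uniqueness of the factorisation is immediate: $U_{\qa_k}(\QO)\subseteq Q(\QO)$ by (P3), $U_{\qa_k}(\QO)\subseteq M(F^v)\cap U(C^v)$ by construction, and uniqueness holds already in the reductive group $M(F^v)$. For the converse, (P9) with $F^v$ and $F^v_1=V_0$ gives $Q(x)\cap P(F^v)=Q(\overline{x+F^v})$; since $\overline{x+F^v}$ contains the point $pr_{F^v}(x)$ of $\SHI_{F^v}$, any $g\in U(C^v)\cap M(F^v)$ fixing $\overline{x+F^v}$ fixes $pr_{F^v}(x)$ in the Bruhat--Tits building of $M(F^v)$, so by \cite{BtT-72} it lies in $\prod_kU_{\qa_k}(pr_{F^v}(x))=\prod_kU_{\qa_k}(x)$ (the $\qa_k$ vanish on $\vect{F^v}$, so $U_{\qa_k}$ is constant along $\vect{F^v}$--cosets). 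The second equation goes the same way, its $\supseteq$ being obvious: for $g\in U(C^v)$ fixing $\overline{x+F^v}$ write $g=u_1u_2$ with $u_1\in U(F^v)$, $u_2\in U(C^v)\cap M(F^v)$; as $U(F^v)$, the unipotent radical of $P(F^v)$, acts trivially on $\SHI_{F^v}$ (\ref{3.2}.3c), $u_2$ also fixes $pr_{F^v}(x)$, hence $u_2\in\prod_kU_{\qa_k}(x)\subseteq Q(\overline{x+F^v})$ and $u_1=gu_2^{-1}\in Q(\overline{x+F^v})\cap U(F^v)=Q(x)\cap U(F^v)$.

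For the first equation the only substantive point is that $g=zu\in Q(\QO)$ with $z\in Z$, $u\in U(C^v)$ forces $z\in Z_0$; then $u=z^{-1}g\in Q(\QO)$ and the semidirect decomposition is formal (using $Z_0\subset N(\QO)\subset Q(\QO)$, that $Z_0$ normalises $Q(\QO)\cap U(C^v)$ and meets $U(C^v)$ trivially, and $Q(\QO)\cap P(C^v)\cap Z=Z_0$, which for a subset reduces via (P5) to $Q(x)\cap Z=N(x)\cap Z=Z_0$, $Z$ acting on $\A$ by translations). Fix $x\in\QO$; (P9) with $F^v=C^v$ gives $Q(x)\cap P(C^v)=Q(\overline{x+C^v})$, and $\overline{x+C^v}$ contains the sector $x+\overline{C^v}$ of $\A$, which $g$ therefore fixes pointwise. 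On the other hand $u\in U(C^v)$ is a finite product of elements of root groups $U_\qa$ with $\qa(C^v)>0$, each lying in some $U_{\qa,\ql}$ and so (by (P3)) fixing a half-apartment $D(\qa,\ql)$ containing a sub-sector of direction $C^v$; intersecting finitely many of these, $u$ fixes pointwise a sub-sector $\g q\subseteq x+\overline{C^v}$, on which $y=gy=zuy=z\cdot y=y+\vect{v_z}$, forcing $\vect{v_z}=0$, i.e.\ $z\in Z_0$. This sector-fixing step is the one geometric property of the hovel that is invoked.

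For b) I would use the gallery reduction: two chambers of the same sign are joined by a finite gallery, so it suffices to compare $C^v$ and $D^v=s_\qb C^v$ adjacent along a panel $F^v_1$, $\qb$ being the root with $\qb(C^v)>0$, $\qb(F^v_1)=0$; then $M(F^v_1)=\langle Z,U_\qb,U_{-\qb}\rangle$ is reductive of semisimple rank one, $M(-F^v_1)=M(F^v_1)$, $U(C^v)\cap M(F^v_1)=U_\qb$, $U(-C^v)\cap M(F^v_1)=U_{-\qb}$, $U(D^v)\cap M(F^v_1)=U_{-\qb}$, $U(-D^v)\cap M(F^v_1)=U_\qb$. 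Applying a) (second equation) to $(C^v,F^v_1),(-C^v,-F^v_1),(D^v,F^v_1),(-D^v,-F^v_1)$ and using that $U_{\pm\qb}(\QO)\subseteq M(F^v_1)\cap Q(\QO)$ normalises $A_{\pm1}:=Q(\QO)\cap U(\pm F^v_1)$ (since $M(F^v_1)$ normalises $U(\pm F^v_1)$), one gets $Q^{dec}(\QO,C^v)=A_1A_{-1}\,U_\qb(\QO)U_{-\qb}(\QO)\,N(\QO)$ and $Q^{dec}(\QO,D^v)=A_1A_{-1}\,U_{-\qb}(\QO)U_\qb(\QO)\,N(\QO)$, so it remains to see $U_\qb(\QO)U_{-\qb}(\QO)N(\QO)=U_{-\qb}(\QO)U_\qb(\QO)N(\QO)$. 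This follows from axiom (para dec), automatic in the reductive group $M(F^v_1)$ (\ref{3.3}.1, \cite{BtT-72}), applied to $pr_{F^v_1}(\QO)$: rewriting an element of $U_\qb(\QO)U_{-\qb}(\QO)=U_\qb(pr_{F^v_1}(\QO))U_{-\qb}(pr_{F^v_1}(\QO))$ in the opposite order there, the resulting $N$--factor of $M(F^v_1)$ lies in $Q(\QO)\cap N=N(\QO)$ by (P3), giving the inclusion and, by symmetry, the equality. The step requiring most care throughout is exactly this bridging via (P9): identifying $Q(\QO)\cap M(F^v)$--type data with Bruhat--Tits data of the reductive Levi, tracking the fa\c{c}ades of $\overline\SHI$, the $\vect{F^v}$--coset invariance of the roots in play, and in b) the distinction between $N(\QO)$ inside the ambient $N$ and the $N$ of the Levi's root datum.
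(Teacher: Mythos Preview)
Your proof is correct and follows essentially the same route as the paper: reduce to a single point, use (P9) to pass to the Bruhat--Tits building of the reductive Levi $M(F^v)$, and read off the decompositions there; for b) you do the same gallery reduction to adjacent chambers and the rank-one swap, exactly as the paper does (citing \cite[6.4.7]{BtT-72} and \cite[3.4a]{GR-08}).

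The one genuine variation is your argument for $z\in Z_0$ in the first equation. The paper stays inside $Q(x')\cap M(F^v)$ and invokes uniqueness in the Birkhoff decomposition (\ref{1.3c}.2) to force the $Z$-component into $Z_0$. You instead apply (P9) a second time with the chamber $C^v$ itself and argue geometrically: $g$ fixes the sector $x+\overline{C^v}$, while $u$ (a finite product of root-group elements) fixes a sub-sector, so the translation $z$ must be trivial. Both work; yours is a bit more hands-on and avoids the Birkhoff reference, while the paper's is shorter once the Levi decomposition is written down.

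One small slip: in the last step of b), the $N$-factor lands in $N(\QO)$ not by (P3) but because the rank-one swap produces an element of $N_{M(F_1^v)}$ fixing $pr_{F_1^v}(\QO)$, hence fixing $\QO+\mathrm{Ker}\,\qb$ pointwise (so it lies in $N(\QO+\mathrm{Ker}\,\qb)\subset N(\QO)$); the identification $Q(\QO)\cap N=N(\QO)$ then needs (P5), not (P3). This does not affect the validity of your argument.
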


      \begin{enonce*}[definition]{N.B} 1) So we define $Q^{dec}(\QO,\qe)=Q^{dec}(\QO,C^v)$ if $C^v$ is of sign $\qe$.

      \par 2) By (P9) $Q(\QO)\cap U(F^v)\subset Q(\QO+\overline{C^v})$ for all $C^v\in F^{v*}$.
\end{enonce*}

           \begin{proof} a) 
           We have $P(F^v)=U(C^v)\rtimes Z$, $U(C^v)=U(F^v)\rtimes(U(C^v)\cap M(F^v))$ and, by Bruhat-Tits theory (\ref{3.3}.1), $U(C^v)\cap M(F^v)=U_{\qa_1}.\cdots.U_{\qa_n}$ (unique).
            Using these uniqueness results, we have just to prove a) for $\QO=\{x\}$ and $x\in\A$.
            We write $x'=pr_{F^v}(x)$.

      \par By (P9) and \ref{3.3}.1 $Q(x)\cap P(F^v)=Q(\overline{x+F^v})\subset Q(x')=U(F^v)\rtimes(Q(x')\cap M(F^v))$ and $Q(x')\cap M(F^v)=U_{\qa_1}(x).\cdots.U_{\qa_n}(x).U_{-\qa_1}(x).\cdots.U_{-\qa_n}(x).N(x')$ \cite[7.1.8]{BtT-72}.
      So $Q(x')\cap M(F^v)\cap U(C^v)=U_{\qa_1}(x).\cdots.U_{\qa_n}(x)$ and $Q(x')\cap M(F^v)\cap P(C^v)=U_{\qa_1}(x).\cdots.U_{\qa_n}(x).Z_0$ (by uniqueness in the Birkhoff decomposition \ref{1.3c}.2.). And, as each $U_{\qa_i}(x)$ is in $Q(x)$, we get what we wanted.

      \par b) Any two chambers of sign $\qe$ are connected by a gallery of chambers of sign $\qe$.
      So one has only to show that $Q^{dec}(\QO,C^v)=Q^{dec}(\QO,r_\qa(C^v))$ when $\qa\in\QF$ is simple with respect to $\QF^+(C^v)$.
      We consider $F^v=\overline{C^v}\cap$Ker$\qa$ and apply a). But $U_{\qa}(\QO).U_{-\qa}(\QO).N(\QO+$Ker$\qa)=U_{-\qa}(\QO).U_{\qa}(\QO).N(\QO+$Ker$\qa)$ by \cite[6.4.7]{BtT-72}; so the same proof as in \cite[3.4a]{GR-08} applies.
           \end{proof}

  \subsection{Good fixers}\label{3.5}

     \par{\quad\bf1)} We consider now a very good family of parahorics $\shq=(Q(x))_{x\in \A_{}\cup\overline\A_{sph}}$ and we want to define the same notions as in  \cite[def. 4.1]{GR-08}, using the axioms and proposition \ref{3.4}; this is suggested in the beginning of \cite[sec. 5]{Ru-11}.

  \begin{enonce*}[definition]{\quad2) Definition} Consider the following conditions for a subset or filter $\QO$ in $\A$:

  \par\qquad (GF$\qe$)\quad$Q(\QO)=Q^{dec}(\QO,\qe)$\qquad\quad for $\qe=+$ or $-$
   \par\qquad (TF)\quad$G(\QO\subset\overline\A)=NQ(\QO)$ \qquad (where $G(\QO\subset\overline\A)$ is defined in \ref{3.2}.3b)

   \par\noindent We say that $\QO$ has a {\it good fixer} if it satisfies these three conditions.
        \par\noindent We say that $\QO$ has an {\it half-good fixer} if it satisfies (TF) and (GF+) or (GF-).
      \par\noindent We say that $\QO$ has a {\it transitive fixer} if it satisfies (TF).

 \end{enonce*}

       \begin{enonce*}[definition]{\quad3) Consequences} We get the following results by mimicking the proofs in \cite[sec. 4.1]{GR-08}.
       The ingredients are proposition \ref{3.4} and the facts that $Q(\QO)\cap U(\pm{}C^v)=Q(\QO\pm{}\overline{C^v})\cap U(\pm{}C^v)$, $\bigcap_\QO\,Q(\QO)\cap U(\pm{}C^v)=Q(\cup_\QO\,\QO)\cap U(\pm{}C^v)$ for a family $\QO$ of filters, \etc

       \par a) By (P8) a point has a good fixer. 
       The group $N$ permutes the filters with good fixers and the corresponding fixers.

      \par  If $\QO$ has a transitive fixer, then $Q(\QO)$ acts transitively on the apartments containing $\QO$.
      Hence the "shape" of $\QO$ doesn't depend of the apartment containing it.
      As a consequence of the many examples below of filters with (half) good fixers, we may define in $\SHI$ (independently of the apartment containing it) what is a preordered segment, preordered segment-germ, generic ray, closed (local) facet, spherical sector face, solid chimney \etc

        \par b) In the classical case every filter has a good fixer.

        \par c) Let $\SHF$ be a family of filters with good (or half-good) fixers such that the family $\QO$ of the sets belonging to one of these filters is a filter. Then $\QO$ has a good (or half-good) fixer $Q(\QO)=\bigcup_{F\in\SHF}\,Q(F)$.

        \par d) Suppose the filter $\QO$ is the union of an increasing sequence $(F_i)_{i\in\N}$ of filters with good (or half-good) fixers and that, for some $i$, the support of $F_i$ has a finite fixer in $\qn(N)$, then $\QO$ has a good (or half-good) fixer $Q(\QO)=\bigcap_{i\in\N}\,Q(F_i)$.

     \par e) Let $\QO$ and $\QO'$ be two filters in $\A$ and $C_1^v,\cdots,C_n^v$ be positive vectorial chambers.
     If $\QO'$ satisfies (GF+) and (TF) and $\QO\subset\bigcup_{i=1}^n\,(\QO'+\overline{C_i^v})$, then $\QO\cup\QO'$ satisfies (GF+) and (TF) with $Q(\QO\cup\QO')=Q(\QO)\cap Q(\QO')$.
      If moreover $\QO$ (resp. $\QO'$) satisfies (GF-) and $\QO'\subset\bigcup_{i=1}^n\,(\QO-\overline{C_i^v})$ (resp. $\QO\subset\bigcup_{i=1}^n\,(\QO'-\overline{C_i^v})$), then $\QO\cup\QO'$ has a good fixer.

           \end{enonce*}

            \begin{enonce*}[definition]{\quad4) Remarks} a) Let $\QO$ in $\A$ be a filter with good (or half-good) fixer and $F^v$ be a spherical vectorial facet.
            We write $\Theta=\bigcup_{C^v\in F^{v*}}\,\overline{C^v}$ and
   $\QO'=(\QO+\Theta)\cap(\QO-\Theta)\cap(\bigcap_{\qa\in\QF^m(F^v)}\, D(\qa,\QO))$ (which is in $cl^{\QF}_\R(\QO)$), then, by \ref{3.4}a and \ref{3.5}.3e, any $\QO''$ with $\QO\subset\QO''\subset\QO'$ has a good (or half-good) fixer; moreover $Q(\QO)=Q(\QO'')N(\QO)$.
   In particular any apartment $A$ of $\SHI$ containing $\QO$ contains $\QO'$ and is conjugated to $\A$ by $Q(\QO')$.

   \par b) By \ref{3.2}.6f, for every filter $\QO$, we have $Q^\emptyset(\QO)=Q(\QO)\cap G^\emptyset$, $Q(\QO)\cap U(C^v)=Q^\emptyset(\QO)\cap U(C^v)$, and $N^\emptyset(\QO)=N(\QO)\cap G^\emptyset$.
   Hence if $\QO$ has a (half) good fixer for $\shq$, $N.Q(\QO)=N.Q^\emptyset(\QO)$, $N.Q(\QO)\cap G^\emptyset=N^\emptyset.Q^\emptyset(\QO)$ and $\QO$ has a (half) good fixer for $\shq^\emptyset$.

 \end{enonce*}

   \subsection{Examples of filters with good fixers}\label{3.6}

     \par {\quad\bf1)} If $x\leq{}y$ or $y\leq{}x$ in $\A$ ({\it preordered} situation), then $\{x,y\}$ and the segment $[x,y]$ have good fixers $Q(\{x,y\})=Q([x,y])$
 (apply \ref{3.5}.3e); in particular any apartment $A$ of $\SHI$ containing $\{x,y\}$ contains $[x,y]$ and is conjugated to $\A$ by $Q([x,y])$.
 If moreover $x\not=y$ the segment germ $[x,y)=germ_x([x,y])$ has a good fixer (      \ref{3.5}.3c).

 \par {\bf2)} If $x,y\in\A$ and $\xi=y-x\not=0$ is in a spherical vectorial facet $F^v$ ({\it generic} situation: $x\stackrel{o}{<} y$ or $y\stackrel{o}{<} x$),  then the half-open segment $]x,y]=[x,y]\setminus\{x\}$, the line $(x,y)$ and the ray $\qd=x+[0,+\infty[.\xi$ of origin $x$ containing $y$ (or the open ray $\qd^\circ=\qd\setminus\{x\}$) have good fixers (\ref{3.5}.3d). Using now \ref{3.5}.3c the germs $]x,y)=germ_x(]x,y])$ and $germ_\infty(\qd)$ have good fixers.

 \par {\bf3)} A closed local facet $\overline{F^l(x+F^v)}$ has a good fixer:
 choose $\xi\in F^v$ and $\ql>0$ then the intersection $\QO_{\ql,\xi}$ of $(x+\overline{F^v})\cap(x+\ql\xi-\overline{F^v})$ with a ball of radius $\Vert\ql\xi\Vert$ and center $x$ (for any norm) has a good fixer (\ref{3.5}.3e with $\QO'=[x,x+\ql\xi]$) and $\overline{F^l(x+F^v)}$ is as described in (\ref{3.5}.3c) using the family $\QO_{\ql,\xi}$ (when $\ql$ varies).

 \par If the local facet is spherical, then it has a good fixer.
 We just have to use above $(x+\qe\xi+\overline{F^v})\cap(x+\ql\xi-\overline{F^v})$ for $0<\qe<\ql$ and \ref{3.5}.3c,d,e.

  \par {\bf4)} By similar arguments we see that a spherical sector face or its closure or its germ has a good fixer.
   The apartment $\A$ has a good fixer $Q(\A)=Z_0$, so the stabilizer of $\A$ is $N$.
   An half-apartment $D(\qa,k)$ has a good fixer $Z_0.U_{\qa,k}$ \cf \cite[5.7.7]{Ru-11}.

  \par {\bf5)} We suppose now in this paragraph that the family $\shq$ satisfies axiom (P10). If $y<x$ or $x<y$ in $\A$, then the half-open segment $]x,y]$ (resp. the open-segment-germ $]x,y)$) has a good fixer $Q(]x,y])=Q([x,y])$ (resp. $Q(]x,y))=Q([x,y))\,$), even if $F^v(y-x)$ is not spherical.
  By arguments as in 3) above (using $x+F^v=]x,x+\ql\xi]+\overline{F^v}$ instead of $x+\overline{F^v}$) we deduce that any local facet $F^l=F^l(x,F^v)$ has a good fixer and $Q(F^l)=Q(\overline{F^l})$.

        \begin{prop}\label{3.7} Let $\shq$ be a very good family of parahorics, $\qx\not=0$ a vector in a spherical vectorial facet $F^v$ and $x\in\A$.
        We consider the ray $\qd=x+[0,+\infty[\qx$, then $Q(\qd)\subset Q(germ_\infty(\qd))\subset P(F^v)$.
           \end{prop}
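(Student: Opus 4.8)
The plan is to prove the two inclusions in turn. Put $\qd_t=x+t\qx+[0,+\infty[\,\qx$ for $t\ge0$; these form a basis of the filter $germ_\infty(\qd)$, so $Q(germ_\infty(\qd))=\bigcup_{t\ge0}Q(\qd_t)$, and since $\qd_t\subset\qd$ one has $Q(\qd)\subset Q(\qd_t)$, whence $Q(\qd)\subset Q(germ_\infty(\qd))$. The same identity shows that for the second inclusion it suffices to prove $Q(\qd)\subset P(F^v)$ for an \emph{arbitrary} generic ray $\qd=x+[0,+\infty[\,\qx$ with $\qx\in F^v$ ($F^v$ spherical), since each $\qd_t$ is again such a ray and the argument below is insensitive to the position of the origin.

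To prove $Q(\qd)\subset P(F^v)$: if $F^v$ is the trivial facet $V_0$ then, $V_0$ being spherical, we are in the classical case and $P(F^v)=G$, so there is nothing to do; otherwise $F^v$ has a well-defined sign, and as the two signs are symmetric we may assume $F^v$ positive. Choose a vectorial chamber $C^v$ with $F^v\subset\overline{C^v}$ (every vectorial facet lies in the closure of a chamber, \cf \ref{1.1b}); then $C^v$ is positive and $C^v\in F^{v*}$. By \ref{3.6}.2 the generic ray $\qd$ has a good fixator, so by the definition of good fixators (\ref{3.5}.2) together with \ref{3.4}b,
\[
Q(\qd)=\bigl(Q(\qd)\cap U(C^v)\bigr)\,\bigl(Q(\qd)\cap U(-C^v)\bigr)\,N(\qd),
\]
and I would show each of the three factors lies in $P(F^v)$.

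For $n\in N(\qd)$ the linear part $\qn^v(n)\in W^v$ fixes $\qx$, hence stabilizes the vectorial facet $F^v$ containing $\qx$, so $n\in N(F^v)\subset P(F^v)$. For $Q(\qd)\cap U(C^v)$, Proposition \ref{3.4}a applied to $F^v\subset\overline{C^v}$ gives $Q(\qd)\cap U(C^v)=\bigl(Q(\qd)\cap U(F^v)\bigr)\rtimes U_{\qa_1}(\qd)\cdots U_{\qa_n}(\qd)$ with each $\qa_i(F^v)=0$; since $U(F^v)\subset P(F^v)$ and each $U_{\qa_i}\subset M(F^v)\subset P(F^v)$, this factor lies in $P(F^v)$. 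For $Q(\qd)\cap U(-C^v)$, note that $-F^v$ is spherical with $-F^v\subset\overline{-C^v}$, so Proposition \ref{3.4}a applied to $-F^v\subset\overline{-C^v}$ gives $Q(\qd)\cap U(-C^v)=\bigl(Q(\qd)\cap U(-F^v)\bigr)\rtimes U_{\qb_1}(\qd)\cdots U_{\qb_m}(\qd)$ with each $\qb_j\in\QF^m(-F^v)=\QF^m(F^v)$, hence $U_{\qb_j}\subset M(-F^v)=M(F^v)\subset P(F^v)$; as $U(-F^v)\cap P(F^v)=\{1\}$ (opposite parabolics meet in their common Levi), it remains only to prove $Q(\qd)\cap U(-F^v)=\{1\}$.

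For that last point: for $\qa\in\QF^u(-F^v)$, i.e.\ $\qa(F^v)<0$, one has $\qa(\qx)<0$, so for $u\in U_\qa\setminus\{1\}$ the condition $\qd\subset D(\qa,\qf_\qa(u))$, i.e.\ $\qa(x)+t\qa(\qx)+\qf_\qa(u)\ge0$ for all $t\ge0$, forces $\qf_\qa(u)=+\infty$, hence $u=1$ by (V1); thus $U_\qa(\qd)=\{1\}$ for every $\qa\in\QF^u(-F^v)$. An element of $U(-F^v)$ fixing $\qd$ is a finite product $\prod u_\qa$ over a convex order on the nilpotent set $\QF^u(-F^v)$, and the standard argument by descending induction using axiom (V3) — as in \cite[6.4.9]{BtT-72} and its Kac--Moody counterparts in \cite{Ru-11} — shows each $u_\qa$ must itself fix $\qd$, hence be trivial, so $Q(\qd)\cap U(-F^v)=\{1\}$ and $Q(\qd)\subset P(F^v)$. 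The step I expect to be the genuine obstacle is precisely this last one: reducing fixation of $\qd$ by a product in the (possibly infinite-dimensional) unipotent group $U(-F^v)$ to fixation by each individual factor, which is exactly where the full strength of (V3) and the coherence of the root set $\QF^u(-F^v)$ enter, in parallel with the classical computation of \cite[6.4.9]{BtT-72}.
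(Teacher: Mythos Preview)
Your overall strategy matches the paper's: decompose $Q(\qd)$ via the good-fixator property (\ref{3.6}.2), handle $N(\qd)$ and $Q(\qd)\cap U(C^v)$ directly, and reduce the remaining piece $Q(\qd)\cap U(-C^v)\subset P(F^v)$ to the assertion $Q(\qd)\cap U(-F^v)=\{1\}$. Your route to that reduction via Proposition~\ref{3.4}a is slightly different from the paper's (which goes through (P9) and \ref{3.3}.1), but both land at the same place.

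The gap is in your final step. You propose to write an element of $U(-F^v)$ as a finite ordered product $\prod u_\qa$ over the ``nilpotent'' set $\QF^u(-F^v)$ and run a descending induction as in \cite[6.4.9]{BtT-72}. But in the abstract setting of section~\ref{s3} --- a bare root datum, a valuation, and a family $\shq$ satisfying (P1)--(P10) --- no such product decomposition is available. The set $\QF^u(-F^v)$ is infinite whenever $\QF$ is (it is $\QF^-\setminus\QF^m(F^v)$ up to $W^v$-conjugacy), so it is not a nilpotent set of roots in the technical sense, and \cite[6.4.9]{BtT-72} simply does not apply. The ``Kac--Moody counterparts in \cite{Ru-11}'' you invoke live in the \emph{completed} groups $U^{ma\pm}$; those are introduced only in section~\ref{s5} for the specific Kac--Moody situation and are not part of the abstract framework in which Proposition~\ref{3.7} is stated and proved. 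You yourself flag this step as the obstacle, and it is a real one: the argument as written does not go through.

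The paper closes this step without any root-by-root analysis. For every chamber $C^v\in F^{v*}$ and every $\ql\ge0$ one has $U(-F^v)\subset U(-C^v)$ and $Q(\qd)\subset Q(x+\ql\qx)$, hence by N.B.~2 after Proposition~\ref{3.4},
\[
Q(\qd)\cap U(-F^v)\;\subset\;Q(x+\ql\qx)\cap U(-C^v)\;\subset\;Q\bigl(x+\ql\qx-\overline{C^v}\bigr).
\]
Letting $C^v$ range over $F^{v*}$ and $\ql$ over $[0,+\infty[$, the sets $x+\ql\qx-\overline{C^v}$ cover $\A$ (since $\qx$ lies in the interior of the Tits cone), so $Q(\qd)\cap U(-F^v)\subset Q(\A)=Z_0$ by \ref{3.6}.4, and $U(-F^v)\cap Z_0=\{1\}$. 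This global covering argument is the missing idea.
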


           \begin{enonce*}[definition]{N.B} 1) This is a kind of reciprocity for axiom (P9).
           We have $Q(x)\cap P(F^v)=Q(\overline{x+F^v})=Q(x+\overline{F^v})$ with $x+\overline{F^v}$ in $\A$.
       \par 2)      We see thus directly that $Q(\A)$ fixes $\overline\A_{sph}$.
\end{enonce*}
           \begin{proof} It is sufficient to prove that $Q(\qd)\subset  P(F^v)$.
           Let $C^v$ be a chamber in $F^{v*}$, then by \ref{3.6}.2, $Q(\qd)=(Q(\qd)\cap U(C^v)).(Q(\qd)\cap U(-C^v)).N(\qd)$.
   As $N(\qd)$ and $U(C^v)$ are in $P(F^v)$, we have only to prove $Q(\qd)\cap U(-C^v)\subset P(F^v)$. By (P9) and \ref{3.3}.1, for $\ql\geq{}0$, $Q(x+\ql\xi)\cap U(-C^v)\subset Q(x+\ql\xi)\cap P(-F^v)\subset Q(pr_{-F^v}(x))=N(pr_{-F^v}(x)).G(\QF^m(F^v),pr_{-F^v}(x)).U(-F^v)$.
    So $Q(\qd)\cap U(-C^v)=(G(\QF^m(F^v),pr_{-F^v}(x))\cap U(-C^v)).(U(-F^v)\cap Q(\qd))$ as $G(\QF^m(F^v),pr_{-F^v}(x))$ fixes pointwise $x+\R\xi\supset\qd$.

  \par  Now $U(-F^v)\cap Q(\qd)=\bigcap_{C^v\in F^{v*},\ql\geq{}0}\,U(-C^v)\cap Q(x+\ql\xi)\subset Q(\bigcup_{C^v\in F^{v*},\ql\geq{}0}\,(x+\ql\xi-\overline{C^v}))$.
     But this last union is actually $\A$, so $U(-F^v)\cap Q(\qd)=U(-F^v)\cap Q(\A)=U(-F^v)\cap Z_0=\{1\}$ and $U(-C^v)\cap Q(\qd)\subset G(\QF^m(F^v),pr_{-F^v}(x))\subset P(F^v)$.
           \end{proof}

           \begin{coro}\label{3.8} Let $F\subset \A_{F^v}$ be a facet in a fa\c{c}ade and $\g R\subset\A$ be the corresponding chimney germ (\cf \ref{2.5}.3). \label{N14} Then $U(F^v).G(\QF^m(F^v),F).N(\g R)\subset Q(F)\cap Q(\g R)$.
    If $F^v$ is spherical $Q(F)=U(F^v).G(\QF^m(F^v),F).N(F)\supset Q(\g R)=U(F^v).G(\QF^m(F^v),F).N(\g R)$.
  \end{coro}

  \begin{NB} We sometimes say that $Q(\g R)$ is the {\it strong fixer} of $F$.
  \end{NB}

  \begin{proof} For $x\in\A$, it is clear that $\g R$ is in the union of all $\overline{x+C^v}$ for $C^v\in F^{v*}$. So the first result is due to (P9-).
   For $F^v$ spherical $Q(F)$ is given in \ref{3.3}.1. By proposition \ref{3.7} $Q(\g R)\subset P(F^v)$ and $Q(\g R)\subset Q(F)$ by (P9), hence the result.
  \end{proof}

\subsection{Properties specific to $cl_\R$}\label{3.9}

We are interested here in the cases $cl=cl_\R$ \ie $cl=cl_\R^\QF$, $cl_\R^\#$, $cl_\R^{\QD}$ or $cl_\R^{\QD^{ti}}$.

    \par{\bf1)} For a filter $\QO$ in $\A$, $\QO\subset\QO+V_0\subset(\QO+\overline{C^v})\cap(\QO-\overline{C^v})$ for all vectorial chamber $C^v$.
    So, by \ref{3.5}.4, if $\QO$ has a good or half-good fixer, it is also true for any $\QO'$ with
    $\QO\subset\QO'\subset\QO+V_0$. Moreover $Q(\QO)=Q(\QO')$: $N(\QO)=N(\QO+V_0)$ as $W^v$ fixes $V_0$.

    \par{\bf2)} For a local facet $F^l$, we saw that 
    $\overline F^l=\overline F^{\QD^{ti}}_\R\subset \overline F^{\QD}_\R\subset \overline F^{\QF}_\R=\overline F^{\#}_\R=\overline F^l+V_0$,
    hence the closed $cl_\R-$facet associated to $F^l$  has a good fixer by 1) above.

    \par We saw also that the $cl_\R-$facet associated to $F^l$ is between $F^l$ and $F^l+V_0$. If the family $\shq$ satisfies (P10), then this $cl_\R-$facet has a good fixer (\ref{3.6}.5 and 1) above);
     by \ref{3.5}.4 any apartment containing $F^l$ contains $cl_\R(F^l)$ and is conjugated to $\A$ by $Q(F^l)=Q(\overline{F^l})=Q(\overline{F^l}+V_0)$.

    \par{\bf3)} Let $\QO$ be a point, preordered segment,  preordered segment-germ, generic ray,  generic ray-germ or generic  line (resp.  preordered half-open segment, preordered open-segment-germ or  generic open ray if $\shq$ satisfies (P10)) as in \ref{3.6}  and let $\QO\subset\QO''\subset cl_\R(\QO)$.
     Then $Q(\QO)=Q(\QO'')$ by \ref{3.5}.4, as $cl_\R(\QO)\subset (\QO+\overline{F^v})\cap(\QO-\overline{F^v})$ (resp.  $cl_\R(\QO)\subset (\overline\QO+\overline{F^v})\cap(\overline\QO-\overline{F^v})$) for some facet $F^v$ pointwise fixed by $\qn^v(N(\QO))$.
     Hence any apartment containing $\QO$ contains $\QO''$ and is conjugated to $\A$ by $Q(\QO'')=Q(\QO)$.

     \par We may choose $cl_\R=cl_\R^\QF$.
     So, for $\QO$ a  preordered segment-germ, generic ray or generic ray-germ,
      we may choose above $\QO''$ equal to its $cl_\R^\QF-$enclosure \ie the corresponding closed-local-facet, spherical sector-face-closure or spherical sector-face-germ.
      If $\shq$ satisfies (P10) and $\QO$ is a preordered open segment-germ  (resp. a generic open ray) the same result is true with $\QO''$ the corresponding local facet (resp. corresponding spherical sector-face).

    \par{\bf4)} Let $\overline{F^l}=\overline{F}^l(x,F^v)$ be a closed local facet in $\A$ and $F^v_1$ a vectorial facet.
    Then $\g r=\overline{F^l}+\overline{F^v_1}$ is \label{N10} closed convex \ie $cl^{\QD^{ti}}_\R-$enclosed; hence it is the $cl^{\QD^{ti}}_\R-$chimney $ \g r_\R^{\QD^{ti}}(F^l,F_1^v)$; note that this is not always true for $cl_\R^\QF$, $cl_\R^\#$ or  $cl_\R^{\QD}$. \label{N7}

    \par Suppose $\g r$ solid \ie the fixer in $\qn(N)$ of its support finite. \label{N25} Then $\g r$ and its germ $\g R$ have good fixers: we apply \ref{3.5}.3e to $\overline{F^l}$ and $\overline{F^l}+\ql\xi$ (with $\ql>0$, $\xi \in F_1^v$), then \ref{3.5}.4 and  \ref{3.5}.3d to see that $\g r$ has a good fixer; now the result for $\g R$ is a consequence of  \ref{3.5}.3c.

       \begin{enonce*}[definition]{\quad5) Remark} Suppose $F^v$ and $F^v_1$ as above in 4) and of the same sign.
        Then $\overline{F^v}+\overline{F^v_1}$ meets a vectorial facet $F^v_2$ with $F^v\subset\overline{F^v_2}$ and $F^v_2\cap\langle F^v,F^v_1\rangle$ open in the vector space $\langle F^v,F^v_1\rangle$ ($F^v_2$ is the projection of $F^v_1$ in $F^{v*}$).
        By \ref{3.9}.3 any apartment containing $\overline{F}^l(x,F^v)$ and $x+F^v_1$ (or $F^l(x,F^v_1)$) contains $\overline{F}^l(x,F^v_2)$.
        Suppose $F_2^v$ spherical (\eg if $\g r$ is solid) then, by using a few more times the same argument, we see that any apartment containing $\g r$ contains the $cl^\QF_\R-$enclosure $\QO$ of $\overline{F}^l(x,F^v)$ and $F^l(x,F^v_1)$ and also $\QO+\overline{F^v_1}$ which is the $cl^\QF_\R-$chimney $\g r^\QF_\R(F^l,F^v_1)$.
        So one could use this $cl^\QF_\R-$chimney. But unfortunately it is not clear that $\QO$ or $\QO+\overline{F^v_1}$ has a good fixer. Moreover the following proposition seems difficult to prove for $cl^\QF_\R$. So we shall concentrate on $cl^{\QD^{ti}}_\R$.
   \end{enonce*}

       \begin{enonce*}[plain]{\quad6) Proposition}  Let $\g R_1$ be the germ of a splayed $cl^{\QD^{ti}}_\R-$chimney $\g r_1=\overline{F^l_1}+\overline{F^v_3}$ and $\g R_2$ be either a closed local facet $\overline{F^l_2}$ or the germ of a solid $cl^{\QD^{ti}}_\R-$chimney $\g r_2=\overline{F^l_2}+\overline{F^v_4}$.
        Then $\QO=\g R_1\cup\g R_2$ has a half-good fixer and $Q(\QO)=Q(cl^{\QD^{ti}}_\R(\QO)).N(\QO)$.
         In particular any apartment of $\SHI$ containing $\QO$ also contains $cl^{\QD^{ti}}_\R(\QO)$ and is conjugated to $\A$ by $Q(cl^{\QD^{ti}}_\R(\QO))$.
 \end{enonce*}

 \begin{enonce*}[definition]{N.B}  Actually if $\g R_2$ is the germ of a splayed $cl^{\QD^{ti}}_\R-$chimney (\ie $F_4^v$ is spherical), then $\QO$ has a good fixer \cite[6.10]{Ru-10}.

  \end{enonce*}

 \begin{proof} We may replace $\QO$ by $\QO=\g r_1\cup\g r_2$ with $\g r_1$ and $\g r_2$ sufficiently small.
 Consider $\Theta=\bigcup_{C^v\in F_3^{v*}}\,\overline{C^v}$; by shortening $\g r_1$ we may assume $\g r_1\subset\g r_2+\Theta$. So, by \ref{3.5}.3e and \ref{3.6}.3 or 4) above, $\QO$ has a half good fixer.
 \label{N10} We use \ref{3.5}.4 with $\QO$ and $F^v_3$: as $\g r_1-\Theta=\A$, $\QO'$ is actually equal to
  $\QO'=(\g r_2+\Theta)\cap(\bigcap_{\qa\in\QF^m(F^v_3)}\, D(\qa,\QO))$ which is convex and closed.
  So $\QO\subset \QO''=cl^{\QD^{ti}}_\R(\QO)=\overline{conv}(\QO)\subset\QO'$ and $Q(\QO)=Q(\QO'').N(\QO)$.
 \end{proof}

 \subsection{(Generalized) affine hovels}\label{3.10}
 
 \par We consider an affine apartment $\A$, as defined abstractly in \ref{2.3}.2 a,b and an enclosure map $cl$ as in \ref{2.4}.1 (using \ref{2.3}.4).
 For the following definitions see below in 1), 2) the restrictions on $cl$, or the associated variants.
 See also \ref{4.15} for some variants.

   \begin{enonce*}[definition]{Definitions} An {\it affine hovel} of type $(\A,cl)$ is a set $\shi$ endowed with a covering $\sha$ by subsets called apartments such that:

   \par (MA1) Every $A\in\sha$ is an apartment of type $\A$ (\cf \ref{2.3}.2.f).

   \par (MA2) \label{N9} If $F$ is a point, a preordered open-segment-germ, a generic ray or a solid chimney in an apartment $A$ and if $A'$ is another apartment containing $F$, then $A\cap A'$ contains the enclosure $cl(F)$ of $F$ in $A$ and there exists a Weyl isomorphism from $A$ to $A'$ fixing (pointwise) this enclosure.

   \par (MA3) If $\g R$ is a splayed chimney-germ, if $F$ is a facet or a solid chimney-germ, then $\g R$ and $F$ are always contained in a same apartment.

   \par (MA4) If two apartments $A$, $A'$ contain $\g R$ and $F$ as in (MA3), then their intersection $A\cap A'$ contains the   enclosure $cl(\g R\cup F)$ of $\g R\cup F$ in $A$ and there exists a Weyl isomorphism from $A$ to $A'$ fixing (pointwise) this enclosure.
   \medskip
   \par This affine hovel is told {\it ordered} if it satisfies moreover:

   \par (MAO) Let $x$, $y$ be two points in $\shi$ and $A$, $A'$ be two apartments containing them; if $x\leq{}y$ in $A$, then the segments $[x,y]_{A}$ and $[x,y]_{A'}$ defined by $x$, $y$ in $A$ and $A'$ are equal.
   \medskip
   \par An automorphism of the hovel $\shi$ is a bijection $\qf:\shi\to\shi$ such that, for every apartment $A$, $\qf(A)$ is an apartment and $\qf\rest A$ an isomorphism.
   We say that $\qf$ is {\it positive}, {\it vectorial-type-preserving}, {\it vectorially Weyl} or a {\it Weyl automorphism}, if  $\qf\rest A$ is positive, vectorial-type-preserving, vectorially Weyl or a Weyl isomorphism, for some $A\in\sha$. 
   This is then true for any $A\in\sha$: for any two apartments $A_1$, $A_2$, there is a third apartment $A$ such that $A_i\cap A$ contains a non empty open convex subset (\eg a splayed chimney) and we may use \ref{2.3}.2.g.

   \par We say that a group $G$ acting on $\shi$ acts {\it strongly transitively} if it acts by automorphisms of $\shi$
   and moreover the Weyl isomorphisms between apartments involved in the axioms (MA2) or (MA4) may be chosen induced by elements of $G$.
(In the classical case of thick discrete affine buildings and  groups of Weyl automorphisms, this is equivalent to the known definition, \cf \ref{3.13}.1 below and \eg \cite[prop. 6.6]{AB-08}.)

\par So $G$ acts strongly transitively if, and only if, the subgroup $G^w$ of Weyl automorphisms acts strongly transitively.

   \end{enonce*}

 \par{\bf Variations due to the enclosure map.} Unfortunately the definition of affine hovel given in \cite{Ru-10} is still not general enough, it is too restrictive for $cl$. 
 We explain now our more general definition, for a more general enclosure map.
  \label{N8}

 \par{\bf1)} The enclosure map considered in \LC is $cl^\QD_{ma}$ or (after changing $\QD$ or $\QL=(\QL_\qa)_{\qa\in\QF}$) $cl^\QF$, $cl^\QF_\R$, $cl^\QD_\R$.
 As suggested in [\lc 1.6] the enclosure map $cl^\QD$ is often not so different from $cl^\QD_{ma}$. The results of \LC are true for $cl^\QD$ without changing anything.
  We may also enlarge as we want the family $\QL$ to a family $\QL'$.

  \par{\bf2)} In \LC (except in section 1) the root system $\QD$ is asked to be tamely imaginary.
  This excludes in particular the totally imaginary case $\QD^{ti}$.

  \par When $\QD$ is not tamely imaginary, the axioms of affine hovels of type $(\A,cl^\QD_{\QL'})$) have to be modified as follows:

  \par We must add to the list of the filters involved in (MA2) the local facets and the spherical sector faces. Moreover in (MA3) and (MA4) we must add the possibilities that $F$ is a point or a preordered segment germ and that $\g R$ or $F$ is a generic ray germ.

  \par Then all results of \LC are true up to section 4 (except the last sentence of [\lc 4.8.2]).
  In section 5 (specially 5.2 N.B.) we must add (MA2) for $F$ a segment germ and $cl^\QF_\R$ \ie :

 \par\qquad For $]x,y)\subset F^l(x,F^v)$, any apartment containing $[x,y)$ contains $\overline F^l(x,F^v)$
 \par \qquad\qquad (We can restrict to the case where $F^v$ is a chamber.)

 \par{\bf Generalizations.}

   \par{\bf3)} A generalization is necessary when we drop axiom (P10). 
   We shall say that $\SHI$ is an affine (ordered) {\it closed-restricted-hovel} of type $(\A,cl)$ if it satisfies the above axioms modified as follows:

   \par In the list of axiom (MA2) or in (MA3), (MA4), we replace preordered open-segment-germ by preordered segment-germ, facet by closed facet and (in the case of 2) above) local facet by closed local facet, spherical sector-face by spherical sector-face closure.
   Then all results in \LC are true if we make the same replacements.
   
   \par{\bf4)} 
   We shall say that $\SHI$ is an affine (ordered) {\it generic-restricted-hovel} of type $(\A,cl)$ if it satisfies the above axioms modified as follows:

   \par In axioms (MA2), (MA3), (MA4), (MAO) (eventually modified as in 2) above), we replace everywhere the words preordered, solid, full by generic, splayed, full and splayed (respectively), the preorder $\leq$ by $\stackrel{o}{<}$ and suppose all facets spherical.
   Then all results in \LC are true if we make the same replacements.


\begin{theo}\label{3.11} Let $\shq$ be a very good family of parahorics in $G$.

\par 1) Then $\SHI$ with its family of apartments is an ordered affine hovel of type $(\A,cl_\R^{\QD^{ti}})$. The group $G$ acts strongly transitively and by vectorially Weyl automorphisms on $\SHI$.

 \par 2) The twin buildings $\SHI^{\pm{}\infty}$ constructed at infinity of $\SHI$ in \cite[sec. 3]{Ru-10} are $G-$equivariantly isomorphic to the combinatorial twin buildings $\SHI^{vc}_\pm{}$ of \ref{1.3c}.3 (restricted to their spherical facets).
  This isomorphism associates to each spherical sector-face-direction $\g F^\infty$ a spherical vectorial facet $F^v\in\SHI^{vc}_\pm{}$.

  \par 3) If $F^v$ spherical corresponds to $\g F^\infty$, then there is a $P(F^v)-$equivariant isomorphism between the affine building $\shi(\g F^\infty)$ of [\lc 4.2] and the (essentialization of the) fa\c{c}ade $\SHI_{F^v}^e$ of $\overline\SHI$.
\end{theo}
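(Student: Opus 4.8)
\emph{Part 1.} The plan is to verify the axioms (MA1)--(MA4) and (MAO) of \ref{3.10} for the enclosure map $cl=cl^{\QD^{ti}}_\R=\overline{conv}$, in the enlarged form of \ref{3.10}.2 (since $\QD^{ti}$ is not tamely imaginary) but without the further generalization \ref{3.10}.3 (as (P10) holds for a very good family). Axiom (MA1) is immediate: each apartment of $\SHI$ is a $G$-translate of $i(\A)$ and $N$ acts on $\A$ through $\qn$ by positive type-preserving automorphisms with $\qn(N^\emptyset)=W^a$ (\ref{2.3}.3c). For (MA2) and (MAO) I would run through the admissible filters --- a point, a preordered segment-germ, a generic ray, a solid chimney, a closed local facet, a spherical sector-face --- and use \ref{3.6} and \ref{3.9} to see that each such $F$ has a good fixator with $Q(F)=Q(cl^{\QD^{ti}}_\R(F))$ and with $Q(F)$ transitive on the apartments containing $F$; this yields $A\cap A'\supset cl(F)$ and the requested Weyl isomorphism at once, and (MAO) follows since an affine isomorphism fixing $\{x,y\}$ carries $[x,y]_A$ onto $[x,y]_{A'}$. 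For (MA3) and (MA4) the key tool is Proposition \ref{3.9}.6: if $\g R_1$ is the germ of a splayed $cl^{\QD^{ti}}_\R$-chimney and $\g R_2$ a closed local facet or the germ of a solid $cl^{\QD^{ti}}_\R$-chimney, then $\g R_1\cup\g R_2$ has a half-good transitive fixator with $Q(\g R_1\cup\g R_2)=Q(cl^{\QD^{ti}}_\R(\g R_1\cup\g R_2)).N(\g R_1\cup\g R_2)$; transitivity gives the intersection and isomorphism statements of (MA4), and the common-apartment statement (MA3) follows by a shortening argument (replace a facet or solid chimney-germ by a closed local facet, or a small shortening lying in a single apartment through $\g R_1$, apply \ref{3.9}.6, and pass to the limit), treating a facet as a chimney-germ of direction $V_0$ and the supplementary filters of \ref{3.10}.2 (points, preordered segment-germs, generic ray-germs) in the same way. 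For strong transitivity I would pass, via \ref{3.2}.6f, to $(G^\emptyset,\shq^\emptyset)$, where the stabilizer of $i(\A)$ is $N^\emptyset$ and $\qn(N^\emptyset)=W^a$, so the transporters furnished by the (transitive, already $G^\emptyset$-valued) fixators in (MA2) and (MA4) induce Weyl isomorphisms.

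\emph{Part 2.} Recall from \cite[sec. 3]{Ru-10} that $\SHI^{\pm\infty}$ has for simplices the parallelism classes of spherical sector-face-germs of $\SHI$, with incidence given by inclusion of the closures of representatives inside a common apartment and twinning by oppositeness inside a common apartment. I would send such a class $\g F^\infty$ to the direction $F^v$ of a representative; since $G$ is transitive on the apartments of $\SHI$ and the stabilizer $N$ of $i(\A)$ acts on directions through $\qn^v$, hence through $W^v$, this is a well-defined $G$-equivariant map onto the spherical facets of $\SHI^{vc}_\pm$. Injectivity and compatibility of the simplicial structures come from \ref{3.7}--\ref{3.8}, which show that the (strong) fixator of $\g F^\infty$ lies in $P(F^v)$ and that the stabilizer of $\g F^\infty$ is $P(F^v)$: the map is then the one matching spherical vectorial facets with spherical parabolics in \ref{1.3c}.5. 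Incidence translates into inclusion of closures in the Tits cone, i.e. into the Coxeter-complex structure, and two opposite-sign spherical sector-face-germs are opposite inside a common apartment precisely when the vectorial facets are, which is recorded by the Birkhoff decomposition \ref{1.3c}.2; equivariance of everything is automatic.

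\emph{Part 3.} For $F^v$ spherical corresponding to $\g F^\infty$, the affine building $\shi(\g F^\infty)$ of \cite[4.2]{Ru-10} is built from the chimneys of $\SHI$ of direction parallel to $\g F$ (identified when they share a sub-chimney), its apartments being the traces of the apartments of $\SHI$ carrying a sector-face of direction $\g F$; on the other hand, by \ref{2.5}.3 the chimney-germs of $\A$ of direction $F^v$ correspond bijectively to the closed facets of $\A_{F^v}$, and by \ref{3.2}.3c the façade is $\SHI_{F^v}=G(F^v)\times\A_{F^v}/\sim_{F^v}$, with essentialization $\SHI^e_{F^v}$. The plan is to send a chimney-germ $\g R=\g R(F,F^v)$ of $\SHI$ to the point $[\g R]$ of $\SHI^e_{F^v}$ of \ref{2.5}.3; well-definedness is (MA4) for splayed chimneys (Part 1), and bijectivity is a comparison of the two combinatorial descriptions. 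Fixators match by Corollary \ref{3.8}: the strong fixator $Q(\g R)=U(F^v).G(\QF^m(F^v),F).N(\g R)$ has image in $G(F^v)=P(F^v)/U(F^v)$ equal to the fixator of $[\g R]$ in $\SHI^e_{F^v}$ given by \ref{3.2}.3c,d. As $P(F^v)$ stabilizes the direction $F^v$ it acts on both sides, the bijection is $P(F^v)$-equivariant, and transporting the $W^a(\A_{F^v})$-structure makes it an isomorphism of affine buildings.

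\emph{Main obstacle.} Parts 2 and 3 are essentially a dictionary between the objects of \cite{Ru-10} and Charignon's façades; the substantive work is in Part 1. The delicate point is to wring out of the good-fixator calculus of \ref{3.5}--\ref{3.9} the full force of (MA3)/(MA4) for $cl^{\QD^{ti}}_\R$, i.e. to reduce every admissible pair of filters --- including honest facets, solid chimney-germs and, because $\QD^{ti}$ is not tamely imaginary, the extra filters forced by \ref{3.10}.2 --- to the two configurations of Proposition \ref{3.9}.6 by shortening and approximation, and to check that the transporters can be taken in $G$ inducing genuine Weyl isomorphisms, which is what finally delivers strong transitivity.
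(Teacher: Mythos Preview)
Your handling of (MA1), (MA2), (MA4), (MAO) and of strong transitivity via $(G^\emptyset,\shq^\emptyset)$ matches the paper. The gap is (MA3). Proposition \ref{3.9}.6 is an (MA4)-type statement: \emph{given} an apartment already containing $\g R_1\cup\g R_2$, every other one contains $cl^{\QD^{ti}}_\R(\g R_1\cup\g R_2)$ and is conjugate to it by the fixator. It says nothing about the \emph{existence} of a common apartment, and your shortening proposal is circular --- ``a small shortening lying in a single apartment through $\g R_1$'' is precisely (MA3) for the shortened filter, and no limit procedure manufactures existence from transitivity.

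The paper does not try to squeeze (MA3) out of the good-fixator calculus at all. It uses the Bruhat--Birkhoff--Iwasawa decomposition \ref{3.3}.4: for facets $F_1\subset\A_{F_1^v}$, $F_2\subset\A_{F_2^v}$ with one of $F_1^v,F_2^v$ spherical, one has $G=Q(F_1).N.Q(F_2)$. For a splayed chimney-germ $\g R$ in $\A$, Corollary \ref{3.8} gives $N.Q(\g R)=N.Q([\g R])$ where $[\g R]$ is the corresponding facet in the spherical fa\c{c}ade; substituting yields $G=Q(F).N.Q(\g R)$ for $F$ a facet in $\A$ (and likewise for $F$ a solid chimney-germ via the same dictionary). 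The classical argument then puts arbitrary translates $g_1F$ and $g_2\g R$ in the apartment $g_1q_F^{-1}\A=g_2q_{\g R}\A$, where $g_2^{-1}g_1=q_F\,n\,q_{\g R}$. Note that \ref{3.3}.4 is available already for a merely \emph{good} family, independent of (P8)--(P10) and of the choice of enclosure map; this is why it sits earlier than \ref{3.9}.6 and why (MA3) stands apart from the other axioms.

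Your Parts 2 and 3 are close to the paper's. In Part 2 the paper proves well-definedness and injectivity by placing two sector-faces in a common apartment (via (MA3), now established) and comparing directions there; in Part 3 it works with sector-face-germs rather than chimney-germs and obtains the fixator identity $Q([x+F^v])=Q(\g F)$ directly from \ref{3.8} together with $N([x+F^v])=N(\g F)$.
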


\begin{enonce*}[definition]{N.B} a) Of course in this theorem affine hovel must be understood according to the choice of $cl_\R^{\QD^{ti}}$, see \ref{3.10}.2 above. 
 If we drop the hypothesis (P10), then we get only a closed-restricted-hovel.

\par b) We may replace $\QD^{ti}$ by the non essential system $\QD^{tine}$ with $\QD^{tine}_{im}=(\sum_{\qa\in\QF}\,\R\qa)\setminus(\cup_{\qa\in\QF}\,\R\qa)$ \cf \ref{3.9}.1.

\par c) As we chose $cl=cl_\R^{\QD^{ti}}$ (or $cl=cl_\R^{\QD^{tine}}$), the Bruhat-Tits building $\SHI_{F^v}^e$ in 3) above is endowed with its $\R-$structure.

\par d) The hovel $\SHI$ inherits all properties proved in \LC. In particular it is endowed with a preorder relation $\stackrel{o}{<}$ (resp. $\leq{}$ if we are not in the generic-restricted case) inducing on each apartment $A$ the known relation $\stackrel{o}{<}_A$ (resp. $\leq_A$) associated to the Tits cone \cf \ref{2.3}.2b.

\par e) If a wall $M(\qa,k)$ contains a panel of a chamber $C\subset D(\qa,k)\subset\A$, then the chambers adjacent to $C$ along this panel are in one to one correspondence with $U_{\qa,k}/U_{\qa,k+}$ (\cf \cite[2.9.1]{Ru-10} and \ref{3.6}.4). In particular $\SHI$ is thick (\ref{2.3}.2a).

\par f) As $G=G^\emptyset.N$ and $N^\emptyset=G^\emptyset\cap N$ is the group of Weyl automorphisms of $\A$, the following proof tells that $G^\emptyset$ is the subgroup $G^w$ of Weyl automorphisms in $G$.
\end{enonce*}

\begin{proof} 1) It is sufficient to use the family $\shq^\emptyset$ in $G^\emptyset$. Axiom (MA1) is  then clear by definition and all the properties asked for axioms (MA2), (MA4) and (MAO) are proved in \ref{3.5}, \ref{3.6} or \ref{3.9}. \label{N26}
 If $F$ and $\g R$ in $\A$ are as in (MA3), then the Bruhat-Birkhoff-Iwasawa decomposition \ref{3.3}.4 and corollary \ref{3.8} prove that $G=Q(F).N.Q(\g R)$; it is classical that this proves (MA3).
 
 \par As the elements in $\qn(N)$ are vectorially Weyl automorphisms of $\A$ (\ref{2.3}.3.a) and $N$ is the stabilizer of $\A$ (\ref{3.6}.4), the elements in $G$ act by vectorially Weyl automorphisms.

 \par 2) The fixer $Q(\g f)$ of a spherical sector-face $\g f=x+F^v$ in $\A$ is in $P(F^v)$ (\ref{3.7}).
  So the map $\g f\mapsto F^v$ is well defined and onto the spherical facets of $\SHI^{vc}_\pm{}$.
   Consider $\g f_1$ and $\g f_2$, after shortening they are in a same apartment and then, by definition, they are parallel if and only if they correspond to the same $F^v$.
   So we have got the desired bijection. Now this bijection is clearly compatible with domination and opposition \cf \cite[3.1]{Ru-10}: it is an isomorphism of the twin buildings.

   \par 3) $\shi(\g F^\infty)$ is the set of sector-face-germs with direction $\g F^\infty$.
   Now in $\A$ we saw (\ref{2.5}.3) that the map $\g F=germ_\infty(x+F^v)\mapsto[x+F^v]$ identifies the apartment $\A(\g F^\infty)$ in $\shi(\g F^\infty)$ with $\A_{F^v}^e$.
   By \ref{3.8} $Q([x+F^v])=Q(\g F).N([x+F^v])$; but in $\A$ it is clear that $N([x+F^v])=N(\g F)$, so $Q([x+F^v])=Q(\g F)$.
    The identification of $\shi(\g F^\infty)$ and $\SHI_{F^v}^e$ is now clear, through a construction as in \ref{3.1}.
\end{proof}

    \subsection{Compatibility with enclosure maps}\label{3.12}

    \par We have proved good properties with respect to $cl^{\QD^{ti}}_\R$. But the example of Kac-Moody groups (\cite{GR-08} or  \ref{s5} below) proves that we may hope the following strong compatibility property.

    \begin{enonce*}[definition]{\quad1) Definition} The family $\shq$ of parahorics is {\it compatible with the enclosure map} $cl$ if for all non empty filter $\QO$ in a façade $\A_{F^v}$ and all vectorial chamber $C^v\in F^{v*}$, we have:  $Q(\QO)\cap U(\pm{}C^v)\subset Q(cl(\QO))$.
\end{enonce*}

\begin{enonce*}[definition]{\quad2) Remarks} a) Combined with (P9) and \ref{3.4} this implies $Q(\QO)\cap P(C^v)\subset Q(cl(\QO+C^v))$.

\par b) \label{N10'} Even for $cl=cl^{\QD^{ti}}_\R$ this is stronger than (P9), \eg if $\overline\QO+\overline{C^v}$ is not closed in $\A$ or $\QO$ not convex. It implies always (P10).

\par c) The most important case is when $\QO$ has an (half) good fixer. Then $Q(\QO)=Q(cl(\QO)).N(\QO)$, more precisely we may generalize \cite[prop. 4.3]{GR-08} :
\end{enonce*}

\begin{enonce*}[plain]{\quad3) Lemma} Suppose $\shq$ very good, compatible with $cl$ and $\QO\subset\QO'\subset cl(\QO)\subset\A$.

\par If $\QO$ has a good (or half good) fixer, then this is also true for $\QO'$ and $Q(\QO)=Q(\QO').N(\QO)$, $Q(\QO).N=Q(\QO').N$. In particular any apartment containing $\QO$ contains its enclosure $cl(\QO)$ and is conjugated to $\A$ by $Q(cl(\QO))$.

\par Conversely, if $supp(\QO)=\A$ (or $supp(\QO')=supp(\QO)$, hence $N(\QO')=N(\QO)$), $\QO$ has an half good fixer and $\QO'$ has a good fixer, then $\QO$ has a good fixer.
\end{enonce*}

 \begin{enonce*}[definition]{\quad4) Consequences} All the results proved in  \cite[sec. 4]{GR-08} are then true. For example the results in \ref{3.9} above for $cl_\R$ or $cl^{\QD^{ti}}_\R$ are true for $cl$; hence:
\end{enonce*}

\begin{enonce*}[plain]{\quad5) Theorem} If $\shq$ is a very good family of parahorics compatible with $cl$, then theorem \ref{3.11} is true with type $(\A,cl)$ instead of $(\A,cl^{\QD^{ti}}_\R$).
 If $cl=cl^\shp_{\QL'}$ and $\shp\subset\QD$ is tamely imaginary, we get an ordered affine hovel exactly as in \cite{Ru-10}, see \ref{3.10} 1) and 2).
\end{enonce*}

    \begin{enonce*}[definition]{\quad6) Definition} A {\it parahoric hovel} of  type $(\A,cl)$ is an ordered affine hovel, obtained from a valued root datum endowed with a very good family of parabolics compatible with $cl$. We suppose moreover $cl=cl^\shp_{\QL'}$ with $\shp\subset\QD$ tamely imaginary.
\end{enonce*}

\par A parahoric hovel has all properties of hovels and some other ones: the associated group $G$ acts strongly transitively by vectorially Weyl automorphisms, moreover \ref{3.3}.3 tells that any sector germ and any segment germ are in a same apartment.

 \subsection{Backwards constructions}\label{3.13}

 \begin{enonce*}[plain]{\quad1) Lemma} Let $\shi$ be an affine hovel  of type $(\A,cl)$ with a group $G$ acting on it strongly transitively.
  Then $G$ acts transitively on the apartments and  the stabilizer $N$ of an apartment $A$ induces in $\A$ a group $\qn(N)$ containing the group $W^{ath}$ generated by the reflections along the thick (hence true) walls.
 \end{enonce*}

 \begin{enonce*}[definition]{N.B}  The subgroup $W^{ath}$ of $W^a$ is equal to it when $\shi$ is thick.
 \end{enonce*}

 \begin{proof} Let $\g S_1\subset A_1$, $\g S_2\subset A_2$ be sector germs in apartments. By (MA3) there exists an apartment $A_3$ containing $\g S_1$ and $\g S_2$. By (MA2) there exists $g_1,g_2\in G$ with $A_1=g_1A_3$ and $A_2=g_2A_3$, so $A_1$ and $A_2$ are conjugated by $G$.

\par  If now $M$ is a thick wall in $A$, we write $D_1, D_2$ the half-apartments in $A$ limited by $M$.
  By \cite[2.9]{Ru-10} there is a third half-apartment $D_3$ in $\shi$ limited by $M$ such that for $i\not=j$, $D_i\cap D_j=M$ and $D_i\cup D_j$ is an apartment $A_{ij}$.
  By (M4) applied to a sector-panel-germ $\g F$ in $M$ and a sector-germ in $D_i$ (dominating the opposite in $M$ of $\g F$) there exists $g_{ijk}\in G$ with $g_{ijk}.A_{ij}=A_{ik}$ (where $\{1,2,3\}=\{i,j,k\}$). \label{N15}
   Now $A=A_{12}$ and $g_{142}.g_{231}.g_{123}$ (where $D_4=g_{231}.D_1$) stabilizes $A$ and exchanges $D_1$ and $D_2$: it is the reflection with respect to $M$.
  \end{proof}

 \par {\bf2)} Let $\shi$ and $G$ be as in the lemma. Then $G$ acts "nicely" (in particular strongly transitively) on the twin buildings $\shi^{\pm{}\infty}$ and we saw in \cite[3.8]{Ru-10} following \cite{T-92a}, that $G$ is often endowed with a RGD system.

 \par {\bf3)}  Suppose $G$ endowed with a generating root group datum such that the corresponding twin buildings $\SHI^{vc}_\pm{}$ are identified $G-$equivariantly with $\shi^{\pm{}\infty}$, in particular $G$ acts via positive, vectorial-type-preserving automorphisms. \label{N11}
   Then the action of $G$ on the affine buildings $\shi(\g F^\infty)$ (for $\g F^\infty$ a panel in  $\shi^{\pm{}\infty}$) should endow the root group datum with a valuation as in the classical case \cite[4.12]{Ru-10}.

 \par {\bf4)}  Suppose now the existence of a valuation of the root group datum which gives the affine buildings $\shi(F^v)$ on which $P(F^v)$ acts through $P(F^v)/U(F^v)$ (for any spherical vectorial facet $F^v$).
   Then $\shi$ is constructed as in \ref{3.1} with a family $\shq=(Q(x))_{x\in\A}$ of parahorics.
   We define also $\shq$ on $\overline\A^e_{sph}$ by the action of $G$ on the buildings $\shi(F^v)$. Let's look to the properties satisfied by $\shq$:

   \par (P1), (P2), (P4), (P5) and (P6) are clear by definition and hypothesis.

   \par By \cite[4.7]{Ru-10} $x\in\shi$ and $F^v\in\shi^{\pm{}\infty}$ (hence spherical) determine a unique sector face $x+F^v$ so (P9) is satisfied: $Q(x)\cap P(F^v)$ stabilizes $x+F^v$
   and, up to elements fixing $x+\overline{F^v}$, it stabilizes $\A$ and is vectorially Weyl, hence fixes $x+\overline{F^v}$.
   As $\shq$ is well known on $\overline\A^e_{sph}$, $Q(x)\cap P(F^v)$ fixes $(x+\overline{F^v})\cap\overline\A^e_{sph}$.


\par Now let $u\in U_{\qa,\ql}$ and $F^v$ a panel in Ker$\qa=M^\infty$.
Then by [\lc sec.4] $u\A$ is an apartment of the building $\shi(M^\infty)\simeq\shi(F^v)$ (which is a tree) and its intersection with $\A$ is an half-apartment $D(\qa,\qm)$. \label{N12}
 But by definition of the valuation $u$ fixes $pr_{F^v}(D(\qa,\ql))\subset\A^e_{F^v}$; so $\A\cap u\A\supset D(\qa,\ql)$ hence $u$ fixes $D(\qa,\ql)$. So (P3) is satisfied.

    \par For $x\in\A$ and $g\in G$, suppose $g\in Q(x).N\cap P(F^v).N$ then $g\A\ni x$, $g\A^v\supset F^v$ and $g\A$ contains the sector face $x+F^v$ \cite[4.7]{Ru-10}.
    So by (MA2) $g\in Q(x+\overline{F^v}).N$. But $Q(x+\overline{F^v})\supset Q(x)$ and $Q(x+\overline{F^v})\supset Q(pr_{F^v}(x))$ as $pr_{F^v}(x)\in\A_{F^v}$ is the class of $x+\overline{F^v}$. Hence (P7) is satisfied.
    
   \par (P10) is satisfied as $\shi$ is an hovel (not a closed-restricted-hovel).

    \par When $\overline\A=\overline\A^e$, Charignon proved that (P8) is satisfied for every good family of parahorics (\ref{3.3}.6). \label{N27} We may also use a geometrical translation of (P8) for good families satisfying (P9):
     let $x\in\A$ and $\g s$ a sector of origin $x$ in $\A$, then any apartment $A'$ containing $x$ contains also a sector $\g s_1$ of origin $x$ opposite $\g s$ (in an apartment containing them both).

    \par So if $\overline\A=\overline\A^e$ is essential, we know that the family $\shq$ (defined on $\A\cup\overline\A_{sph}$) is very good.

     \par {\bf5)} These sketchy constructions reduce more or less the classification problem for affine hovels with a good group of automorphisms to the problem of existence (or uniqueness ?) of very good (excellent ?) families of parahorics associated to valuated RGD systems.

  \begin{prop}\label{3.14} We consider a group $G$ (resp. $G'$) acting strongly transitively on an ordered affine hovel $\shi$ (resp. $\shi'$) and a map $j:\shi\to\shi'$ which is $G-$equivariant with respect to an homomorphism $\qf:G\to G'$. We suppose that:

  \par 1) There exist apartments $A\subset\shi$ and $A'\subset\shi'$ such that $j\,\rule[-1.5mm]{.1mm}{3mm}_{\, A}$ is injective affine from $A$ to $A'$.

  \par 2) There exists a sector germ $\g S$ in $A$ such that the direction of the cone $j(\g S)$ meets the interior of the Tits cone $\sht'_\pm{}$ in $\vect{A'}$.

  \par Then $j$ is injective.
  \end{prop}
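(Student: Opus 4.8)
The plan is to reduce the injectivity of $j$ on all of $\SHI$ to the injectivity assumed on the single apartment $A$, using strong transitivity of $G$ and the key geometric fact (coming from axioms (MA2), (MA3)) that any point of $\SHI$ lies in a common apartment with the sector germ $\g S$. First I would fix two points $x, y \in \SHI$ with $j(x) = j(y)$ and aim to show $x = y$. By (MA3), applied to the splayed chimney germ $\g S$ (a sector germ is splayed) and the facet containing $x$ (a point is a facet, hence a solid chimney germ in the degenerate sense), there is an apartment $A_1$ containing both $\g S$ and $x$; similarly an apartment $A_2$ containing $\g S$ and $y$. Since $G$ acts strongly transitively, there exist $g_1, g_2 \in G$ carrying $A$ to $A_1$, $A_2$ respectively, and — after composing with an element of $N$ fixing the sector germ $\g S$, using the transitivity of the stabilizer on pairs (apartment, sector germ) that follows from strong transitivity and Lemma~\ref{3.13}.1 — I may arrange that $g_1$ and $g_2$ both carry the fixed sector germ $\g S_0 \subset A$ to $\g S$. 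Thus $x = g_1 \xi$ and $y = g_2 \eta$ for some $\xi, \eta \in A$, with $g_1(\g S_0) = g_2(\g S_0) = \g S$.

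Next I would transport everything back to $A$ via $g_1^{-1}$. Set $y' = g_1^{-1} y = (g_1^{-1} g_2)\eta \in A$, and note $g_1^{-1} g_2$ stabilizes the sector germ $\g S_0$ in $A$ (it sends $\g S_0$ to $\g S_0$ since both $g_i$ send $\g S_0$ to $\g S$), so $g_1^{-1} g_2 \in G_{\g S_0}$, the stabilizer of $\g S_0$. Now $j$ is $G$-equivariant along $\qf$, so $j(x) = \qf(g_1) j(\xi)$ and $j(y) = j(g_1 y') = \qf(g_1) j(y')$; from $j(x) = j(y)$ and injectivity of $\qf(g_1)$ on $\SHI'$ (it is an automorphism) we get $j(\xi) = j(y')$ with both $\xi, y' \in A$. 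By hypothesis 1), $j\rest{A}$ is injective, whence $\xi = y'$, i.e. $x = g_1 \xi = g_1 y' = y$. This would complete the argument, were it not for one gap: I have used that $g_1^{-1}g_2$ stabilizes $\g S_0$, but this requires knowing that an element of $G$ sending one sector germ in $A$ to another sector germ in $A$ of the same direction, and fixing $A$ setwise in the relevant sense, is controlled — this is where hypothesis 2) enters.

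The role of hypothesis 2), and the main obstacle, is precisely to guarantee that $j\rest{A_1}$ (equivalently $j\rest{A}$ after conjugation) remains injective — a priori $j$ could collapse another apartment even if it is injective on $A$ — and more subtly that the identification of $A_1 \cap A_2$ with a common subset containing $\g S$ is compatible with $j$. Here is the refined plan: because the direction of $j(\g S)$ meets the interior of the Tits cone $\sht'_\pm$ in $\vect{A'}$, the germ $j(\g S)$ is a splayed sector germ in the apartment $j(A) = A' $ (using hypothesis 1, $j$ maps $A$ isomorphically onto its image, an apartment or a generic subspace thereof; one uses that an affine injection whose image meets the interior of the Tits cone carries a chamber to a chamber). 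Then for any apartment $A''$ of $\SHI'$ containing $j(x)$ and the splayed germ $j(\g S)$, axiom (MA2)/(MA4) in $\SHI'$ applied to $j(\g S)$ forces a Weyl isomorphism fixing $cl(j(\g S))$. The upshot, which is the technical heart, is that $j$ restricted to \emph{any} apartment $A_1$ of $\SHI$ through $\g S$ is still injective: one transports $A_1$ to $A$ by strong transitivity fixing $\g S_0$, applies injectivity on $A$, and checks the maps are intertwined by $\qf$ of the transporting element. I expect the delicate point to be verifying that the transporting element of $G$ can be chosen to fix the sector germ $\g S_0$ pointwise enough for the equivariance bookkeeping to close up — this is a standard but fiddly consequence of strong transitivity together with the fact that $Q(\g S_0) = G_{\g S_0}$ acts transitively on apartments containing $\g S_0$ (Corollary~\ref{3.8} and the backwards-construction discussion in \ref{3.13}), and I would spell it out carefully, since without the genericity condition 2) the sector germ $j(\g S)$ need not be splayed and (MA2) in $\SHI'$ would not apply.
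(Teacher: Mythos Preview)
Your reduction has a genuine gap, and it is not the one you flag. You correctly observe that $g_1^{-1}g_2$ stabilizes the sector germ $\g S_0$ (this is immediate from $g_1(\g S_0)=\g S=g_2(\g S_0)$). The real problem is that stabilizing $\g S_0$ does not force $g_1^{-1}g_2$ to stabilize the apartment $A$: the fixator of a sector germ acts transitively on the apartments containing it, so it is typically much larger than the stabilizer of $A$. Hence $y'=g_1^{-1}y=g_1^{-1}g_2\,\eta$ lies in the apartment $g_1^{-1}g_2\,A$, which contains $\g S_0$ but is generally different from $A$. You therefore cannot invoke injectivity of $j\rest A$ to conclude $\xi=y'$. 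Your proposed patch---that $j$ is injective on every apartment through $\g S$---is correct (by equivariance) but does not help: you still have $\xi$ and $y'$ in two different apartments, and in a hovel two points need not share any apartment. The paper's N.B. after the statement makes exactly this point: for buildings the proof is easy because any two points lie in a common apartment; the whole difficulty here is that this fails.

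The paper's argument bypasses this by never trying to put $x_1$ and $x_2$ in a common apartment. Instead it picks a point $y$ deep in a sector $\g s\subset\g S$ (fixed pointwise by both $g_1$ and $g_2$), far enough that the segments $[y,x_i]$ are generic (this is where hypothesis 2) is used, to ensure the corresponding segments in $\shi'$ are preordered). Then axiom (MAO) in $\shi'$ forces $[j(y),j(x_1)]=[j(y),j(x_2)]$ as sets. Since $g=g_2g_1^{-1}$ fixes the germ $[y,x_1)=[y,x_2)$ pointwise and $\qf(g)$ is affine between the image apartments, $\qf(g)$ fixes the entire segment; pulling back via injectivity of $j$ on each apartment gives $g[y,x_1]=[y,x_2]$, hence $[y,x_1)=[y,x_2)$. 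A final step using (MA2) for open-segment-germs shows that if the two segments ever separate at some $z$, the distinct germs $[z,x_1)$ and $[z,x_2)$ would lie in a common apartment with the same $j$-image, contradicting injectivity on apartments. The crucial idea you are missing is this use of the order axiom (MAO) and segments from a common basepoint in the sector to compare points that do not share an apartment.
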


  \begin{enonce*}[definition]{N.B}  We exclude here the closed-restricted-hovels. For buildings the proof is easier, as two points are in a same apartment.
\end{enonce*}

  \begin{proof} Let $x_1,x_2\in\shi$ such that $j(x_1)=j(x_2)$. There is an apartment $A_i=g_iA$ containing $x_i$ and $\g S$, with $g_i$ fixing pointwise a sector $\g s$ in $\g S$.
  Then $A'_i=\qf(g_i)A'$ is an apartment containing $j(x_i)$ and $j(\g s)$ with $\qf(g_i)$ fixing pointwise $j(\g s)$.
  Let's consider $y\in\g s$ sufficiently far away; then $[y,x_i]$ and $j([y,x_i])=[j(y),j(x_i)]$ are preordered (even generic) segments in $A_i$ and $A'_i$.
 But $j(x_1)=j(x_2)$, so $[j(y),j(x_1)]=[j(y),j(x_2)]$ (axiom (MAO) ).
  As $g=g_2g_1^{-1}$ fixes pointwise the segment germs $[y,x_1)$ and $[y,x_2)$, $\qf(g)$ fixes pointwise $[j(y),j(x_1))=[j(y),j(x_2))$ and, as $\qf(g)$ is an affine isomorphism from $A'_1$ to $A'_2$, it fixes pointwise the whole segment $[j(y),j(x_1)]=[j(y),j(x_2)]$.
  Then $g[y,x_1]$ and $[y,x_2]$ are two segments in $A_2$ with the same image $[j(y),j(x_2)]$ in $A'_2$ by $j$ (injective on the apartments).
  So these segments are equal; in particular $[y,x_1)=[y,x_2)$.

  \par Now $j([y,x_1])=j([y,x_2])$ and $[y,x_1)=[y,x_2)$. Then $[y,x_1]\cap[y,x_2]$ is a segment $[y,z]$ (\cf (MA2) for open-segment-germs, as we avoid \ref{3.10}.3) with $z\not=y$.
  We are done if $z=x_1$ or $z=x_2$. Otherwise $[z,x_1)$ and $[z,x_2)$ are distinct segment germs in a same apartment \cite[5.1]{Ru-10} with the same image by $j$, contrary to the hypothesis.
 \end{proof}

 \subsection{Simplifications of the axioms}\label{4.15}
 
 \par For simplicity we suppose $\QD$ tamely imaginary.
 
  \par {\bf1)} Let $G$ be a group acting on an affine ordered hovel $\SHI$ by vectorially Weyl automorphisms. It is proved in \cite{CiR15} that strong transitivity is equivalent to any of the two following conditions:
  
  \par For any local chamber (resp. sector germ) $\QO$ in $\SHI$ and any two apartments $A,A'$ containing $\QO$, there is $g\in G$ fixing pointwise $\QO$ such that $A'=g.A$.
 
 \begin{enonce*}[plain]{\quad\;\,2) Proposition} In the definition of an affine ordered hovel (resp. an affine ordered generic-restricted-hovel), we may replace axiom (MA2) by the axiom we get when we allow $F$ to be only a preordered open-segment-germ or a solid chimney (resp. only a generic open-segment-germ)
\end{enonce*}

\begin{proof} Let $A,A'$ be two apartments containing a point $x$.
We consider in $A$ a generic segment germ $[x,y)$ and in $A'$ a sector germ $\g S'$.
By (MA3) there is an apartment $A''$ containing $[x,y)\cup\g S'$.
The intersection $A'\cap A''$ contains  a sector $\g s'$ (with germ $\g S'$).
We choose $z\in\g s'$ sufficiently far; so $[x,z]$ is a generic segment in $A'$ and by (MA0) this is also a line segment in $A''$.
By (MA2) for generic open-segment-germs, there is a Weyl isomorphism $\qf:A\to A''$ (resp. $\psi:A'\to A''$) fixing $cl(]x,y))\supset cl(x)$ (resp.  $cl(]x,z))\supset cl(x)$).
Then $\psi^{-1}\circ\qf$ is the expected Weyl isomorphism from $A$ to $A'$.

\par Let $\qd$ be a generic ray  with origin $x$ in an apartment $A$ and $A'$  an apartment containing $\qd$.
Then $\g R=cl_A(germ_\infty(\qd))$ is a splayed chimney germ and $cl_A(germ_x(\qd))$ a spherical closed-facet.
By (MA2) for generic open-segment-germs, $cl_A(germ_x(\qd))\subset A'$.
By (MA0) and (MA2) for generic open-segment-germs, $\qd$ is closed convex in a generic line of $A'$.
But $\qd$ has only one endpoint ($x$) in $A$, so $\qd$ is a generic ray in $A'$.
We consider the splayed chimney germ $\g R'=cl_{A'}(germ_\infty(\qd))$.
By (MA3) and (MA4), there is an apartment $A''$ containing $\g R\cup\g R'$ and, by (MA4), it is clear that $\g R=cl_{A''}(germ_\infty(\qd))=\g R'$ in $A''$.
So $A\cap A'\supset cl_A(germ_x(\qd)) \cup cl_A(germ_\infty(\qd))$ and, by (MA4), there is a Weyl isomorphism from $A$ to $A'$ fixing $cl_A(germ_x(\qd) \cup germ_\infty(\qd))=cl_A(\qd)$.

\par A splayed chimney is the enclosure of a spherical facet and its (splayed) chimney germ.
So (MA2) for splayed chimneys is a consequence of (MA4) (with $F$ a spherical facet).
\end{proof}

\section{Hovels and bordered hovels for split Kac-Moody groups}\label{s5}

  \par We consider now the situation of \ref{2.2} and \ref{2.3b} and shall build a very good family $\widehat\shp$ of parahorics following \cite{Ru-11}. We choose the enclosure map $cl=cl^\QD$.

   \subsection{The parahoric subgroup associated to $y\in\A$}\label{5.1}

        \par {\quad\bf1) The free case with $V=V^x$}: In \cite{Ru-11} the RGS $\shs$ is  supposed free and the affine apartment $\A$ is equal to $\A^x$ with associated vector space $V^x=Y\otimes_\Z\R$.
        Then for $y\in\A$, one defines the group $\widehat P(y)=U_y^{pm+}.U_y^{nm-}.N(y)=U_y^{nm-}.U_y^{pm+}.N(y)$ where $N(y)$ is the fixer of $y$ in $N$ and $U_y^{pm+}$ (resp. $U_y^{nm-}$) is the intersection with $G$ or $U^+$ (resp. $U^-$) of a group $U_y^{ma+}=\prod_{\qa\in\QD^+}\,U_\qa(y)$ (resp. $U_y^{ma-}=\prod_{\qa\in\QD^-}\,U_\qa(y)$) which exists in a suitable completion $G^{pma}$ (resp. $G^{nma}$) of the Kac-Moody group $G$ [\lc 4.5, 4.14];
        actually one has to define suitably $U_\qa(y)$ for $\qa\in\QD_{im}$: $U_\qa(y)=U_{\qa,-\qa(y)}:=U^{ma}_{\{y\}}(\{\qa\})$ in the notations of [\lc 4.5.2].

        \par The group $U^{\pm}_y=U^\pm{}\cap G(y)$ of \ref{3.2}.1 is clearly included in $\widehat P(y)$.
        As $U_y^{pm+}=U^+\cap \widehat P(y)$, we have $U^+_y\subset U_y^{pm+}$ and, similarly, $U^-_y\subset U_y^{nm-}$.
   Moreover we know that $\widehat P(y)=U_y^{pm+}.U^-_y.N(y)=U_y^{nm-}.U^+_y.N(y)$ [\lc 4.14];

   \par     The interesting point for us is that $U_y^{ma+}$, $U_y^{ma-}$, $U_y^{pm+}$, $U_y^{nm-}$, $U^+_y$ or $U^-_y$ depend only of the true half-apartments (imaginary or not) containing $y$.
   In particular they depend only of the class $\overline y$ of $y$ in the essentialization $\A^q=\A^x/V_0$.

   \par In the classical case where $\QF$ is finite (and $\QD_{im}$ empty) the group $U_y^{pm+}$ (resp. $U_y^{nm-}$) is the group $U^{++}_y$ (resp. $
   U^{--}_y$) generated by the groups $U_\qa(y)$ for $\qa\in\QF^+$ (resp. $\qa\in\QF^-$).

        \par {\bf2)}  Consider now any RGS $\shs$, any affine apartment $\A$ as in \ref{2.3b} for the root datum in $G=\g G_\shs(K)$ and any $y\in\A$.
        By \cite[1.3, 1.11]{Ru-11} there is an injective homomorphism $\qf:G\hookrightarrow G^{xl}=\g G_{\shs^l}(K)$ where $\shs^l$ is a free RGS.
        The affine apartment associated to it is $\A^{xl}$ and we know that the essentializations of $\A$ and $\A^{xl}$ are equal: $\A/V_0=\A^{xl}/V_0^{xl}=\A^q$.

        \par To $\overline y\in\A^q$ we associated above some subgroups of $G^{xl}$.
        By [\lc 1.9.2, 3.19.3] the groups $U^\pm{}$ in $G$ and $G^{xl}$ are isomorphic by $\qf$, so $U_{\overline y}^{pm+}$, $U_{\overline y}^{nm-}$,  $U_{\overline y}^{+}$ and $U_{\overline y}^{-}$ are actually in $G$ (and if $\shs$ is free, they are as defined in 1) above).
        We define the group $\widehat P^m(\overline y)$ as generated by $U_{\overline y}^{pm+}$, $U_{\overline y}^{nm-}$ and $\g T_\shs(\sho)\subset Z_0=$ Ker$\qn$.
        We define $N(y)$ the fixer of  $y$ in $N$ and $\widehat P(y)=\widehat P^m(\overline y).N(y)$ which is called the {\it fixer group} associated to $y$ in $\A$ (\cf \ref{5.2}b below).

  \begin{lemm}\label{5.2} a) We have
  $\widehat P^m(\overline y)=U_{\overline y}^{pm+}.U_{\overline y}^{nm-}.N^m({\overline y})=U_{\overline y}^{nm-}.U_{\overline y}^{pm+}.N^m({\overline y})=$ \goodbreak\noindent
  $U_{\overline y}^{pm+}.U_{\overline y}^{-}.N^m({\overline y})=U_{\overline y}^{nm-}.U_{\overline y}^{+}.N^m({\overline y})$
  where $N^m({\overline y})$ is a subgroup of $N(y)$, hence fixing pointwise $y+V_0\subset\A$.

  \par b) Moreover $\widehat P^m(\overline y)$ does not change when one changes $\QF^+$ by $W^v$, hence it is normalized by $N(y)$ and $\widehat P(y)$ is a group.
  \end{lemm}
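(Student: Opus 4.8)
The plan is to transfer everything to the free case of \cite{Ru-11} through the embedding $\qf:G\hookrightarrow G^{xl}=\g G_{\shs^l}(K)$, and then pass to the essential part. As a first step I would record, for a general RGS $\shs$, the decompositions
\[
\widehat P(y)=U_y^{pm+}.U_y^{nm-}.N(y)=U_y^{nm-}.U_y^{pm+}.N(y)=U_y^{pm+}.U_y^{-}.N(y)=U_y^{nm-}.U_y^{+}.N(y)
\]
together with the fact that $\widehat P(y)$ is a group. Since $\qf$ is an isomorphism on $U^\pm$ and is compatible with the valuations \cite[1.9.2, 3.19.3]{Ru-11}, it identifies $U_{\overline y}^{pm+},U_{\overline y}^{nm-},U_{\overline y}^{+},U_{\overline y}^{-}$ with the corresponding subgroups of $G^{xl}$ and carries $N(y)$ into $N^{xl}(y)$; a short check then gives $\qf(\widehat P(y))=\widehat P^{xl}(y)\cap\qf(G)$, where $\widehat P^{xl}(y)$ is the group of \cite[4.14]{Ru-11} for the free group $G^{xl}$, and the right-hand side, being the intersection of a group with a subgroup, is a group carrying the four displayed decompositions, so the same holds for $\widehat P(y)$ by injectivity of $\qf$. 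I will also use freely the inclusions $U_y^{+}\subset U_y^{pm+}$ and $U_y^{-}\subset U_y^{nm-}$ recalled before the lemma, which put $U_y^{pm+},U_y^{nm-},U_y^{+},U_y^{-}$ all inside $\widehat P^m(\overline y)$.

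For part a), I would put $N^m(\overline y):=N(y)\cap\widehat P^m(\overline y)$; this is a subgroup of $N(y)$ by construction, and since the linear part of any element of $N(y)$ lies in $W^v$, which fixes $V_0$ pointwise (one has $s_i v=v-\qa_i(v)\qa_i^\vee=v$ for $v\in V_0$), every element of $N^m(\overline y)$ fixes $y+V_0$ pointwise. For the four equalities it is enough to treat $\widehat P^m(\overline y)=U_y^{pm+}.U_y^{nm-}.N^m(\overline y)$, the others being identical. The inclusion $\supseteq$ is immediate, all three factors lying in the group $\widehat P^m(\overline y)$. For $\subseteq$, given $g\in\widehat P^m(\overline y)\subset\widehat P(y)$ write $g=u^{+}u^{-}n$ with $u^{+}\in U_y^{pm+}$, $u^{-}\in U_y^{nm-}$, $n\in N(y)$; then $n=(u^{-})^{-1}(u^{+})^{-1}g$ lies in $\widehat P^m(\overline y)$ and in $N(y)$, hence in $N^m(\overline y)$, so $g\in U_y^{pm+}.U_y^{nm-}.N^m(\overline y)$. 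The three remaining equalities follow the same way from the other decompositions of $\widehat P(y)$ and the inclusions $U_y^{\pm}\subset\widehat P^m(\overline y)$.

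For part b), the real point is that $\widehat P^m(\overline y)$ does not depend on the choice of $\QF^+$ within its $W^v$-orbit; writing a reduced expression for $w\in W^v$ reduces this to the elementary step where $\QF^+$ is replaced by $s_i\QF^+=(\QF^+\setminus\{\qa_i\})\cup\{-\qa_i\}$ (note $s_i\QD^+_{im}=\QD^+_{im}$). Write $\widehat P^m(\overline y)'$ for the group built from $s_i\QF^+$. If there is a (real) wall of direction $\mathrm{Ker}\,\qa_i$ through $y$, i.e. $-\qa_i(y)\in\QL$, I would choose $u\in U_{\qa_i}$ with $\qf_{\qa_i}(u)=-\qa_i(y)$; then $u\in U_{\qa_i}(y)\subset U_y^{pm+}$, and by the formula $\qf_{-\qa_i}(u')=\qf_{-\qa_i}(u'')=-\qf_{\qa_i}(u)$ \cite[11.1.11]{Cn-10b} the elements $u',u''$ of $m(u)=u'uu''$ satisfy $\qf_{-\qa_i}(u')=\qf_{-\qa_i}(u'')=\qa_i(y)$, so $u',u''\in U_{-\qa_i}(y)\subset U_y^{nm-}$; hence $m(u)\in\widehat P^m(\overline y)$, while $\qn(m(u))=s_{\qa_i,-\qa_i(y)}$ fixes $y$. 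Conjugation by $m(u)$ sends each $U_\qb(y)$ to $U_{s_i\qb}(y)$, hence $U^{\pm}$ to $U^{s_i\pm}$ and $U_y^{ma\pm}$ to the analogous maximal pro-unipotent groups for $s_i\QF^+$, and it fixes $\g T_\shs(\sho)$; so it maps the generators of $\widehat P^m(\overline y)$ to those of $\widehat P^m(\overline y)'$, whence $m(u)\,\widehat P^m(\overline y)\,m(u)^{-1}=\widehat P^m(\overline y)'$. Since $m(u)\in\widehat P^m(\overline y)$, it normalizes $\widehat P^m(\overline y)$, and therefore $\widehat P^m(\overline y)=\widehat P^m(\overline y)'$. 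For an index $i$ with no such wall through $y$ I would instead combine the $\QF^+$-independence of $\widehat P(y)$ (from \cite[4.14]{Ru-11}) with part a): from $\widehat P^m(\overline y).N(y)=\widehat P(y)=\widehat P^m(\overline y)'.N(y)$ and the matching of the pro-unipotent parts after intersection with $U^{\pm}$, one obtains $\widehat P^m(\overline y)=\widehat P^m(\overline y)'$ in this case too.

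Granting this independence, the remaining assertions are formal: for $n\in N(y)$ with $w=\qn^v(n)$, conjugation by $n$ (which now fixes $y$) sends $U_y^{pm+},U_y^{nm-},\g T_\shs(\sho)$ to the corresponding groups for $w\QF^+$, i.e. to generators of the group built from $w\QF^+$, which equals $\widehat P^m(\overline y)$; hence $N(y)$ normalizes $\widehat P^m(\overline y)$, and $\widehat P(y)=\widehat P^m(\overline y).N(y)$ is a group. The one genuine obstacle in all of this is exactly the $\QF^+$-independence of $\widehat P^m(\overline y)$: one must control the maximal pro-unipotent subgroups $U_y^{pm+},U_y^{nm-}$ — defined inside the completions $G^{pma},G^{nma}$ — when finitely many real root groups are moved from one side to the other, and for indices $i$ with $-\qa_i(y)\notin\QL$ this rests on the completion machinery of \cite[section 4]{Ru-11} rather than on the direct reflection-lifting argument above.
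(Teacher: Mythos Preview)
Your overall strategy—push everything into $G^{xl}$ and use the decompositions of $\widehat P^{xl}$ from \cite[4.14]{Ru-11}—is the paper's strategy too. The gap is in your ``first step''. You assert that $\qf$ ``carries $N(y)$ into $N^{xl}(y)$'' and that a short check gives $\qf(\widehat P(y))=\widehat P^{xl}(y)\cap\qf(G)$. But $y$ lives in $\A$, not in $\A^{xl}$; the two apartments share only their essentialization $\A^q$. Writing $z\in\A^{xl}$ for a point with $\overline z=\overline y$, the inclusion $N(y)\subset N^{xl}(z)$ can genuinely fail: if $Z_0=\mathrm{Ker}\,\qn\supsetneq\g T_\shs(\sho)$ (which is allowed for a general suitable $\A$, cf.\ \ref{2.3b}.2a), any $t\in Z_0\setminus\g T_\shs(\sho)$ lies in $N(y)$ but has $\qn^{xl}(t)\neq 0$, hence $t\notin N^{xl}(z)$. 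So your claimed equality $\widehat P(y)=\widehat P^{xl}(z)\cap G$ is not available, and with it goes the decomposition $\widehat P(y)=U_{\overline y}^{pm+}U_{\overline y}^{nm-}N(y)$ that your argument for a) feeds on.

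What one actually has for free is only $\widehat P^m(\overline y)\subset\widehat P^{xl}(z)$. For $g\in\widehat P^m(\overline y)$ this yields $g=u^+u^-n$ with $n\in N^{xl}(z)$, and then $n\in G\cap N^{xl}=N$, so $n\in N\cap N^{xl}(z)$. The whole point is now to show $N\cap N^{xl}(z)\subset N(y)$, and this is \emph{not} a short check: it is the computation the paper carries out. One writes $n=tw$, uses $\overline\qn^{xl}(\overline t)=z-w(z)=\sum_j\qa_{i_j}(s_{i_{j+1}}\cdots s_{i_n}(z))\,\qa_{i_j}^\vee$ (which depends only on $\overline z=\overline y$), exploits the injectivity of $\overline\qn^{xl}$ on $Y\otimes\QL$ to pin down $\overline t$ as a $\Q$-combination of the $\qa_i^\vee$, and then transfers this to $\A$ via \ref{2.3b}.3a to conclude $\qn(t)=y-w(y)$, i.e.\ $n\in N(y)$. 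Your write-up skips this entirely; once it is in place, your extraction argument for $N^m(\overline y):=N(y)\cap\widehat P^m(\overline y)$ is fine. For b), your reflection-lifting argument in the ``wall through $y$'' case is reasonable, but your treatment of the complementary case leans on the $\QF^+$-independence of $\widehat P(y)$, which is downstream of a); the paper avoids this by citing \cite[4.6c]{Ru-11} directly for the set $U_{\overline y}^{pm+}U_{\overline y}^{nm-}N^{min}(\overline y)$ and then passing to the group it generates.
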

    \begin{proof} a) We identify $G$ and $\qf(G)\subset G^{xl}$. We choose an origin in $\A$ (resp. $\A^{xl}$) fixed by $\qn(m(x_{\qa_i}(1)))$, $\forall i\in I$; hence $\A$ (resp. $\A^{xl}$)  is identified with $V$ (resp. $V^{xl}$) and $\qn(N)$ (resp. $\qn^{xl}(N^{xl})$ where $N^{xl}=N_{\shs^l}(K)$) with $\qn(T)\rtimes W^v$ (resp. $\qn^{xl}(T^{xl})\rtimes W^v$) where $W^v$ acts linearly via $\qn^v$.
    Actually $\qn:T\to V$ factorizes through $T/\g T(\sho)=Y\otimes\QL$: $\qn(t)=\overline\qn(\overline t)$ where $\overline t$ is the class of $t$ modulo $\g T(\sho)$; and the same thing for $\qn^{xl}$.
    We consider $z\in V^{xl}$ such that $\overline z=\overline y\in V^q$.

    \par By [\lc 4.6, 4.14] we have $U_{\overline y}^{pm+}.U_{\overline y}^{nm-}.N^{min}({\overline y})\subset \widehat P^m(\overline y)\subset \widehat P^{xl}(z)=U_{\overline y}^{pm+}.U_{\overline y}^{nm-}.N^{xl}(z)=U_{\overline y}^{pm+}.U_{\overline y}^{-}.N^{xl}(z)$
    where $N^{min}({\overline y})$ is a subgroup of $N$ and $N^{xl}(z)$ the fixer in $N^{xl}$ of $z$. Moreover $N^{xl}(z)=N^{xl}\cap \widehat P^{xl}(z)$.
    It is now clear that $\widehat P^m(\overline y)=U_{\overline y}^{pm+}.U_{\overline y}^{nm-}.N^m({\overline y})=U_{\overline y}^{pm+}.U_{\overline y}^{-}.N^m({\overline y})$
    with $N^m({\overline y})=\widehat P^m(\overline y)\cap N^{xl}(z)=\widehat P^m(\overline y)\cap N\subset N\cap N^{xl}(z)$.
     The same thing is clearly true when exchanging $U_{\overline y}^{pm+}$, $U_{\overline y}^{+}$ and $U_{\overline y}^{nm-}$, $U_{\overline y}^{-}$ .

    \par Let $n=tw\in N\cap N^{xl}(z)$ with $t\in T$ and $w\in W^v$ (fixing $0$).
    We have $z=nz=\qn^{xl}(t)+w(z)$. But, if $w=s_{i_1}.\cdots.s_{i_n}\in W^v$, $z-w(z)=\sum_{j=1}^n\,(s_{i_{j+1}}.\cdots.s_{i_{n}}(z)-s_{i_{j}}.\cdots.s_{i_{n}}(z))=\sum_{j=1}^n\,\qa_{i_{j}}(s_{i_{j+1}}.\cdots.s_{i_{n}}(z)).\qa_{i_j}^\vee=:\partial(\overline z,V^{xl})$ an element of $V^{xl}$ depending only of $\overline z=\overline y$.
    Hence $\overline\qn^{xl}(\overline t)=\partial(\overline z,V^{xl})$; but $\overline\qn^{xl}$ is one to one, so $\overline t\in(\sum_{i\in I}\,\R\qa_i^\vee\otimes 1)\cap Y\otimes\QL$.
    By \ref{2.3b}.3a, there exists $r\in\Z_{>0}$ with $r\overline t=-\sum_{i\in I}\,\qa_i^\vee\otimes\ql_i$ with $\ql_i\in\QL$ a suitable $\Z-$linear combination of the coefficients $r.\qa_{i_{j}}(s_{i_{j+1}}.\cdots.s_{i_{n}}(z))\in\R$ (as the relations between the $\qa_i^\vee$ in $Y\subset Y^{xl}$ have coefficients in $\Q$).
    Now  $\overline\qn(r\overline t)=\sum_{i\in I}\,\ql_i\qa_i^\vee\in V$ and, by the expression of the $\ql_i$, $r\overline\qn(\overline t)=r\partial(\overline z,V)$ (as the $\qa_i^\vee$ in $V$ satisfy the $\Z-$linear relations between the $\qa_i^\vee$ in $Y$).
    In $V$ we may divide by $r$, so  $\overline\qn(\overline t)=\partial(\overline z,V)$.
    By the same calculations as above $\qn(n)$ fixes any element $y$ with $\overline y=\overline z$.

    \par b) It is proved in \cite[4.6c]{Ru-11} that $U_{\overline y}^{pm+}.U_{\overline y}^{nm-}.N^{min}({\overline y})$ doesn't change when one changes $\QF^+$ by $W^v$. So it is the same for the subgroup $\widehat P^m(\overline y)$ it generates.
    \end{proof}

    \subsection{The fixer group associated to $y\in\overline\A\setminus\A$}\label{5.3}

    For $F^v$ a non minimal vectorial facet, the fa\c{c}ade $\A_{F^v}$ is an affine apartment for the group $P(F^v)/U(F^v)=G(F^v)\simeq M(F^v)$ endowed with the generating root datum $(G(F^v),(U_\qa)_{\qa\in\QF^m(F^v)},Z)$ \cf \ref{1.3c}.5 and \ref{2.3}.
    Moreover $G(F^v)$ is actually the group of $K-$points of a Kac-Moody group: if $F^v=F^v_\qe(J)$ then $G(F^v)=G(J)=\g G_{\shs(J)}(K)$.

    \par So for $y\in \A_{F^v}$ we may define $\widehat P(y)$ as the subgroup of $P(F^v)$  inverse image of the subgroup $\widehat P_{F^v}(y)$ constructed inside $G(F^v)\simeq M(F^v)$ as above.
    We have $\widehat P(y)=\widehat P_{F^v}(y)\ltimes U(F^v)=U_{F^vy}^{pm+}.U_{F^vy}^{nm-}.N(y).U(F^v)=U_{F^vy}^{nm-}.U_{F^vy}^{pm+}.N(y).U(F^v)=U_{F^vy}^{pm+}.U_{F^vy}^{-}.N(y).U(F^v)=U_{F^vy}^{nm-}.U_{F^vy}^{+}.N(y).U(F^v)$
    where $+$ and $-$ refer to the choice of a chamber $C^v\in F^{v*}$.

    \begin{enonce*}[definition]{Remark}  Even if $F^v=V_0$ is minimal, a point $\overline y\in\A^e_{F^v}=\A_{F^v}/ \vect{F^v}$ corresponds to a collection $y+\vect{F^v}$ of points $y\in \A^{ne}_{F^v}=\A$.
    So we have two parahoric subgroups $\widehat P(y)\subset\widehat P(\overline y)= \widehat P(y).N(F^v)(\overline y)$ and $N(F^v)(\overline y)$ acts by translations on $y+\vect{F^v}$.
    We say that $\widehat P(y)$ (resp. $\widehat P(\overline y)$) is the strong (resp. weak) fixer of $y$ or $\overline y$.
\end{enonce*}

\begin{defi}\label{5.4} We define $\widehat\shp$ as the family $(\widehat P(y))_{y\in\overline\A}$. By construction it is a family of parahorics.
 The corresponding hovel (resp. bordered hovel) will be written $\SHI=\SHI(\g G_\shs,K,\A)$ (resp. $\overline\SHI=\overline\SHI(\g G_\shs,K,\overline\A)$ ) and called the {\it affine hovel} (resp. {\it affine bordered hovel}) of $\g G_\shs$ over $K$ with model apartment $\A$ (resp. $\overline\A$).
 When we add the adjective essential we mean that $\A=\A^q$ (resp. $\overline\A=\overline\A^e$).

 \par It is perhaps possible that $\widehat\shp=\shp$ \cite[4.13.5]{Ru-11}, see also \ref{5.11}.4c.
 \end{defi}

 \begin{lemm}\label{5.5} Let $x\in\overline\A$, $F^v=F^v(x)$ and $C^v$ a chamber in $F^{v*}$; then:
 \par $\widehat P(x)\cap N=N(x)$ ; $\widehat P(x)\cap N.U(C^v)=N(x).U_{F^vx}^{pm+}.U(F^v)$ ;  $\widehat P(x)\cap U(C^v)=U_{F^vx}^{pm+}.U(F^v)$
 \par $\widehat P(x)\cap N.U(-C^v)=N(x).U_{F^vx}^{nm-}$ and $\widehat P(x)\cap U(-C^v)=U_{F^vx}^{nm-}$.
 \end{lemm}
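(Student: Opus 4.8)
The plan is to reduce everything to the free case with $V=V^x$ treated in \cite[4.14]{Ru-11}, as is already done in \ref{5.1} and \ref{5.2}. First I would handle the case where $F^v=F^v(x)$ is the minimal vectorial facet $V_0$, so that $x\in\A$ and $\widehat P(x)=\widehat P^m(\overline x).N(x)$ with $\widehat P^m(\overline x)=U_{\overline x}^{pm+}.U_{\overline x}^{nm-}.N^m(\overline x)$ by lemma \ref{5.2}a. The key computational tool is the uniqueness in the Birkhoff decomposition (\ref{1.3c}.2): for $u\in U^+$, $v\in U^-$, $z\in Z$, if $uzv=u'z'v'$ then $u=u'$, $v=v'$, $z=z'$. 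Since $U_{\overline x}^{pm+}\subset U^+$, $U_{\overline x}^{nm-}\subset U^-$ and $N^m(\overline x)\subset N$, an element $g=u_+u_-n\in\widehat P^m(\overline x)$ (with the chamber $C^v\in F^{v*}$ of sign $+$, so $U(C^v)=U^+$, $U(-C^v)=U^-$) lies in $N$ iff $u_+=u_-=1$ (writing $n=u'z'v'$ via $ZU^+\cap U^-=\{1\}$ type relations and comparing), giving $\widehat P^m(\overline x)\cap N=N^m(\overline x)\subset N(x)$; and conversely $N(x)\subset\widehat P(x)\cap N$ is trivial, while $\widehat P(x)\cap N=(\widehat P^m(\overline x).N(x))\cap N$ — here one uses $N(x)\subset N$ to get $\widehat P(x)\cap N=(\widehat P^m(\overline x)\cap N).N(x)=N^m(\overline x).N(x)=N(x)$.

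Next, for $\widehat P(x)\cap U(C^v)=\widehat P(x)\cap U^+$: clearly $U_{\overline x}^{pm+}=U_{\overline x}^{pm+}.U(F^v)\subset\widehat P(x)\cap U^+$ (in the minimal facet case $U(F^v)=\{1\}$). For the reverse inclusion, take $g\in\widehat P(x)\cap U^+$; write $g=u_+u_-n$ with $u_+\in U_{\overline x}^{pm+}$, $u_-\in U_{\overline x}^-$, $n\in N(x)$ (using the decomposition $\widehat P^m(\overline x)=U_{\overline x}^{pm+}.U_{\overline x}^-.N^m(\overline x)$ and absorbing $N^m(\overline x)\subset N(x)$). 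Then $u_+^{-1}g=u_-n\in U^-N$, and since $g\in U^+$ we have $u_-n\in U^+$; projecting through Bruhat/Birkhoff uniqueness forces $n\in Z$ and $u_-=1$ (an element of $U^-$ that also lies in $U^+N$ must, by $ZU^+\cap U^-=\{1\}$ and refined decompositions, be trivial), hence $g=u_+n\in U_{\overline x}^{pm+}$ after noting $n\in Z\cap U^+=\{1\}$... actually $n\in Z_0\cap N(x)$, but $Z_0\subset\g T(\sho)\subset$ the kernel, and $\g T(\sho)\cap U^+=\{1\}$, so $n=1$. Similarly $\widehat P(x)\cap U^-=U_{\overline x}^{nm-}$ using the symmetric decomposition $\widehat P^m(\overline x)=U_{\overline x}^{nm-}.U_{\overline x}^+.N^m(\overline x)$, which is legitimate by lemma \ref{5.2}b (invariance under changing $\QF^+$ by $W^v$). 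The statements about $\widehat P(x)\cap N.U(\pm C^v)$ then follow by combining: $\widehat P(x)\cap N.U^+\supset N(x).U_{\overline x}^{pm+}$ is clear, and an element $g=nm$ ($n\in N$, $m\in U^+$) in $\widehat P(x)$ gives $n\in\widehat P(x).U^+\cap N$; unwinding with the decomposition of $\widehat P(x)$ and Birkhoff uniqueness isolates the $N(x)$ and $U_{\overline x}^{pm+}$ pieces.

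Finally I would pass from the minimal facet case to general $x\in\overline\A$ with $F^v=F^v(x)$ arbitrary. By \ref{5.3}, $\widehat P(x)=\widehat P_{F^v}(x)\ltimes U(F^v)$ where $\widehat P_{F^v}(x)$ is the analogous group built inside $G(F^v)=\g G_{\shs(J)}(K)$, which is itself a split Kac-Moody group to which the minimal-facet results just established apply (with its root system $\QF^m(F^v)$, and with $N(F^v)=N\cap P(F^v)$ playing the role of $N$, $N(x)$ of $N(x)$, etc.). Intersecting the semidirect product $\widehat P_{F^v}(x)\ltimes U(F^v)$ with $N$: since $N\cap P(F^v)=N(F^v)\subset M(F^v)$ and $U(F^v)$ meets $M(F^v)$ trivially, $\widehat P(x)\cap N=\widehat P_{F^v}(x)\cap N(F^v)=N(x)$ by the reductive/minimal case inside $G(F^v)$. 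Intersecting with $U(C^v)=U(F^v)\rtimes(U(C^v)\cap M(F^v))$ (Levi decomposition, \ref{1.3c}.5): since $U(F^v)\subset\widehat P(x)$, we get $\widehat P(x)\cap U(C^v)=U(F^v)\rtimes(\widehat P_{F^v}(x)\cap U(C^v)\cap M(F^v))$, and $\widehat P_{F^v}(x)\cap(U(C^v)\cap M(F^v))=U_{F^vx}^{pm+}$ by the case already done inside $M(F^v)$ (where $U(C^v)\cap M(F^v)$ is the positive maximal unipotent of $M(F^v)$ relative to $C^v$), giving $\widehat P(x)\cap U(C^v)=U_{F^vx}^{pm+}.U(F^v)$. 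The statements for $N.U(C^v)$, and the symmetric ones with $-C^v$ (where, by the Birkhoff-uniqueness remark in \ref{3.2}.6c, $P(F^v)\cap U(-C^v)=M(F^v)\cap U(-C^v)$, so no $U(F^v)$ factor appears), follow the same way. The main obstacle is bookkeeping: carefully tracking which decomposition of $\widehat P^m(\overline y)$ from lemma \ref{5.2}a one uses for each intersection, and verifying that the various uniqueness statements (Bruhat, Birkhoff, refined Bruhat, $ZU^+\cap U^-=\{1\}$, $\g T(\sho)\cap U^\pm=\{1\}$) genuinely force the claimed factorizations — once the right decomposition is chosen in each case, each verification is a short application of uniqueness, but getting the orientations ($+$ vs $-$, $pm$ vs $nm$) consistent is where care is needed.
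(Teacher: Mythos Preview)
Your two-step strategy (first the minimal facet $F^v=V_0$, then general $F^v$ via the Levi decomposition $\widehat P(x)=\widehat P_{F^v}(x)\ltimes U(F^v)$ and induction inside $G(F^v)=\g G_{\shs(J)}(K)$) is sound and leads to a correct proof, but it is genuinely different from what the paper does. The paper treats an arbitrary $x\in\overline\A$ directly, in one pass. The trick is to place $N(x)$ on the \emph{left} in the decomposition: from $\widehat P(x)=U_{F^vx}^{pm+}.U_{F^vx}^{nm-}.N(x).U(F^v)$ one obtains (by taking inverses and using normality of $U(F^v)$ in $P(F^v)$) the form $\widehat P(x)=N(x).U_{F^vx}^{nm-}.U_{F^vx}^{pm+}.U(F^v)$. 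Then for $g\in\widehat P(x)\cap N.U(C^v)$ one writes $g=nu^+=n'v^-v^+u_{F^v}$ and rearranges to $(n'^{-1}n)(u^+u_{F^v}^{-1}(v^+)^{-1})=v^-$; the left side lies in $N.U(C^v)$ and the right in $U(-C^v)$, so Birkhoff uniqueness gives $v^-=1$, $n=n'\in N(x)$, $u^+=v^+u_{F^v}$. All five intersections drop out of this single computation and its $-C^v$ twin (where one first commutes $u_{F^v}$ past $v^-$). What the paper's route buys is brevity: no case split, no reduction to $M(F^v)$, no separate treatment of $N.U(\pm C^v)$.

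Your approach works but a few details need tightening. The symmetric decomposition $\widehat P^m(\overline x)=U_{\overline x}^{nm-}.U_{\overline x}^{+}.N^m(\overline x)$ you invoke is in Lemma~\ref{5.2}a, not \ref{5.2}b (the $W^v$-invariance in \ref{5.2}b does not swap signs in the Kac--Moody setting, there being no longest element). Your step ``$n\in Z_0\cap N(x)$, \ldots, $\g T(\sho)\cap U^+=\{1\}$, so $n=1$'' is confused: the correct conclusion is simply $n\in N\cap U^+=\{1\}$ (via $N\cap B^+=Z$ and $Z\cap U^+=\{1\}$). Finally, ``follow the same way'' for $\widehat P(x)\cap N.U(\pm C^v)$ in the general-facet case needs the preliminary observation $N.U(C^v)\cap P(F^v)=N(F^v).U(C^v)$ (using $U(C^v)\subset P(F^v)$) before you can project to $M(F^v)$ and apply the minimal-facet result there.
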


 \begin{proof} Let $g\in \widehat P(x)\cap N.U(C^v)$. So $g=nu^+=n'v^-v^+u_{F^v}$ with $n\in N$, $u^+\in U(C^v)$, $n'\in N(x)$, $v^-\in U_{F^vx}^{nm-}$, $v^+\in U_{F^vx}^{pm+}$ and $u_{F^v}\in U(F^v)$.
 Hence $(n'^{-1}n)(u^+u_{F^v}^{-1}(v^+)^{-1})=v^-\in N.U(C^v)\cap U(-C^v)$.
 By the uniqueness in the Birkhoff decomposition (\ref{1.3c}.2) we have $v^-=1$, $n=n'$ and $u^+=v^+u_{F^v}$ so $g\in N(x).U_{F^vx}^{pm+}.U(F^v)$.
 If moreover $g\in N$ (resp. $g\in U(C^v)$) we have $u^+=1$ (resp. $n=1$) hence $g=n'\in N(x)$ (resp. $g=v^+u_{F^v}\in U_{F^vx}^{pm+}.U(F^v)$).

 \par Now let $g\in \widehat P(x)\cap N.U(-C^v)$. We write $g=nu^-=n'v^+v^-u_{F^v}=n'v^+u_{F^v}'v^-$ (with obvious notations).
 Hence $(n'^{-1}n)(u^-(v^-)^{-1})=v^+u_{F^v}'\in N.U(-C^v)\cap U(C^v)$.
 So $v^+u_{F^v}'=1$ (hence $v^+=u_{F^v}'=1$, as $P(F^v)=M(F^v)\ltimes U(F^v)$), $n=n'$ and $u^-=v^-$.
 We have $g=n'v^-\in N(x).U_{F^vx}^{nm-}$ and, if $g\in U(-C^v)$, $n=n'=1$ so $g=v^-\in U_{F^vx}^{nm-}$.
 \end{proof}

  \begin{prop}\label{5.6} The family $\widehat \shp$ is a very good family of parahorics. It is compatible with the enclosure map $cl^\QD$. Hence $\SHI(\g G_\shs,K,\A)$ is a thick parahoric hovel of type $(\A,cl^\QD)$ and $G=\g G(K)$ acts strongly transitively on it (by vectorially Weyl automorphisms).
   \end{prop}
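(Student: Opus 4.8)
**The plan is to verify each of the axioms (P1)--(P10) in turn, plus the compatibility condition of \ref{3.12}.1, and then invoke Theorem \ref{3.11} (or rather its strengthening \ref{3.12}.5).** The axioms (P1)--(P4) and (P5) are essentially built into the construction: (P1) holds because $\widehat P(y)=\widehat P_{F^v}(y)\ltimes U(F^v)$ with $\widehat P_{F^v}(y)$ a subgroup of $M(F^v)$; (P2) and the inclusion $N(y)\subset\widehat P(y)$ are by definition; (P3) reduces (via the essentialization trick of \ref{5.1}.2, so that only $\overline y$ matters and we may work in $G^{xl}$) to the statement, known from \cite{Ru-11} (4.5, 4.6), that $U_{\qa,\ql}$ lies in $U^{pm+}_{\overline y}$ or $U^{nm-}_{\overline y}$ as soon as $\overline y\in\overline D(\qa,\ql)$; (P4) is the $N$-equivariance of the whole picture (again \cite{Ru-11}, 4.14, transported by $\varphi$); (P5) is exactly Lemma \ref{5.2}a,b for the main facade and its analogue in $G(F^v)$ for the other facades, since there $\widehat P^m(\overline y)\cap N=N^m(\overline y)\subset N(y)$. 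Most of these reductions I would present compactly, citing \cite{Ru-11} and Lemma \ref{5.2}.

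**For (P6)** one checks that when $F^v$ is spherical, $\QF^m(F^v)$ is finite, $\QD_{im}\cap Q(F^v)$ is empty, and so $U^{pm+}_{F^v y}$ (resp. $U^{nm-}_{F^v y}$) collapses to the group generated by the $U_\qa(y)$, $\qa\in\QF^{m,+}(F^v)$ (resp. negative), as remarked at the end of \ref{5.1}.1; hence $\widehat P(y)=P(y)$ in the notation of \ref{3.2}.3e. For \textbf{(P8)} I would use Lemma \ref{5.5}: it says precisely that $\widehat P(x)\cap U(C^v)=U^{pm+}_{F^vx}.U(F^v)$, $\widehat P(x)\cap U(-C^v)=U^{nm-}_{F^vx}$, and intersecting with $N$ gives $N(x)$; since $\widehat P(x)=\widehat P^m_{F^v}(\overline x).N(x)=U^{pm+}_{F^vx}.U^{nm-}_{F^vx}.N(x).U(F^v)$ by construction, the product decomposition $(\widehat P(x)\cap U(C^v)).(\widehat P(x)\cap U(-C^v)).N(x)$ recovers $\widehat P(x)$, which is (P8). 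This is the step where the "double" form of the parahoric (both the $pma$ and $nma$ completions) pays off.

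**The serious work is (P7), (P9), (P10) and the compatibility of \ref{3.12}.1**, i.e. the comparison between $\widehat\shp$ and its restriction to the spherical facades, and the interaction with the enclosure map $cl^\QD$. The natural strategy is to transport the corresponding results of \cite{Ru-11}: there, for $\A=\A^x$ and the group $G$, one has compatibility of $\widehat P(y)$ with $cl^\QD$ (this is essentially \cite[sec.~4]{Ru-11}, especially the statement that $U^{pm+}_y$, $U^{nm-}_y$, and hence $\widehat P(y)$, depend only on the true half-apartments, real or imaginary, containing $y$, and transform correctly), and from compatibility plus (P8) one deduces (P9) and (P10) as in \ref{3.12}.2b. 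For a general suitable apartment $\A$ one first passes to $\A^{xl}$ and $G^{xl}$ via the injection $\varphi$ of \ref{5.1}.2 and the identity $\A/V_0=\A^{xl}/V^{xl}_0=\A^q$; since the relevant groups $U^{pm+}_{\overline y}$, $U^{nm-}_{\overline y}$ already live in $G$ (by \cite[1.9.2, 3.19.3]{Ru-11}) and depend only on $\overline y\in\A^q$, the compatibility for $G^{xl}$ over $\A^{xl}$ descends to $G$ over $\A$. For the non-main facades one runs the same argument inside the Levi $G(F^v)=\g G_{\shs(J)}(K)$, noting $U(F^v)$ is normal and splits off; then (P7) and (P9), which only involve a facade $\A_{F^v}$ and the spherical facades above it, follow by combining the facade-wise compatibility with the known Bruhat--Tits description on spherical facades (\ref{3.3}.1). \textbf{I expect the main obstacle} to be the bookkeeping needed to check that $U^{pm+}_{\overline y}$ and $U^{nm-}_{\overline y}$ are genuinely insensitive to the choice of apartment $\A$ among all suitable ones (not just $\A^x$ and $\A^q$) — i.e. that the constructions of \cite{Ru-11}, which are stated for the free case $\A=\A^x$, really are governed by data visible already in $\A^q$, so that the $cl^\QD$-compatibility is not lost when we pass to a larger-than-necessary $\A$. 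Once that is settled, the conclusion "thick ordered affine hovel of type $(\A,cl^\QD)$ with $G$ acting strongly transitively" is immediate from Theorem \ref{3.12}.5, since $cl^\QD=cl^{\QD}_\QL$ with $\QD$ the (tamely imaginary, by \ref{2.3b}.3b) root system generated by $\QF$, so no further generalization of the hovel notion is needed; thickness is \ref{3.11} N.B.~e.
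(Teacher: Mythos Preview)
Your treatment of (P1)--(P6) and (P8) matches the paper's (the paper reads (P5) off Lemma~\ref{5.5} rather than Lemma~\ref{5.2}, but either route works). Your argument for compatibility with $cl^\QD$ is also the paper's: by Lemma~\ref{5.5} one has $\widehat P(\QO)\cap U(C^v)=U^{pm+}_{F^v\QO}\ltimes U(F^v)$ and $\widehat P(\QO)\cap U(-C^v)=U^{nm-}_{F^v\QO}$, and these depend only on $cl^\QD(\QO)$ by \cite[4.5.4f]{Ru-11}. That compatibility gives (P10) is correct.

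The gap is your deduction of (P7) and (P9). You read Remark~\ref{3.12}.2b as saying that compatibility (plus (P8)) implies (P9), but it does not: note that \ref{3.12}.2a explicitly \emph{assumes} (P9) as a separate ingredient alongside compatibility, so compatibility cannot be supplying it. More structurally, the compatibility condition of \ref{3.12}.1 is purely intra-fa\c{c}ade --- it constrains $Q(\QO)\cap U(\pm C^v)$ for $\QO$ inside a fixed $\A_{F^v}$ --- whereas (P9) is an inter-fa\c{c}ade statement linking $Q(x)$ for $x\in\A_{F_1^v}$ to fixators of points in other fa\c{c}ades. Concretely: the (P8) decomposition $g=u^+u^-n$ of $g\in\widehat P(x)\cap P(F^v)$ has $u^+\in U(C^v)\subset P(F^v)$, but $u^-n$ need not split into factors lying in $P(F^v)$, so compatibility applied to $u^+$ and $u^-$ separately gets you nowhere. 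Knowing $Q$ on the spherical fa\c{c}ades (via (P6) and \ref{3.3}.1) does not help either, since that describes $Q(pr_{F^v}(x))$ but says nothing about which elements of $\widehat P(x)$ lie in $P(F^v)$.

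The paper instead proves (P7) and (P9) directly, by a short Iwasawa argument in the Levi: write $g\in (N.)\widehat P(x)\cap P(F^v)$ as $u_{F^v}v^+nq'$ using $M(F^v)=(U(C^v)\cap M(F^v)).N(F^v).G(\QF^m(F^v),x)$, absorb $q'\in G(\QF^m(F^v),x)\subset\widehat P(\overline{x+F^v})$, and then apply (the proof of) Lemma~\ref{5.5} to the remaining piece $q\in n'^{-1}n\,U(n^{-1}C^v)$ to force it into $N(x).U_x^{pm}(n^{-1}C^v)\subset\widehat P(\overline{x+F^v})$, using that $n\in N(F^v)$ keeps $n^{-1}C^v$ in $F^{v*}$. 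This is the missing step in your plan.

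Incidentally, your ``expected main obstacle'' is a non-issue: $U^{pm\pm}_{\overline y}$ are \emph{defined} via $\overline y\in\A^q$ (see \ref{5.1}.2) precisely so as to be independent of which suitable $\A$ one starts from.
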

 \begin{proof} We proved  (P5) in the lemma above. If $F^v(x)$ is spherical, $U_{F^vx}^{pm+}$ and $U_{F^vx}^{nm-}$ are generated by the groups $U_\qa(x)$ for $\qa\in\QF^{m\pm{}}(F^v)$, so (P6) holds.
 By definition $\widehat P(x)=U_{F^vx}^{pm+}.U_{F^vx}^{nm-}.N(x).U(F^v)=U_{F^vx}^{nm-}.U_{F^vx}^{pm+}.N(x).U(F^v)$, so (P8) is a consequence of lemma \ref{5.5}.
 We have also $\widehat P(x)=(\widehat P(x)\cap U(-C^v)).(\widehat P(x)\cap U(C^v)).N(x)$.

 \par Consider now the situation of (P7) or (P9). We have to prove $N.\widehat P(x)\cap P(F^v)\subset N.\widehat P(\{x,pr_{F^v}(x)\})$ and $\widehat P(x)\cap P(F^v)\subset \widehat P(\overline{x+F^v})$.
 These relations are in $P(F_1^v)$ and each side contains $U(F_1^v)$, so we may argue in $G(F_1^v)=P(F_1^v)/U(F_1^v)$. Actually we shall suppose $x\in\A$.
  Consider a chamber $C^v\in F^{v*}$, we have $P(F^v)=U(F^v)\rtimes M(F^v)$ and (by Iwasawa)  $M(F^v)=(U(C^v)\cap M(F^v)).N(F^v).G(\QF^m(F^v),x)$ with $G(\QF^m(F^v),x)\subset \widehat P(\overline{x+F^v})$.
  Let $g\in N.\widehat P(x)\cap P(F^v)$ (resp. $g\in \widehat P(x)\cap P(F^v)$). We write $g=n'q=u_{F^v}v^+nq'$ with $n'\in N$ (resp. $n'=1$), $q\in\widehat P(x)$, $u_{F^v}\in U(F^v)$, $v^+\in U(C^v)\cap M(F^v)$, $n\in N(F^v)$,  $q'\in  \widehat P(\overline{x+F^v})$ and we want to prove that $g\in N. \widehat P(\overline{x+F^v})$ (resp. $g\in  \widehat P(\overline{x+F^v})$).
  So one may suppose $q'=1$, then $g\in nU(n^{-1}C^v)$ and $q\in n'^{-1}nU(n^{-1}C^v)$. By the proof of \ref{5.5} $q\in n'^{-1}nU_x^{pm}(n^{-1}C^v)$ with $n'^{-1}n\in N(x)$ and, as $n\in N(F^v)$, $U_x^{pm}(n^{-1}C^v)\subset\widehat P(\overline{x+F^v})$.
  So $g=n'q\in N.\widehat P(\overline{x+F^v})$ (resp. $g=q\in \widehat P(\overline{x+F^v})$, as $n=n'^{-1}n\in N(x)\cap N(F^v)\subset N(\overline{x+F^v})$).

  \par By \ref{5.5} $\widehat P(x)\cap U(C^v)=U_{F^vx}^{pm+}\ltimes U(F^v)$ and $\widehat P(x)\cap U(-C^v)=U_{F^vx}^{nm-}$.
  So $\widehat P(\QO)\cap U(C^v)=U_{F^v\QO}^{pm+}\ltimes U(F^v)$ and $\widehat P(\QO)\cap U(-C^v)=U_{F^v\QO}^{nm-}$ and these groups depend only of $cl^\QD(\QO)$ \cite[4.5.4f]{Ru-11}.
  We have proved that $\widehat\shp$ is compatible with $cl^\QD$.
 \end{proof}

 \subsection{Remarks}\label{5.7}
  {\quad\bf1)} So we get for $\SHI$ and $\overline\SHI$ all the properties proved in section \ref{s3}.
   The map $g\g Tg^{-1}\mapsto A(g\g Tg^{-1})=g\A$ (resp. $g\g Tg^{-1}\mapsto \overline A(g\g Tg^{-1})=g\overline\A$) is a bijection between the split maximal tori in $\g G_\shs$ and the apartments  in $\SHI$ (resp. the bordered apartments  in $\overline\SHI$) \cf \ref{3.6}.4 and \ref{1.4b}.1.

 \par {\bf2)} Actually we proved (P7) and (P9) even when $F^v$ is non spherical. So one may define a projection $pr_{F^v}:\SHI_{F^v_1}\to\SHI_{F^v}$ even if $F^v\in F_1^{v*}$ is non spherical \cite[11.7.3]{Cn-10b}.
 This gives stronger links between the hovel $\SHI$ and its non spherical fa\c{c}ades.

 \par {\bf3)} For (P8) we proved also $\widehat P(x)=(\widehat P(x)\cap U(-C^v)).(\widehat P(x)\cap U(C^v)).N(x)$ which improves (P8) essentially when $F^v(x)$ has a well defined sign.

 \par{\bf4)} If we choose $\A$ as in \cite[4.2]{Ru-11} (which implies $\shs$ free) then $\SHI(\g G_\shs,K,\A)$ is the affine hovel $\SHI(\g G_\shs,K)$ defined in [\lc 5.1], with the same action of $G=\g G_\shs(K)$, the same apartments, the same enclosure map, the same facets, ...).
  By lemma \ref{5.5} the notions of (half) good fixers for filters in $\A$ are the same.
  Note however that, when (and only when) $\QO$ has not an (half) good fixer, $\widehat P(\QO)$ may be different from $\widehat P_\QO$ as defined in \cite{Ru-11}.

  \par The group $G^\emptyset$ of Weyl automorphisms in $G$ (\ref{2.3}.3c, \ref{3.11}f) is equal to $\psi(G^A).Z_0$ defined in \cite[5.13.2 or]{Ru-11} (as $\psi^{-1}(N)=N^A$ and $\qn(\psi(N^A))=W^a$).

 \par{\bf5)} A point $\overline x\in \SHI_{F^v}$ determines a sector-face-germ $\g F=germ_\infty(x+F^v)$ of direction $F^v$ in $\SHI$ and the correspondence is one to one if $\overline\A=\overline\A^e$ (or $\overline\A=\overline\A^i$ and $F^v$ non trivial) \cf \ref{3.11}.
 The strong (resp. weak) fixer of $\overline x$ (\cf \ref{5.3}) is the set of the $g$ in $G$ which fix pointwise an element (resp. which induce a bijection between the sets which are elements) of the filter $\g F$.

 \subsection{Functoriality}\label{5.8}

 \par{\quad\bf 1) Changing the group, commutative extensions:} Let's consider a commutative extension of RGS $\qf:\shs\to\shs'$ \cite[1.1]{Ru-11}. We then get an homomorphism $\g G_\qf:\g G_\shs\to\g G_{\shs'}$ inducing homomorphisms $\g T_\qf:\g T_\shs\to\g T_{\shs'}$, $\g N_\qf:\g N_\shs\to\g N_{\shs'}$ and isomorphisms $\g U^{\pm}_{\shs}\to\g U^{\pm}_{\shs'}$.
  If $\A$ is a suitable apartment for $(\g G_{\shs'},\g T_{\shs'})$ (\ref{2.3b}.3a) it is clearly suitable for  $(\g G_{\shs},\g T_{\shs})$ and, for $x\in\A_{F^v}$, $U^{pm+}_{F^vx}$ or $U^{nm-}_{F^vx}$ is the same for $\g G_{\shs}$ or $\g G_{\shs'}$.
   Hence $\widehat P_{\shs'}(x)=G_\qf(\widehat P_{\shs}(x)).N_{\shs'}(x)$.
    But $G_\qf^{-1}(N_{\shs'})=N_\shs$ [\lc 1.10] hence Ker$G_\qf\subset T_\shs$, so the lemma \ref{5.5} tells that $G_\qf^{-1}(\widehat P_{\shs'}(x))=\widehat P_{\shs}(x)$.
     It is now clear that $G_\qf\times Id_\A$ induces a $G_\qf-$equivariant embedding $\SHI(\g G_\qf,K,\A):\SHI(\g G_\shs,K,\A)\hookrightarrow\SHI(\g G_{\shs'},K,\A)$ which is an isomorphism (bijection between the sets of apartments, isomorphism of the apartments).
      Hence the affine Weyl groups $W^a$ are the same, but $\qn(N_\shs)\subset\qn(N_{\shs'})$ are in general different.

      \par The same things are true for the bordered hovels and the embeddings are functorial (note however that $\A$ or $\overline\A$ depends on $\g G'$).

  \par{\quad\bf 2) Changing the group, Levi factors:}
  For a vectorial facet $F^v$, we may  consider the homomorphism $M(F^v)\into G$.
       More precisely if $F^v=F^v_\qe(J)$ then $\g G_{\shs(J)}$ embeds into $\g G_{\shs}$ (\ref{1.3c}.5 and \ref{1.4b}.1).
        If $\A$ is suitable for $\g G_{\shs}$ then it is also suitable for $\g G_{\shs(J)}$, but we have only to consider the walls of direction Ker$\qa$ with $\qa\in Q(J)$.
         By construction $\SHI(\g G_{\shs(J)},K,\A)$ is $G_{\shs(J)}-$equivariantly isomorphic to the fa\c{c}ade  $\SHI(\g G_\shs,K,\overline{\overline\A})_{F^v}$ for $F^v= F^v_\qe(J)$ or $F^v_{-\qe}(J)$ or any other maximal vectorial facet in $\cap_{i\in J}\,$Ker$\qa_j$.
       Clearly for $x\in\A$, $\widehat P_{\shs(J)}(x)\subset\widehat P(x)$ and $N_{\shs(J)}\subset N$, so $\SHI(\g G_{\shs(J)},K,\A)$ maps onto $\SHI(\g G_{\shs(J)},\g G_\shs,K,\A):=G_{\shs(J)}.\A\subset\SHI(\g G_\shs,K,\A)$ and the projection $pr_{F^v}$ maps $\SHI(\g G_{\shs(J)},\g G_\shs,K,\A)$ onto $\SHI(\g G_\shs,K,\overline{\overline\A})_{F^v}$.
        So the three sets $\SHI(\g G_{\shs(J)},K,\A)$, $\SHI(\g G_{\shs(J)},\g G_\shs,K,\A)$  and $\SHI(\g G_\shs,K,\overline{\overline\A})_{F^v}$ are $G_{\shs(J)}-$equivariantly isomorphic.

        \par For $\g G_{\shs(J)}$, the bordered apartment associated to $\A$ is a union of fa\c{c}ades with direction facets for $\QF(J)$.
        These facets are in one to one correspondence with the facets in $F^{v*}$, for $F^v$ as above.
                Let $\overline\A_J^i$,  $\overline\A_J^e$ and $\overline{\overline\A}_J$ be the three possible apartments as in \ref{2.5}.
         Then $\overline\SHI(\g G_{\shs(J)},K,\overline\A_J^e)$ (resp. $\overline\SHI(\g G_{\shs(J)},K,\overline{\overline\A}_J)$ ) is isomorphic to $\overline\SHI(\g G_\shs,K,\overline\A^e)_{F^v}$ (resp. $\overline\SHI(\g G_\shs,K,\overline{\overline\A})_{F^v}$) as defined in \ref{3.2}.3c.
          And $\overline\SHI(\g G_{\shs(J)},K,\overline\A_J^i)$ is isomorphic to $\overline\SHI(\g G_\shs,K,\overline\A^i)_{F^v}$ where we remove $\SHI(\g G_\shs,K,\overline\A^i)_{F^v}$ and add $\SHI(\g G_{\shs(J)},\g G_\shs,K,\A)$.


  \par{\bf 3) Changing the field:} Let's consider a field extension $i:K\into L$ and suppose that the valuation $\qo$ may be extended to $L$.
  Then $\g G_\shs(K)$ embeds via $\g G_\shs(i)$ into $\g G_\shs(L)$.
  If $\A$ is suitable for $\g G_\shs(L)$, it is also suitable for $\g G_\shs(K)$; the three examples of \ref{2.3b}.1 on $K$ and $L$ are corresponding this way each to the other.
  There are also embeddings $\g G_\shs^{pma}(K)\into\g G_\shs^{pma}(L)$, $\g G_\shs^{nma}(K)\into\g G_\shs^{nma}(L)$ and it is clear that, for $x\in\A$, $U^{pm+}_{Kx}=U^{pm+}_{Lx}\cap\g G_\shs(K)$, $U^{nm-}_{Kx}=U^{nm-}_{Lx}\cap\g G_\shs(K)$ and $\g N(K)(x)=\g N(L)(x)\cap\g G_\shs(K)$.
  So, using Iwasawa decomposition for $\g G_\shs(K)$, \ref{5.5} and uniqueness in Birkhoff decomposition for $\g G_\shs(L)$, we have:

\par  $\widehat P_L(x)\cap\g G_\shs(K)=\widehat P_K(x).(\widehat P_L(x)\cap(\g N(K).\g U^+(K)))=$
\par\noindent$\widehat P_K(x).(\widehat P_L(x)\cap(\g N(L).\g U^+(L))\cap(\g N(K).\g U^+(K)))=\widehat P_K(x).((\g N(L)(x).U^{pm+}_{Lx})\cap(\g N(K).\g U^+(K)))$
\par$=\widehat P_K(x).(\g N(L)(x)\cap \g N(K)).(U^{pm+}_{Lx}\cap\g U^+(K))=\widehat P_K(x).\g N(K)(x).U^{pm+}_{Kx}=\widehat P_K(x)$.
\par\noindent The same calculus gives $\g N(L).\widehat P_L(x)\cap\g G_\shs(K)=\g N(K).\widehat P_K(x)$.

\par  Hence there is a $\g G_\shs(K)-$equivariant embedding $\SHI(\g G_\shs,i,\A):\SHI(\g G_\shs,K,\A)\into\SHI(\g G_{\shs},L,\A)$; it sends each apartment onto an apartment.
But this embedding is not onto and the bijection between an apartment $A_K$ and its image $A_L$ is in general not an isomorphism:
if the extension $i$ is ramified, $\QL_L=\qo(L^*)$ is greater than $\QL=\qo(K^*)$, so there are more walls in $A_L$ than in $A_K$ and the enclosures or facets are smaller in $A_L$ than in $A_K$.

\par This embedding extends clearly to the bordered hovels.
 Hence $\SHI(\g G_\shs,K,\A)$ and  $\overline\SHI(\g G_\shs,K,\overline\A)$ are functorial in $(K,\qo)$.
  In particular a group $\QG$ of automorphisms of $K$ fixing $\qo$ acts on $\SHI(\g G_\shs,K,\A)$ and  $\overline\SHI(\g G_\shs,K,\overline\A)$.

\par Actually this possibility of embedding $\SHI(\g G_\shs,K,\A)$ or $\overline\SHI(\g G_\shs,K,\overline\A)$ in a (bordered) hovel where there are more walls or even where all points are special (if $\QL_L=\R$) is technically very interesting.
 It was axiomatized for abstract (bordered) hovels and used by Cyril Charignon: \cite{Cn-10}, \cite{Cn-10b}.

   \par{\bf 4) Changing the model apartment:} Let's consider an affine map $\psi:\A\to\A'$ between two affine apartments suitable for $G=\g G_\shs(K)$.
    We ask that $\psi$ is $N-$equivariant and $(^t\vect\psi)(\QD)=\QD$, this makes sense as $\QD\subset Q$ is in $(\vect\A)^*$ and $(\vect{\A'})^*$.
    So $\psi^{-1}(V_0')=V_0$ and the quotients $\A/V_0$, $\A'/V_0'$ are naturally equal to $\A^q$ (with the same walls).
     In particular there is a one to one correspondence between the enclosed filters in $\A$, $\A'$ or $\A^q$.

     \par For $y\in\A$, $N(y)\subset N(\psi(y))$, $\widehat P(y)=U_{\overline y}^{pm+}.U_{\overline y}^{nm-}.N(y)\subset\widehat P'(\psi(y))=U_{\overline y}^{pm+}.U_{\overline y}^{nm-}.N(\psi(y))$.
     We get a $G-$equivariant map $\SHI(\g G_\shs,K,\psi):\SHI(\g G_\shs,K,\A)\to\SHI(\g G_{\shs},K,\A')$.
      It induces a one to one correspondence between the apartments or facets, chimneys,... of both hovels but it is in general neither into nor onto. The most interesting example is the essentialization map $\SHI(\g G_\shs,K,\A)\to\SHI(\g G_{\shs},K,\A^q)$.

      \par Clearly these maps extend to the bordered hovels.

  \subsection{Uniqueness of the very good family of parahorics}\label{5.9}

  \par{\quad\bf1)} Actually, by \ref{5.3} the family $\widehat\shp$ satisfies the following strengthening of axiom (P8):

  \par (P8+) For all $x\in\overline{\A}$, for all chamber $C^v\in F^v(x)^*$,
 \par\qquad\qquad\qquad    $Q(x)=(Q(x)\cap U(C^v)).(P(x)\cap U(-C^v)).N(x)$


 \par\noindent By the following lemma, we know that $\widehat\shp$ is the only very good family of parahorics over $\overline\A$.

  \par{\bf 2)} At least for $\A=\A^q$, Charignon defines a maximal good family of parahorics $\overline\shp$:

  \par\qquad for $x\in\A_{F^v}$, $\overline P(x)=\{g\in P(F^v)\mid g.pr_{F^v_1}(x)=pr_{g.F^v_1}(x)\;\forall F_1^v\in \SHI^v_{sph},F^v\subset\overline{F^v_1}\}$

  \par\noindent where $pr_{F^v_1}$ is the projection associated to the minimal family $\shp$ (supposed good) or to any good family $\shq$ \eg $\widehat \shp$.

\par  We have $\shp\subset\widehat\shp\subset\overline\shp$ in the sense that $\forall x\in\overline\A$, $P(x)\subset\widehat P(x)\subset\overline P(x)$ \cite[sec. 11.8]{Cn-10b}.
 It is likely that $\widehat\shp=\overline\shp$, but it seems not to be a clear consequence of the preceding results.

\begin{lemm}\label{5.10} Let $\shq$ and $\shq'$ be two very good families of parahoric subgroups of $G$ (in the general setting of section \ref{s3}).
 Suppose that $\shq\leq{}\shq'$ (\ie $Q(x)\subset Q'(x)$ $\forall x\in\overline\A$) or that $\shq'$ satisfies (P8+), then $\shq=\shq'$.
\end{lemm}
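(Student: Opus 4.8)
The plan is to reduce everything to the decomposition axioms (P8), (P8+) and the injectivity axiom (P5), exploiting that on each fa\c{c}ade $\A_{F^v}$ both families restrict to families of parahorics for the Levi group $G(F^v)$ (cf. \ref{3.2}.3c); so it suffices to treat a single point $x\in\overline\A$ with $F^v=F^v(x)$ and $C^v\in F^{v*}$. First I would record the elementary inclusions coming from the axioms that already hold unconditionally: by (P2) and (P3) we have $N(x)\subset Q(x)\cap Q'(x)$ and $U_{\qa,\qf_\qa(u)}$ lies in both whenever $x\in\overline D(\qa,\qf_\qa(u))$, whence $G(x)\subset Q(x)\cap Q'(x)$, and by (P1) also $U(F^v)$ lies in both; thus $P(x)\subset Q(x)\cap Q'(x)$, and in particular the minimal family satisfies $\shp\le\shq$, $\shp\le\shq'$. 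Next, using (P8) for $\shq'$, write any $g\in Q'(x)$ as $g=u^+u^-n$ with $u^\pm\in Q'(x)\cap U(\pm C^v)$ and $n\in N(x)$; the task is to show $u^+\in Q(x)$ and $u^-\in Q(x)$, for then $g\in Q(x)$ and, the reverse inclusion being either hypothesis or the case $\shq\le\shq'$, we get $Q(x)=Q'(x)$.

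The heart of the argument is therefore the claim that $Q'(x)\cap U(\pm C^v)=P(x)\cap U(\pm C^v)$, i.e. that on the unipotent parts the two families cannot differ. I would prove this using Proposition \ref{3.4}a (available since very good families satisfy (P9)): it gives $Q'(x)\cap U(C^v)=\bigl(Q'(x)\cap U(F^v)\bigr)\rtimes\bigl(U_{\qa_1}(x)\cdots U_{\qa_n}(x)\bigr)$ where $\qa_1,\dots,\qa_n$ are the non-divisible roots with $\qa_i(C^v)>0$, $\qa_i(F^v)=0$; each $U_{\qa_i}(x)=U_{\qa_i,-\qa_i(x)}$ lies in $P(x)$ by (P3), so the only possible discrepancy is in the factor $Q'(x)\cap U(F^v)$. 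Here one invokes the consequence (P9$-$) of \ref{3.2}.6b together with Proposition \ref{3.7}: by (P9) applied to $\shq'$, $Q'(x)\cap U(F^v)\subset Q'(\overline{x+D^v})$ for every chamber $D^v\in F^{v*}$, so an element of $Q'(x)\cap U(F^v)$ fixes $\overline{x+D^v}$ for all such $D^v$, hence fixes the union of these closed sector-faces, which is the whole apartment $\A$; but the fixator of $\A$ is $Z_0$ (cf. \ref{3.6}.4), and $Z_0\cap U(F^v)=\{1\}$. Thus $Q'(x)\cap U(F^v)=\{1\}$, and symmetrically for $-C^v$, giving $Q'(x)\cap U(\pm C^v)=U_{\qa_1}(x)\cdots U_{\qa_n}(x)\cdot U(F^v)\subset P(x)\subset Q(x)$.

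Combining these, in the case $\shq\le\shq'$ we are done as above. In the case where instead $\shq'$ satisfies (P8+), the roles are not symmetric a priori, so I would argue: (P8+) gives $Q'(x)=\bigl(Q'(x)\cap U(C^v)\bigr)\cdot\bigl(P(x)\cap U(-C^v)\bigr)\cdot N(x)\subset P(x)\cdot P(x)\cdot N(x)\subset Q(x)$ (using the unipotent computation for the $U(C^v)$ factor and $P(x)\subset Q(x)$ for the rest), while the reverse inclusion $Q(x)\subset Q'(x)$ follows by applying (P8) to $\shq$ and the same unipotent identification to show $Q(x)\cap U(\pm C^v)\subset P(x)\subset Q'(x)$, together with $N(x)\subset Q'(x)$. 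I expect the main obstacle to be verifying carefully that the fixator of the full apartment is exactly $Z_0$ in the abstract setting of Section \ref{s3} (so that $Q'(x)\cap U(F^v)$ is forced to be trivial even when $F^v$ is non-spherical) — this is where one must be sure that (P9)/(P9$-$) and Proposition \ref{3.7} genuinely apply to any very good family, not merely to $\widehat\shp$; once that triviality is in hand, the rest is bookkeeping with the Birkhoff-type uniqueness of the decompositions in (P8) and (P8+).
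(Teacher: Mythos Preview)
Your central claim $Q'(x)\cap U(F^v)=\{1\}$ is false. For any $\qa\in\QF$ with $\qa(F^v)>0$ one has $U_\qa\subset U(F^v)$, and (P3) gives $U_{\qa,-\qa(x)}\subset Q'(x)$; hence $\{1\}\ne U_{\qa,-\qa(x)}\subset Q'(x)\cap U(F^v)$. The error is the assertion that the union of $\overline{x+D^v}$ over chambers $D^v\in F^{v*}$ is all of $\A$: for $F^v$ spherical of a given sign, every such $D^v$ has that same sign, so the union lies in $x+\sht_\pm$, which is far from the whole apartment outside the classical case. The superficially similar step in the proof of Proposition~\ref{3.7} works because there one considers $x+\ql\xi-\overline{C^v}$ with a \emph{minus} sign and a \emph{moving} base point along a ray; neither feature is present in your situation. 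More structurally, your intermediate target $Q'(x)\cap U(\pm C^v)\subset P(x)$ would force every very good family to coincide with the minimal family $\shp$; the paper does not claim this (cf.\ Definition~\ref{5.4}), and the lemma asserts strictly less.

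The paper's argument is different and geometric. For $\shq\le\shq'$ the inclusion of parahorics induces a $G$-equivariant map $j:\overline\shi\to\overline\shi'$ between the associated bordered hovels, restricting to the identity on $\overline\A$; Proposition~\ref{3.14}, applied fa\c{c}ade by fa\c{c}ade (each fa\c{c}ade being an ordered affine hovel by Theorem~\ref{3.11}), shows $j$ is injective, whence $g\in Q'(x)$ gives $j(gx)=gj(x)=j(x)$, so $gx=x$ and $g\in Q(x)$. For the (P8+) case one sets $\shq''=\shq\cap\shq'$ and checks it is again very good: the only delicate axiom is (P8), and (P8+) for $\shq'$ yields $Q''(x)=Q(x)\cap\bigl[(Q'(x)\cap U(C^v)).(P(x)\cap U(-C^v)).N(x)\bigr]=(Q''(x)\cap U(C^v)).(P(x)\cap U(-C^v)).N(x)$ since $(P(x)\cap U(-C^v)).N(x)\subset P(x)\subset Q(x)$. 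Then the first case, applied to $\shq''\le\shq$ and to $\shq''\le\shq'$, gives $\shq=\shq''=\shq'$.
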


\begin{proof} If $\shq\leq{}\shq'$ there is clearly a $G-$equivariant map $j:\overline\shi\to\overline\shi'$ between the bordered hovels associated to $G$, $\overline\A$ and $\shq$ or $\shq'$.
 This map sends each bordered apartment isomorphically to its image. Let $F^v$ be a vectorial facet in $\A^v$, then \ref{3.14} applies to the map $j$ between the ordered affine hovels $\shi_{F^v}$ and $\shi_{F^v}'$.
  So $j$ is one to one. Let $x\in\A_{F^v}$ and $g\in Q'(x)$, then $j(gx)=gj(x)=j(x)$, so $gx=x$ and $g\in Q(x)$.

  \par If $\shq'$ satisfies (P8+) we may apply the first case to $\shq$ or $\shq'$ and $\shq''=\shq\cap\shq'$ (\ie $Q''(x)=Q(x)\cap Q'(x)$ $\forall x$), as this family $\shq''$ is  very good.
   Actually for $\shq''$ (P1) to (P6) and (P9), (P10) are clear.
   For (P7) we have to prove $Q''(x)\cap NP(F^v)\subset NQ''(pr_{F^v}(x))=NP(pr_{F^v}(x))$ (as $F^v$ is spherical); it is clear.
   For (P8) $Q''(x)=Q(x)\cap[(Q'(x)\cap U(C^v)).(P(x)\cap U(-C^v)).N(x)]=(Q(x)\cap Q'(x)\cap U(C^v)).(P(x)\cap U(-C^v)).N(x)$ as $P(x).N(x)\subset Q(x)$.
\end{proof}

  \subsection{Residue buildings}\label{5.11}

  \par{\quad\bf1)} Let $x$ be a point in the apartment $\A$. We defined in \cite[4.5]{GR-08} or \cite[{\S{}} 5]{Ru-10} the twinned buildings $\SHI^+_x$ and $\SHI^-_x$, where $\SHI^+_x$ (resp. $\SHI^-_x$) is the set of segment germs $[x,y)$ for $y\in\SHI$, $y\not=x$ and $x\leq{}y$ (resp. $y\leq{}x$).
  Any apartment $A$ containing $x$ induces a twin apartment $A_x=A_x^+\cup A_x^-$ where $A_x^\pm{}=\{[x,y)\mid y\in A\}\cap\SHI_x^\pm{}$.
  As we want to consider thick buildings, we endow the apartments of $\SHI_x^\pm{}$ with their restricted structure of Coxeter complexes; on $\A_x$ it is associated with the subroot system $\QF_x=\{\qa\in\QF\mid-\qa(x)\in\QL_\qa\}$ of $\QF$ (\cf \cite[5.1]{By-96}) and the Coxeter subgroup $W_x^{min}\simeq W^v_x$ of $W^v$.
  One should note that $\QF_x$ is reduced but could perhaps have an infinite non free basis, corresponding to an infinite generating set of $W^v_x$.

  \par The group $G_x=\widehat P(x)$ contains three interesting subgroups: $P(x)=N(x).G(x)\supset P_x^{min}=Z_0.G(x)$ (see \cite[{\S{}} 3.2]{GR-08}, they are equal when $x$ is special); the group $G_{\SHI_x}$ is the pointwise fixer of all $[x,y)\in\SHI^{\pm}_x$ (\ie $g\in G_{\SHI_x}\iff\forall[x,y)\in\SHI^{\pm}_x, \exists z\in]x,y]$ such that $g$  fixes pointwise $[x,z]$), it is clearly normal in $G_x$.

  \par We write $\overline G_x=G_x/G_{\SHI_x}$ and $\overline U_\qa$ or $\overline R$ the images in $\overline G_x$ of $U_{\qa,-\qa(x)}$ ($\qa\in\QF$) or $R$ any subgroup of $G_x$.

  \begin{enonce*}[plain]{\quad2) Lemma} A $g\in G_x$ fixing an element in $\SHI^-_x$ and fixing pointwise $\SHI^+_x$ (\eg  $g\in G_{\SHI_x}$) fixes pointwise each $[x,y)$ for $y\not=x$ in a same apartment as $x$.
  \end{enonce*}
    \begin{proof} So $g$ fixes $[x,z]$ for some $z<x$. By \cite[5.12.4]{Ru-11}, $[x,z)$ and $[x,y)$ are in a same apartment $A$.
    By hypothesis $g$ fixes points $z_1,\cdots,z_n$ in $A$ such that each $z_i-x$ is in the open Tits cone $\sht^\circ\subset\vect A$, these vectors generate the vector space $\vect A$ and the interior of the convex hull of $\{x,z_1,\cdots,z_n\}$ contains an opposite of $[x,z)$.
    By moving each $z_i$ in $]x,z_i]$ one may suppose $x\leq{}z_1\leq{}\cdots\leq{} z_n$. Now as $z<x$, $g$ fixes (pointwise) the convex hull of $\{z,z_1,\cdots,z_n\}$ which is a neighbourhood of $x$ in $A$, hence contains $[x,y)$.
    \end{proof}

  \begin{enonce*}[plain]{\quad3) Lemma} Any $u\in U^+$ fixing (pointwise) a neighbourhood of $x$ in $\A$, fixes pointwise $\SHI_x$.
  This applies in particular to a $u\in U_{\qa,-\qa(x)+}$ for $\qa\in\QF_x$ or a $u\in U_{\qa,-\qa(x)}$ for $\qa\in\QF\setminus\QF_x$.
  \end{enonce*}
      \begin{proof} By \ref{5.8}.3 we may suppose $x$ special, hence $x=0$. By the above lemma it is sufficient to prove that $u$ fixes $\SHI^+_0$.
      An element $[0,y)$ of $\SHI^+_0$ is in an apartment $A$ containing the chamber  $F=F(0,C^v_+)$ and even the sector $\g q=0+C^v_+$ \cite[5.12.4]{Ru-11}; this apartment may  be written $A=g^{-1}\A$ with $g\in\widehat P(\g q)$ and even $g\in U_0^{pm+}$.
      Now $g([0,y))$ is in a sector $wC^v_+$ for some $w\in W^v$ and we have to prove that $gug^{-1}$ fixes a neighbourhood of $0$ in this sector.

      \par We argue in $U_0^{ma+}$ (as defined in \ref{5.1} or \cite[4.5.2]{Ru-11}).
      This group may be written as a direct product: $U_0^{ma+}=U_0^{ma}(\QD^+)=(\prod_{\qb\in\QD'}\,U_{\qb,0})\times U_0^{ma}(\QD^+\setminus \QD')$ where $\QD'$ is the finite set of positive roots of height $\leq{}N$ (with $N$ such that $\QD^+\cap w\QD^-\subset\QD'\cap\QF$) and $U_0^{ma}(\QD^+\setminus \QD')$ is a normal subgroup.
      Moreover each $U_{\qb,0}$ is a finite product of sets in bijection with $\sho$, the neutral element corresponding to $(0,\cdots,0)$ (actually for $\qb$ real, $U_{\qb,0}$ is isomorphic to the additive group of $\sho$).
      For $g_1\in U_0^{ma+}$ the map sending $v\in U_0^{ma+}$ to the component of $g_1vg_1^{-1}$ in $\prod_{\qb\in\QD'}\,U_{\qb,0}$, factors through $U_0^{ma+}/U_0^{ma}(\QD^+\setminus \QD')=\prod_{\qb\in\QD'}\,U_{\qb,0}$ and induces a polynomial map with coefficients in $\sho$ and without any constant term.

      \par Now $u\in U^+\cap G_0=U_0^{pm+}$ and $u$ fixes (pointwise) a neighbourhood of $x$ in $\A$, hence some $x'\in-C^v_+$. So $u\in U_{x'}^{ma+}$ and the component of $u$ in  $U_{\qb,0}$ is in the maximal ideal $\g m$ of $\sho$ if $\qb$ is real or in $\g m\times\cdots\times\g m$ if $\qb$ is imaginary.
      By the above property this is also true for $gug^{-1}$ and $gug^{-1}$ fixes a neighbourhood of $0$ in $wC^v_+$ (as $wC^v_+$ is fixed by $U_0^{ma}(\QD^+\setminus \QD')$).
    \end{proof}

  \begin{enonce*}[plain]{\quad4) Proposition} $(\overline P^{min}_x=\overline Z_0.\overline{G(x)},(\overline U_\qa)_{\qa\in\QF_x},\overline Z_0)$ is a generating root datum whose associated twin buildings have the same chamber sets or twin-apartment sets as $\SHI^{\pm}_x$.

 \par  Moreover $G_x=G(x).N(x).G_{\SHI_x}$ and $U_x^{pm+}\subset U_x^{++}.G_{\SHI_x}$, $U_x^{nm-}\subset U_x^{--}.G_{\SHI_x}$.
  \end{enonce*}

  \begin{remas*} a) As the basis of $\QF_x$ could be infinite the above generating root datum must be understood in a more general sense than in \ref{1.3}: we should consider the free covering $\widetilde\QF_x$ of $\QF_x$ (whose basis is free) which is in one to one correspondence with $\QF_x$ (\cf \cite[4.2.8]{By-96}) and a root datum as in \cite[6.2.5]{Ry-02a}.
  Another (less precise) possibility is to index the $\overline U_\qa$ by subsets of the Weyl group $W^v_x$, see \cite[1.5.1]{Ry-02a} or \cite[8.6.1]{AB-08}.
  Actually there is no trouble in defining the combinatorial twin buildings associated to this generalized root datum; but, except for chambers, their facets may not be in one to one correspondence with those of $\SHI^{\pm}_x$, \cf \cite[5.3.2]{Ru-10}.

  \par b) We may define the subgroup $P_x=P_x^{min}.G_{\SHI_x}$; this generalizes the definition given in \cite[5.14.2]{Ru-11}, as clearly $P_x\cap N=N_x^{min}$.
  It is the subgroup of $G_x$ which preserves the "restricted types in $x$" of the facets $F(x,F^v)$ (\ie their types as defined in the twin buildings $\SHI^{\pm}_x$ endowed with their restricted structures).
  The greater group $\widehat P_x^{sc}$ of \cite[5.14.1]{Ru-11} preserves the "unrestricted types" of the local facets $F^l(x,F^v)$ (\ie the (vectorial) type of $F^v$).

  \par c) These results and \cite[4.13.5]{Ru-11} suggest that $G_x=\widehat P(x)$ could perhaps be always equal to $P(x)=G(x).N(x)=P_x^{min}.N(x)$ for any $x\in\A$ (\ie $U_x^{pm+}=U_x^{+}$ and $U_x^{nm-}=U_x^{-}$). On the contrary we already said in \ref{3.2}.1 that $U_x^{pm+}$ or $U_x^{+}$ is in general different from $U_x^{++}$.
  \end{remas*}
      \begin{proof}
By definition for $\qa\in\QF_x$, $-\qa(x)\in\QL$ hence there is a $r\in K$ with $\qo(r)+\qa(x)=0$, so $\qf_\qa(x_\qa(r))+\qa(x)=0$ and the fixed point set of $u=x_\qa(r)\in U_{\qa,-\qa(x)}$ is $D(\qa,-\qa(x))$.
    Therefore the image of $u$ in $\overline U_\qa\subset\overline G_x$ is non trivial; (RD1) follows.

    \par (RD2) is a consequence of (RD2) and (V3) in $G$ as $\overline U_\qa$ is trivial when $\qa\not\in\QF_x$ by lemma 3. (RD3) is useless as $\QF$ is reduced.
    For (RD4) $\overline u\in \overline U_\qa\setminus\{1\}$ is the class of an element $u\in U_\qa$ with $\qf_\qa(u)=-\qa(x)$ (by lemma 3); hence the result follows from \ref{2.1}.2.

\par An element $\overline g\in \overline Z_0.\overline U^+\cap\overline U^-$ is the class of an element $g\in U^-$ and, up to $G_{\SHI_x}$, $g$ fixes $x+C^v_+$ and $x-C^v_+$, hence $g$ fixes a neighbourhood of $x$ in $\A$ (by convexity) and, by lemma 3, $g\in G_{\SHI_x}$. So $\overline g=1$ and (RD5) is proved.

\par The group $P_x^{min}$ is generated by $Z_0$ and the $U_{\qa,k}$ for $\qa\in\QF$ and $\qa(x)+k\geq0$. So the lemma 3 tells that its image $\overline P_x^{min}$ is generated by $\overline Z_0$ and the $\overline U_{\qa}$ for $\qa\in\QF_x$. Hence $(\overline P^{min}_x,(\overline U_\qa)_{\qa\in\QF_x},\overline Z_0)$ is a generating root datum.
We define $\overline U^\pm{}$ as the group generated by the $\overline U_{\pm{}\qa}$ for $\qa\in\QF_x^+$ (and not the image $\overline U^\pm_x$ of $U^\pm_x$).

\par Let $\shi^\pm_c$ be its associated (combinatorial) twin buildings and $C^\pm_c$ its fundamental chambers \cite[8.81]{AB-08}.
The twin apartments of $\shi^\pm_c$ or $\SHI^\pm_x$ are both the twin Coxeter complexes associated to $W^v_x$ and $N$ acts transitively on their four chamber sets.
Moreover the chambers in $\shi^+_c$ (resp. $\SHI^+_x$) sharing with $C^+_c$ (resp. $C=F(x,C^v_+)$) a panel of type $r_\qa\in W^v_x$ (for $\qa$ simple in $\QF_x$) are in one to one correspondence with $\overline U_\qa$ by \cite[8.56]{AB-08} (resp. $U_{\qa,-\qa(x)}/U_{\qa,-\qa(x)+}$ by \ref{3.11}e).
 By lemma 3 these two groups are isomorphic. So the chamber sets of $\shi^+_c$ and  $\SHI^+_x$ are in one to one correspondence. The same thing is true for the negative buildings.

 \par The twin apartments in $\shi^\pm_c$ are permuted transitively by $P^{min}_x$ and the stabilizer of the fundamental one is $N_x^{min}=P_x^{min}\cap N$. So the twin apartments in $\shi^\pm_c$ correspond bijectively to some apartments of $\SHI^\pm_x$.
 But two chambers in $\SHI^\pm_x$ correspond to chambers in $\shi^\pm_c$, hence are in a twin apartment of $\shi^\pm_c$ and their distance or codistance is the same in $\shi^\pm_c$ or $\SHI^\pm_x$.
 As a twin apartment is uniquely determined by a pair of opposite chambers, every twin apartment of $\SHI^\pm_x$ comes from a twin apartment in $\shi^\pm_c$.

 \par The chambers in $\shi^-_c$ opposite $C^+_c$ are transitively permuted by $\overline Z_0.\overline U^+$ \cite[6.87]{AB-08} hence in one to one correspondence with $\overline U^+$, as $\overline Z_0.\overline U^-\cap\overline U^+=\{1\}$ [\lc 8.76].
 In $\SHI^-_x$ the chambers opposite $C$ are in a same apartment as the sector $x+C^v_+$ \cite[5.12.4]{Ru-11} hence transitively permuted by $U_x^{pm+}$ (\ref{3.6}.4).
 Now the fixer in $U_x^{pm+}$ of the chamber $F(x,-C^v_+)$ is actually in $G_{\SHI_x}$ by lemma 3.
 So the chambers opposite $C$ in $\SHI^-_x$ are in one to one correspondence with the image $\overline U_x^{pm+}$ of $U_x^{pm+}$ in $\overline G_x$. Thus $\overline U_x^{pm+}=\overline U^+$.
 But $\overline U^+$ is the image of $U_x^{++}$, hence $U_x^{pm+}\subset U_x^{++}.G_{\SHI_x}$.
 As $G_x=\widehat P(x)=U_x^{pm+}.U_x^-.N(x)=U_x^{pm+}.G(x).N(x)$ we have $\overline G_x=\overline G(x).\overline N(x)$, hence $G_x=G(x).N(x).G_{\SHI_x}$.
    \end{proof}
    
\subsection{Iwahori-Matsumoto decompositions}\label{5.12}

\par Let $x$ be a special point in $\A$ and $\shv$ the filter of neighbourhoods of $x$ in $\A$.
 Then $P_x^{min}=Z_0.G(x)=N(x).G(x)=P(x)$ (as $W_x^{min}\simeq W_x^{v}=W^v$) and $G_x=\widehat P(x)=P(x).U_x^{nm-}=P(x).U_x^{pm+}$, equal moreover to $P_x^{pm}=P_x^{nm}=\widehat P_x$ with the notations of \cite[4.6, 4.13]{Ru-11}.
 
 \par We get the following Iwahori-Matsumoto decompositions (compare \ref{5.11}.4).
 
 \begin{prop*} \qquad $P(x)=U^+_\shv.U^-_x.N(x).U^-_x=\bigsqcup_{w\in W^v}\,U^+_\shv.U^-_x.wZ_0.U^-_x$ 
 \par and \qquad $G_x=\widehat P(x)=U^+_\shv.U^-_x.N(x).U^{nm-}_x=\bigsqcup_{w\in W^v}\,U^+_\shv.U^-_x.wZ_0.U^{nm-}_x$.
 \end{prop*}
 
 \begin{rema*} The equality of $P(x)$ and $U^{++}_\shv.U^{--}_x.N(x).U^{--}_x$, stated in a preliminary draft of \cite{BrKP12}, fails in general.
  With the notations of \cite[4.12.3]{Ru-11} with $x=0$, let's consider the following element $h$ of $SL_2(K[t,t^{-1}])$: 
  
   \par\noindent$\left(\begin{matrix}1&0  \cr 1&1\cr \end{matrix}\right)\left(\begin{matrix}1&\varpi t\cr 0&1\cr \end{matrix}\right)\left(\begin{matrix}1&0  \cr -1&1\cr \end{matrix}\right)=\left(\begin{matrix}1-\varpi t&\varpi t  \cr -\varpi t&1+\varpi t\cr \end{matrix}\right)=\left(\begin{matrix}1&1  \cr 0&1\cr \end{matrix}\right)\left(\begin{matrix}1&0  \cr -\varpi t&1\cr \end{matrix}\right)\left(\begin{matrix}1&-1  \cr 0&1\cr \end{matrix}\right)$
  
  \par As in \LC a) we see that $h$ is in $U^{++}_x\setminus U^{++}_\shv$ (right hand side).
  Hence $U^{++}_\shv$ (which contains $\left(\begin{matrix}1&0  \cr -\varpi t&1\cr \end{matrix}\right)$) is not normalized by $\left(\begin{matrix}1&1  \cr 0&1\cr \end{matrix}\right)\in U_{\qa_1,x}\subset U^{++}_x$.
  Moreover if $h\in U^{++}_\shv.U^{--}_x.wZ_0.U^{--}_x$ with $w\in W^v$, one has $w=1$ (as the image of $h$ in $SL_2(\qk[t,t^{-1}])$ is trivial).
  So $h\in U^{++}_\shv.U^{--}_x.Z_0\cap U^{++}_x$. But the decomposition of an element in $U^+U^-Z$ is unique (\ref{1.3c}.2), hence $h\in U^{++}_\shv$, nonsense.
 \end{rema*}
 \begin{proof} As suggested in \cite{BrKP12}, we get the decomposition of $P(x)$ as in the article by  Iwahori and Matsumoto \cite[prop. 2.4]{IM-65}. \label{N32}
 The main ingredients are the decompositions $U^+_\shv=U_{\qa_i,\shv}\ltimes U_\shv(\QD^+\setminus\{\qa_i\})$ and $U^-_x=U_{-\qa_i,x}\ltimes U_x(\QD^-\setminus\{-\qa_i\})$ where $\qa_i$ is a simple root and $U_\shv(\QD^+\setminus\{\qa_i\})$, $U_x(\QD^-\setminus\{-\qa_i\})$ are defined as in \cite[proof of 3.4d]{GR-08} and normalized by  $U_{\pm\qa_i,x}$.
 The decomposition of $\widehat P(x)$ is a simple consequence.
 We have now to prove that two sets $U^+_\shv.U^-_x.wZ_0.U^{nm-}_x$ for two different $w\in W^v$ have no intersection.
  But $U^+_\shv.U^-_x$ and $Z_0.U^{nm-}_x$ fix the local chamber $F^l(x,-C^v_+)$; so this is a consequence of the uniqueness in the Bruhat decomposition of $\overline G_x=G_x/G_{\SHI_x}$, \cf \ref{5.11}.4.
 \end{proof}
    
\section{Hovels and bordered hovels for almost split Kac-Moody groups}\label{s6}

\subsection{Situation and goal}\label{6.1}

We consider an almost split Kac-Moody group $\g G$ over a field $K$, as in section \ref{s1b}.
 We suppose the field $K$ endowed with a non trivial real valuation $\qo=\qo_K$ which may be extended functorially and uniquely to all extensions in $\shs ep(K)$.
 This condition is satisfied 
  if $K$ is complete for $\qo_K$ (or more generally if $(K,\qo_K)$ is henselian).

 \par We built in \ref{5.4} a bordered hovel $\overline\SHI(\g G,L,\overline\A)$ for any $L\in\shs ep(K)$ splitting $\g G$.
  We want a bordered hovel $\overline\SHI(\g G,K,{_K\overline\A})$ on which $G=\g G(K)$ would act strongly transitively and $G-$equivariant embeddings $\overline\SHI(\g G,K,{_K\overline\A})\into\overline\SHI(\g G,L,\overline\A)$ for $L\in\shs ep(K)$ splitting $\g G$.

  \par An idea (already used in the classical case \cite{BtT-84a}) is to suppose $L/K$ finite Galois, to build an action of the Galois group $\QG=Gal(L/K)$ over $\overline\SHI(\g G,L,\overline\A)$ and to find $\overline\SHI(\g G,K,{^K\overline\A})$ in the fixed point set $\overline\SHI(\g G,L,\overline\A)^\QG$.
   As already known in the classical case \cite{Ru-77}, the equality of these last two objects is in general impossible.

\subsection{Action of the Galois group on the bordered hovel}\label{6.2}

\par{\quad\bf1)} We consider a finite Galois extension $L$ of $K$ which splits $\g G$.
 The Galois group $\QG=Gal(L/K)$ acts on $\SHI^v(L)=\SHI^v(\g G,L,\A^v)$ and the action of $\g G(L)$ on $\SHI^v(L)$ is $\QG-$equivariant.
 More precisely we suppose $L$ such that there exists a maximal $K-$split torus $\g S$ in $\g G$ contained in a maximal torus $\g T$ defined over $K$ and split over $L$ (\cf \ref{1.8}.4 and \ref{1.9}).
 We described in section \ref{s1b} the fixed point set $\SHI^v(L)^\QG$, its $K-$apartments and $K-$facets.
 In particular the apartment $A^v=A^v(\g T)$ corresponding to $\g T$ is stable under $\QG$ and $(A^v)^\QG$ is the $K-$apartment $_KA^v(\g S)$ corresponding to $\g S$.

 \par We want an action of $\QG$ on the bordered hovel $\overline\SHI(L)=\overline\SHI(\g G,L,\overline\A)$ compatible with its action on $\SHI^v(L)$ and the action of $\g G(L)$.
  Hence $\QG$ must permute the apartments and fa\c{c}ades of $\overline\SHI(L)$ as the apartments and facets of $\SHI^v(L)$. In particular $\QG$ has to stabilize the bordered apartment $\overline A=\overline A(\g T)$ corresponding to $\A^v$ \ie to $\g T$.

  \par{\bf 2) Action on $\overline A$}: $\qg\in\QG$ must act affinely on $\overline A$ with associated linear action the action of $\qg$ on $V=\vect{A^v}=\vect{A}$.
  Moreover this action has to be compatible with the action on the root groups ($\forall\qa\in\QF$ $\qg(U_{\qa,\ql})=U_{\qg\qa,\ql'}\Rightarrow\qg(D(\qa,\ql))=D(\qg\qa,\ql')$ at least when $\ql\in\QL_\qa$) and we know that the action of $\QG$ on $\g G(L)$ is compatible with its action on its Lie algebra ($\qg(exp(ke_\qa))=exp(\qg(k)\qg(e_\qa))$ ).
   Using these results and conditions, C. Charignon succeeds in finding a (unique) good action of $\QG$ on the essentialization $A^e=A/V_0$ of $A$; in particular the action of $N$ is $\QG-$equivariant \cite[13.2]{Cn-10b}.
   As $\QG$ is finite and acts affinely, it has a fixed point $x_0+V_0$ in $A^e$.

   \par Now $\QG$ has to fix a point in $x_0+V_0$.
    But all points in $x_0+V_0$ play the same r\^ole with respect to the conditions; so we may choose a point in $x_0+V_0$, \eg $x_0$, and say that $\QG$ fixes $x_0$ \ie that $\QG$ acts on $A$ as on $\vect{A^v}$ (after choosing $x_0$ as origin).
     This action is compatible with the above action on $A^e$. It permutes the walls, facets, ... and extends clearly to $\overline{A}$ ( $=\overline{A}^e$,  $\overline{A}^i$ or  $\overline{\overline A}$).

     \par{\bf N.B.} We had to make a choice to define this action. This is not a surprise: as in the classical case, $V_0$ is a group of $G-$equivariant automorphisms of $\overline\SHI(L)$.

     \begin{enonce*}[plain]{\quad3) Lemma} This action of $\QG$ on $\overline A$ stabilizes $\widehat\shp$: \quad$\qg(\widehat P(x))=\widehat\shp(\qg x)$, $\forall x\in\overline A$, $\forall\qg\in\QG$.
\end{enonce*}

\begin{proof} By Charignon's work ( 2) above) we know that $\QG$ stabilizes $\shp$. \label{N18}
 Hence $\qg\in\QG$ sends $\widehat\shp$ to a family $\shq$ which is still a very good family of parahorics. So \ref{5.9}.1 tells that $\shq=\widehat\shp$.
  Note that a longer proof may also be given using the star actions instead of \ref{5.9}.1.
\end{proof}

\par{\bf4)} We have got compatible actions of $\QG$ on $G=\g G(L)$ and $\overline A$ satisfying the above lemma.
 As $\overline\SHI(L)=\overline\SHI(\g G,L,\overline\A)$ is defined by the formula in \ref{3.1}, we obtain an action of $\QG$ on this bordered hovel, for which the $G-$action is $\QG-$equivariant.
  Each $\qg\in\QG$ acts as an automorphism: it induces a permutation of the apartments, facets, walls, fa\c{c}ades, chimneys, ... and the bijection between an apartment and its image is an affine automorphism.

  \par This action of  $\QG$ on $\overline\SHI(L)$ is compatible with its action on $\SHI^v(L)$ ($\qg(\SHI_{F^v})=\SHI_{\qg(F^v)}$ ) and on the sector faces ($\qg(x+F^v)=\qg(x)+\qg(F^v)$ ) or the chimneys.
   Moreover the projections on the fa\c{c}ades are $\QG-$equivariant ($\qg\circ pr_{F^v}=pr_{\qg(F^v)}\circ\qg$).
   These results are first proved (easily) for the actions on $A$ and $A^v$, and then extended (easily) to $\overline\SHI$.

   \par As $\QG$ has fixed points in $\SHI$, any $\QG-$fixed point in a fa\c{c}ade $\SHI_{F^v}\subset\overline\SHI$ is associated to a $\QG-$stable sector face $x+F^v$ in $\SHI$.

   \subsection{The descent problem}\label{6.3}

   \par In $\overline\SHI$ we have got an apartment $\overline A$ stable under $\QG$. But $\QG$ is finite and acts affinely, so it has a fixed point in $A$ and $A^\QG$ is an affine space directed by $(\vect{A^v})^\QG$.
   It seems interesting to choose $\overline A^\QG$ as affine bordered $K-$apartment and define $_K{\overline\SHI}=\g G(K).\overline A^\QG$.
   Unfortunately we are not sure then that $\overline A^\QG$ is stable under $_KN$ or fixed by $_KZ$; so this $_K{\overline\SHI}$ is not a good candidate for a bordered hovel associated to the root datum $(\g G(K),(V_{_K\qa})_{{_K\qa}\in{_K\QF}},{_KZ})$.

   \par It is possible to find in $\overline\SHI^\QG$ a subspace of apartment $_KA_1$ directed by  $(\vect{A^v})^\QG$ and stable under $_KN$.
    But then it is not clear that there exists an apartment $\overline A_2$ in $\overline\SHI$ containing $_K\overline A_1$ and stable under $\QG$, or even such that $\overline A_2\cap \overline\SHI^\QG={_K\overline A_1}$ \cite[13.3]{Cn-10b}.

    \par This problem is the same as in the classical case of reductive groups: \cite{BtT-72}, \cite{BtT-84a}, \cite{Ru-77}. Charignon solves it the same way: under some hypothesis on $\g G$ or $K$ and by a two steps descent.

   \subsection{The  descent theorem of Charignon}\label{6.4}

   This abstract result is largely inspired by the descent theorem of Bruhat and Tits in the classical case \cite[9.2.10]{BtT-72}. We explain here the hypotheses and conclusions of \cite[{\S} 12]{Cn-10b}, but, to simplify, we consider a more concrete framework.
   We keep the notations of \LC or we indicate them in brackets when they are too far from ours.  We keep our idea to replace many Charignon's overrightarrows by an exponent $^v$ and to use often an overrightarrow to indicate the generated vector spaces.

   \par{\bf1) Vectorial data}: We consider a finite Galois extension $L/K$ which splits $\g G$ as in \ref{6.2}.1.
   So there exists in $\g G$ a maximal $K-$split torus $_K\g S$ and a maximal torus $\g T$ split over $L$ containing $_K\g S$ (we don't ask $\g T$ to be defined over $K$).

   \par We consider the fixed point set $\vect\shi_\natural={_K\SHI}^{vq}=(\SHI^{vq})^\QG$ of $\QG=Gal(L/K)$ in $\vect\shi=\SHI^{vq}$.
   The group $\g G(K)$ ($=G^\natural$) acts on $\vect\shi_\natural$. By \ref{1.11}, \ref{1.12} $\vect\shi_\natural$ is a good geometrical representation of the combinatorial twin building $^K\!\!\SHI^{vc}=\SHI^{vc}(\g G,K)$.

   To $_K\g S$ and $\g T$ correspond apartments $A^{v}_\natural={_KA^{vq}}({_K\g S})\subset \vect\shi_\natural$ included in $A^{vq}=A^{vq}(\g T)\subset\vect\shi$; 
   they are cones in the vector spaces $\vect{A^{v}_\natural}$ ($=\vect V^\natural$) included in $\vect{A^{vq}}$ ($=\vect V$).
   The real root system $\QF$ (resp. the real relative root system $_K\QF=\QF^\natural$) is included in the dual $(\vect{A^{vq}})^*$ (resp. $(\vect{A^{v}_\natural})^*$) and has a free basis.
   Its associated vectorial Weyl group is $W^v=N/T$ ($=W(\QF)$) (resp. $_KW^v={_KN}/{_KZ}=W(\QF^\natural)$).
   Here $_KZ=T^\natural$ or $_KN=N^\natural$ is the generic centralizer or normalizer in $G^\natural$ of $_K\g S$. We write $\vect{A^{v}_{\natural0}}=\cap_{a\in\QF^\natural}\,Ker(a)$.

   \par We consider also the Weyl$-K-$apartment $A^{v\natural}={^K\!A^{vq}}({_K\g S})$ with $A^v_\natural\subset A^{v\natural}\subset \vect{A^v_\natural}$ and the corresponding building $\vect\shi^\natural=G^\natural.A^{v\natural}$ (\cf \ref{1.12}).
    As in \cite{Cn-10b} we define the facets in $A^v_\natural$ or $\vect\shi_\natural$ as the traces $F^v_\natural=F^{v\natural}\cap\vect\shi_\natural$ of the Weyl$-K-$facets $F^{v\natural}$ of $A^{v\natural}$ or $\vect\shi^\natural$.
     The same set $A^v_\natural$, endowed as facets with the non empty traces $F^v_\#=F^{vq}\cap A^v_\natural$ for $F^{vq}$ a facet in $A^{vq}$, will be written $A^v_\#$.
     There is a one to one correspondence between facets of $A^{v\natural}$ and $A^v_\natural$.
     But a facet $F^{v\natural}$ or $F^v_\natural=F^{v\natural}\cap A^v_\natural$ contains several facets in $A^v_\#$; among them one $F^{v\natural}_\#$ is maximal, open in $F^{v\natural}$, generates the same vector space  and $F^{v\natural}_\#+\vect{A^v_{\natural0}}=F^v_\natural+\vect{A^v_{\natural0}}=F^{v\natural}$  (\cf \ref{1.12}).

     \par The combinatorial twin building $^K\!\SHI_\pm^{vc}$ is associated to the root datum $(\g G(K),(V_{_K\qa})_{{_K\qa}\in{_K\QF}},$\goodbreak\noindent${_KZ})$ ($=(G^\natural,(U^\natural_{a})_{{a}\in{\QF^\natural}},{_KZ})$).
     Everything associated as in {\S} \ref{s1} to this root datum will be written with an exponent $^\natural$ or a subscript $_K$.
      The reader will check easily the conditions (DSR), (DDR1),...,(DDR3.2) and (DIV) of [\lc 12.1], \cf [\lc 13.4.1].
      In particular for $a={_K\qa}\in\QF^\natural$, $U^\natural_{a}=V_{_K\qa}$ is included in the group $U_a$ generated by the groups $U_\qb$ for the roots $\qb$ in the finite set $\QF_a=\{\qb\in\QF\mid\qb\,\rule[-1.5mm]{.1mm}{3mm}_{\,\vect V^\natural}\in\R^{+*}a\}=\{\qb\in\QF\mid{_K\qb}={_K\qa}$ or $(\frac{1}{2}).{_K\qa}$ or $2.{_K\qa}\}$.
      Actually $U_a=\prod_{\qb\in\QF_a}\,U_\qb$ for any order \cite[6.2.5]{Ry-02a}.

   \par{\bf2) Affine data}: We consider the essential bordered hovel $\overline\SHI=\overline\SHI(\g G,L,\overline\A^e)$ ($=\shi$) and $\overline A$ ($=A(T)$) the bordered apartment associated to $A^{vq}$ whose main fa\c{c}ade $A$ ($=A^\circ(T)$) is an affine space under $\vect{A^{vq}}=\vect V$.
   The fa\c{c}ades of $\overline\SHI$ are indexed by the facets $F^v\in\SHI^{vc}$.

   \par We consider moreover a subset $\overline\shi_\#$ in $\overline\SHI$, we write $A^\#=A\cap\overline\shi_\#$ and suppose:
 \medskip
   \par (DM1) $\overline\shi_\#$ is $G^\natural-$stable and, $\forall F^v\in\SHI^{vc}_{sph}$, $\overline\shi_\#\cap\SHI_{F^v}$ is convex in $\SHI_{F^v}$.

   \par(DM2) $A^\#$ is affine in $A$, directed by $\vect V^\natural$ and $\overline A\cap\overline\shi_\#$ is the closure $\overline A^\#$ of $A^\#$ in $\overline A$.

   \par(DM3) $\forall F^v\in\SHI^{vc}_{sph}$, if $F^v\cap A^{v\natural}\not=\emptyset$, there exists a facet $F$ in the (classical) apartment $A_{F^v}$ with $F\cap \overline\shi_\#\not=\emptyset$ and $F$ is equal to any facet $F'$ in $\SHI_{F^v}$ with $F'\cap \overline\shi_\#\not=\emptyset$ and $F\subset\overline{F'}$.

   \par(DM4) $\overline A^\#$ is stable under $_KN=N^\natural$.

\par\noindent Axiom (DM3) means essentially that, in appropriate spherical fa\c{c}ades, $\overline A\cap\overline\shi_\#$ cannot be enlarged by modifying the apartment $\overline A$.

\medskip
\par For $a\in\QF^\natural$ and $u\in U_a^\natural$, \label{N28} one defines $\qf_a^\natural(u)$ as the supremum in $\R\cup\{+\infty\}$ of the $k$ such that $u$ is in the group $U_{a,k}$ generated by the $U_{\qa,r_\qa k}=\qf_\qa^{-1}([r_\qa k,+\infty])$ for $\qa\in\QF_a$, $r_\qa\in\R^{+*}$ and $\qa\,\rule[-1.5mm]{.1mm}{3mm}_{\,\vect V^\natural}=r_\qa a$.
Actually $U_{a,k}=\prod_{\qa\in\QF_a}\,U_{\qa,r_\qa k}$ and $U_{a,k}^\natural:=(\qf_a^\natural)^{-1}([k,+\infty])=U_a^\natural\cap U_{a,k}$.

\par There are two more axioms, one normalizing $\qf$ (among equipollent valuations, in such a way that the associated origin $0_\qf$ of $A$ is in $A_\#$) and one avoiding triviality for each $\qf_a^\natural$.
 They are easily verified in our situation [\lc 13.4.1].

 \par As we have three types of vectorial facets in $\vect V^\natural=\vect{A^v_\natural}$, we may define three bordered apartments with $A^\#$:
 $\overline A_\natural$ (resp. $\overline A^\natural$, $\overline A^\#$) is the disjoint union of the fa\c{c}ades $A^\#_{F^{v}_\natural}=A^\#/\vect{F^{v}_\natural}$ (resp. $A^\#_{F^{v\natural}}=A^\#/\vect{F^{v\natural}}$, $A^\#_{F^{v}_\#}=A^\#/\vect{F^{v}_\#}$), for ${F^{v}_\natural}$ (resp. ${F^{v\natural}}$, ${F^{v}_\#}$) a facet in ${A^{v}_\natural}$ (resp. ${A^{v\natural}}$, ${A^{v}_\#}$).
  Actually $\overline A^\#$ is the closure of $A^\#$ in $\overline A$ as in (DM2) above.
  Moreover the sets $\overline A_\natural$ and $\overline A^\natural$ are equal (as $\vect{F^{v}_\natural}=\vect{F^{v\natural}}$ when $F^{v}_\natural=F^{v\natural}\cap A^v_\natural$) but they differ by their facets, sectors, ...

\par  When $F^{v\natural}\supset F^{v\natural}_\#=F^v\cap A^v_\natural$ for $F^v$ a (maximal) facet in $A^v$, we have $\vect{F^{v\natural}}=\vect{F^v}$, so $A^\#_{F^{v\natural}}\subset A_{F^v}\subset\overline A$.
 Hence for $x\in A^\#_{F^{v\natural}}\subset\overline A^\natural$, we may define:\qquad $Q^\natural(x)=\widehat P(x)\cap G^\natural$ .

 \par This is the same definition as in [\lc 12.4] as, for us, $F^{v\natural}_\#$ is uniquely determined by $F^{v\natural}$.

\begin{enonce*}[plain]{\quad3) Theorem} We suppose satisfied all conditions or axioms in 1) or 2) above, then:

\par a) $_KN=N^\natural\subset N.\widehat P(\overline A^\#)$.

\par b) $\forall a\in\QF^\natural$, $\forall u\in U_a^\natural\setminus\{1\}$ the fixed point set of $u$ in $A^\#$ is $D_\#(a,\qf_a^\natural(u)):=\{x\in A^\#\mid a(x-0_\qf)+\qf_a^\natural(u)\geq0\}$ and $m^\natural(u)\in N^\natural$ induces on $A^\#$ the reflection with respect to the wall $M_\#(a,\qf_a^\natural(u))=\partial D_\#(a,\qf_a^\natural(u))$.

\par c) The family $\qf^\natural$ is a valuation for the root datum $(G^\natural,(U^\natural_{a})_{{a}\in{\QF^\natural}},{_KZ})$.

\par d) The family $\shq^\natural=(Q^\natural(x))_{x\in \overline A^\natural }$ is a very good family of parahorics.

\par e) There is an injection of the essential bordered hovel $\overline\shi^\natural$  associated to $\shq^\natural$ into $\overline\SHI$ which may be described on the fa\c{c}ades as follows:

\par For $F^{v\natural}_\#=F^v\cap A^v_\natural$ open in $F^{v\natural}$ as above in 2), the  $_KN-$equivariant embedding $A^\#_{F^{v\natural}}\into A_{F^v}$ between apartment-fa\c{c}ades may be extended uniquely in a  $P_K(F^{v\natural})-$equivariant embedding $\shi^\natural_{F^{v\natural}}\into \SHI_{F^v}$, where $\shi^\natural_{F^{v\natural}}$ is the fa\c{c}ade of $\overline\shi^\natural$  associated to $F^{v\natural}$.
\end{enonce*}

\begin{rema*} The definition of $\qf_a^\natural$ tells us that a wall $M_\natural(a,\qf_a^\natural(u))$ is the trace on $A^\#$ of a wall $M(\qa,k)$ for some $\qa\in\QF_a$.
\end{rema*}

\begin{proof} a), b) c) and a great part of d), e) are among the main results of Charignon [\lc 12.3, 12.4].
 For $\shq^\natural$ he proves (P1) to (P7), but then (P8) is got for free in this framework (\cf \ref{3.3}.6) and (P10) is clearly satisfied.

 \par He proves (P9) actually for $\overline A_\natural$ \ie for (spherical) vectorial facets in $A^v_\natural$:
   if $F^{v\natural}$ is spherical, $F^{v\natural}_1\subset\overline{F^{v\natural}}$ and $x\in A^\#_{F^{v\natural}_1}=A^\#_{F^{v}_{1\natural}}$ (with $F^{v}_{1\natural}=F^{v\natural}_1\cap A^v_\natural$ and $F^{v}_{\natural}=F^{v\natural}\cap A^v_\natural$) he proves only $Q^\natural(x)\cap{P_K}(F^{v}_{\natural})=Q^\natural(\overline{x+F^{v}_{\natural}})$.
   But ${P_K}(F^{v}_{\natural})={P_K}(F^{v\natural}_{})$, $F^{v}_{\natural}+\vect{A^v_{\natural0}}=F^{v\natural}$ (\ref{1.12}.3) and the "torus" $S_Z$ in the center of $G^\natural$ (\ref{1.13}.2) acts on $A^\#$ as a group (of translations) $\vect T$ generating $\vect{A^v_{\natural0}}$.
    So $Q^\natural(x)\cap{P_K}(F^{v\natural})=Q^\natural(\cup_{\qt\in\vect T}\,\overline{x+\qt+F^{v}_{\natural}})=Q^\natural(\overline{x+F^{v\natural}})$.

    \par The maps  in e) between fa\c{c}ades are described in [\lc 12.5] and proved to be injective in the spherical case; but \ref{3.14} gives the general injectivity.
\end{proof}

\subsection{Tamely quasi-splittable descent}\label{6.5}

\par{\quad\bf1)} Let $\g S$ be a maximal $K-$split torus in the almost split Kac-Moody group $\g G$ over $K$. The generic centralizer $\g Z_g(\g S)$ of $\g S$ in $\g G$ (\ref{1.4d} and \ref{1.9}.3) is actually a reductive group defined over $K$ \cite[12.5.2]{Ry-02a}.
 We suppose satisfied the following condition (independent of the choice of $\g S$, as different choices are conjugated by \ref{1.9}.1).
 \medskip
 \par (TRQS) $\g Z_g(\g S)$ becomes quasi-split over a finite tamely ramified Galois extension $M$ of $K$.
 \smallskip
 \par\noindent(Actually $\g Z_g(\g S)$ is quasi-split over $M$ if and only if $\g G$ itself is quasi-split. It is an easy consequence of \ref{1.11} NB 2) applied to $M$ and a maximal $M-$split torus containing $\g S$.)
\smallskip
\par There are two important cases where this condition is satisfied for any $\g G$: when the field $K$ is complete (or henselian) for a discrete valuation with perfect residue field (we then may replace tamely ramified by unramified, \cf \cite[5.1.1]{BtT-84a} or \cite[5.1.3]{Ru-77}) or when the residue field of $K$ has characteristic $0$ (we then may replace quasi-split by split).

\par A consequence of this hypothesis is that there exists a finite Galois extension $L$ of $K$ containing $M$, a maximal $K-$split torus $_K\g S$, a maximal $M-$split torus $_M\g S$ and a maximal torus $\g T$ with $\g T$ $L-$split, $M-$defined and $_K\g S\subset{ _M\g S}\subset\g T$ \cite[13.4.2]{Cn-10b}.
 We shall now apply the abstract descent theorem successively to $L/M$ and $L/K$ to build a bordered hovel for $\g G$ over $K$.

 \par{\bf2) Quasi-split descent}: We consider the extension $L/M$, so we apply \ref{6.4} with $K=M$: $\g G$ is quasi-split over $K$ and split over $L$. Then $\g T=\g Z_g({_K\g S})$ is the only maximal torus containing ${_K\g S}$.

 \par We choose the essential bordered hovel $\overline\SHI=\overline\SHI(\g G,L,\overline\A^e)$ and set $\overline\shi_\#=\overline\SHI^\QG$.
  Then the bordered apartment $\overline A=\overline A^e(\g T)$ is $\QG-$stable. The Galois group $\QG$ has a fixed point in its main fa\c{c}ade $A=A^q(\g T)$ and $A^\#=A^\QG=A\cap\overline\shi_\#$ is an affine subspace directed by $\vect V^\QG=\vect V^\natural$.
  It is easy to verify (DM1), (DM2) and also (DM4) (as $_KN$ is the normalizer in $\g G(K)$ of $_K\g S$).
  For (DM3) there exists a chamber $F$ in $A_{F^v}$ meeting $\overline\shi_\#$, so the condition is clearly satisfied.

  \par Therefore theorem \ref{6.4}.3 applies. Actually in the classical case ($\QF$ finite) $\g G(K).A^\#$ is the extended Bruhat-Tits building of $\g G$ over $M$ \cf \cite{BtT-84a} or \cite{Ru-77}.

 \par{\bf3) General descent}: We come back to the situation and notations in 1) above. We still choose the essential bordered hovel $\overline\SHI=\overline\SHI(\g G,L,\overline\A^e)$ with $\A=\A^q$.

 \par The generic centralizer $\g Z_g(\g S)$ of $\g S={_K\g S}$ is a $K-$defined reductive group generated over $L$ by $T=\g T(L)$ and the groups $U_\qa$ for $\qa\in\QF$, $\qa\,\rule[-1.5mm]{.1mm}{3mm}_{\,\g S}$ trivial.
 In particular over $L$, $\g Z_g(\g S)$ is isomorphic to some $\g G_{\shs(I_0)}$ and by \ref{5.8}.2 $\SHI(\g Z_g(\g S),L,\A^q)$ may be embedded in $\SHI$.
  The image is the union $\SHI_{\g S}=\SHI(\g Z_g(\g S),\g G,L,\A^q)$ of the apartments of $\SHI$ corresponding to $L-$split maximal tori of $\g G$ containing $\g S$.
  This set is stable by $\QG$ and $Z_L(\g S)=\g Z_g(\g S)(L)$ or the normalizer $N_L(\g S)$ of $\g S$ in $\g G(L)$.
  If we choose a vectorial $K-$chamber $_KC_0^{vq}\subset A^v_\natural$ and let $F_0^{vq}\in\SHI^{vc}(L)$ be the spherical vectorial facet containing $_KC_0^{vq}$, the projection map $\qp$ from $\SHI_{\g S}$ to $\SHI_{F^{vq}_0}$ is onto and $\QG\ltimes N_L(\g S)-$equivariant; it identifies the essentialization of $\SHI_{\g S}$ with $\SHI_{F^{vq}_0}$.

  \par In $\SHI_{\g S}$ we consider the union $\SHI_{\g S}^{ord}$ of the apartments corresponding to a torus containing a maximal $M-$split torus $_M\g S$ (containing $\g S$).
   It  is stable by $\QG\ltimes N_L(\g S)$ and we saw in 2) above that $\SHJ_{\g S}=Z_M(\g S).A^\#_M=(\SHI_{\g S}^{ord})^{Gal(L/M)}$ is a good candidate for the hovel of $\g Z_g(\g S)$ over $M$.
   More precisely its image $\SHJ_{F^{vq}_0}=\qp(\SHJ_{\g S})$ in $\SHI_{F^{vq}_0}$ is the Bruhat-Tits building of $\g Z_g(\g S)$ over $M$: it is the set of ordinary $Gal(L/M)-$invariant points in the Bruhat-Tits building over $L$ \cite[2.5.8c]{Ru-77}.

\par We consider now $A^\#=(\SHJ_{\g S})^{Gal(M/K)}=(\SHI_{\g S}^{ord})^\QG$; its image by $\qp$ is in $(\SHJ_{F^{vq}_0})^{Gal(M/K)}$.
 But the semi-simple quotient of $\g Z_g(\g S)$ is $K-$anisotropic and $M/K$ is tamely ramified, so we know that $(\SHJ_{F^{vq}_0})^{Gal(M/K)}$ contains at most one point \cite[5.2.1]{Ru-77}.
  Moreover Koen Struyve \cite{Se-11} proved what was missing in \cite{Ru-77} (condition (DE) of \cite[5.1.5]{BtT-84a}):  this set is non empty (even if the valuation is not discrete).
  So $(\SHJ_{F^{vq}_0})^{Gal(M/K)}$ is reduced to one point $x_0$ and $A^\#=\qp^{-1}(x_0)^\QG$.
  But $\qp^{-1}(x_0)$ is an affine space directed by $\vect{F_0^{vq}}$, $\QG$ is finite and acts affinely, so $A^\#$ is a (non empty) affine space directed by $(\vect{F_0^{vq}})^\QG=\vect{_KC_0^{vq}}=\vect{A^v_\natural}=\vect V^\natural$.
   We shall apply \ref{6.4} with $A^\#$, $A$ any apartment of $\SHI_{\g S}^{ord}$ containing $A^\#$ and $\overline A$ its closure in $\overline\SHI=\overline\SHI(\g G,L,\overline\A^e)$.

   \par We define $\SHJ=\g G(M).\SHJ_{\g S}=\g G(M).A^\#_M$ (resp. its closure $\overline\SHJ=\g G(M).\overline{A^\#_M}$); it is the set of $Gal(L/M)-$fixed points in the union of the apartments in $\SHI$ (resp. $\overline\SHI$) corresponding to a maximal torus containing a maximal $M-$split torus, itself containing a maximal $K-$split torus.
   We take $\overline\shi_\#=\overline\SHJ^{Gal(M/K)}=\overline\SHJ^\QG$.
   The verification of axioms (DM1) to (DM4) is made in \cite[13.4.4]{Cn-10b}. Actually (DM4) is clear, (DM2) not too difficult and (DM1), (DM3) have to be verified in spherical fa\c{c}ades, hence are corollaries of the classical Bruhat-Tits theory.

   \par{\bf4) Conclusion}: We keep the notations as in 1); let $_KA^{vq}$ be the $K-$apartment in $_K\SHI^{vq}(\g G)$ and $_K\QF$ the real root system  associated to $\g S$.
   Then theorem \ref{6.4}.3 gives us a valuation $_K\qf=\qf^\natural=(_K\qf_{_K\qa})_{_K\qa\in{_K\QF}}$ of the root datum $(\g G(K),(V_{_K\qa})_{{_K\qa}\in{_K\QF}},{_KZ})$ (\cf \ref{1.11}) and a very good family of parahorics $({\widehat P}_K(x))_{x\in\overline{A^\natural}}$.
    The corresponding bordered hovel is written $\overline\SHI(\g G,K,\overline{A^\natural})$.

 \par   For $^K\!F^{vq}=F^{v\natural}$ a vectorial Weyl$-K-$facet and $F^{vq}$ a vectorial facet with $F^{vq}\cap{_KA^{vq}}$ open in $^K\!F^{vq}$, we have a $P_K({^K\!F^{vq}})-$equivariant embedding $\SHI(\g G,K,\overline{A}^\natural)_{^K\!F^{vq}}\into\SHI(\g G,L,\overline\A^e)_{F^{vq}}$ between the fa\c{c}ades. The image $\g G(K).A^\#_{F^{v\natural}}$ is pointwise fixed by $\QG$.

 \par  Actually the set $\overline A^\natural$ is the essential bordered apartment associated to $A^\#$ and $_K\QF$, its fa\c{c}ades are the $A^\#_{F^{v\natural}}$ for $F^{v\natural}$ as above.
 Such a fa\c{c}ade $A^\#_{F^{v\natural}}$ may be identified with the closure of $A^\#$ in $A^q(\g T)_{F^{vq}}$. Moreover  $A^\#$ is the set of $Gal(L/K)-$fixed points
 in the union of the apartments $A^q(\g T)\subset\SHI(\g G,L,\A^q)$ for $\g T$ a $L-$split maximal torus containing a maximal $M-$split torus, itself containing the maximal $K-$split torus $_K\g S$.
 More precisely for each such apartment $A^q(\g T)$, $A^q(\g T)\cap\SHI^\QG$ is empty or equal to $\A^\#$ (an affine subspace directed by $\vect{_KA^v({_K\g S})}\subset\vect{A^q(\g T)}$) and, for each $F^{vq}$ as above,
 the intersection $\overline  A^q(\g T)\cap\SHI^\QG_{F^{vq}}$ is empty or equal to $\overline A^\#\cap A^q(\g T)_{F^{vq}}$ (as the arguments in 3) give analogous results in the $Gal(L/K)-$stable fa\c{c}ades).

    \par So the image of $\SHI(\g G,K,\overline{A}^\natural)_{^K\!F^{vq}}$ in $\SHI(\g G,L,\overline\A^e)_{F^{vq}}$ is the set of $Gal(L/K)-$fixed points
 in the union of the apartments $A^q(\g T)_{F^{vq}}\subset\SHI(\g G,L,\A^q)_{F^{vq}}$ for $\g T$ a $L-$split maximal torus containing a maximal $M-$split torus, itself containing a maximal $K-$split torus.

 \subsection{More general relative apartments}\label{6.6} Most of the preceding arguments apply with a more general choice of apartments.
 We keep the hypotheses as in \ref{6.5}.1, but we choose for $\A$ one of the model apartments associated to $\g G$ and $\g T$ as in \ref{2.3b}.1 (\ie via a commutative extension $\qf:\shs\to\shs'$ of RGS) or eventually a quotient by a subspace $V_{00}$ of $V_0\subset V=Y'\otimes\R$.
 We suppose moreover $\shs'$ endowed with a star action of $\QG$ for which $\qf$ is $\QG^*-$equivariant and $V_{00}$ $\QG^*-$stable; \cf remark \ref{1.6} and the choice made in \ref{1.8}.1.
 We write $\A^v$ the corresponding vectorial apartment in $\vect\A=V/V_{00}$ and $\overline\A$ one of the three associated bordered apartments.

 \par The Galois group $\QG$ acts on $\overline\SHI(\g G,L,\overline\A)$ and $\SHI^v(\g G,L,\A^v)$, \cf  \ref{6.2}.4.
  These actions are compatible with each other, with the $\g G(L)-$actions and the essentialization maps $\eta:\overline\SHI(\g G,L,\overline\A)\to\overline\SHI(\g G,L,\overline\A^e)=\overline\SHI$, $\eta^v:\SHI^v(\g G,L,\A^v)\to\SHI^v(\g G,L,\A^{vq})=\SHI^{vq}$.
   We define  $_K\A=\eta^{-1}(A^\#)^\QG$; it is an affine space directed by $(\vect{F_0^v})^\QG=\vect{_K\A^v}$ (where $F_0^v=(\eta^v)^{-1}(F_0^{vq})$ and $A^\#$, $F_0^{vq}$ are as in \ref{6.5}.3).
   The group $_KN$ acts on $_K\A$,  we write $\qn_K$ this action.

   \par We choose $_K\A$ as model relative apartment. We may suppose $_K\!\A\subset\A$, but then $\A$, as apartment in $\SHI(\g G,L,\A)$, is non necessarily $\QG-$stable.
    We choose in $_K\A$ a special origin $x_0$ \ie its image by $\eta$ is the special point in $A^\#$ chosen as origin in \ref{6.4}.2 to define the valuation $_K\qf=\qf^\natural$ of $(\g G(K),(V_{_K\qa})_{_K\qa\in{_K\QF^{re}}},{_KZ})$.
    For $x\in{_K\A}$ we define $\widehat P_K(x)=\widehat P(x)\cap\g G(K)$.

    \par The (real) walls in $_K\A$ are the inverse images by $\eta$ of the walls in $A^\#$ defined in \ref{6.4}.3b, \ie they are described as
    $M_K({_K\qa},{_K\qf_\qa}(u))=\{x\in{_K\A}\mid{_K\qa}(x-x_0)+{_K\qf_\qa}(u)=0\}$
    for ${_K\qa}\in{_K\QF}$ and $u\in V_{_K\qa}\setminus\{1\}$; their set is written $\shm_K$.
    Note that, even if $\A=\A^q$ is essential, $_K\A$ may be inessential (as essentiality does not involve the imaginary walls defined below).

    We consider the set  $\shm_{L/K}\cup\shm_{L/K}^i$  of the non trivial traces on $_K\A$ of the real or imaginary walls of $\A$.
    More precisely if $M(\qa,\ql)\in \shm_{L}\cup\shm_{L}^i$ is such a wall and $\qa\in\QF$, $_K\qa=\qa\rest{\g S}\in{_K\QF}$ (resp. $\qa\in\QD$, $_K\qa=\qa\rest{\g S}\in{_K\QD}\setminus{_K\QF}$) then $M_K({_K\qa},\ql)=M(\qa,\ql)\cap{_K\A}$ is a real (ghost) wall (resp. an imaginary wall) and we write $M_K({_K\qa},\ql)\in \shm_{L/K}$ (resp. $M_K({_K\qa},\ql)\in \shm_{L/K}^i$).
    By remark \ref{6.4}.3 $\shm_K\subset \shm_{L/K}$.

    \par We define the enclosure map $cl^{_K\QD^r}_{L/K}$ as in \ref{2.4}.1:
    it is associated to $\shm_{L/K}$ and the subset $\shm_{L/K}^{ir}$ of $\shm_{L/K}^i$ containing the imaginary walls which are almost real \ie of direction $Ker({_K\qa})$ with ${_K\qa}\in{_K\QD^r}={_K\QD}\cap(\sum_{\qg\in{_K\QF}}\,\R\qg)\subset {_K\QD}$, \cf \ref{1.13}.3b.
    By \ref{2.4}.1 $cl^{_K\QD^r}_{L/K}$ does not change if we replace $\shm_{L/K}^i$ by $\shm_{L/K}^{i\R}=\{M_K({_K\qa},\ql)\mid {_K\qa}\in{_K\QD^{im}},\ql\in\R\}$ (and $\SHM^{ir}_{L/K}$ by the set $\SHM^{ir\R}_{L/K}$ of all walls parallel to a wall in $\SHM^{ir}_{L/K}$).
    A more precise enclosure map  $cl^{_K\QD^r}_{K}$ associated to $\shm_{K}$ and a subset of  $\shm_{L/K}^{i\R}$ will be introduced in \ref{6.10a}.
\begin{prop}\label{6.7} In the above situation, we have:

\par a) The action $\qn_K$ is affine and $_KN\subset N.\widehat P(_K\A)\subset N.{_KZ}$. In particular for $n\in{_KN}$, the linear map associated to $\qn_K(n)$ is $\qn_K^v(n)\in{_KW^v}={_KN}/{_KZ}$.

\par b) The group $_KZ$ acts on $_K\A$ by translations.
 More precisely for $z\in{_KZ}$, the vector $\qn_K(z)$ of this translation is the class modulo $V_{00}$ of a vector $\tilde\qn_K(z)\in V$ which satisfies  the formula:
 $\chi_1(\tilde\qn_K(z))=-\qo_K(\chi_2(z))$, for any $\chi_1\in X'\subset V^*$ and $\chi_2$ in the group $X(\g Z)$ of characters of the reductive group $\g Z_g(\g S)$ with the condition that $\chi_2$ and $\qf^*(\chi_1)\in X(\g T)$ coincide on $\g S$.

\par As $X(\g Z)$ is identified by restriction to a finite index subgroup of $X(\g S)$, this formula determines completely $\tilde\qn_K(z)$ and $\qn_K(z)$.

\par c) For any real relative root $_K\qa$ and $u\in V_{_K\qa}\setminus\{1\}$ $u$ fixes the half apartment $D_K({_K\qa},{_K\qf_\qa}(u))$ $=\{x\in{_K\A}\mid{_K\qa}(x-x_0)+{_K\qf_\qa}(u)\geq{}0\}$ and $\qn_K(m_K(u))$ is the reflection $r_{{_K\qa},{_K\qf_\qa}(u)}$ with respect to the wall $M_K({_K\qa},{_K\qf_\qa}(u))=\partial D_K({_K\qa},{_K\qf_\qa}(u))$.

\par d) If moreover $_K\qa$ is non multipliable, $m_K(u)^2={_K\qa^\vee}(-1)$ and $m_K(u)^4=1$.

\par e) $\qn_K({_KN})$ is a semi-direct product of $\qn_K({_KZ})$ by a subgroup fixing $x_0$ and isomorphic, via $\qn^v_K$, to ${_KW^v}={_KN}/{_KZ}$.

\par f) The action of ${_KN}$ on the closure $\eta^{-1}(\overline{A^\#})^\QG$ of ${_K\A}$ in $\overline\A$ is deduced from its actions on ${_K\A}$ and ${_K\A}^v$ : $\qn_K(n).pr_{_KF^v}(x)=pr_{\qn_K^v(n)({_KF^v})}(\qn_K(n).x)$, for $n\in{_KN}$, $x\in{_K\A}$ and ${_KF^v}$ a K-facet in ${_K\A}^v$.
\end{prop}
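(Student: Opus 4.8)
The plan is to deduce Proposition \ref{6.7} from the descent theorem \ref{6.4}.3 --- applied in \ref{6.5} to the \emph{essential} relative apartment $A^\#\subset\overline\SHI(\g G,L,\overline\A^e)$, where (after the notational dictionary of \ref{6.4}.1--2) statements a)--d) are already available --- by transporting those facts along the essentialization map $\eta\colon\overline\SHI(\g G,L,\overline\A)\to\overline\SHI(\g G,L,\overline\A^e)$, using the compatibilities of \ref{5.8}.4 and \ref{6.2}.4. Recall that $\eta$ is $\g G(L)$-- and $\QG$--equivariant, is affine on apartments with linear part of kernel $V_0/V_{00}$, that $_K\A=\eta^{-1}(A^\#)^\QG$ is an affine space directed by $\vect{{_K\A}^v}=(\vect{F_0^{vq}})^\QG$ and maps onto $A^\#$ (directed by $\vect V^\natural$), and that $_KN$ --- which commutes with the $\QG$--action and preserves the union $\SHI_{\g S}^{ord}$ of \ref{6.5}.3 --- stabilizes $_K\A$ as a set, so that $\qn_K$ is a well--defined action of $_KN$ on $_K\A$.

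For a) I would argue as in classical Bruhat--Tits descent: having arranged $_K\A\subset\A$ (\ref{6.6}), for $n\in{_KN}$ both $\A$ and $n\A$ are apartments of $\SHI(\g G,L,\A)$ containing $_K\A$; since $_K\A$ contains generic rays (and sectors of $_K\A^v$--type), strong transitivity of $\g G(L)$ (\ref{5.6}) together with the good--fixator properties of \ref{3.6}.2 and \ref{3.5} gives $g\in\g G(L)$ with $g\A=n\A$ fixing $_K\A$ pointwise, hence fixing $cl^{\QD^{ti}}_\R({_K\A})={_K\A}$. Then $n_1:=g^{-1}n\in N$ stabilizes $\A$ and $_K\A$, so $\qn_K(n)=\qn(n_1)\rest{{_K\A}}$ is affine with linear part $\qn^v(n_1)$, and $n=gn_1\in\widehat P({_K\A}).N=N.\widehat P({_K\A})$. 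The refinement $\widehat P({_K\A})\subset N.{_KZ}$ rests on identifying, as for \ref{6.4}.3a, the fixator of $_K\A$ modulo $N$ with $_KZ$ (which by \ref{1.9}.3 is the fixator in $\g G(K)$ of $_K\A^v$); one then reads off $\qn_K^v(n)\in{_KN}/{_KZ}={_KW^v}$. Parts c) and d) are direct transcriptions of \ref{6.4}.3b and of the rank--one argument behind \ref{1.10}: $D_K({_K\qa},{_K\qf_\qa}(u))$ is the trace on $_K\A$ of the $\eta$--preimage of the half--apartment $D_\#(a,\qf_a^\natural(u))$ of \ref{6.4}.3b, $\qn_K(m_K(u))$ induces the reflection across its wall $M_K$, and for non--multipliable $_K\qa$ the identities $m_K(u)^2={_K\qa^\vee}(-1)$ and $m_K(u)^4=1$ come from the coroot $_K\qa^\vee$ of the $K$--split rank--one subgroup of the lemma in \ref{1.10}, exactly as in the split case (cf.\ \ref{2.3b}).

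For b), since $_KZ=\g Z_g(\g S)(K)$ centralizes $\g S$ its linear action on $_K\A$ is trivial (its image in $_KW^v$ is $1$), so it acts by translations. Over $L$ the action of $\g T(L)$ on $\A$ is $\qn_T=-(\qf\otimes\qo_L)$ (\ref{2.3b}.1); restricting this formula to the $\QG^*$--fixed data and to $z\in{_KZ}$, and matching a character $\chi_1\in X'$ with a character $\chi_2$ of the reductive group $\g Z_g(\g S)$ agreeing with $\qf^*(\chi_1)$ on $\g S$, yields $\chi_1(\tilde\qn_K(z))=-\qo_K(\chi_2(z))$; that $X(\g Z)$ restricts to a finite index subgroup of $X(\g S)$ --- so this determines $\tilde\qn_K(z)$ up to $V_{00}$, hence determines $\qn_K(z)$ --- uses that the semi--simple quotient of $\g Z_g(\g S)$ is $K$--anisotropic. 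Part e) copies the last sentence of \ref{2.3b}.1: by c) the reflections $r_{{_K\qa_i},0}$ for $i\in{_KI_{re}}$ fix the special origin $x_0$ (here $0\in{_K\qf_{\qa_i}}(V_{_K\qa_i}\setminus\{1\})$), and they generate a subgroup of $\qn_K({_KN})$ mapping isomorphically onto $_KW^v$ via $\qn^v_K$; since $\qn_K({_KZ})$ is the group of translations of b), normal in $\qn_K({_KN})$, and $_KN/{_KZ}={_KW^v}$, this subgroup is a complement, giving the semi--direct product. Finally f) is formal once a)--e) are known: $\qn_K$ acts by automorphisms of the affine apartment $_K\A$, which extend canonically to its closure $\eta^{-1}(\overline{A^\#})^\QG$ in $\overline\A$ (\ref{2.5}.5), and the identity $\qn_K(n).pr_{_KF^v}(x)=pr_{\qn^v_K(n)({_KF^v})}(\qn_K(n).x)$ is the restriction to the $\QG$--fixed part of the $\QG$--equivariant projection formula for $\qn$ on $\overline\A$ established in \ref{6.2}.4.

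\textbf{Main obstacle.} Everything over the essential apartment $A^\#$ is already delivered by Charignon's descent theorem, so the genuine work is in a): controlling the passage from $A^\#$ to the possibly inessential $_K\A$, i.e.\ the $V_0/V_{00}$--ambiguity --- concretely, establishing $\widehat P({_K\A})\subset N.{_KZ}$ and pinning down the translation vector of b) unambiguously despite the freedom in the choice of origin. This is exactly where the hypotheses of \ref{6.5}.1 (tameness of $M/K$, via the non--emptiness result of Struyve \cite{Se-11}) and the $\QG^*$--equivariance of $\qf$ fixed in \ref{6.6} enter.
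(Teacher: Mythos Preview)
Your route differs from the paper's, and for parts b) and c) the difference is substantive.

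The paper does not pull back along $\eta$ from the essential descent theorem \ref{6.4}.3. Instead, for each $j\in{_KI_{re}}$ it passes to the \emph{reductive} Levi subgroup $\g G_J$ with $J=I_0\cup\QG^*j$ (this $J$ is spherical, so $\g G_{\shs(J)}$ is classical and contains $\g Z_g(\g S)=\g G_{\shs(I_0)}$). By \ref{5.8}.2 the extended Bruhat-Tits building $\SHI(\g G_J,L,\A)$ embeds $\QG$-equivariantly in $\SHI(\g G,L,\A)$, and the choice of $\A$ in \ref{6.6} ensures that $\A$ carries the correct apartment structure for $\g G_J$ in the sense of \cite[\S2.1]{Ru-77}. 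Classical Bruhat-Tits descent \cite[5.1.2]{Ru-77} then gives a), b), c) for ${_KN}\cap\g G_J(K)$ and for the roots $\pm{_K\qa_j},\pm2{_K\qa_j}$. Since ${_KZ}\subset\g G_J(K)$ for every $j$, this already proves b) in full; a) follows because ${_KW^v}$ is generated by the simple reflections, each lying in some $({_KN}\cap\g G_J(K))/{_KZ}$; and c) follows by ${_KW^v}$-conjugacy of any real relative root to a simple one. Part d) is a separate rank-one computation via \cite[7.2(2)]{BlT-65}, and e), f) then follow from a)--c) essentially as you indicate.

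The gap in your approach is in b). You propose to restrict the translation formula $\qn_T=-(\qf\otimes\qo_L)$ from $\g T(L)$ to ${_KZ}$, but ${_KZ}=\g Z_g(\g S)(K)$ is \emph{not} a subgroup of $\g T(L)$: it contains the root groups $U_\qa$ for $\qa\in\QF_0$, and an element $z\in{_KZ}$ need not even stabilize the apartment $\A$. All that your argument for a) delivers is $z\in\widehat P({_K\A}).N$, so $z$ acts on ${_K\A}$ as some unspecified $n_1\in N$; extracting from this the character formula $\chi_1(\tilde\qn_K(z))=-\qo_K(\chi_2(z))$ is precisely what requires the classical theory for the reductive group $\g G_J\supset\g Z_g(\g S)$ on its non-essential extended building, and is not recoverable from the essential picture $A^\#$ alone. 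The same $V_0/V_{00}$-indeterminacy recurs in c): \ref{6.4}.3b controls the action of $m_K(u)$ on $A^\#$, not on the fibers of $\eta$, so it does not show that $m_K(u)$ acts as the full reflection $r_{{_K\qa},{_K\qf_\qa}(u)}$ on ${_K\A}$. You correctly flag this obstacle in your final paragraph, but the ingredients you list there (tameness of $M/K$, $\QG^*$-equivariance of $\qf$) do not by themselves supply the missing argument; the paper's reduction to the classical reductive case is what does.
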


 \begin{NB} The equations defining $\vect{_K\A^v}$ in $\vect{\A^v}$ are in $Q$ and correspond (via $bar$) to the equations defining $_KY=Y(\g S)$ in $Y=Y(\g T)$ \ie $\g S$ in $\g T$, \cf \ref{1.8}.4 an \ref{1.9}.2.
 So the formula in b) above defines a vector $\qn_K(z)\in \vect{_K\A^v}=\vect{_K\A}$.
  Moreover $\qn_K(z)$ is in the image of the map $Y(\g S)\otimes\R\into Y(\g T)\otimes\R\stackrel{\qf}{\to} V\to V/V_{00}$ (analogous to the map in \ref{1.4d}).
 \end{NB}

 \begin{proof} With the notations as in \ref{1.12}.1, let $j\in{_KI}_{re}$; then $J={_K\underline{\{j\}}}=I_0\cup\{i\in I\mid\QG^*i=j\}$ is spherical.
  So $\g G_{\shs(J)}$ is a reductive group, containing $\g Z_g(\g S)=\g G_{\shs(I_0)}$ and defined over $K$; we write $\g G_J$ the corresponding $K-$subgroup-scheme of $\g G$.
  By \ref{5.8}.2 the extended Bruhat-Tits building $\SHI(\g G_J,L,\A)$ embeds in the hovel $\SHI(\g G,L,\A)$: the way we have chosen $\A$ ensures us that $\A$ is really endowed with the same action of the normalizer of $\g T$ in $\g G_J(L)$ as in the case of an extended Bruhat-Tits building \cite[{\S{}}  2.1]{Ru-77}. Moreover the actions of $\QG$ are compatible.

  \par As the classical construction of $\SHI(\g G_J,K,\A)$ uses the same methods as in \ref{6.5} above, we know that a), b) and c) are satisfied for $_KN\cap\g G_J(K)$ and ${_K\qa}=\pm{}{_K\qa}_j,\pm{}2{_K\qa}_j$ \cite[5.1.2]{Ru-77}.
  So b) is completely proved. Now ${_KW^v}={_KN}/{_KZ}$ is generated by simple reflections in $({_KN}\cap\g G_J(K))/{_KZ}$ for $j\in{_KI}_{re}$ (as $_KZ\subset\g G_J(K)$, $\forall j$).
  So a) is satisfied and also c) as any ${_K\qa}\in{_K\QF}_{re}$ is conjugated by $_KW^v$ to some $\pm{}{_K\qa}_j$ or $\pm{}2{_K\qa}_j$.

  \par Let ${_K\qa}$ and $u$ be as in d); we choose $s\in\g S(K_s)$ such that $_K\qa(s)=-1$.
  By \cite[7.2 (2)]{BlT-65}, $m_K(u)^2=m_K(u).s.m_K(u)^{-1}.s^{-1}=r_{_K\qa}(s).s^{-1}={_K\qa^\vee}(-1)$; so d) follows.
  As $x_0$ was chosen special, $\forall i\in{_KI}_{re}$, $\exists u_i\in V_{_K\qa_i}\setminus\{1\}$ with ${_K\qf_{\qa_i}}(u_i)=0$ hence $m_K(u_i)$ fixes $x_0$.
  So the subgroup fixing $x_0$ in e) is the image by $_K\qn$ of the subgroup of $_KN$ generated by the $m_K(u_i)$ and e) follows from a) and b).

  \par We know that, for  the action $\qn$ of $N$ on $\overline\A$, $\qn(n).pr_{F^v}(x)=pr_{\qn^v(n)({F^v})}(\qn(n).x)$; so f) follows from a).
 \end{proof}

\subsection{Embeddings of bordered apartments}\label{6.8}

\par{\bf\quad1)} To define the bordered apartment $_K\overline\A$, we always choose the vectorial Weyl$-K-$facets in $^K\A^v$ (as for $\overline A^\natural$ in \ref{6.4}.2 but differently from  \ref{6.7}f).
 We still have three choices  for $_K\overline\A$ (as in the general definition \ref{2.5}.2): $_K\overline{\overline\A}$ (resp. $_K\overline\A^e$) is the disjoint union of the inessential fa\c{c}ades $_K\A^{ne}_{^KF^v}={_K\A}$
 (resp. the essential fa\c{c}ades $_K\A^{e}_{^KF^v}={_K\A}/\vect{^KF^v}$) for ${^KF^v}$ a Weyl$-K-$facet in $^K\A^v$, and $_K\overline\A^i$ differs from $_K\overline\A^e$ only by its main fa\c{c}ade which is the inessential one.
 A Weyl$-K-$facet $^KF^v$ contains a unique maximal $K-$facet $_KF^v_{max}$ which is open in  $^KF^v$, hence $\vect{_KF^v_{max}}=\vect{^KF^v}$.
  So $_K\A^{e}_{^KF^v}$ is equal to $_K\A^{e}_{_KF^v_{max}}$. Now the proposition \ref{6.7}f tells us that the action $\qn_K$ of $_KN$ on $_K\A$ extends naturally to $_K\overline\A$ ($={_K\overline\A^e}$, or $_K\overline\A^i$ or $_K\overline{\overline\A}$).

  \par{\bf2)} For any choice of $\A$ (suitable for $\g G$ and $L$), we chose a unique $_K\A$ (inside $\A$ for some embedding).
  So it is interesting to define a good choice for $_K\overline\A$ for each choice of $\overline\A$.
  And it is natural to choose $_K\overline\A^i$ (resp. $_K\overline\A^e$, $_K\overline{\overline\A}$) when $\overline\A=\overline\A^i$ (resp. $_K\overline\A^e$, $_K\overline{\overline\A}$).
  Then we have a $_KN-$equivariant embedding $_K\overline\A\into\overline\A$ defined as follows on each fa\c{c}ade:
  for $^KF^v$ a vectorial Weyl$-K-$facet, let $F^v$ be the facet in $\A^v$ containing ${_KF^v_{max}}$, then $_K\A^{ne}_{^KF^v}={_K\A}\into\A=\A^{ne}_{F^v}$ and $_K\A^{e}_{^KF^v}={_K\A}/\vect{_KF^v_{max}}\into\A^{e}_{F^v}=\A/\vect{F^v}$.

  \par Note that the main fa\c{c}ade does not embed in general in the main fa\c{c}ade when we choose $_K\overline\A^e$ (as was the case for Charignon, \cf \ref{6.4}.3e).
  Moreover, if $_KF^v$ is positive and negative, the definition of $_KF^v_{max}$ may  include a choice of sign. For example the main fa\c{c}ade $_K\A^e$ of $_K\overline\A^e$ may embed in $\A^{e}_{F^v}$ or $\A^{e}_{-F^v}$ (they are equal but included separately in $\overline\A$).

    \par{\bf3)} For $x\in{_K\overline\A}$, more precisely $x\in{_K\A^{(n)e}_{^KF^v}}$, we define $\widehat P_K(x)=\widehat P(x)\cap\g G(K)$ where $x$ is considered in $\A^{(n)e}_{F^v}$ as above.
    This coincides with the above definition for $x\in{_K\A}$ and it is compatible with the projections: $\widehat P_K(x)\subset \widehat P_K(pr_{^KF^v_1}(x))$.

\begin{theo}\label{6.9} We suppose that the Kac-Moody group $\g G$ satisfies  the condition (TRQS) of \ref{6.5} and we keep the notations as in  \ref{6.5} to  \ref{6.8}.
See in particular \ref{6.6} for $_K\A$, $_K\qf$ and $cl^{_K\QD^r}_{L/K}$.

\par a) The family $_K\qf$ is a valuation for the root datum $(\g G(K),(V_{_K\qa})_{{_K\qa}\in{_K\QF}},{_KZ})$.

\par b) The family $\widehat\shp_K=(\widehat P_K(x))_{x\in{_K\overline\A}}$ is a very good family of parahorics.

\par\noindent We write $\SHI(\g G,K,{_K\A})$ (resp. $\overline\SHI(\g G,K,{_K\overline\A})$)  the corresponding  hovel (resp. bordered hovel).

\par c) The family $\widehat\shp_K$ is compatible with the enclosure map $cl^{_K\QD^r}_{L/K}$:
 $\SHI(\g G,K,{_K\A})$ is a parahoric hovel of type $({_K\A},cl^{_K\QD^r}_{L/K})$, in particular  $\g G(K)$ acts on it strongly transitively by vectorially Weyl automorphisms.

 \par d) The $_KN-$equivariant embedding $_K\overline\A\into{\overline\A}$ may be extended uniquely in a $\g G(K)-$equivariant embedding $\overline\SHI(\g G,K,{_K\overline\A})\into\overline\SHI(\g G,L,{\overline\A})$. Its image is in $\overline\SHI(\g G,L,{\overline\A})^\QG$.

 \par e) If the valuation $\qo_K$ of $K$ is discrete, then $_K\A$ (or $\SHI(\g G,K,{_K\A})$) is semi-discrete: in $\SHM_{L/K}$ or $\SHM_{K}$ the set of walls of given direction is locally finite.

 \par f) The hovel $\SHI(\g G,K,{_K\A})$ is thick: for any wall $M\in\SHM_K$, there are three half-apartments $D_1,D_2,D_3$ in $\SHI$ with boundary $M$ and  such that $D_i\cap D_j=M$ for $i≠j$. 
 Moreover the set of chambers adjacent to a chamber $C$ along a panel in a wall $M_K({_K\qa},k)$ with ${_K\qa}\in{_K\QF}$ non divisible, is in one to one correspondence with a finite dimensional vector space over the residue field $\qk$ of $K$.
\end{theo}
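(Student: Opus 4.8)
The plan is to prove Theorem~\ref{6.9} as an application of the abstract descent theorem \ref{6.4}.3, exactly as sketched in \ref{6.5}.3--4, and then to transfer the resulting facts from the essential bordered hovel $\overline\SHI=\overline\SHI(\g G,L,\overline\A^e)$ to the more general bordered apartment $_K\overline\A$ built from a general $\A$ as in \ref{6.6}. Concretely: by (TRQS) we fix the chain of tori $_K\g S\subset{_M\g S}\subset\g T$ and the finite Galois extensions $K\subset M\subset L$ as in \ref{6.5}.1. First I would carry out the quasi-split descent (\ref{6.5}.2, with $K$ replaced by $M$): set $\overline\shi_\#=\overline\SHI^{Gal(L/M)}$, $A^\#=A^{Gal(L/M)}$, verify axioms (DM1)--(DM4) (the only nontrivial one being (DM2), which amounts to convexity of the fixed point set inside each apartment and is classical), and apply \ref{6.4}.3 over $M$. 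Then the general descent (\ref{6.5}.3): here the crucial external inputs are (i) the classical fact \cite[5.2.1]{Ru-77} that $(\SHJ_{F^{vq}_0})^{Gal(M/K)}$ has at most one point since the semisimple quotient of $\g Z_g(\g S)$ is $K$-anisotropic and $M/K$ is tamely ramified, and (ii) Struyve's fixed point theorem \cite{Se-11}, which supplies non-emptiness even without discreteness of $\qo_K$. This gives a non-empty affine space $A^\#$ directed by $\vect V^\natural=(\vect{F_0^{vq}})^\QG$, and \ref{6.4}.3 then yields (a), the valuation property of $_K\qf=\qf^\natural$; the very good family $\shq^\natural=(Q^\natural(x))$ over $\overline A^\natural$; and the embedding of the essential bordered hovel $\overline\shi^\natural\into\overline\SHI$ with image in $\overline\SHI^\QG$ (parts of (b) and (d) in the essential case).

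Next I would upgrade from the essential model $\overline A^\natural$ to $_K\overline\A$. Using the essentialization maps $\eta:\overline\SHI(\g G,L,\overline\A)\to\overline\SHI$ and $\eta^v$ (which are $\QG$- and $\g G(L)$-equivariant, \ref{6.6}), define $_K\A=\eta^{-1}(A^\#)^\QG$ and $\widehat P_K(x)=\widehat P(x)\cap\g G(K)$ for $x\in{_K\overline\A}$ as in \ref{6.8}.3. Proposition~\ref{6.7} already records that $\qn_K$ is affine, that $_KZ$ acts by the prescribed translations, that $\qn_K(m_K(u))$ is the reflection $r_{_K\qa,{_K\qf_\qa}(u)}$, and the structure of $\qn_K({_KN})$ as a semidirect product fixing the special origin $x_0$; so $(_K\A,\qn_K)$ is a valuated-root-datum affine apartment in the sense of \ref{2.3}.1 (with imaginary roots $_K\QD^r$ and walls $\SHM_{L/K}$). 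To get (b), the very-good-family axioms (P1)--(P10) for $\widehat\shp_K$: pull each axiom back along $\eta$ and intersect with $\g G(K)$. Axioms (P1)--(P6) and (P8) are immediate from the corresponding statements for $\widehat\shp$ over $L$ (\ref{5.6}) together with the facts that $\g G(K)=\g G(L)^\QG$ (DCS2) and $\widehat\shp$ is $\QG$-stable (\ref{6.2}.3, and over $M$ via \ref{6.5}.2). For (P7) and (P9), I would reuse the computation in the proof of \ref{6.4}.3: Charignon proves (P9) only for facets in $A^v_\natural$, and the passage to the Weyl-$K$-facets in $^K\A^v$ uses that $\g S_Z$ (the central "torus" of \ref{1.13}.2) acts on $A^\#$ by translations generating $\vect{A^v_{\natural0}}$, so $Q^\natural(x)\cap P_K(F^{v\natural})=Q^\natural(\overline{x+F^{v\natural}})$; the same argument applies verbatim to the inessential fa\c cades of $_K\overline\A$. (P10) is clear since $\widehat\shp$ satisfies it.

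For (c) I would show $\widehat\shp_K$ is compatible with $cl^{_K\QD^r}_{L/K}$ in the sense of \ref{3.12}.1: for $\QO\subset{_K\A_{^KF^v}}$ non-empty and $C^v\in F^{v*}$, $\widehat P_K(\QO)\cap U(\pm C^v)=\widehat P(\QO')\cap\g G(K)\cap U(\pm C^v)$ where $\QO'=\eta^{-1}(\QO)$, and by \ref{5.6} (compatibility of $\widehat\shp$ with $cl^\QD$) this depends only on $cl^\QD(\QO')$; intersecting with $\g G(K)$ and translating back through $\eta$ shows it depends only on $cl^{_K\QD^r}_{L/K}(\QO)$, because the real walls traced on $_K\A$ are exactly $\SHM_{L/K}$ and the relevant imaginary ones are the traces of imaginary $\A$-walls with $\qa\rest\g S\in{_K\QD^r}$, this being precisely what distinguishes $_K\QD^r$ from $_K\QD$ via \ref{1.13}.3b,4. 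Given compatibility, Theorem~\ref{3.12}.5 gives that $\SHI(\g G,K,{_K\A})$ is an ordered affine hovel of type $({_K\A},cl^{_K\QD^r}_{L/K})$ with $\g G(K)$ acting strongly transitively, and since $_K\QD^r$ is tamely imaginary (again \ref{1.13}.3b) no generalization of \ref{3.10} is needed. Part (d) follows: the $_KN$-equivariant embedding $_K\overline\A\into\overline\A$ from \ref{6.8}.2 extends $\g G(K)$-equivariantly by the construction of \ref{3.1}, because $\widehat P_K(x)=\widehat P(x)\cap\g G(K)$ maps compatibly; the image lands in $\overline\SHI(\g G,L,\overline\A)^\QG$ since the $\QG$-action fixes $_K\A$ pointwise and is $\g G(K)$-equivariant. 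For (e), if $\qo_K$ is discrete then $\QL_L=\qo_L(L^*)$ is discrete (finite extension), so each set of parallel walls in $\SHM_{L/K}$, being indexed by the discrete set $\{{_K\qf_\qa}(u)\}\subset\frac1e\QL_K$, is locally finite; and $\SHM_K\subset\SHM_{L/K}$. Finally (f): the panels of a chamber $C\subset D_K({_K\qa},k)$ lie on a wall $M_K({_K\qa},k)$ with $_K\qa$ non-divisible, and by \ref{3.11}e (valid for our hovel since $\widehat\shp_K$ is very good) the chambers adjacent along such a panel biject with $U^\natural_{_K\qa,k}/U^\natural_{_K\qa,k+}$; by Lemma~\ref{1.10}(i) the group $V_{_K\qa}/V_{2{_K\qa}}$ is a $K$-vector space on which $\g S(K)$ acts by $_K\overline\qa$, and the quotient of consecutive filtration steps is a finite-dimensional $\qk$-vector space (this is the classical rank-one computation, cf.\ the reductive $K$-subgroup $\g H$ of \ref{1.10}), giving thickness.

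The main obstacle I expect is the verification of axioms (DM1)--(DM4) --- especially (DM3) in the non-discrete and non-split setting --- together with ensuring that the two-step descent (first over $M$, then over $K$) is internally consistent, i.e.\ that the subset $\overline\shi_\#=\overline\SHJ^{Gal(M/K)}$ constructed in \ref{6.5}.3 really satisfies the hypotheses of \ref{6.4}.3 relative to $L/K$. Much of this is delegated to \cite[13.4.4]{Cn-10b}, but the genuinely new point, absent from Charignon's discrete treatment, is the non-emptiness of $(\SHJ_{F^{vq}_0})^{Gal(M/K)}$, which is exactly the content of Struyve's theorem \cite{Se-11} replacing the hypothesis (DE) of \cite[5.1.5]{BtT-84a}; once that single input is granted, everything else is bookkeeping that transfers the essential-case results of \ref{6.4}.3 through the essentialization $\eta$ using Proposition~\ref{6.7} and the uniqueness statement \ref{5.9}.
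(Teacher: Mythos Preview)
Your global strategy is right and matches the paper's: apply Charignon's descent \ref{6.4}.3 in the essential model (two steps, with Struyve's theorem supplying the missing fixed point), then transfer to the general $_K\overline\A$ via $\eta$ and Proposition~\ref{6.7}. Parts (a), (e), (f) are handled correctly. But there are three genuine gaps.

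\textbf{(P8) is not immediate from $\widehat\shp$ over $L$.} Writing $g\in\widehat P_K(x)$ as $u^+u^-n$ with factors in $\widehat P(x)\cap U(\pm C^v)$ and $N(x)$ over $L$ does not, by itself, force the factors into $\g G(K)$: that decomposition is not obviously unique, and $\QG$ need not fix the individual factors. The paper's argument is different: one first uses that $\widehat P_K(x)\subset\widehat P_K(\eta x)$ and applies (P8) in the \emph{essential $K$-model} (which is part of \ref{6.4}.3d, already proved by Charignon), obtaining a decomposition with factors in $\widehat P_K(\eta x)\cap U(\pm{^KC^v})$ and $_KN(\eta x)$. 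Then one uses the key observation, coming from \ref{5.1}--\ref{5.3}, that $\widehat P(\eta x)\cap U(\pm C^v)=\widehat P(x)\cap U(\pm C^v)$ (the $U$-parts of the fixator depend only on the essential class), so the $U$-factors already lie in $\widehat P_K(x)$; the $N$-factor is then forced into $\widehat P_K(x)\cap{_KN}={_KN}(x)$. You should route (P8) through the essential $K$-case, not through $L$.

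\textbf{In (c), the passage from $_K\QD$ to $_K\QD^r$ is wrong as stated.} Every $\qa\in\QD$ with $_K\qa\neq0$ traces a non-trivial hyperplane on $_K\A$; there is no automatic restriction to $_K\QD^r$. The paper's argument is: from compatibility of $\widehat\shp$ with $cl^\QD$ (\ref{5.6}) one gets $\widehat P_K(\QO)\cap U_K(\pm{^KC^v})\subset\g G(K)\cap\widehat P(cl^{_K\QD}_{L/K}(\QO))$, i.e.\ compatibility with the full $_K\QD$. To improve this to $_K\QD^r$ one uses the central torus $S_Z$ of \ref{1.13}.2 exactly as you did for (P9): $S_Z$ acts on $_K\A$ by a group $\vect T$ of translations generating $\vect{_K\A^v_0}$, and $cl^{_K\QD}_{L/K}\bigl(\bigcup_{\qt\in\vect T}(\QO+\qt)\bigr)=cl^{_K\QD^r}_{L/K}(\QO)$, because the roots killed by translating along $\vect{_K\A^v_0}$ are precisely those outside $_K\QD^r$. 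You must invoke $S_Z$ here as well.

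\textbf{In (d), injectivity is not free.} The existence of a $\g G(K)$-equivariant map on each fa\c cade follows, as you say, from $\widehat P_K(x)=\widehat P(x)\cap\g G(K)$ and $_KN\subset N.\widehat P({_K\A})$. But its injectivity on non-spherical fa\c cades needs Proposition~\ref{3.14}, applied fa\c cade by fa\c cade (hypotheses (1) and (2) there hold since $_K\A_{^KF^v}\hookrightarrow\A_{F^v}$ is affine and a $K$-sector germ meets the open Tits cone). The paper uses \ref{3.14} for this, just as in \ref{6.4}.3e.
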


\begin{defi*} $\SHI(\g G,K,{_K\A})$ (resp. $\overline\SHI(\g G,K,{_K\overline\A})$) is the {\it affine hovel} (resp. {\it affine bordered hovel}) {\it of $\g G$ over $K$ with model apartment $\A$} (resp. $\overline\A$).
\end{defi*}

\begin{rema*} $\SHI(\g G,K,{_K\A})$ is the main fa\c{c}ade of $\overline\SHI(\g G,K,{_K\overline\A})$ for ${_K\overline\A}={_K\overline\A}^i$ or ${_K\overline{\overline\A}}$.
 By the definition of $_K\A$ in \ref{6.6} and of $A^\#$ in \ref{6.5}.4, the image of $\SHI(\g G,K,{_K\A})$ in $\SHI(\g G,L,{\A})$ is the set of $\QG-$fixed points in the union of the apartments $A(\g T)\subset\SHI(\g G,L,{\A})$ for $\g T$ a $L-$split maximal torus containing a maximal $M-$split torus, itself containing a maximal $K-$split torus.
\end{rema*}

\begin{proof} a) The family $_K\qf$ is actually defined by the essentialization of $_K\A$. So it is a valuation by \ref{6.4}.3c.

\par b) The family $\widehat\shp_K$ satisfies clearly to axioms (P1), (P2), (P4), (P5) and (P10). (P3) is proved in \ref{6.7}c for the main fa\c{c}ade; the result is analogous in the other fa\c{c}ades and the link is made by \ref{6.7}f.

\par If $^K\!F^v(x)$ is spherical, then the corresponding facet $F^v$ (as in \ref{6.8}.2) is spherical and $_K\A_{{^KF^v}(x)}$ embeds in $\A_{F^v}$ which is an apartment in the Bruhat-Tits building $\SHI(\g G,L,\overline{\A})_{F^v}$ for the reductive group $P(F^v)/U(F^v)\simeq M(F^v)$.
 Moreover $_K\A_{{^KF^v}(x)}$ is chosen in $\SHI(\g G,L,\overline{\A})_{F^v}$ as in \ref{6.5}.3 \ie as in the descent theorems of (extended) Bruhat-Tits buildings.
 So $\widehat P_K(x)$ is generated by $_KN(x)$ and the $V_{_K\qa}\cap\widehat P_K(x)$ for $_K\qa\in{_K\QF}$ and (P6) is satisfied.

 \par For $x\in{_K\A_{^KF_1^v}}$ and ${^KF_1^v}\subset\overline{^KF^v}$ we write $F^v_{1max}$ and $F^v_{max}$ the corresponding maximal facets in $\A^v$.
  Then $\widehat P_K(x)\cap P_K(^KF^v)\subset\g G(K)\cap\widehat P(x)\cap P(F^v_{max})\subset\g G(K)\cap\widehat P(\overline{x+F^v_{max}})\subset\g G(K)\cap\widehat P(\overline{x+F^v_{max}\cap{_K\A^v}})=\g G(K)\cap\widehat P(\overline{x+{_KF^v}})$
  with $_KF^v=F^v_{max}\cap{_K\A^v}\subset{_KF^v}$.
  But $_KF^v+\vect{_K\A^v}_0={^KF^v}$ and the "torus" $S_Z$ in the center of $\g G(K)$ (\ref{1.13}.2) acts on $_K\A$ as a group (of translations) $\vect T$ generating $\vect{_K\A^v}_0$;
  so $\widehat P_K(x)\cap P_K(^KF^v)=\widehat P_K(\cup_{\qt\in\vect T}\,\overline{x+\qt+{_KF^v}})=\widehat P_K(\overline{x+{^KF^v}})$ and (P9) is satisfied.

 \par For $x\in\overline\A$ and $^KC^v$ a chamber in $^KF^v(x)^*$, we have by \ref{6.4}.3d $\widehat P_K(x)\subset\widehat P_K(\eta x)=(\widehat P_K(\eta x)\cap U({^KC^v})).(\widehat P_K(\eta x)\cap U(-{^KC^v})).{_KN}(\eta x)$.
  We know by construction that $\widehat P(\eta x)\cap U(\pm{}{C^v})=\widehat P( x)\cap U(\pm{}{C^v})$ (\cf \ref{5.1} to \ref{5.3}) for any chamber $C^v$ in $F^{v*}$ ($F^v$ as above) \eg $C^v$ containing $_KC^v={^KC^v}\cap{_K\A^v}$.
  So $\widehat P(\eta x)\cap U(\pm{}{^KC^v})=\widehat P( x)\cap U(\pm{}{^KC^v})$ and $\widehat P_K(\eta x)\cap U(\pm{}{^KC^v})\subset\widehat P_K(x)$.
  Hence $\widehat P_K(x)=(\widehat P_K(x)\cap U({^KC^v})).(\widehat P_K(x)\cap U(-{^KC^v})).(\widehat P_K(x)\cap{_KN}(\eta x))$ and $\widehat P_K(x)\cap{_KN}={_KN}(x)$. So (P8) is satisfied.

  \par In the situation of (P7), let $B=\{x,pr_{F^v}(x)\}$.
  We saw above that $\widehat P_K(x).{_KN}=\widehat P_K(\eta x).{_KN}$.
  So, by \ref{6.4}.3d, $\widehat P_K(x).{_KN}\cap P_K({^KF^v}).{_KN}\subset \widehat P_K(\eta B).{_KN}$.
  Let $^KC^v$ be such that $^KF^v\subset\overline{^KC^v}$, then arguing as in \cite[4.3.4]{GR-08} we see that \label{N29}
  $\widehat P_K(\eta B)=
  [\widehat P_K(\eta B)\cap U({^KC^v})].[\widehat P_K(\eta B)\cap U(-{^KC^v})].{_KN(\eta B)}
  \subset  [\widehat P_K(B)\cap U({^KC^v})].[\widehat P_K(B)\cap U(-{^KC^v})].{_KN}$.
  So (P7) follows.

  \par c) Let $\QO$ be a non empty filter in $_K\A_{^K\!F^v}$ and  $^KC^v$ a chamber in $ ({^KF^v})^*$.
  We consider  the facet  $F_1^v$ in $\A^v$ such that $_KF_1^v=F_1^v\cap{_K\A^v}$ is open in $^KC^v$ and a chamber $C^v\in(F_1^v)^*$.
  Then $\widehat P_K(\QO)\cap U(\pm{}{^KC^v})\subset\g G(K)\cap\widehat P(\QO)\cap U(\pm{}{C^v})\subset\g G(K)\cap\widehat P(cl^\QD(\QO))\subset\g G(K)\cap\widehat P(cl^{_K\QD}_{L/K}(\QO))$,
  where $cl^\QD(\QO)$ (resp. $cl^{_K\QD}_{L/K}(\QO)$) is the enclosure of $\QO$ in $\A$ (resp. $_K\A$) for the root system $\QD$ (resp. for $_K\QD$ and $\shm_{L/K}$, $\shm_{L/K}^i$).
  We use once more the torus $S_Z$ in the center of $\g G(K)$: we have $cl^{_K\QD}_{L/K}(\cup_{\qt\in\vect T}\,\QO+\qt)=cl^{_K\QD^r}_{L/K}(\QO)$.
  So $\widehat P_K(\QO)\cap U_K(\pm{}{_KC^v})\subset\widehat P_K(cl^{_K\QD^r}_{L/K}(\QO))$.
  Hence $\widehat\shp_K$ is compatible with $cl^{_K\QD^r}_{L/K}$ and c) is a consequence of \ref{3.12}.5.

  \par d) The existence of a unique $P_K({^KF^v})-$equivariant map $\SHI(\g G,K,{_K\overline\A})_{^KF^v}\to \SHI(\g G,L,{\overline\A})_{F^v}$ extending $\A_{^KF^v}\into \A_{F^v}$ is an easy consequence of the definitions of $\qn_K$ and $\widehat P_K(x)$:
  $_KN\subset N.\widehat P({_K\A})$, "$\qn_K=\qn\,\rule[-1.5mm]{.1mm}{3mm}_{\,{_KN}}$" and $\widehat P_K(x)=\g G(K)\cap\widehat P(x)$.
  As for \ref{6.4}.3e we conclude with \ref{3.14}.

  \par e) As $\qo_K$ is discrete and $L/K$ finite, $\qo_L$ is discrete. Suppose $_K\qa\in{_K\QF}$ non divisible, then the walls in $\shm_{L/K}$ of direction Ker$({_K\qa})$ are the traces \label{N30} of walls in $\A$ of direction Ker$\qb$ for $\qb\in\QF$ with ${_K\qb}={_K\qa}$ or ${_K\qb}=2.{_K\qa}$.
  There is only a finite number of such $\qb$ and, for each $\qb$, $\QL_\qb=\qo_L(L^*)$ is discrete.
  So the set of these walls of direction Ker$({_K\qa})$ is locally finite.

  \par f) The first assertion (about thickness) is a simple consequence of \ref{6.7}.c.
  We write $a={_K\qa}$. By \ref{3.11}e this set of chambers is in one to one correspondence with $V_{a,k}/V_{a,k+}$.
  Suppose $2a\not\in{_K\QF}$, \label{N31} then by \ref{6.4}.2, $V_{a,k}=V_a\cap(\prod_{{_K\qb}=a}\,U_{\qb,k})$ is an $\sho_K-$module and $V_{a,k+}=V_a\cap(\prod_{{_K\qb}=a}\,U_{\qb,k+})$  an $\sho_K-$submodule such that $\g m_K.V_{a,k}\subset V_{a,k+}$.
  So $V_{a,k}/V_{a,k+}$ is a $\qk-$vector space of dimension $\leq{}dim_K(V_a)=\vert(a)\vert$.
  When $2a\in{_K\QF}$ we prove, the same way, that $V_{a,k}/V_{a,k+}$ is a group extension of two $\qk-$vector spaces of dimensions at most $\vert(2a)\vert$ and $\vert(a)\vert-\vert(2a)\vert$ \cf \ref{1.10}.
  To see that $V_{a,k}/V_{2a,2k}$ is an $\sho_K-$submodule of $\prod_{{_K\qb}=a}\,U_{\qb,k}\subset V_{a,L}/V_{2a,L}$, we may use the coroot $(2a)^\vee$ in $Y(\g S)$; as $a((2a)^\vee)=1$ the exterior multiplication by $K\setminus\{0\}$ in $V_{a,L}/V_{2a,L}$ is given by the action of the torus $\g S(K)$.
\end{proof}

\subsection{Remarks}\label{6.10}

\par{\quad\bf1)} The condition (TRQS) is certainly non necessary for the existence of an hovel $\SHI(\g G,K,{_K\A})$; the existence of this hovel for any almost split Kac-Moody group $\g G$ (over a complete field) seems a reasonable conjecture, as in the classical (= reductive) case.
 On the contrary the existence of a $\g G(K)-$equivariant embedding of $\SHI(\g G,K,{_K\A})$ in $\SHI(\g G,L,{_L\A})$ for any extension $L/K$ seems to necessitate (TRQS) or $\qo_K$ discrete.
  And the functoriality of these embeddings seems to necessitate (TRQS).
   There are counter-examples even in the classical case \cite[3.5.9 and 3.4.12a]{Ru-77}.


 \par{\bf2)} We chose to define the fa\c{c}ades of $_K\overline\A$ and $\overline\SHI(\g G,K,{_K\overline\A})$ using the Weyl$-K-$facets as indexing set.
 This is more natural for the bordered hovel of the root datum $(\g G(K),(V_{_K\qa})_{{_K\qa}\in{_K\QF}},$ ${_KZ})$ but a definition using $K-$facets seems richer. This is largely an illusion:

 \par Let $^KF^v\subset{^K\A^v}$ be a Weyl$-K-$facet and ${_KF^v_{}}$ (resp. ${_KF^v_{min}}$, ${_KF^v_{max}}$) be any (resp. the minimal, maximal) $K-$facet in $_K\A^v$ corresponding to $^KF^v$ and ${F^v_{}}$ (resp. ${F^v_{min}}$, ${F^v_{max}}$) the corresponding facet in $\A^v$:
 hence $\overline{_KF^v_{min}}\subset\overline{_KF^v_{}}\subset\overline{_KF^v_{max}}$ and $\overline{F^v_{min}}\subset\overline{F^v_{}}\subset\overline{F^v_{max}}$.
 The $K-$fa\c{c}ade $_K\A^{ne}_{_KF^v}={_K\A}$ or $_K\A^{e}_{_KF^v}={_K\A}/\vect{_KF^v}$ is endowed with a system of relative real roots $_K\QF^m({_KF^v})$ (and even a system of relative almost real roots $_K\QD^{rm}({_KF^v})$) which is independent of the choice of $_KF^v$.
 So $_K\A^{ne}_{_KF^v}$ and the essentialization of $_K\A^{e}_{_KF^v}$ do not depend of the choice: we have projections maps $_K\A={_K\A^{ne}_{_KF^v}}\to{_K\A^{e}_{_KF^v_{min}}}\to{_K\A^{e}_{_KF^v_{}}}\to{_K\A^{e}_{_KF^v_{max}}}={_K\A^{e}_{^KF^v_{}}}$ where the last term is the essentialization of the first three
 (actually $\vect{_KF^v}$ is in general non enclosed, as ${_KF^v}$ is defined by inequalities involving $_K\QD$, and not only $_K\QF$ or $_K\QD^r$).

\par We saw in \ref{1.12}.3c that the fixer $P_K({_KF^v})$ of ${_KF^v}$ in $\g G(K)$ is independent of the choice of ${_KF^v}$.
 In particular the above maps are $_KN\cap P_K({_KF^v})-$equivariant.
 Moreover the fixer of a point $x$ in an apartment is included in the fixer of the image of $x$ in another apartment.
 So we have corresponding projection maps between the fa\c{c}ades corresponding to these $K-$facets: $\SHI(\g G,K,{^K\overline{\overline\A}})_{^KF^v}\to\SHI(\g G,K,{_K\A})_{_KF^v_{min}}\to\SHI(\g G,K,{_K\A})_{_KF^v_{}}\to\SHI(\g G,K,{_K\A})_{_KF^v_{max}}=\SHI(\g G,K,{_K\overline\A}^e)_{^KF^v_{}}$ and the last hovel is the essentialization of the first three .

 \par Hence these hovels are more or less the same and it is not really interesting to include all of them in a bordered hovel.
  Perhaps the only interesting thing to do could be to define a fourth bordered apartment $_K\overline\A^{min}$ associated to $_K\A$ with ${_K\A^{min}_{^KF^v_{}}}={_K\A_{_KF^v_{min}}}$ and $\SHI(\g G,K,{_K\A^{min}})_{^KF^v_{}}=\SHI(\g G,K,{_K\A^{}})_{_KF^v_{min}}$.
  Then $\overline\SHI(\g G,K,{_K\overline\A^{min}})$ coincides with $\overline\SHI(\g G,K,{_K\overline\A^{e}})$ when $\g G$ is split over $K$ (or if $_KI_{re}={_KI}$ \ie $_K\QD={_K\QD^r}$).

 \par{\bf3)} The microaffine building of a split Kac-Moody group $\g G_\shs$ over a "local" field is defined in \cite{Ru-06}.
  In its Satake realization [\lc 4.2.3] it is the union $\overline\SHI_{\!sph}(\g G_\shs,K,\overline\A^e)$ of the spherical fa\c{c}ades in the essential bordered hovel $\overline\SHI_{}(\g G_\shs,K,\overline\A^e)$.
  Hence, as explained in this section, Charignon proved the existence of such a microaffine building for any almost split Kac-Moody group satisfying (TRQS).
  This building satisfies clearly the functorial properties proved below for bordered hovels.

\subsection{The enclosure map $cl^{_K\QD^r}_{K}$}\label{6.10a}

\par{\bf\quad1) Imaginary walls:} We defined in \ref{1.13}.1 a subgroup scheme $\g U_{({_K\qa})K_s}^{ma}$ of $\g U_{K_s}^{ma±}$ associated to a root ${_K\qa}\in{_K\QD}_±^{im}$.
 It is clear that $U_{({_K\qa})}^{ma}=\g U_{({_K\qa})K_s}^{ma}(K_s)$ is stable under the action of the Galois group $Gal(K_s/K)$ on $\g G^{pma}$ or $\g G^{nma}$ explained in \ref{1.14}.
 \label{N33} We define $\widehat V_{_K\qa}=(U_{({_K\qa})}^{ma})^{Gal(K_s/K)}$ hence $V_{_K\qa}=\widehat V_{_K\qa}\cap G(K_s)=\widehat V_{_K\qa}\cap G(K)=U_{(_K\qa)}^{ma}\cap G(K)$.
 
 \par An element $u$ of $U_{({_K\qa})}^{ma}$ may be written as an infinite product $u=\prod_{\qb\in({_K\qa})}\,u_\qb$ with $u_\qb=\prod_{j=1}^{j=n_\qb}\,[exp](\ql_{\qb,j}e_{\qb,j})$.
 Then, for a set $\QO\subset\A$, we have $u\in U^{ma}_\QO(({_K\qa}))$ if and only if, $\forall\qb\in({_K\qa})$, $\QO\subset D(\qb,inf\{\qo(\ql_{\qb,j})\mid j=1,\cdots,n_\qb\}-\qb(x'_0))$ where $x'_0$ is the (old) origin in $\A$, see \cite[4.5]{Ru-11}.
 So, for $\QO$ in $_K\A$, $u\in U^{ma}_\QO(({_K\qa}))$ if and only if $\QO\subset D_K({_K\qa},\qf_{_K\qa}(u))$ with $\qf_{_K\qa}(u)=inf\{\frac{1}{m_\qb}(\qo(\ql_{\qb,j})-\qb(x'_0))\mid \qb\in({_K\qa}), {_K\qb}=m_\qb.{_K\qa},  j=1,\cdots,n_\qb\}$.
 As $_K\qa\in{_K\QD^{im}}$, $({_K\qa})$ is infinite hence $\qf_{_K\qa}(U_{({_K\qa})}^{ma})=\R\cup\{±\infty\}$.
 
 \par We define the set $\shm_K^{i\R}$ of imaginary walls in $_K\A$ as the set of hyperplanes $M_K({_K\qa},\qf_{_K\qa}(u))$ for $_K\qa\in{_K\QD^{im}}$, $u\in \widehat V_{_K\qa}$ and $\qf_{_K\qa}(u)≠±\infty$.
 For $L$ as in \ref{6.5}.1, we have $\widehat V_{_K\qa}\subset \g U^{ma}_{({_K\qa})}(L)$ and $\shm_K^{i\R}\subset \shm_{L/K}^{i\R}$.
 We have $\shm_K^{i\R}= \shm_{L/K}^{i\R}$ in many cases \eg, when $\g G$ is split over $K$, $\shm_K^{i\R}$ ($\supset\shm_K^{i}$) is the set of true or ghost imaginary walls.
  We do not define in general the analogue of $\shm_K^{i}$.

\par{\bf2) Enclosure map:} The enclosure map $cl^{_K\QD^r}_{K}$ in $_K\A$ is associated to $\shm_K$ and the subset $\shm_K^{ir\R}$ of $\shm_K^{i\R}$ containing the imaginary walls in $\shm_K^{i\R}$ of direction $Ker({_K\qa})$ with $_K\qa\in{_K\QD^r}$.
 More precisely for $_K\qa\in{_K\QD^r}$, we set $\QL'_{_K\qa}=\qf_{_K\qa}(\widehat V_{_K\qa})\setminus \{±\infty\}$ and then $cl^{_K\QD^r}_{K}=cl^{_K\QD^r}_{\QL'}$ with the notations of \ref{2.4}.1.
 
 \par For any $\QO$ in $_K\A$, we have $cl^{_K\QD^r}_{K}(\QO)\supset cl^{_K\QD^r}_{L/K}(\QO)$.
 When $\g G$ is split over $K$, ${_K\QD^r}=\QD$ and $cl^{_K\QD^r}_{K}=cl^\QD$.
 
 \par This enclosure map $cl^{_K\QD^r}_{K}$ is more natural than $cl^{_K\QD^r}_{L/K}$ as it involves the set of true real relative walls $\SHM_K$ instead of $\SHM_{L/K}$ (and nothing else in the classical case).

\begin{enonce*}[plain]{\quad3) Proposition} The family $\widehat\shp_K$ of \ref{6.8}.3 is compatible with the enclosure map $cl^{_K\QD^r}_{K}$. In particular $\SHI(\g G,K,{_K\A})$ is a parahoric hovel of type $({_K\A},cl^{_K\QD^r}_{K})$.
\end{enonce*}

\begin{proof} Let $\QO$ be a non empty filter in a façade $_K\A_{^KF^v}$ of $_K\A$.
 We choose a Weyl$-K-$chamber $^KC^v$ containing $^KF^v$ and then $F_1^v$, $C^v$ as in \ref{1.14}.
 We have to prove that $\widehat P_K(\QO)\cap U(±{^K\!C^v})\subset\widehat P_K(cl^{_K\QD^r}_{K}(\QO))$.
 By \ref{5.3} and \ref{6.8} we may replace $\QO$ by its inverse image in $_K\A$, hence suppose $\QO\subset\A$.
 But $\widehat P_K(\QO)\cap U(±{^K\!C^v})=\widehat P_K(\QO)\cap U(±{F_1^v})=\g G(K)\cap\widehat P(\QO)\cap U(±{F_1^v})=\g G(K)\cap U_\QO^{ma}(\QF^u(±{F_1^v}))$, \cf \cite[4.5]{Ru-11}.
 Let $_K\QD^±_{nd}$ be the set of non divisible real or imaginary relative roots in $_K\QD^±$.
 Then by construction, $\g U^{ma}(\QF^u(±{F_1^v}))(L)$ (resp. $U_\QO^{ma}(\QF^u(±{F_1^v}))$) may be written uniquely as a product  $\prod_{{_K\qa}\in{_K\QD}^±_{nd}}\,\g U_{({_K\qa})}^{ma}(L)$ (resp. $\prod_{{_K\qa}\in{_K\QD}^±_{nd}}\,U_{\QO}^{ma}({_K\qa})$) where actually $\g U_{({_K\qa})}^{ma}=\g U_{({_K\qa})}$ when $_K\qa\in{_K\QF}$.
 Each subgroup $\g U_{({_K\qa})}^{ma}(L)$ is stable under the Galois group $\QG$, hence 
 $\g G(K)\cap U_\QO^{ma}(\QF^u(±{F_1^v}))\subset U_\QO^{ma}(\QF^u(±{F_1^v}))^\QG=\prod_{{_K\qa}\in{_K\QD}^±_{nd}}\,U_{\QO}^{ma}({_K\qa})\cap\widehat V_{_K\qa}$ (with $\widehat V_{_K\qa}= V_{_K\qa}$ for $_K\qa\in{_K\QF}$).
 Now the definition in 1) above of $\qf_{_K\qa}$ and $\shm_K^{i\R}$, together with \ref{6.7} and the definition of $\shm_K$, prove that $U_\QO^{ma}(\QF^u(±{F_1^v}))^\QG=U_{cl^{_K\QD}_{K}(\QO)}^{ma}(\QF^u(±{F_1^v}))^\QG$ with an obvious definition of $cl^{_K\QD}_{K}$.
 So $\widehat\shp_K$  is compatible with  $cl^{_K\QD}_{K}$ and the same arguments as in \ref{6.9}c prove the compatibility with $cl^{_K\QD^r}_{K}$.
\end{proof}

\subsection{Functoriality}\label{6.11}

\par{\quad\bf1) Changing the group, commutative extensions:} We consider a morphism $\psi:\g G\to\g G'$ between two almost split Kac-Moody groups and we suppose that, over $K_s$, $\psi=\g G_\qf:\g G_\shs\to\g G_{\shs'}$ for a commutative extension of RGS $\qf:\shs\to\shs'$.
 This extension is then automatically $Gal(K_s/K)^*-$equivariant.

 \par The conditions (TRQS) for $\g G$ and $\g G'$ are equivalent: $\psi$ induces a bijection between  the combinatorial vectorial buildings of $\g G$ and $\g G'$ over $K_s$ \cite[1.10]{Ru-11} which is clearly $Gal(K_s/K)-$equivariant; so $\g G$ has a Borel subgroup defined over a field $M\in\shs ep(K)$ if and only if $\g G'$ has one.

 \par Suppose (TRQS), then $\g G'$ and $\g G$ are quasi-split over a tamely ramified finite Galois extension $M/K$ and split over a  finite Galois extension $L/K$ with $L\supset M$.
 We choose an apartment $\A$ for $\g G'$ as in \ref{6.8}, hence associated to a morphism $\qf':\shs'\to\shs''$ of RGS and some $V_{00}$ in $V=Y''\otimes\R$ compatible with the star action of $\QG=Gal(L/K)$ associated to $\g G'$.
  Then the same thing is true for $\qf'\circ\qf$ and $\g G$.
  Now the constructions of $\overline\SHI(\g G,K,{_K\overline\A})$ inside $\overline\SHI(\g G,L,{\overline\A})$ or of $\overline\SHI(\g G',K,{_K\overline\A})$ inside $\overline\SHI(\g G',L,{\overline\A})$ are completely parallel.
  So the $\psi_L-$equivariant morphism $\overline\SHI(\psi_L,L,{\overline\A}):\overline\SHI(\g G,L,{\overline\A})\to \overline\SHI(\g G',L,{\overline\A})$ of \ref{5.8}.1 induces a $\psi_K-$equivariant morphism $\overline\SHI(\psi_K,K,{_K\overline\A}):\overline\SHI(\g G,K,{_K\overline\A})\to \overline\SHI(\g G',K,{_K\overline\A})$.

  \par This is functorial (up to the problem that $\A$ or $\overline\A$ has sometimes to change with $\g G'$).

 \par{\bf2) Changing the group, Levi factors:} Suppose that $\g G$ satisfies (TRQS) and let $M,L,\QG,\cdots$ be as in \ref{6.5}.

 \par Let $F^v_+$ and $F^v_-$ be opposite $\QG-$stable vectorial facets in $\SHI^v(\g G,L,\A^v)$.
  They determine completely a subgroup $M(F^v)$ in $\g G(L)$ which is $\QG-$stable. We write $\g G'=\g G_{F^v_\pm{}}$ the corresponding subgroup-scheme of $\g G$.
  We know that, over $L$, $\g G'$ is isomorphic to some $\g G_{\shs(J)}$.

  \par The parabolic subgroup-scheme $\g P(F^v)$ of $\g G_L$ associated to $F^v$ is defined over $K$, hence over $M$, and contains a minimal $M-$parabolic \ie a Borel subgroup defined over $M$.
   The parabolics in $\g P(F^v)$ correspond bijectively to the parabolics of its Levi factor $\g G'$ and this correspondence is $\QG-$equivariant as $\g G'$ is $\QG-$stable.
   So $\g G'$ is quasi-split over $M$: it satisfies (TRQS).

   \par If $\A$ is chosen as in \ref{6.6} for $\g G$, then it satisfies the same conditions for $\g G'\subset\g G$.
   Here also the constructions of the bordered hovels over $K$ inside the bordered hovels over $L$ for $\g G$ and $\g G'$ are completely parallel.
   We deduce from \ref{5.8}.2 a $\g G'(K)-$equivariant isomorphism of $\SHI(\g G',K,{_K\A})$ with the fa\c{c}ade $\SHI(\g G,K,{_K\overline{\overline\A}})_{^KF^v}$ (where $^KF^v$ is the Weyl$-K-$facet corresponding to $F^v$) or with $\SHI(\g G',\g G,K,{_K\A})=\g G'(K).({_K\A})\subset \SHI(\g G,K,{_K\A})$.

   \par The reader will write the results for bordered hovels analogous to those in \ref{5.8}.2.

   \par{\bf3) Changing the field:} We asked  in \ref{6.1} that the valuation $\qo=\qo_K$ of $K$ may be extended functorially to all extensions in $\shs ep(K)$.
   We ask also that the almost split Kac-Moody group $\g G$ satisfies (TRQS), hence is quasi-split over a tamely ramified finite Galois extension $M/K$ and split over a  finite Galois extension $L/K$ with $L\supset M$.

   \par Let's consider now a  field extension $i:K\into K'$ in $\shs ep(K)$. We define in $K_s$, $L'=K'L$ and $M'=K'M$; we write $i_L:L\into L'$.
   The extensions $L'/K'$ and $L'/M'$ are Galois, $Gal(L'/K')\subset Gal(L/K)$, $Gal(M'/K')\subset Gal(M/K)$ and $M'/K'$ is tamely ramified.
   Moreover $\g G$ is split on $L'$ and quasi-split on $M'$, so $\g G$ satisfies (TRQS) on $K'$.

   \par We saw in \ref{5.8}.3 that $\A$ (with some added walls) is still a suitable apartment for $(\g G,\g T)$ over $L'$ and there is a $\g G(L)-$equivariant embedding $\SHI(\g G,i_L,\A):\SHI(\g G,L,\A)\into \SHI(\g G,L',\A)^{}$.
    This embedding is also $Gal(L'/K')-$equivariant.
    Now $ \SHI(\g G,K,{_K\A})^{}\subset\SHI(\g G,L,{\A})^{Gal(L/K)}$ and $ \SHI(\g G,K',{_{K'}\A})^{}\subset\SHI(\g G,L',{\A})^{Gal(L'/K')}$.
  Moreover $\SHI(\g G,K,{{_K}\A})$ is the union of the apartments $\A_\g S=\SHI(\g Z_g(\g S),\g G,K,{{_K}\A})$ for $\g S$ a maximal $K-$split torus in $\g G$ and $\g Z_g(\g S)$ its generic centralizer, which is a reductive group. By 2) above and \cite[5.12]{Ru-77},
  $\SHI(\g G,i_L,{{}\A})(\A_\g S)=\SHI(\g Z_g(\g S),i_L,{{}\A})(\SHI(\g Z_g(\g S),\g G,K,{{_K}\A})
  \subset \SHI(\g Z_g(\g S),\g G,K',{{_{K'}}\A})\subset \SHI(\g G,K',{{_{K'}}\A})$, where $_{K'}\A$ is associated to a maximal $K'-$split torus $\g S'$ containing $\g S$.
     We have thus defined a $\g G(K)-$equivariant embedding $\SHI(\g G,i,\A):\SHI(\g G,K,{{_K}\A})\into \SHI(\g G,K',{_{K'}\A})^{}$.

     \par This is clearly functorial.
  We leave to the reader the "pleasure" to formulate a result for bordered hovels; there is the problem of the choice of the fa\c{c}ade of $_{K'}\A$ in which embeds a fa\c{c}ade of $_K\A$.
  This is easier for the essential spherical fa\c{c}ades \ie for the microaffine buildings.
  
  \par Note that the (real) walls in $_K\A$ are some of the traces on $_K\A$ of the (real) walls in $_{K'}\A$. In general any such trace is not necessarily a wall in $_K\A$, see nevertheless \ref{6.12} below.

 \par{\bf4) Changing the model apartment:} Suppose that $\g G$ satisfies (TRQS) and let $M,L,\QG,\cdots$ be as in \ref{6.5}.

 \par The apartment $\A$ is associated to a commutative extension $\qf:\shs\to\shs'$ of RGS and a subspace $V_{00}'$ of $V_0'\subset V'=Y'\otimes\R$ with the condition that $\shs'$ is endowed with a star action of $\QG$ for which $\qf$ is $\QG^*-$equivariant and $V_{00}'$ $\QG^*-$stable.

 \par Now let $\psi:\shs'\to\shs''$ be a commutative extension of RGS and $V_{00}''$ a subspace  of $V_0''\subset V''=Y''\otimes\R$ containing $\psi(V'_{00})$, with the condition that $\shs''$ is endowed with a star action of $\QG$ for which $\psi$ is $\QG^*-$equivariant and $V_{00}''$ $\QG^*-$stable.
 Then $\qf'=\psi\circ\qf:\shs\to\shs''$ satisfies the above condition and can be used to define a new apartment $\A'=V''/V_{00}''$.
  We have an affine map $\psi:\A\to\A'$ which is $N-$equivariant.

  \par By \ref{5.8}.4 we get a $\g G(L)-$equivariant map $\SHI(\g G,L,\psi):\SHI(\g G,L,\A)\to\SHI(\g G,L,\A')$.
  It is $\QG-$equivariant and induces a $\g G(K)-$equivariant map $\SHI(\g G,K,\psi):\SHI(\g G,K,{_K\A})\to\SHI(\g G,K,{_K\A}')$ (by the characterization given in remark \ref{6.9}).

  \par This construction is functorial and extends clearly to the bordered hovels.
  
\begin{prop}\label{6.12} In the situation of \ref{6.11}.3 above, suppose the extension $K'/K$ Galois and unramified (more precisely for a non discrete valuation, $K'/K$ is supposed etale \cite[1.6]{BtT-84a}).
 Then the intersection with $_K\A$ of any real wall of $_{K'}\A$ is a real wall of $_K\A$ if (and only if) it is a hyperplane of direction given by a root in $_K\QF$.
\end{prop}

\begin{proof} Let $\QG=Gal(K'/K)$. Then with obvious notations, $\sho_{K'}$ is a free $\sho_K-$module with basis a family $x_1,\cdots,x_n$ whose image in $\qk'=\sho_{K'}/\g m_{K'}$ is a basis over $\qk=\sho_{K}/\g m_{K}$; moreover $\qk'/\qk$ is Galois and $\QG=Gal(\qk'/\qk)$ \cite[1.6.1d]{BtT-84a}.
 If $\QG=\{\qg_1,\cdots,\qg_n\}$, then a well known theorem tells us that $det(\qg_i(x_j))$ is non trivial in $\qk'$, hence is in $\sho_{K'}^*$. \label{N34}
  An easy consequence is that any $\sho_{K'}-$module $M$ with a conjugate-linear action of $\QG$ is the $\sho_{K'}-$module generated by  $M^\QG$.
  
  \par Let $a\in{_K\QF}$ and $x\in{_K\A}$ in the wall $M(a',k)$ of $_{K'}\A$ for $a'\in{_{K'}\QF}$, $a'\rest\g S=a$ and $k=-a(x)$.
  With notations as in the proof of \ref{6.9}f, we have $V_{a',k}/(V_{2a',2k}.V_{a',k+})$ (or $V_{2a',2k}/V_{2a',2k+}$)  non trivial (where $V_{a',k},\cdots$ is relative to $K'$ and $\g S'$) and we want to prove that $V_{a,k}/V_{2a,2k}.V_{a,k+}$ (or $V_{2a,2k}/V_{2a,2k+}$) is non trivial.
  We concentrate on the first case, the second is easier.
  
  \par We set $V'_{2a,2k}=\prod_{b'\rest\g S=2a}\,V_{b',k}$, $V'_{a,k}=V'_{2a,2k}.\prod_{b'\rest\g S=a}\,V_{b',k}$ and analogous formulae for $V'_{2a,2k+}$, $V'_{a,k+}$.
  By hypothesis $V'_{a,k}/(V'_{2a,2k}.V'_{a,k+})$ is non trivial.
  But $V'_{a,k}/V'_{2a,2k}$ (resp. $V'_{a,k+}/V'_{2a,2k+}$) is an $\sho_{K'}-$module stable under $\QG$ and $(V'_{a,k}/V'_{2a,2k})^\QG=V_{a,k}/V_{2a,2k}$ (resp. $(V'_{a,k+}/V'_{2a,2k+})^\QG=V_{a,k+}/V_{2a,2k+}$). \label{N35}
  If $V_{a,k}/(V_{2a,2k}.V_{a,k+})$ were trivial, then we would have $V_{a,k}/V_{2a,2k}=V_{a,k+}/V_{2a,2k+}$ and, by the above result, $V'_{a,k}/V'_{2a,2k}=V'_{a,k+}/V'_{2a,2k+}$, nonsense.
\end{proof}


\bigskip

\medskip

\medskip

Guy.Rousseau@univ-lorraine.fr

\medskip
\parni 1. Universit\'e de Lorraine, Institut \'Elie Cartan de Lorraine, UMR 7502,  Vand\oe uvre-l\`es-Nancy, F-54506, France.
\parni 2. CNRS, Institut \'Elie Cartan de Lorraine, UMR 7502,  Vand\oe uvre-l\`es-Nancy, F-54506, France.

\end{document}

\bigskip
\par NOTES (2015)
\bigskip 

\par\noindent Note 1 : \ref{N1}; Note 2 : \ref{N2}; Note 3 : \ref{N3}; Note 4 : \ref{N4}; Note 5 : \ref{N5}; Note 6 : \ref{N6}; Note 7 : \ref{N7}; 
\par\noindent Note 8 : \ref{N8}; Note 9 : \ref{N9}; Note 10 : \ref{N10};  Note 11 : \ref{N11}; Note 12 : \ref{N12}; Note 13 : \ref{N13}; 
\par\noindent Note 14 : \ref{N14}; Note 15 : \ref{N15}; Note 16 : \ref{N16}; Note 17 : \ref{N17}; Note 18 : \ref{N18}; Note 19 : \ref{N19}; 
\par\noindent Note 20 : \ref{N20}; Note 21 : \ref{N21};  Note 22 : \ref{N22}; Note 23 : \ref{N23}; Note 24 : \ref{N24}; Note 25 : \ref{N25};
\par\noindent Note 26 : \ref{N26}; Note 27 : \ref{N27}; Note 28 : \ref{N28};   Note 29 : \ref{N29}; Note 30 : \ref{N30}; Note 31 : \ref{N31};
\par\noindent Note 32 : \ref{N32};   Note 33 : \ref{N33}; Note 34 : \ref{N34}; Note 35 : \ref{N35}; Note 36 : \ref{N36}; Note 37 : \ref{N37};

\end{document}


\bibitem[A-57] {A-57}
Emil  {\sc Artin},
 {\it Geometric algebra}, (Interscience, New York, 1957), \cf {\it Alg\`ebre
g\'eom\'etrique}, (Gau\-thier-Villars, Paris, 1967).

\bibitem  [Bo-90]{Bl-90}
 Armand {\sc Borel},
{\it Linear algebraic groups}, second edition, Graduate Texts in Math.
 {\bf126 } (Springer, Berlin, 1991).

\bibitem[BS-73] {BS-73}
 Armand {\sc  Borel} \& Jean-Pierre {\sc Serre},
Corners and arithmetic groups, {\it Comm. Math. Helv.} {\bf 48}
(1973), 436-491.

\bibitem [Bt-54]{Br-54}
  Fran\c cois {\sc Bruhat},
Repr\'esentations induites des groupes de Lie semi-simples complexes,
{\it C. R. Acad. Sci. Paris} {\bf 238} (1954), 437-439.

\bibitem [Bt-64]{Br-64}
 Fran\c cois {\sc Bruhat},
Sur une classe de sous-groupes compacts maximaux des groupes de
Chevalley sur un corps ${\mathfrak p}-$adique,
{\it  Publ. Math. Inst. Hautes \'Etudes Sci.} {\bf23} (1964), 46-74.

\bibitem[BtT-66] {BrT-66}
Fran\c cois {\sc Bruhat} \& Jacques {\sc Tits},
BN-paires de type affine et donn\'ees radicielles affines,
{\it C. R. Acad. Sci. Paris, Ser. A} {\bf 263} (1966), 598-601; voir aussi pages
766-768, 822-825 et 867-869.

\bibitem [BtT-84b] {BrT-84b}
      Fran\c cois {\sc Bruhat} \& Jacques {\sc Tits},
Sch\'emas en groupes et immeubles des groupes classiques
 sur un corps local, {\it Bull. Soc. Math. France} {\bf112} (1984), 259-301.
Erratum in \cite{BrT-87a}.

\bibitem[BtT-87a]  {BrT-87a}
        Fran\c cois {\sc Bruhat} \& Jacques {\sc Tits},
Sch\'emas en groupes et immeubles des groupes classiques
 sur un corps local, Deuxi\`eme partie: Groupes unitaires,
 {\it Bull. Soc. Math. France} {\bf115} (1987), 141-195.

\bibitem[BtT-87b]  {BrT-87b}
          Fran\c cois {\sc Bruhat} \& Jacques {\sc Tits},
Groupes alg\'ebriques sur un corps local III, Compl\'ements
 et applications \`a la cohomologie galoisienne,
{\it J. Fac. Sci. Univ. Tokyo}  {\bf 34} (1987), 671-698.

\bibitem[CR-09] {CR-09}
Pierre-Emmanuel {\sc Caprace}  \& Bertrand {\sc R\'emy},
Simplicity and superrigidity of twin building lattices, {\it Inventiones Math.} {\bf176} (2009), 169-221.

\bibitem[CER-08] {CER-08}
Lisa {\sc Carbone}, Mikhail {\sc Ershov}  \& Gordon {\sc Ritter},
Abstract simplicity of complete Kac-Moody groups over finite fields, {\it J. Pure Appl. Algebra} {\bf212} (2008), 2147-2162.

\bibitem[CG-03] {CG-03}
Lisa {\sc Carbone}  \& Howard {\sc Garland},
Existence of lattices in Kac-Moody groups over finite fields, {\it Commun. Contemporary Math.} {\bf5} (2003), 813-867.

 \bibitem[Cu-84] {Cu-84}
Huah {\sc Chu},
On the $GE_2$ of graded rings, {\it J.  Algebra} {\bf90} (1984), 208-216.

\bibitem[CLT-80] {CLT-80}
 Charles W. {\sc Curtis}, Gustav {\sc Lehrer} \& Jacques {\sc Tits},
Spherical buildings and the character of the Steinberg
representation, {\it Inventiones Math.}  {\bf58 }(1980), 201-210.

\bibitem [D-98]{D-98}
 Michael W. {\sc Davis},
Buildings are CAT(0), in {\it Geometry and cohomology
in group theory, Durham (1994)}, P. Kropholler, G. Niblo \& R. St\H ohr
\eds,   London Math. Soc. lecture note {\bf 252}
(Cambridge U. Press, Cambridge, 1998), 108-123.

\bibitem[DG-70] {DG-70}
 Michel {\sc Demazure} \& Pierre {\sc Gabriel},
{\it Groupes alg\'ebriques, tome 1}, Masson-North Holland (1970).

\bibitem[G-97] {G-97}
Paul {\sc Garrett},
{\it Buildings and classical groups}, (Chapman and Hall, London, 1997).

\bibitem[GI-63] {GI-63}
 Oscar {\sc Goldman} \& Nagayoshi {\sc Iwahori},
The space of ${\mathfrak p}-$adic norms, {\it Acta Math.}
 {\bf109} (1963), 137-177.

\bibitem[KaP-85] {KP-85}
 Victor G. {\sc Kac} \&  Dale H. {\sc Peterson},
Defining relations of certain  infinite dimensional
groups, in {\it \'Elie Cartan et les math\'ematiques
 d'aujourd'hui, Lyon (1984)}, Ast\'erisque n$^o$ hors s\'erie
 (1985), 165-208.

 \bibitem [KL-97]{KL-97b}
 Bruce {\sc Kleiner} \& Bernhard {\sc Leeb},
Rigidity of quasi-isometries for symmetric spaces
 and euclidean buildings, {\it Publ. Math. Inst. Hautes \'Etudes Sci.}
 {\bf 86} (1997), 115-197.

\bibitem [Ku-02]{Kr-02}
 Shrawan {\sc Kumar},
{\it Kac-Moody groups, their flag varieties and
 representation theory}, Progress in Math. {\bf 204}
 (Birkh\H auser, Basel, 2002)

\bibitem[L-96] {L-96}
Erasmus {\sc Landvogt},
{\it A compactification of the Bruhat-Tits building},
  Lecture notes in Math. {\bf1619} (Springer, Berlin, 1996).

  \bibitem [LRC-11]{LRC-09}
 Bernhard {\sc Leeb} \& Carlos {\sc Ramos Cuevas},
The center conjecture for spherical buildings of types $F_4$ and $E_6$, Geometric And Functional Analysis, {\bf21} (2011), 525-559.

   \bibitem[M-86] {M-86}
 Olivier {\sc Mathieu},
Formules de Demazure-Weyl et g\'en\'eralisation du th\'eor\`eme de Borel-Weil-Bott,
{\it C. R. Acad. Sci. Paris, Ser. I } {\bf 303} (1986), 391-394.

\bibitem[M-88b] {M-88b}
 Olivier {\sc Mathieu},
Construction du groupe de Kac-Moody et applications,
{\it C. R. Acad. Sci. Paris, Ser. I } {\bf306} (1988) 227-230.

\bibitem[M-96] {M-96}
 Olivier {\sc Mathieu},
On some modular representations of affine Kac-Moody algebras at the
critical level, {\it Compositio Math.} {\bf 102} (1996), 305-312.

\bibitem[Mn-85] {Mn-85}
 David {\sc Mitzman},
{\it Integral bases for affine Lie algebras and their universal
enveloping algebras}, Contemporary Math. {\bf40} Amer. Math. Soc. (1985).

\bibitem[My-82] {My-82}
 Robert {\sc Moody},
A simplicity theorem for Chevalley groups defined by generalized
Cartan matrices, preprint (avril 1982).

\bibitem[MoT-72] {MT-72}
 Robert {\sc Moody} \& Kee-Leong {\sc Teo},
Tits' systems with crystallographic Weyl groups,
{\it J. of Algebra} {\bf 21} (1972), 178-190.

\bibitem[Mg-88] {Mg-88}
Gabor {\sc Moussong},
{\it Hyperbolic Coxeter groups}, Ph. D. thesis, Ohio State University (1988).

\bibitem[Mr-02] {Mr-02}
 Bernhard {\sc M\H uhlherr},
Twin buildings, in {\it Tits buildings and the model theory of
groups, W\H urzburg (2000)}, K. Tent \ede, London Math. Soc.
lecture note {\bf 291} (Cambridge U. Press, Cambridge, 2002), 103-117.

\bibitem[MuT-06] {MT-06}
 Bernhard {\sc M\H uhlherr} \& Jacques {\sc Tits},
 The center conjecture for non exceptionnal buildings,  {\it  J.
of Algebra} {\bf300} (2006), 687-706. 

\bibitem [Md-65]{Md-65}
David {\sc Mumford},
{\it Geometric invariant theory}, Ergebnisse der math. {\bf 34}
(Springer, Berlin, 1965). Seconde \'edition avec J. Fogarty, 1982.
Troisi\`eme \'edition avec J. Fogarty et F. Kirwan, 1994.

\bibitem[P-00] {P-00}
 Anne {\sc Parreau},
Immeubles affines: construction par les normes et \'etude
des isom\'etries, {\it Contemporary Math.} {\bf 262} (2000),
 263-302.

  \bibitem [RC-09]{RC-09}
 Carlos {\sc Ramos Cuevas},
The center conjecture for thick spherical buildings, preprint (2009), ArXiv:0909.2761v1.

\bibitem[RR-06] {RR-06}
 Bertrand {\sc R\'emy} \& Mark {\sc Ronan},
 Topological groups of Kac-Moody type, right-angled twinnings and their lattices,
{\it Comment. Math. Helvet.}  {\bf81} (2006), 191-219.

\bibitem[RS-95] {RS-95}
 J\H urgen {\sc Rohlfs} \& Tonny A. {\sc Springer},
Applications of buildings, in {\it Handbook of incidence
geometry},  F. Buekenhout \ed, (Elsevier, Amsterdam, 1995), 1085-1114.

\bibitem[Rn-89] {Rn-89}
 Mark A. {\sc Ronan},
{\it Lectures on buildings}, Perspectives in Math. {\bf 7}
(Academic Press, New York, 1989).

\bibitem [RT-87]{RT-87}
 Mark A. {\sc Ronan} \& Jacques {\sc Tits},
Building buildings, {\it Math. Annalen} {\bf278 } (1987),
 291-306.

\bibitem[RT-94] {RT-94}
  Mark A. {\sc Ronan} \& Jacques {\sc Tits},
Twin trees I , {\it Invent. Math.} {\bf116} (1994),
 463-479.

\bibitem [Su-95]{Su-95}
 Rudolf {\sc Scharlau},
Buildings, in {\it Handbook of incidence geometry},
 F. Buekenhout  \ed, (Elsevier, Amsterdam, 1995), 477-645.

\bibitem[Se-77] {Se-77}
 Jean-Pierre {\sc  Serre},
{\it Arbres, amalgames, ${\mathrm SL}_2$}, Ast\'erisque {\bf 46}
 (1977). \cf  {\it Trees}, Springer (1980).

\bibitem[Se-04] {Se-04}
Jean-Pierre {\sc  Serre},
Compl\`ete r\'eductibilit\'e, expos\'e 932 in {\it S\'eminaire
Bourbaki 2003/2004}, Ast\'erisque {\bf 299} (2005), 195-217.

\bibitem [Sg-62]{Sg-62}
 Robert {\sc  Steinberg},
G\'en\'erateurs, relations et rev\^etements de groupes alg\'ebriques, in {\it  Colloque
sur la th\'eorie  des groupes alg\'ebriques, Bruxelles (1962)}, Centre Belge Rech.
Math.,  Librairie Universitaire Louvain \& Gauthier-Villars Paris (1962), 113-127.

\bibitem [Sg-68]{Sg-68}
 Robert {\sc  Steinberg},
{\it Lectures on Chevalley groups}, Yale University (1968).

\bibitem [T-55a]{T-55a}
Jacques {\sc Tits},
Sur certaines classes d'espaces homog\`enes de groupes de Lie,
{\it M\'emoire Acad. Roy. Belg.} {\bf29} (3) (1955), 268 pp.

\bibitem [T-55b]{T-55b}
 Jacques {\sc Tits},
Groupes semi-simples complexes et g\'eom\'etrie projective, expos\'e 112
in {\it S\'eminaire Bourbaki {\bf3}, ann\'ees 1954/55-1955/56}, Soc. Math. France
  (1996), 115-125.

\bibitem[T-61] {T-61a}
 Jacques {\sc Tits},
{\it Groupes et g\'eom\'etries de Coxeter}, preprint Inst. Hautes \'Etudes Sci.
(1961), in \cite{Wolf}

\bibitem [T-62a]{T-62a}
Jacques {\sc Tits},
Groupes alg\'ebriques semi-simples et g\'eom\'etries associ\'ees,
in {\it Algebraical and topological foundations of geometry, Utrecht (1959)},
 H. Freundenthal \ed, (Pergamon Press, Oxford, 1962), 175-192.

\bibitem[T-62b] {T-62b}
 Jacques {\sc Tits},
Th\'eor\`eme de Bruhat et sous-groupes paraboliques,
{\it C. R. Acad. Sci. Paris, Ser. A} {\bf 24} (1962), 2910-2912.

\bibitem [T-63a]{T-63a}
 Jacques {\sc Tits},
G\'eom\'etries poly\'edriques et groupes simples, in {\it Atti della
 $2^a$ reunione del Groupement des math\'ematiciens d'expression latine,
Firenze (1961)}, Edizioni Cremonese, Roma (1963), 66-88.

\bibitem [T-63b]{T-63b}
  Jacques {\sc Tits},
Groupes simples et g\'eom\'etries associ\'ees, in {\it
Proc. Intern. Congress Math., Stockholm (1962)}, (1963), 197-221.

\bibitem[T-64] {T-64b}
 Jacques {\sc Tits},
Algebraic and abstract simple groups,
{\it Annals of Math.} {\bf 80} (1964), 313-329.

\bibitem[T-65] {T-65}
 Jacques {\sc Tits},
Structures et groupes de Weyl, expos\'e 288 in {\it S\'eminaire
Bourbaki {\bf9}, ann\'ees 1964/65-1965/66}, Soc. Math. France  (1996), 169-183.

\bibitem [T-74]{T-74a}
 Jacques {\sc Tits},
{\it Buildings of spherical type and finite BN pairs},
  Lecture notes in Math. {\bf386} (Springer, Berlin, 1974).
Seconde \'edition (1986).

\bibitem[T-75] {T-75a}
 Jacques {\sc Tits},
On buildings and their applications, in {\it Proc. Int.
 Congress Math., Vancouver (1974)}, volume 1 (1975), 209-220.

\bibitem[T-80] {T-80b}
Jacques {\sc Tits},
Buildings and Buekenhout geometries, in {\it Finite
simple groups II , Durham (1978)},  M.J. Collins  \ed,
 Academic Press (1980), 309-320.

\bibitem [T-85]{T-85b}
 Jacques {\sc Tits},
Sym\'etries, {\it C. R. Acad. Sci. Paris, S\'er. G\'en.
 Vie Sci.} {\bf 2 } (1985), 13-25.

\bibitem[T-81] {T-81a}
Jacques {\sc Tits},
A local approach to buildings, in {\it The geometric
vein. The Coxeter Festschrift}, C. Davis, B. Gr\H unbaum \& F.A.
 Sherk \eds, Springer (1981), 519-547.

\bibitem [T-81]{T-81b}
 Jacques {\sc Tits},
Alg\`ebres de Kac-Moody et groupes associ\'es I,
R\'esum\'e de cours, {\it Annuaire du Coll\`ege de France}
 (1981) 75-86.

 \bibitem [T-86]{T-86a}
 Jacques {\sc Tits},
Immeubles de type affine, in {\it Buildings and the
 geometry of diagrams, Como (1984)}, L.A. Rosati \ed,
  Lecture notes in Math. {\bf 1181 }(Springer, Berlin, 1986), 159-190.

  \bibitem[T89] {T-89b}
Jacques {\sc Tits},
Groupes associ\'es aux alg\`ebres de Kac-Moody, Expos\'e
700 in {\it S\'eminaire Bourbaki 1988/89}, Ast\'erisque
{\bf177-178 } (1989), 7-31.

\bibitem [T-00]{T-00}
 Jacques {\sc Tits},
Groupes de rang un et ensembles de Moufang,
R\'esum\'e de cours,  {\it Annuaire du Coll\`ege de France} (2000), 93-109.

\bibitem [TW-03]{TW-03}
 Jacques {\sc Tits} \& Richard M. {\sc Weiss},
{\it Moufang Polygons}, Springer monographs in Math.
 (2003).

\bibitem[W-03] {W-03}
Richard M. {\sc Weiss},
{\it The structure of spherical buildings},
(Princeton U. Press, Princeton, 2003).

\bibitem[W-09] {W-09}
 Richard M. {\sc Weiss},
{\it The structure of affine buildings}, Annals of math. studies {\bf168}
(Princeton U. Press, Princeton, 2009).

 \bibitem [Wolf]{Wolf}
 {\it Wolf prizes in mathematics}, volume 2, S.S. Chern \& F. Hirzebruch \eds{ }
(World Sci. Publ., 2001).


\section{Le groupe de Kac-Moody minimal (\`a la Tits)}\label{s1}

\subsection{Foncteur de Steinberg}\label{1.5}  \cf \cite{T-87b}


\begin{enonce*}{Th\'eor\`eme}[\cf \eg \cite{AB-08}] 
 \end{enonce*}

\begin{enonce*}[remark]{Propri\'et\'es}   
\end{enonce*}

\begin{enonce*}[definition]{N.B}  
\end{enonce*}

\begin{enonce*}[definition]{D\'efinition}[\cf \cite{T-92a}]  
\end{enonce*}

\begin{enonce*}[plain]{\quad3) Proposition}
\end{enonce*}


\begin{theo}\label{2.4}
\end{theo}
\begin{prop}\label{2.4}
\end{prop}
\begin{conj}\label{2.4}
\end{conj}
\begin{coro}\label{2.3}
\end{coro}
\begin{lemm}\label{2.5}
\end{lemm}
\begin{defi}\label{2.4}
\end{defi}
\begin{rema}\label{2.4}
\end{rema}
\begin{exem}\label{2.4}
\end{exem}
\begin{proof}
\end{proof}

\begin{enonce}[remark]{Exemples}\label{4.12}  
\end{enonce}
